\newtheorem*{thma}{Theorem~A}
\newtheorem*{thmb}{Theorem~B}
\newtheorem*{thmc}{Theorem~C}
\newtheorem*{thmd}{Theorem~D}
\newtheorem{theorem}{Theorem}[section]
\newtheorem{lemma}[theorem]{Lemma}
\newtheorem{proposition}[theorem]{Proposition}
\newtheorem{corollary}[theorem]{Corollary}
\newtheorem{claim}[theorem]{Claim}
\newtheorem{fact}[theorem]{Fact}
\newtheorem{question}[theorem]{Question}
\theoremstyle{definition}
\newtheorem{definition}[theorem]{Definition}
\newtheorem{remark}[theorem]{Remark}
\newcommand{\dom}[1]{\ensuremath{\mathrm{dom}}(#1)}
\newcommand{\power}{\ensuremath{\mathscr{P}}}
\newcommand{\set}[2]{\ensuremath{\{#1 \,|\, #2 \}}}
\newcommand{\mc}{\mathcal}
\newcommand{\bb}{\mathbb}
\newcommand{\beq}{\begin{equation}}
\newcommand{\eeq}{\end{equation}}
\newcommand{\brm}{\begin{remark}\begin{rm}}
\newcommand{\erm}{\end{rm}\end{remark}}
\newcommand{\bce}{\begin{compactenum}}
\newcommand{\ece}{\end{compactenum}}
\newcommand{\cf}{\mathrm{cf}}
\newcommand{\otp}{\mathrm{otp}}
\newcommand{\height}{\mathrm{ht}}
\newcommand{\pp}{\mathrm{pp}}
\newcommand{\R}{\bb{R}}
\newcommand{\Q}{\bb{Q}}
\renewcommand{\P}{\bb{P}}
\newcommand{\M}{\bb{M}}
\newcommand{\C}{\bb{C}}
\newcommand{\Coll}{\mathrm{Coll}}
\newcommand{\SP}{{\sf SP}}
\newcommand{\ISP}{{\sf ISP}}
\newcommand{\ZFC}{\sf ZFC}
\newcommand{\SCH}{\sf SCH}
\newcommand{\PFA}{\sf PFA}
\newcommand{\MA}{\sf MA}
\newcommand{\CP}{\sf CP}
\newcommand{\IGMP}{\sf IGMP}
\newcommand{\wKH}{\sf wKH}
\newcommand{\AGP}{\mathsf{AGP}}
\newcommand{\wAGP}{\mathsf{wAGP}}
\newcommand{\GMP}{\mathsf{GMP}}
\newcommand{\SSH}{\mathsf{SSH}}
\begin{document}

\title{Guessing models, trees, and cardinal arithmetic}

\author{Chris Lambie-Hanson}
\address[Lambie-Hanson]{
Institute of Mathematics, 
Czech Academy of Sciences, 
{\v Z}itn{\'a} 25, Prague 1, 
115 67, Czech Republic
}
\email{lambiehanson@math.cas.cz}
\urladdr{https://users.math.cas.cz/~lambiehanson/}

\author{{\v S}{\'a}rka Stejskalov{\'a}}
\address[Stejskalov{\'a}]{
Charles University, Department of Logic,
Celetn{\' a} 20, Prague~1, 
116 42, Czech Republic
}
\email{sarka.stejskalova@ff.cuni.cz}
\urladdr{logika.ff.cuni.cz/sarka}

\address{Institute of Mathematics, Czech Academy of Sciences, {\v Z}itn{\'a} 25, Prague 1, 115 67, Czech Republic}

\thanks{Both authors were supported by the Czech Academy of Sciences 
(RVO 67985840) and the GA\v{C}R project 23-04683S}

\begin{abstract}
  Since being isolated by Viale and Wei\ss\ in 2009, the Guessing Model Property has emerged as 
  a particularly prominent and powerful consequence of the Proper Forcing Axiom. In this paper, 
  we investigate connections between variations of the Guessing Model Property and cardinal 
  arithmetic, broadly construed. We improve upon results of Viale and Krueger by proving that 
  a weakening of the Guessing Model Property implies Shelah's Strong Hypothesis. We also prove 
  that, though the Guessing Model Property is known not to put an upper bound on the size of the 
  continuum, it does imply that $2^{\omega_1}$ is as small as possible relative to the value of 
  $2^\omega$. Building on work of Laver, we prove that, in the extension of any model of $\PFA$ 
  by a measure algebra, every tree of height and size $\omega_1$ is B-special (a generalization of 
  specialness introduced by Baumgartner that can also hold of trees with uncountable branches). 
  Finally, we investigate the impact of forcing axioms for Suslin and almost Suslin 
  trees on guessing model properties. In particular, we prove that
  if $S$ is a Suslin tree, then the axioms $\PFA(S)$ and 
  $\PFA(S)[S]$ imply the Guessing Model Property and the Indestructible Guessing Model Property, 
  respectively, and, if $T^*$ is an almost Suslin Aronszajn tree, then the axiom 
  $\PFA(T^*)$ implies the Indestructible Guessing Model Property. This answers a number of questions of Cox and Krueger.
\end{abstract}

\keywords{guessing models, cardinal arithmetic, weak Kurepa hypothesis, meeting numbers, 
Singular Cardinals Hypothesis, Shelah's Strong Hypothesis, special trees, random reals, PFA(S)[S], 
coherent Suslin tree}
	\subjclass[2010]{03E55, 03E35}
	\maketitle

\tableofcontents

\section{Introduction}

A long line of research in set theory, dating back at least to the 1970s, concerns the study of the extent 
to which certain combinatorial properties of large cardinals can consistently hold at smaller cardinals and 
of the consequences and implications of such properties. Classical examples of such properties include 
stationary reflection principles and the tree property; particularly prominent in recent years have been 
two-cardinal tree properties, which, in their strongest forms, can usefully be reformulated as guessing model 
principles.

The study of two-cardinal tree properties began with work of Jech \cite{jech_combinatorial_problems} and 
Magidor \cite{magidor_combinatorial_characterization}, who used them to give combinatorial characterizations 
of strongly compact and supercompact cardinals, respectively. Versions of two-cardinal tree properties 
consistent at small cardinals were first isolated by Wei\ss\ \cite{weiss} in the late 2000s, and since then 
much work has been done to understand these principles. In \cite{viale_weiss}, Viale and Wei\ss\ show that 
the strongest of these two-cardinal tree properties, $\ISP$, can be reformulated in terms of the existence 
of \emph{guessing models}; this reformulation has proven to be quite fruitful in deriving consequences of 
two-cardinal tree properties. Viale and Wei\ss\ also prove that the guessing model property at $\omega_2$, 
which is equivalent to $\ISP_{\omega_2}$ and often simply denoted $\GMP$, follows from the 
Proper Forcing Axiom ($\PFA$).

In this article, we continue the study of two-cardinal tree properties and guessing model principles, 
focusing in particular on their relationship to cardinal arithmetic, broadly construed. There has already 
been considerable work done in this direction. Notably, results of Viale \cite{viale_guessing_models} and 
Krueger \cite{krueger_mitchell_iteration} show that $\GMP$ implies the Singular Cardinals Hypothesis 
($\SCH$), and, more generally, that for any regular cardinal $\kappa \geq \omega_2$, 
$\ISP_\kappa$ implies $\SCH$ above $\kappa$. 
On the other hand Cox and Krueger prove in \cite{cox_krueger_quotients} that, unlike $\PFA$, 
$\GMP$ does not place any restrictions on the value of the continuum other than implying that $2^\omega > 
\omega_1$.

In the first part of the paper, we continue these lines of investigation. In particular, we show that, 
for a regular cardinal $\kappa \geq \omega_2$, a certain weakening of $\ISP_\kappa$ implies 
Shelah's Strong Hypothesis ($\SSH$) above $\kappa$. This improves upon Viale and Krueger's aforementioned 
result by weakening the hypotheses and strengthening the conclusion. (The terminology and notation 
in the statement of Theorem A will be introduced in later sections; for now, we note that the 
hypothesis of the theorem is a weakening of $\ISP_\kappa$.)

\begin{thma}
  Let $\kappa \geq \omega_2$ be a regular cardinal and,
  for every singular cardinal $\lambda > \kappa$ of countable cofinality, let 
  \[
    \mc Y_\lambda := \{M \in \power_\kappa H(\lambda^{++}) \mid M \text{ is } 
    (\omega_1, \lambda)\text{-internally unbounded}\}.
  \]
  If $\wAGP_{\mc Y_\lambda}(\kappa, \kappa, \lambda^{++})$ holds for all such $\lambda$,
  then $\SSH$ holds above $\kappa$.
\end{thma}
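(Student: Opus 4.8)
The plan is to reduce $\SSH$ above $\kappa$ — via Shelah's theorem that $\SSH$ is equivalent to the assertion ``$\pp(\lambda)=\lambda^+$ for every singular cardinal $\lambda$ of countable cofinality'', in its localized form involving only instances above $\kappa$ — to the single statement that $\pp(\lambda)=\lambda^+$ for every singular $\lambda>\kappa$ with $\cf(\lambda)=\omega$. So I would fix such a $\lambda$, put $\theta:=\lambda^{++}$, assume toward a contradiction that $\pp(\lambda)\geq\theta$, and derive a contradiction from $\wAGP_{\mc Y_\lambda}(\kappa,\kappa,\theta)$. Here I would use the general inequality $\pp(\lambda)\leq\cf([\lambda]^{\leq\omega},\sub)$: given a $\sub$-cofinal $\mc A\sub[\lambda]^{\leq\omega}$ and an increasing $\la\lambda_n\mid n<\omega\ra$ cofinal in $\lambda$ with each $\lambda_n$ regular, the functions $a\mapsto(n\mapsto\sup(a\cap\lambda_n))$ for $a\in\mc A$ form a $<^*$-cofinal subset of $\prod_{n<\omega}\lambda_n$. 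Hence the assumption lets me fix, by standard PCF theory, an increasing sequence $\la\lambda_n\mid n<\omega\ra$ of regular cardinals with $\kappa<\lambda_0$ and $\sup_n\lambda_n=\lambda$, together with a $<^*$-increasing, $<^*$-cofinal sequence $\la f_\xi\mid\xi<\theta\ra$ in $\prod_{n<\omega}\lambda_n$; equivalently, $\tcf\big(\prod_{n<\omega}\lambda_n,<^*\big)=\theta$.

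Next I would fix a sufficiently large regular $\chi$ and, using $\wAGP_{\mc Y_\lambda}(\kappa,\kappa,\theta)$ together with a routine reflection argument, fix $M\el H(\chi)$ with $\la\lambda_n\mid n<\omega\ra$, $\la f_\xi\mid\xi<\theta\ra$, and enough further parameters in $M$, such that $|M|<\kappa$, $M\cap\kappa\in\kappa$, $M\cap H(\theta)\in\mc Y_\lambda$ (i.e.\ $M\cap H(\theta)$ is $(\omega_1,\lambda)$-internally unbounded), and $M\cap H(\theta)$ is a weak guessing model witnessing the relevant instance of $\wAGP$. Setting $g(n):=\sup(M\cap\lambda_n)$, regularity of $\lambda_n$ and $|M|<\kappa<\lambda_n$ give $g(n)<\lambda_n$, so $g\in\prod_{n<\omega}\lambda_n$. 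The crux — and the one place where $(\omega_1,\lambda)$-internal unboundedness genuinely enters — is to show that $g$ is a $<^*$-exact upper bound of $\big(\prod_{n<\omega}\lambda_n\big)\cap M$: given $h$ with $h(n)<g(n)$ for all $n$, pick $\beta_n\in M\cap\lambda_n$ above $h(n)$; the countable set $\{\beta_n\mid n<\omega\}$ is a countable subset of $M\cap\lambda$, so internal unboundedness provides a countable $c\in M$ covering it, and then $n\mapsto\sup(c\cap\lambda_n)$ lies in $\big(\prod_{n<\omega}\lambda_n\big)\cap M$ and dominates $h$ everywhere. Since $\la f_\xi\mid\xi<\theta\ra\in M$ is $<^*$-cofinal, elementarity gives that $\{f_\xi\mid\xi\in M\cap\theta\}$ is $<^*$-cofinal in $\big(\prod_{n<\omega}\lambda_n\big)\cap M$, so $g$ is in particular the $<^*$-exact upper bound of $\{f_\xi\mid\xi\in M\cap\theta\}$.

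Then I would encode the ``trace on $M$'' of this cofinal structure: an object $D\in H(\theta)$ built from the countable sets $\rng{f_\xi}$ for $\xi\in M\cap\theta$ together with their characteristic functions $n\mapsto\sup(\rng{f_\xi}\cap\lambda_n)$, chosen so that $(\omega_1,\lambda)$-internal unboundedness of $M\cap H(\theta)$ makes $D$ $\kappa$-approximated by $M\cap H(\theta)$ — crucially, this must be arranged so that the computation of $D\cap a$ for $a\in M$ with $|a|<\kappa$ (hence $a\sub M$, using $M\cap\kappa\in\kappa$) goes through inside $M\cap H(\theta)$, which \emph{cannot} appeal to $\la f_\xi\mid\xi<\theta\ra\notin H(\theta)$ but instead uses that every countable subset of $M\cap\lambda$ is covered by a countable element of $M$. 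The weak guessing property then yields $e\in M\cap H(\theta)$ with $e\cap M=D\cap M$; since $e\in H(\theta)$ it has size at most $\lambda^+$, and unwinding the coding while invoking the exact–upper–bound fact and the elementarity of $M$, $e$ produces a $<^*$-cofinal subset of $\prod_{n<\omega}\lambda_n$ of size at most $\lambda^+$, contradicting $\tcf\big(\prod_{n<\omega}\lambda_n,<^*\big)=\theta=\lambda^{++}$. Hence $\pp(\lambda)=\lambda^+$, and, by the reduction in the first paragraph, $\SSH$ holds above $\kappa$.

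The hardest part will be the construction and bookkeeping in the third paragraph: isolating a coded object $D$ that is simultaneously (i) an element of $H(\theta)$ that is $\kappa$-approximated by every relevant model — which forces one to work with the induced countable subsets of $\lambda$ (and their characteristic functions) rather than with $\la f_\xi\mid\xi<\theta\ra$ itself, the latter being too large to be visible to $M\cap H(\theta)$, and which is exactly where $(\omega_1,\lambda)$-internal unboundedness is indispensable — and (ii) such that any guess $e$ genuinely yields a $\lambda^+$-sized $<^*$-cofinal family in $\prod_{n<\omega}\lambda_n$, which is where the exact–upper–bound claim of the second paragraph is cashed in. A secondary point needing care is the precise formulation of the weak guessing property packaged into $\wAGP$, and the Shelah-style localization ensuring that $\SSH$ above $\kappa$ really does follow from its countable-cofinality instances above $\kappa$ without interference from $\pp$-values of cardinals below $\kappa$.
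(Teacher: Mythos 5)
Your overall strategy is genuinely different from the paper's (which never works with scales or exact upper bounds directly: it fixes a uniform, transitive $\omega$-covering matrix $\mc D$ for $\lambda^+$ respecting $\vec{\mu}$, proves a downward-coherence lemma for such matrices via club guessing, derives $\CP(\mc D)$ from $\wAGP_{\mc Y_\lambda}(\kappa,\kappa,\lambda^{++})$, and then reads off $\cf(\prod\vec\mu,<^*)=\lambda^+$ from $\CP(\mc D)$). Your opening reduction to countable cofinality and to killing a scale of length $\lambda^{++}$ is fine, and your exact-upper-bound observation using $(\omega_1,\lambda)$-internal unboundedness is correct as far as it goes. But the proposal has two genuine gaps. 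First, the ``routine reflection argument'' producing $M \el H(\chi)$ with $M\cap H(\lambda^{++}) \in \mc Y_\lambda$ witnessing the principle is not available: $\wAGP$ only asserts that the relevant set of models is \emph{weakly} stationary, i.e.\ meets every strong club, and strong clubs are exactly those generated by closure under one-variable functions $x\mapsto f(x)\in\power_\kappa H(\lambda^{++})$. The collection of (traces of) elementary submodels is a club but not a strong club, so you cannot intersect with it; the hypothesis gives you no $M$ that is elementary in $H(\chi)$, nor even in $H(\lambda^{++})$. Every place you invoke elementarity of $M$ --- that $\{f_\xi \mid \xi\in M\cap\theta\}$ is $<^*$-cofinal in $(\prod_n\lambda_n)\cap M$, and ``the elementarity of $M$'' when unwinding the coding --- is therefore unjustified. (Some of these uses, e.g.\ the cofinality of $\{f_\xi\mid\xi\in M\cap\theta\}$, can be simulated by closure under suitable one-variable functions, which is exactly the discipline the paper's proof of its $\CP$ theorem follows; but your write-up does not do this, and the final step seems to need more than that.)

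Second, the heart of the argument --- the construction of the coded object $D$, the verification that it is $(\kappa,M)$-approximated, and the extraction of a $\lambda^+$-sized $<^*$-cofinal family in the \emph{full} product from a guess --- is not carried out; you explicitly defer it as ``the hardest part,'' so the proof of the contradiction with $\tcf(\prod_n\lambda_n,<^*)=\lambda^{++}$ is missing. Moreover, the proposal misstates what $\wAGP$ delivers: it does not give $e\in M\cap H(\lambda^{++})$ with $e\cap M = D\cap M$, but only, for a cofinal family $S\sub\power_\kappa H(\lambda^{++})$ and an $x\in H(\lambda^{++})$ that you must fix in advance, some $N\in S$ with $x\in N\sub M$ and $e\in N$ satisfying $e\cap N = D\cap N$. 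Turning such a local agreement on a small $N$ into global cofinality information is a substantive step; in the paper this is exactly why $S$ is chosen to consist of elementary submodels $N$ with $\cf(\sup(N\cap\lambda^+))>\omega$, so that elementarity of $N$ (not of $M$) globalizes the guess. Without specifying $x$, $S$, and this globalization mechanism, knowing that $\chi_M$ is an exact upper bound of $(\prod_n\lambda_n)\cap M$ yields no contradiction: $M$ has size $<\kappa$ and $\sup(M\cap\lambda^{++})<\lambda^{++}$, so nothing so far conflicts with the existence of a scale of length $\lambda^{++}$.
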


In the process of proving Theorem A, we sharpen some results of Viale 
\cite{viale_covering} regarding covering properties. In particular, 
in \cite{viale_covering} and \cite{viale_guessing_models}, Viale shows that 
various consequences of $\PFA$, including $\GMP$, imply certain covering properties 
at all singular cardinals of countable cofinality above $2^\omega$. Here, 
we improve upon this by removing the assumption that the singular cardinal is 
greater than the continuum. See Theorem \ref{cp_thm} below for a more precise 
statement; the main new technical lemma leading to this improvement is 
Lemma \ref{downward_coherence_lemma}.

By the work of Cox and Krueger \cite{cox_krueger_quotients}, $\GMP$ is compatible with $\cf(2^\omega) = 
\omega_1$ and therefore, unlike $\PFA$, or even $\MA_{\omega_1}$, 
it does not imply that $2^{\omega_1} = 2^\omega$. 
However, as a consequence of the proof Theorem A, we are able to prove that 
a weakening of $\GMP$ implies that $2^{\omega_1}$ is as small as possible relative to the value of $2^\omega$. We 
additionally show that the negation of the weak Kurepa Hypothesis ($\neg \wKH$), which is a consequence of 
the same weakening of $\GMP$, yields the same conclusion if $2^\omega < \aleph_{\omega_1}$ but can 
consistently fail to do so if $2^\omega \geq \aleph_{\omega_1}$.

\begin{thmb}
  \begin{enumerate}
    \item If $\wAGP(\omega_2)$ holds and $\wAGP_{\mc Y_\lambda}(\lambda^{++})$ holds for all singular 
    $\lambda < 2^\omega$ of countable cofinality, where 
	\[
    \mc Y_\lambda := \{M \in \power_{\omega_2} H(\lambda^{++}) \mid M \text{ is } 
    (\omega_1, \lambda)\text{-internally unbounded}\},
  \]    
 then 
    \[
      2^{\omega_1} = \begin{cases}
        2^\omega & \text{if } \cf(2^\omega) \neq \omega_1 \\
        (2^\omega)^+ & \text{if } \cf(2^\omega) = \omega_1.
      \end{cases}
    \]
    \item If $\neg\wKH$ holds and $2^{\omega} < \aleph_{\omega_1}$, then $2^{\omega_1} = 2^\omega$.
    \item Relative to the consistency of the existence of a supercompact cardinal, it is consistent 
    that $\neg\wKH$ holds, $2^\omega = \aleph_{\omega_1}$, but $2^{\omega_1} > \aleph_{\omega_1+1}$.
  \end{enumerate}
\end{thmb}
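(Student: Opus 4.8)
\emph{Proof strategy.} I would treat (2) as the combinatorial core, bootstrap (1) from it by resolving the pcf-theoretic obstructions that arise once $2^\omega$ can be large, and build a dedicated model for (3). For (2): since $2^{\omega_1} = (2^\omega)^{\omega_1}$, and since $2^\omega < \aleph_{\omega_1}$ means $2^\omega = \aleph_\delta$ with $\delta < \omega_1$, it is enough to bound the number of cofinal branches of the tree $2^{<\omega_1}$, which has size $2^\omega$ and height $\omega_1$. I would prove by induction on $\gamma \leq \delta$ the statement $R_\gamma$: every tree of height $\omega_1$ and size at most $\aleph_\gamma$ has at most $2^\omega$ cofinal branches. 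The base case $R_1$ follows from $\neg\wKH$, which in fact gives the sharper bound $\aleph_1$. For the inductive step, given such a tree $T$ of size $\aleph_\gamma$, take a continuous increasing elementary chain $\langle N_i \mid i < \cf(\aleph_\gamma) \rangle$ of submodels of some $H(\chi)$ with $T \in N_0$, $\omega_1 \subseteq N_0$, $|N_i| < \aleph_\gamma$, and $T \subseteq \bigcup_i N_i$. The key point is that for $\gamma < \omega_1$ the cardinal $\aleph_\gamma$ is never singular of cofinality $\omega_1$: either $\aleph_\gamma$ is regular with $\cf(\aleph_\gamma) \geq \aleph_2$, and then every cofinal branch $b$ of $T$ — being a set of size $\aleph_1 < \cf(\aleph_\gamma)$ contained in $\bigcup_i N_i$ — lies inside a single $N_i$; or $\cf(\aleph_\gamma) = \omega$, and then, since $\omega_1 \subseteq N_i$ makes each $\{\alpha < \omega_1 \mid b\restriction\alpha \in N_i\}$ an initial segment of $\omega_1$, these countably many initial segments exhaust $\omega_1$ and hence one of them is all of $\omega_1$, again putting $b$ inside some $N_i$. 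In either case $b$ is a cofinal branch of $T \cap N_i$, a tree of size $< \aleph_\gamma$, so the inductive hypothesis caps the total at $\max(\cf(\aleph_\gamma), \aleph_0) \cdot 2^\omega = 2^\omega$. Applying $R_\delta$ to $2^{<\omega_1}$ gives $2^{\omega_1} \leq 2^\omega$. The only subtlety is the observation just used that for $\delta < \omega_1$ the induction never meets a singular cardinal of cofinality $\omega_1$; this is exactly why $\neg\wKH$ alone suffices here.

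For (1) I would rerun this induction up to $\kappa := 2^\omega$, which may now exceed $\aleph_{\omega_1}$, so that singular cardinals of cofinality $\omega_1$ do appear. At such a cardinal $\mu$ neither case above applies, and I would instead cover the ranges of the cofinal branches of the tree in question by a family of elementary submodels $M \prec H(\chi)$ with $\omega_1 \subseteq M$ and $|M| = \aleph_1$, whose cardinality is the covering number $\cf([\mu]^{\omega_1}, \subseteq)$ (up to routine adjustments for closure), and then bound the branches through each $T \cap M$ — a tree of size $\aleph_1$ and height $\omega_1$ — by $\aleph_1$ via $\neg\wKH$. The covering numbers needed here, and the analogous ones feeding the regular and cofinality-$\omega$ steps, are driven to their least possible values ($\mu^+$ when $\cf(\mu) = \omega_1$, $\mu$ otherwise) by the $\wAGP_{\mc Y_\lambda}(\lambda^{++})$ hypotheses: these yield the covering property at cofinality-$\omega$ singulars below $2^\omega$ by Theorem~\ref{cp_thm}, and the standard pcf fact that the cofinality-$\omega$ fragment of $\SSH$ implies $\SSH$ in full then propagates minimal covering numbers to the cofinality-$\omega_1$ singulars as well. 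Carrying the resulting bounds through the induction shows that $2^{<\omega_1}$ has at most $\kappa$ cofinal branches when $\cf(\kappa) \neq \omega_1$ and at most $\kappa^+$ when $\cf(\kappa) = \omega_1$; in the latter case König's theorem supplies the matching lower bound $2^{\omega_1} = \kappa^{\cf\kappa} > \kappa$. (That $\wAGP(\omega_2)$ implies $\neg\wKH$, as already recorded, is what licenses the branch-counting throughout.) The main obstacle here is bookkeeping: arranging the induction so that it closes off uniformly across the regular, cofinality-$\omega$, and cofinality-$\omega_1$ cases and dovetails exactly with the covering input at the singular stages.

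For (3), that $\neg\wKH$ — in fact $\GMP$ — is consistent with $2^\omega = \aleph_{\omega_1}$ (hence $\cf(2^\omega) = \omega_1$) modulo a supercompact is due to Cox and Krueger; the extra work is to arrange in addition that $2^{\omega_1}$ exceeds $(2^\omega)^+ = \aleph_{\omega_1+1}$. The obvious route, forcing $2^{\omega_1}$ up with $\Add(\omega_1, \aleph_{\omega_1+2})$ over such a model, is blocked: any $\sigma$-closed poset adding a function $g \colon \omega_1 \to 2$ has every ground-model real occur as some $g\restriction[\alpha, \alpha+\omega)$ (a straightforward density argument), so $\Add(\omega_1, \cdot)$ surjects $\omega_1$ onto the old reals, collapsing $2^\omega$ to $\omega_1$ and re-establishing $\CH$, whereupon $2^{<\omega_1}$ becomes a weak Kurepa tree and $\neg\wKH$ is lost. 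The plan is therefore to produce the model by a single forcing construction over the supercompact — a variant of the Cox--Krueger iteration — that simultaneously arranges $2^\omega = \aleph_{\omega_1}$ and adds at least $\aleph_{\omega_1+2}$ subsets of $\omega_1$ via a component distributive enough to add no reals and to collapse nothing below $\aleph_{\omega_1+1}$, while the guessing-model machinery powered by the supercompact kills every weak Kurepa tree in the usual way. The main obstacle is exactly the tension just illustrated: choosing the part of the forcing that blows up $2^{\omega_1}$ so as to keep $2^\omega = \aleph_{\omega_1}$ (equivalently, keep $\CH$ false), and verifying that $\neg\wKH$ survives the entire construction.
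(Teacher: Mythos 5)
Your argument for clause (2) is correct and genuinely different from the paper's. The paper factors the claim through meeting numbers: Lemma~\ref{general_wkh_meeting_lemma} shows $\neg\wKH$ gives $2^{\omega_1}=m(\omega_1,2^\omega)$, and Proposition~\ref{meeting_prop} (Matet) gives $m(\omega_1,\lambda)=\lambda$ when $\lambda<\aleph_{\omega_1}$. Your direct induction on $\gamma\le\delta$ via continuous elementary chains replaces both steps; the delicate point, trapping a branch $b$ inside a single $N_i$ when $\cf(\aleph_\gamma)=\omega$, does work, because for each $\alpha$ the set $b\cap T_{<\alpha}$ is the set of predecessors below level $\alpha$ of any node of $b$ of level $\ge\alpha$, hence is definable from parameters in some $N_i$ and, being countable with $\omega_1\subseteq N_i$, is a subset of $N_i$; pigeonhole on $\omega$ then puts $b\subseteq N_i$. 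What you gain is self-containedness (no meeting-number citations); what you lose is the identity $2^{\omega_1}=m(\omega_1,2^\omega)$, which is exactly what the paper reuses for clause (1). For clause (1) your assembly is again different: the paper runs the induction of Corollary~\ref{isp_and_omega_1_cor} on $m(\omega,\mu)$ (using the ``moreover'' clause of Theorem~\ref{cp_thm} at countable-cofinality singulars and Matet's results elsewhere), transfers to $m(\omega_1,2^\omega)$ by Matet's Theorem 1.1, and finishes with Lemma~\ref{general_wkh_meeting_lemma}. You instead count branches directly and need, at cofinality-$\omega_1$ singulars $\mu\le 2^\omega$, that the \emph{covering} number $\cf([\mu]^{\aleph_1},\subseteq)$ equals $\mu^+$; this is strictly more than the meeting numbers the paper uses, and your ``standard pcf fact'' black box has to deliver it in localized form (pp minimal below $2^\omega$ via Shelah's stationary-transfer claim, then cov $=$ pp at uncountable cofinality). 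That is true and citable, but it is doing the same work as the paper's Matet citations with extra overhead; alternatively you could keep meeting families and recover containment by the downward-closure trick of Lemma~\ref{general_wkh_meeting_lemma}, which would make your clause (1) need exactly what Corollary~\ref{isp_and_omega_1_cor} provides.

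Clause (3) is where you have a genuine gap: you correctly rule out appending $\Add(\omega_1,\cdot)$, but the positive part of your plan is only a declaration that some ``distributive enough'' component of a Cox--Krueger-style iteration will add $\aleph_{\omega_1+2}$ subsets of $\omega_1$ without adding reals or collapsing; no such forcing is exhibited, and producing one is the whole difficulty. Moreover, your plan to kill weak Kurepa trees ``by the guessing-model machinery powered by the supercompact'' is in tension with your own clause (1): a construction that yields the $\wAGP$ hypotheses of (1) (as supercompact-based constructions of $\GMP$ tend to) forces $2^{\omega_1}=(2^\omega)^+$, so the model must satisfy $\neg\wKH$ \emph{without} those stronger principles. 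The paper's construction avoids both problems by a different mechanism: the large value of $2^{\omega_1}$ is arranged \emph{before} any $\omega_1$-level considerations. Starting with $2^\lambda=\lambda^{++}$ for a prepared $\lambda$ and a weakly compact $\kappa<\lambda$, a Radin-type forcing with interleaved collapses makes $\lambda=\kappa^{+\omega_1}$ a strong limit with $\lambda^{\omega_1}=\lambda^{++}$ while preserving $V_{\kappa+1}$; Mitchell forcing $\M(\omega,\kappa)$ then gives $\kappa=\omega_2=2^\omega$ and $\neg\wKH$ (from the weak compactness of $\kappa$, not from a supercompact); finally adding $\lambda$-many Cohen reals lifts $2^\omega$ to $\lambda=\aleph_{\omega_1}$, whereupon $2^{\omega_1}\ge\lambda^{\omega_1}=\aleph_{\omega_1+2}$ holds automatically, with no late forcing adding subsets of $\omega_1$ at all. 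The remaining work, absent from your sketch, is the preservation of $\neg\wKH$ under the final Cohen step, which the paper gets by reducing any name for an $\omega_1$-tree to a subalgebra $\C_A$ with $|A|=\aleph_1$, invoking the quoted preservation theorem for adding at most $\omega_2$-many Cohen reals over the Mitchell extension, and using that Cohen forcing adds no cofinal branches to trees of height $\omega_1$.
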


In the second half of the paper, we prove some results motivated in part by the indestructible guessing 
model property ($\IGMP$) introduced by Cox and Krueger in \cite{cox_krueger_indestructible}, and which 
we also feel are of independent interest. We first prove a result about special trees. Recall that a 
tree of height $\omega_1$ is \emph{special} if it is the union of countably many antichains. Special 
trees therefore have no uncountable branches. In 
\cite{baumgartner_pfa}, Baumgartner introduces a generalization of specialness, which we call here 
\emph{B-specialness}, which can also hold of trees with uncountable branches. By results of Cox and 
Krueger \cite{cox_krueger_indestructible} and Krueger \cite{krueger_sch}, $\IGMP$ follows from the conjunction 
of $\GMP$ and the assertion that all trees of height and size $\omega_1$ are B-special. 
Also, Cox and Krueger prove \cite{cox_krueger_indestructible} that $\IGMP$ is compatible with any 
possible value of the continuum with cofinality at least $\omega_2$. Motivated by the question of 
whether $\IGMP$ is compatible with $\cf(2^\omega) = \omega_1$, we prove the following variation on 
a theorem of Laver \cite{laver_random}, who proved that forcing with a measure algebra over any 
model of Martin's Axiom preserves the fact that all trees of height and size $\omega_1$ with no 
uncountable branches are special.

\begin{thmc}
  If $\PFA$ holds and $\bb{B}$ is a measure algebra, then in $V^{\bb{B}}$ every tree of height and 
  size $\omega_1$ is $B$-special.
\end{thmc}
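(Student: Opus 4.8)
The plan is to work in the ground model, where $\PFA$ holds, and, for a given measure algebra $\bb B$ and a given $\bb B$-name $\dot T$ forced to be a tree of height and size $\omega_1$, to construct in $V$ a single $\bb B$-name $\dot f$ with $\Vdash_{\bb B}$ ``$\dot f$ is a $B$-specializing function for $\dot T$''; this clearly suffices. After the usual reductions I may assume $\dot T$ is forced to have underlying set $\omega_1$, with $<_{\dot T}$ a $\bb B$-name for a suborder of $\omega_1 \times \omega_1$. I would obtain $\dot f$ from a sufficiently generic filter on an auxiliary poset $\mathbb{Q} \in V$ whose conditions are the finite sets $q$ of triples $(\alpha, b, n)$, with $\alpha < \omega_1$, $b \in \bb B \setminus \{0\}$ and $n < \omega$, subject to two requirements: (i) for each $\alpha < \omega_1$, the conditions appearing in triples of $q$ of the form $(\alpha, b, n)$ are pairwise incompatible in $\bb B$; and (ii) whenever $(\alpha, b_0, n)$, $(\alpha', b_1, n)$, $(\alpha'', b_2, n)$ all lie in $q$, share the color $n$, and satisfy $b_0 \wedge b_1 \wedge b_2 \neq 0$, then $b_0 \wedge b_1 \wedge b_2$ forces that it is \emph{not} the case that $\alpha <_{\dot T} \alpha'$, $\alpha <_{\dot T} \alpha''$ and $\alpha'$, $\alpha''$ are $<_{\dot T}$-incomparable. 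Order $\mathbb{Q}$ by reverse inclusion, and for a filter $\mathbb{G}$ on $\mathbb{Q}$ put $\dot f = \{\, ((\check\alpha, \check n), b) \mid (\alpha, b, n) \in \textstyle\bigcup \mathbb{G} \,\}$. Thus $\mathbb{Q}$ is meant to be a ground-model, ``$\bb B$-dressed'' form of Baumgartner's finite-condition forcing for $B$-specializing $\dot T$ in $V^{\bb B}$, with the intended reading that the generic filter on $\bb B$ selects, from each family of triples with a fixed first coordinate $\alpha$, the color of the unique triple whose second coordinate it contains.

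The routine half of the argument is to check that, once $\mathbb{Q}$ is known to be proper, $\PFA$ finishes the proof. Let $D_\alpha$ (for $\alpha < \omega_1$) be the set of $q \in \mathbb{Q}$ such that finitely many of the conditions appearing in $q$ in triples of the form $(\alpha, b, n)$ have join $1_{\bb B}$; each $D_\alpha$ is open dense, since given $q$ whose $\alpha$-conditions have join $c \neq 1_{\bb B}$, one may adjoin the triple $(\alpha, 1_{\bb B} \setminus c, n)$ for any color $n$ strictly larger than every color occurring in $q$, which preserves (i) trivially and preserves (ii) because a fresh color cannot appear in a monochromatic witness. Applying $\PFA$ to $\mathbb{Q}$ and the $\aleph_1$ many dense sets $\{D_\alpha \mid \alpha < \omega_1\}$ yields a filter $\mathbb{G}$ meeting them all. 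One then verifies: by requirement (i) together with the $D_\alpha$, the name $\dot f$ is forced to be a total, single-valued function $\omega_1 \to \omega$; and if some $b \in \bb B \setminus \{0\}$ forced $\dot f$ to realize a monochromatic ``$V$'' on some $\alpha, \alpha', \alpha''$ with color $n$, then below $b$ one could find triples $(\alpha, c_0, n)$, $(\alpha', c_1, n)$, $(\alpha'', c_2, n)$ in $\bigcup \mathbb{G}$, necessarily lying in a common condition of $\mathbb{G}$, with $b \wedge c_0 \wedge c_1 \wedge c_2 \neq 0$, directly contradicting requirement (ii). Hence everything reduces to the single claim that $\mathbb{Q}$ is proper.

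That claim is the crux, and it is where the hypothesis that $\bb B$ is a \emph{measure} algebra is used, in the spirit of Laver's preservation theorem \cite{laver_random} and of Baumgartner's properness analysis of the $B$-specialization forcing \cite{baumgartner_pfa}. The task is to transfer the properness of Baumgartner's forcing from $V^{\bb B}$ down to $\mathbb{Q}$ in $V$. Given a sufficiently large $\theta$, a countable $N \prec H(\theta)$ with $\bb B, \dot T, \mathbb{Q} \in N$, and $q \in \mathbb{Q} \cap N$, one builds an $(N, \mathbb{Q})$-generic $q' \leq q$ by amalgamation: to arrange that $E \cap N$ is predense below $q'$ for every dense $E \in N$, one takes an arbitrary $q'' \leq q'$ and must produce $r \in E \cap N$ compatible with $q''$, which requires reflecting the triples of $q''$ whose first coordinate lies above $N \cap \omega_1$ down into $N$ — using elementarity together with the fact that $\dot T$ is forced to be a tree of height $\omega_1$, so that those high nodes have $<_{\dot T}$-predecessors below $N \cap \omega_1$ — while simultaneously keeping all the relevant meets of the attached $\bb B$-conditions positive and respecting (ii). The measure-theoretic ingredients I expect to need here are the precaliber-$\aleph_1$ property of the measure algebra $\bb B$, to run the bookkeeping of this construction without driving any meet of attached conditions to $0$, together with a Lebesgue-density-style refinement ensuring that whenever $\bb B$ forces the existence of nodes realizing a configuration demanded by $E$, one can pass to positive-measure conditions that decide enough of $\dot T$ to install the corresponding triples legally; this is precisely the step that generalizes Laver's key lemma. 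The genuinely new feature, and the reason full $\PFA$ rather than merely $\MA_{\omega_1}$ is invoked, is that $\dot T$ may have uncountable branches — equivalently, uncountable antichains hanging off a single node — in $V^{\bb B}$, which makes $\mathbb{Q}$ fail to be ccc in general (in Laver's setting the no-uncountable-branch hypothesis renders the corresponding forcing ccc and properness automatic), so the amalgamation must be carried out genuinely at the level of properness. In doing so I also anticipate using, inside $V$, that $\PFA$ implies every tree of height and size $\omega_1$ is $B$-special and that there are no Kurepa trees, in order to control from the ground model the branch and antichain structure that $\dot T$ can exhibit.
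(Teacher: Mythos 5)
Your reduction of the theorem to the properness of the ground-model poset $\bb{Q}$ of ``$\bb{B}$-dressed'' finite Baumgartner conditions is where the proposal stops being a proof: properness of $\bb{Q}$ is exactly the hard content, and you only conjecture it, appealing to precaliber-$\aleph_1$ of the measure algebra and a ``Lebesgue-density-style refinement.'' Those tools are what give c.c.c.-ness in Laver's \emph{branchless} setting; as you yourself observe, the whole difficulty here is that $\dot{T}$ may have uncountable branches in $V^{\bb{B}}$, so the ordinary compatibility-via-$\Delta$-system/density arguments break down, and nothing in your sketch replaces them. In particular, your plan to ``control from the ground model the branch and antichain structure'' of $\dot{T}$ using $\neg\wKH$ or $B$-specialness of ground-model trees does not apply, because $\dot{T}$ is only a $\bb{B}$-name: the tree need not exist in $V$, and in $V^{\bb{B}}$ it can have as many as $2^{\omega_1}=\kappa$ cofinal branches. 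Handling that branch set is precisely why both Baumgartner's $\PFA$ argument and the paper's proof first pass (inside the $\PFA$ application, never in the real extension) through an auxiliary $\omega_1$-closed collapse $\bb{C}=\mathrm{Coll}(\omega_1,\kappa)$, so that a Baumgartner function $\dot{g}$ on the branch set exists and the tree splits as $\dot{T}_0\cup\dot{T}_1$ with $\dot{T}_1$ branchless; only $\dot{T}_1$ is then specialized by a finite-condition poset, and the function on $\dot{T}_0$ is defined afterwards by copying the value at the Baumgartner node $\eta^*_\alpha$. Your single poset $\bb{Q}$ has no analogue of the collapse or of the $T_0/T_1$ decomposition, so the amalgamation you describe for $(N,\bb{Q})$-genericity has to specialize, with finite conditions, a tree that genuinely has uncountable branches; there is no known argument that this is proper, and the standard obstructions (monochromatic configurations along branches, which in the branchless case would be contradictions but here are real) make it doubtful that it is.

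A second, related omission: your conditions carry arbitrary nonzero elements of $\bb{B}$, whereas Laver's argument, which the paper follows, crucially restricts $p(\alpha,n)$ to the countably generated subalgebra $\bb{B}_\alpha$ and demands $\mu(p(\alpha,n))>\tfrac12$, and further uses countable dense subfamilies $\bb{B}^*_\gamma$ and the dense sets $E^*_{\gamma,k}$ to guarantee that, even though the $\PFA$ filter meets only $\omega_1$-many dense sets, every node actually receives a color in the true random extension. You replace the totality issue by your join-$1$ dense sets $D_\alpha$ (which is fine as far as it goes), but you have no substitute for the role the measure constraints play in the compatibility/chain-condition analysis; dropping them removes the only mechanism by which ``measure algebra'' enters your argument at all, despite it being the hypothesis that makes the theorem true. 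For contrast, the paper's proof applies $\PFA$ to $\bb{C}\ast\dot{\bb{P}}(\dot{T}_1)$, where $\dot{\bb{P}}(\dot{T}_1)$ is Laver's measure-constrained poset (c.c.c.\ in $V^{\bb{C}}$ by Laver's lemma, so the iteration is proper for free), and then combines the resulting filter with the actual random generic $H$ and the decided values $\eta^*_\alpha$ to build the $B$-specializing function; no properness of a finite-condition poset for a tree with branches is ever needed. As it stands, your proposal has a genuine gap at its central claim, and the missing ideas are exactly the auxiliary collapse, the Baumgartner-function decomposition, and Laver's measure constraints.
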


Theorem C implies that, in the extension of any model of $\PFA$ by a measure algebra, an indestructible 
version of $\neg \wKH$ holds (see Corollary~\ref{wkh_continuum_cor} for a precise statement). In 
particular, this indestructible version of $\neg \wKH$ is compatible with $\cf(2^\omega) = \omega_1$.

In the last section of the paper, we investigate the effect of forcing axioms 
for Suslin and almost Suslin trees on guessing model principles. The axiom $\PFA(S)$ (introduced by Todorcevic in \cite{todorcevic_forcing} and defined more precisely in Section~\ref{pfas_section}) 
is the assertion that $S$ is a Suslin tree and the conclusion of $\PFA$ holds when restricted 
to proper forcings that preserve the fact that $S$ is a Suslin tree.\footnote{Most other sources require 
$S$ to be a \emph{coherent} Suslin tree in the statement of $\PFA(S)$. Since we will not need coherence 
here, we state the axiom in a more general form.} $\PFA(S)[S]$ is the assertion that 
the universe is obtained from forcing over a model of $\PFA(S)$ with the Suslin tree $S$. The axiom $\PFA(T^*)$ (introduced by Krueger in \cite{krueger_forcing_axiom} and again defined more precisely in Section~\ref{pfas_section}), 
is the assertion that $T^*$ is an almost Suslin Aronszajn tree and the conclusion of 
$\PFA$ holds when restricted to proper forcings preserving this fact.
Our main theorem in this section is the following:
%

\begin{thmd}
  \begin{enumerate}
  \item Let $S$ denote a Suslin tree. Then $\PFA(S)$ implies $\GMP$, and 
  $\PFA(S)[S]$ implies $\IGMP$. 
  \item Let $T^*$ denote an almost Suslin Aronszajn tree. Then $\PFA(T^*)$ implies 
  $\IGMP$.  
  \end{enumerate}
\end{thmd}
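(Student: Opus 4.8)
The plan is to reduce all three implications to the fact that the forcings used in the standard proofs of $\PFA \Rightarrow \GMP$ (Viale--Wei\ss) and $\PFA \Rightarrow \IGMP$ (Cox--Krueger) already lie within the restricted classes of posets governed by $\PFA(S)$, $\PFA(S)[S]$, and $\PFA(T^*)$. Recall that $\GMP$ (resp.\ $\IGMP$) is equivalent to the assertion that for every regular $\theta \ge \omega_2$ and every $F \colon [H(\theta)]^{<\omega} \to H(\theta)$ there is a guessing model (resp.\ an indestructibly guessing model, in the sense of Cox--Krueger) $M \prec H(\theta)$ with $|M| = \omega_1$, $\omega_1 + 1 \sub M$, and $M$ closed under $F$; and that there is a proper forcing $\mathbb{Q}_{\theta,F}$, built from elementary submodels of $(H(\theta), \in, F, \dots)$ as side conditions (the Viale--Wei\ss\ poset, or the Cox--Krueger refinement whose generic model is indestructibly guessing), such that applying $\PFA$ to $\mathbb{Q}_{\theta,F}$ together with its $\aleph_1$ natural dense subsets produces such an $M$. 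The content of the theorem is then to see that these dense sets can be met already under the weaker axioms, and this will follow once we check suitable preservation properties of $\mathbb{Q}_{\theta,F}$ and of certain two-step iterations built from it.

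For the first assertion of part~(1), the key lemma is that $\mathbb{Q}_{\theta,F}$ is \emph{$S$-preserving}, i.e.\ forcing with it keeps $S$ a Suslin tree; granting this, $\PFA(S)$ applies to $\mathbb{Q}_{\theta,F}$ directly and gives $\GMP$. Since $\mathbb{Q}_{\theta,F}$ is proper, $\omega_1$ is preserved; and since a new cofinal branch of the Aronszajn tree $S$ would produce a new uncountable antichain of $S$, it suffices to show $\mathbb{Q}_{\theta,F}$ forces that $S$ has no uncountable antichain. I would establish this by verifying the standard sufficient condition for $S$-preservation of a proper forcing: for every countable $N \prec (H(\chi), \in, \dots)$ with $\mathbb{Q}_{\theta,F}, S \in N$ and every $p \in \mathbb{Q}_{\theta,F} \cap N$, the canonical strongly $(N, \mathbb{Q}_{\theta,F})$-generic condition $q \le p$ obtained from $N$ in the usual way is additionally $(N,S)$-preserving, i.e.\ $q$ forces every node of $S$ of height $N \cap \omega_1$ to remain $S$-generic over $N[\dot G]$. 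The reason this works is that, since the side conditions are \emph{full} elementary submodels, forcing below $q$ is governed by $N$, which is correct about $S$; the argument is a variant of the standard proofs that side-condition forcings preserve Suslin trees.

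The indestructible conclusions require in addition that the generic model remain guessing after forcing with the posets $\mathbb{R}$ that occur in the Cox--Krueger definition of $\IGMP$, and one arranges this by applying the forcing axiom not to $\mathbb{Q}_{\theta,F}$ alone but to two-step iterations $\mathbb{Q}_{\theta,F} * \dot{\mathbb{R}}$, the task being to check that these remain in the relevant class. This is where the three conclusions part ways. For $\PFA(S)$ it is obstructed: taking $\dot{\mathbb{R}}$ to be a name for $S$ itself, the iteration $\mathbb{Q}_{\theta,F} * \dot S$ adds a cofinal branch through $S$ and is not $S$-preserving, so $\PFA(S)$ does not evidently reach past $\GMP$. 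Passing to $\PFA(S)[S]$ removes the obstruction via the standard $(S)[S]$-absorption: working in $V[S]$, the existence there of an indestructibly guessing model closed under a given $F$ reduces to applying $\PFA(S)$ in $V$ to a proper, $S$-preserving poset $\mathbb{P}_0$ which factors over $S$ as the guessing-model forcing of $V[S]$ composed with a name for a destroying poset, the $S$-preservation of $\mathbb{P}_0$ being checked as above (now for the iteration, and using that the destruction of $S$ has already taken place in $V[S]$ and is harmless thereafter). For part~(2) one argues directly: $\PFA(T^*)$ is applied to $\mathbb{Q}_{\theta,F} * \dot{\mathbb{R}}$, and one must check that such iterations preserve ``$T^*$ is almost Suslin'' --- $\mathbb{Q}_{\theta,F}$ does so by the $T^*$-analogue of the argument above (now one needs only to block antichains of $T^*$ with stationary level set), and one checks the same for the iteration with each destroying poset $\mathbb{R}$. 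Applying $\PFA(T^*)$ to all such iterations then yields a model that remains guessing after every $\mathbb{R}$, hence $\IGMP$. (An alternative route to the indestructible conclusions is to show that $\PFA(S)[S]$, resp.\ $\PFA(T^*)$, implies both $\GMP$ and that every tree of height and size $\omega_1$ is B-special, and to invoke the Cox--Krueger implication that this conjunction yields $\IGMP$.)

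I expect the main obstacle to be the preservation lemma underlying all of this: showing that $\mathbb{Q}_{\theta,F}$, and the iterations $S * \dot{\mathbb{Q}}_{\theta,F}$ and $\mathbb{Q}_{\theta,F} * \dot{\mathbb{R}}$ needed for the indestructible conclusions, are $S$-preserving in the strong ($S$-proper) sense, respectively preserve that $T^*$ is almost Suslin. This requires a hands-on analysis of how the side-condition and strong-genericity structure of $\mathbb{Q}_{\theta,F}$ interacts with a Suslin or almost Suslin tree, together with a precise identification of the class of destroying posets in the Cox--Krueger definition of $\IGMP$ and a check that the corresponding iterations stay within the scope of $\PFA(S)[S]$, resp.\ $\PFA(T^*)$. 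The remaining ingredients --- the Viale--Wei\ss\ reformulation, the choice of the $\aleph_1$ dense sets, and the $(S)[S]$-absorption --- are by now routine.
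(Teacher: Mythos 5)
There is a genuine gap, and it sits exactly where you locate ``the main obstacle'': the preservation lemma you assume is never argued and, in the form you state it, is not available. The poset that PFA uses to produce a guessing model is not a pure side-condition forcing with elementary submodels; it is (essentially) $\Add(\omega,1)\ast\dot{\Coll}(\omega_1,2^\lambda)$ followed by a Baumgartner-style specialization of the branchless part $T_1$ of a derived tree of height and size $\omega_1$. Nothing constrains $T_1$ away from containing a copy of $S$, so the specialization step can perfectly well destroy the Suslinity of $S$; your claim that the canonical strongly generic condition is automatically $(N,S)$-preserving ``because the side conditions are full models'' does not engage with this, and no $S$-preserving poset forcing guessing in $V$ is actually exhibited. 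The paper's resolution is to change the forcing, not to verify preservation of the old one: it specializes an \emph{$S$-name} $\dot T_1$ via a poset $\P(\dot T_1)$ such that $\P(\dot T_1)\times S$ is c.c.c.\ (Theorem~\ref{ma_s_special_tree_thm}, a strengthening of Raghavan--Yorioka removing coherence); this yields the guessing model only in $V^S$ (Lemma~\ref{technical_lemma}, Theorem~\ref{pfa_s_s_isp_thm}), and $\PFA(S)\Rightarrow\GMP$ then needs an additional pullback step (Proposition~\ref{guessing_pullback_prop}: guessing of $M^{\P}$ in $V^{\P}$ for small $\P$ with $\omega_1$-covering implies guessing of $M$ in $V$), which your direct route skips. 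Your parenthetical alternative for $\PFA(S)[S]\Rightarrow\IGMP$ ($\GMP$ plus specialness of branchless trees, then Cox--Krueger) is essentially what the paper does, but it depends on the same unproved product-c.c.c.\ specialization theorem.

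For $\PFA(T^*)$ the gaps are sharper. First, ``apply the axiom to $\mathbb{Q}_{\theta,F}\ast\dot{\mathbb{R}}$ for each destroying poset $\mathbb{R}$'' cannot give $\IGMP$ as stated: indestructibility quantifies over \emph{all} $\omega_1$-preserving extensions, a proper class, and a single model must survive all of them; one needs a sufficient condition such as the paper's Proposition~\ref{persistently_branchless_prop} ($\GMP$ plus every branchless tree of height and size $\omega_1$ is persistently branchless), not a poset-by-poset argument. Second, your fallback via specialness is unavailable here: the full Baumgartner specialization of an arbitrary branchless tree is not $T^*$-preserving (applied to $T^*$ itself it makes $T^*$ special, hence not almost Suslin), and indeed the whole point of this case is that $T^*$ remains a nonspecial Aronszajn tree under $\PFA(T^*)$. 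The paper instead uses Shelah's poset $\Q(T)$, which specializes $T_A$ only on an unbounded set of levels and is $T^*$-proper (Fact~\ref{t_proper_fact}), both to get persistent branchlessness of all branchless trees and, inside the Viale--Wei{\ss} argument, in place of the full specializing forcing; your proposal does not identify this substitution or Krueger's $T^*$-properness machinery that makes the relevant two-step iterations legitimate instances of the axiom.
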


This theorem answers a number of questions of Cox and Krueger, which we record 
here and explicate more thoroughly in Section~\ref{pfas_section}.

\begin{itemize}
  \item In \cite{cox_krueger_indestructible}, Cox and Krueger ask whether 
  $\IGMP$ implies that the pseudointersection number $\mathfrak{p}$ is 
  greater than $\omega_1$. Theorem D answers this negatively, 
  since, in any model of $\PFA(S)[S]$, we have $\mathfrak{p} = \omega_1$.
  \item In \cite{cox_krueger_indestructible}, Cox and Krueger ask whether 
  $\IGMP$ implies that every tree of height and size $\omega_1$ with no cofinal 
  branches is special. 
  Theorem D answers this negatively, since, in any model of $\PFA(T^*)$, 
  $\IGMP$ holds and $T^*$ is a nonspecial Aronszajn tree.
  \item In \cite{krueger_forcing_axiom}, Krueger asks whether $\PFA(T^*)$ implies 
  $\neg \wKH$. Theorem D answers this positively, since $\GMP$, and hence 
  $\IGMP$, implies $\neg \wKH$.
\end{itemize}

The structure of the remainder of the paper is as follows. In Section~\ref{background_sec}, we 
review some background on $\power_\kappa \lambda$ combinatorics and guessing models. In 
Section~\ref{kurepa_sec}, we investigate the effect of $\neg \wKH$ on cardinal arithmetic, proving 
clauses (2) and (3) of Theorem B. In Section~\ref{isp_p_omega_1_section}, we investigate the effect of 
(weakenings of) $\GMP$ on cardinal arithmetic. Among the central technical results of this section is 
Lemma~\ref{downward_coherence_lemma}, a new lemma about covering matrices. This is then used to prove 
Theorem A and clause (1) of Theorem B. Section~\ref{special_sec} contains the proof of Theorem C, and 
Section~\ref{pfas_section} contains the proof of Theorem D.

\subsection{Notation and terminology}

Our terminology and notation is for the most part standard. We use \cite{jech} as our standard 
background reference for set theory and refer the reader there for any undefined notions or notations.
We record a few notational conventions here at the outset.
If $\kappa < \nu$ are infinite cardinals, with $\kappa$ regular, then $S^\nu_\kappa 
:= \{\alpha < \nu \mid \cf(\alpha) = \kappa\}$. If $\kappa$ is an infinite cardinal and $X$ is a set 
with $|X| \geq \kappa$, then $\power_\kappa X := \{x \subseteq X \mid |x| < \kappa\}$.
If $\P$ is a forcing poset with greatest lower bounds and $p,q$ are compatible conditions in $\P$, 
then $p \wedge q$ denotes their greatest common lower bound.

\section{Background on two-cardinal combinatorics and guessing models} \label{background_sec}

Though much of the previous work motivating this article concerns two-cardinal tree properties, 
we will be working here exclusively with the formulations of these properties in terms 
of guessing models. Since the definitions of the relevant thin and $\mu$-slender 
$(\kappa, \lambda)$-lists and $(\kappa, \lambda)$-trees and the ensuing tree properties 
$\mathsf{(I)TP}(\kappa, \lambda)$ and $\mathsf{(I)SP}(\mu, \kappa, \lambda)$ are somewhat 
involved and will not directly be used in this paper, we refer the reader to the companion 
paper \cite{kurepa_paper} for their precise definition and their connection with the guessing model 
properties studied here and defined below. Before introducing these guessing model properties, though, 
we need some background on two-cardinal combinatorics.

\subsection{Two-cardinal combinatorics}
Temporarily fix a regular uncountable 
cardinal $\kappa$ and a set $X$ with $|X| \geq \kappa$.

\begin{definition}
  Suppose that $\mc C \subseteq \power_\kappa X$.
  \begin{enumerate}
    \item $C$ is \emph{closed} if whenever $D \subseteq C$ is such that $|D| < \kappa$ and 
    $D$ is linearly ordered by $\subseteq$, we have $\bigcup D \in C$;
    \item $C$ is \emph{strongly closed} if whenever $D \subseteq C$ and $|D| < \kappa$, we have 
    $\bigcup D \in C$;
    \item $C$ is \emph{cofinal} if for all $x \in \power_\kappa \lambda$, there is $y \in C$ such 
    that $x \subseteq y$;
    \item $C$ is a \emph{club} in $\power_\kappa X$ if it is closed and cofinal;
    \item $C$ is a \emph{strong club} in $\power_\kappa X$ if it is strongly closed and cofinal.
  \end{enumerate}
  A set $S \subseteq \power_\kappa X$ is \emph{(weakly) stationary} in $\power_\kappa X$ if 
  $S \cap C \neq \emptyset$ for every (strong) club $C \subseteq \power_\kappa X$.
\end{definition}

Given a set $x \subseteq X$ and a function $f: X \rightarrow 
\power_\kappa X$, we say that $x$ is \emph{closed under $f$} if $f(a) \subseteq x$ for 
all $a \in x$. Similarly, if $g:[X]^2 \rightarrow \power_\kappa X$, then $x$ is 
closed under $g$ if $g(a) \subseteq x$ for all $a \in [x]^2$.
The following proposition is immediate.

\begin{proposition}
  Suppose that $f:X \rightarrow \power_\kappa X$ is a function. Then the set 
  $\{x \in \power_\kappa X \mid x \text{ is closed under } f\}$ is a strong club in 
  $\power_\kappa X$. In particular, if $\mc Y \subseteq \power_\kappa X$ is 
  weakly stationary, then there is $x \in \mc Y$ such that $x$ is closed under $f$.
\end{proposition}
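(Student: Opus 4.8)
The plan is to verify directly that
$C_f := \{x \in \power_\kappa X \mid x \text{ is closed under } f\}$
is both strongly closed and cofinal in $\power_\kappa X$, and then to read off the ``in particular'' clause from the definition of weak stationarity. First I would handle strong closure: given $D \subseteq C_f$ with $|D| < \kappa$, set $x := \bigcup D$. Since each $y \in D$ has $|y| < \kappa$ and $\kappa$ is regular, a union of fewer than $\kappa$ sets each of size less than $\kappa$ has size less than $\kappa$, so $x \in \power_\kappa X$. If $a \in x$, then $a \in y$ for some $y \in D$, and closure of $y$ under $f$ gives $f(a) \subseteq y \subseteq x$; hence $x \in C_f$. (The degenerate case $D = \emptyset$ is covered too, since $\emptyset \in \power_\kappa X$ is vacuously closed under $f$.)

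Next I would establish cofinality. Fix $x \in \power_\kappa X$ and recursively define $x_0 := x$ and $x_{n+1} := x_n \cup \bigcup_{a \in x_n} f(a)$ for $n < \omega$. An induction using regularity of $\kappa$ shows $|x_n| < \kappa$ for every $n$, and then $y := \bigcup_{n < \omega} x_n$ has size less than $\kappa$ because $\kappa$ is uncountable; thus $y \in \power_\kappa X$. Clearly $x = x_0 \subseteq y$, and $y$ is closed under $f$: any $a \in y$ lies in some $x_n$, whence $f(a) \subseteq x_{n+1} \subseteq y$. So $y$ witnesses cofinality.

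Having shown $C_f$ is a strong club, the ``in particular'' clause is immediate: if $\mc Y \subseteq \power_\kappa X$ is weakly stationary, then by definition $\mc Y \cap C \neq \emptyset$ for every strong club $C$, so in particular $\mc Y \cap C_f \neq \emptyset$, yielding an $x \in \mc Y$ closed under $f$. The only point that needs any attention is the cardinality bookkeeping --- namely that regularity of $\kappa$ keeps the intermediate unions inside $\power_\kappa X$ and that uncountability of $\kappa$ takes care of the final countable union --- so I do not anticipate a genuine obstacle here; this is exactly why the paper labels the statement ``immediate.''
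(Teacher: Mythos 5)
Your proof is correct: the strong-closure and cofinality verifications (regularity of $\kappa$ for the unions, uncountability for the final $\omega$-iteration) are exactly the standard argument, and the ``in particular'' clause follows as you say from the definition of weak stationarity via strong clubs. The paper omits the proof as immediate, and what you wrote is precisely the argument it has in mind.
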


The following characterization of the club filter on $\power_\kappa X$ is due to Menas \cite{menas}.

\begin{proposition} \label{menas_prop}
  If $g:[X]^2 \rightarrow \power_\kappa X$ is a function, then the set 
  \[
  C_g := \{x \in \power_\kappa X \mid x \text{ is infinite and closed under } g\}
  \]
  is a club in $\power_\kappa X$. Moreover, for any club $C$ in $\power_\kappa X$, 
  there is $g:[X]^2 \rightarrow \power_\kappa X$ such that $C_g \subseteq C$.
\end{proposition}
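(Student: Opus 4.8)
The plan is to prove the two assertions separately. For the first, fix $g \colon [X]^2 \to \power_\kappa X$ and set $C_g := \{x \in \power_\kappa X \mid x \text{ is infinite and closed under } g\}$. I would show $C_g$ is cofinal by a standard closing-off argument: given $x_0 \in \power_\kappa X$, enlarge it to an infinite set if necessary, then recursively build $x_0 \subseteq x_1 \subseteq \cdots$ with $x_{n+1} := x_n \cup \bigcup\{g(a) \mid a \in [x_n]^2\}$; since $\kappa$ is regular and uncountable and each $x_n$ has size $<\kappa$, the union $x_\omega := \bigcup_{n<\omega} x_n$ is in $\power_\kappa X$, is infinite, and is closed under $g$, so $x_\omega \in C_g$ with $x_0 \subseteq x_\omega$. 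For closedness, suppose $D \subseteq C_g$ with $|D| < \kappa$ and $D$ linearly ordered by $\subseteq$; let $x := \bigcup D$, which lies in $\power_\kappa X$ by regularity of $\kappa$. Given $a \in [x]^2$, since $a$ is finite and $D$ is a $\subseteq$-chain, there is a single $y \in D$ with $a \in [y]^2$; then $g(a) \subseteq y \subseteq x$ because $y$ is closed under $g$. Since also $x$ is infinite (it contains any member of $D$), we get $x \in C_g$. Thus $C_g$ is a club.

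For the second assertion, fix an arbitrary club $C$ in $\power_\kappa X$ and produce $g \colon [X]^2 \to \power_\kappa X$ with $C_g \subseteq C$. The idea is to use $g$ to encode enough of $C$'s cofinality and closure to force any infinite $g$-closed set into $C$. Fix a bijection-like device: since $C$ is cofinal, for each finite $s \in [X]^{<\omega}$ choose $c_s \in C$ with $s \subseteq c_s$. Now I would like $g$ to have the property that any infinite $x$ closed under $g$ can be written as an increasing union of members of $C$, witnessing $x \in C$ by closedness of $C$. Concretely: fix an injection $e \colon [X]^{<\omega} \to X$ (possible since $|X| \geq \kappa > \omega$, so $|[X]^{<\omega}| = |X|$), let $r := e^{-1}$ on $\rng(e)$, and define $g(\{u,v\})$ to be $c_s$ where $s$ is the finite set decoded from $\min(u,v)$ under some fixed well-order of $X$ if that coordinate is in $\rng(e)$, together with $\{e(s)\}$ for $s$ ranging over the subsets of $\{u,v\}$. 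The precise bookkeeping I would set up so that if $x$ is infinite and closed under $g$, then for every finite $s \subseteq x$ we have $e(s) \in x$ and $c_s \subseteq x$; consequently $x = \bigcup\{c_s \mid s \in [x]^{<\omega}\}$, and this union is directed (given $s, t$, the set $s \cup t \in [x]^{<\omega}$ gives $c_{s \cup t} \supseteq c_s \cup c_t$). A directed union of size $<\kappa$ of members of $C$ need not be linearly ordered, so to apply closedness of $C$ I would instead extract a $\subseteq$-increasing cofinal $\omega$-chain from this directed system — writing $[x]^{<\omega} = \bigcup_n F_n$ as an increasing union of finite sets and noting $\langle c_{\bigcup F_n} \mid n < \omega\rangle$ is a $\subseteq$-increasing sequence from $C$ with union $x$ — so $x \in C$ by closedness of $C$ along the chain. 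Hence $C_g \subseteq C$.

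The main obstacle is the second part: arranging the coding function $g$ so that $g$-closure genuinely forces membership in $C$, while respecting the constraint that $g$ only sees pairs (not arbitrary finite tuples) from $x$. The trick — standard in this area — is that a single real parameter $e(s) \in X$ can serve as a ``name'' for the finite set $s$, and closure under $g$ applied to pairs involving such names lets one recover, inside $x$, both the enumeration of $[x]^{<\omega}$ and the chosen witnesses $c_s \in C$; one then has to check the resulting cover of $x$ can be organized into an increasing $\omega$-sequence so that ordinary closedness (not strong closedness) of $C$ applies. I would also double-check the edge cases (ensuring $g$-closed sets are forced to be infinite, which is why ``infinite'' is built into the definition of $C_g$, and that the injection $e$ can be chosen without disturbing cofinality). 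The cofinality and closedness of $C_g$ in the first part are routine given regularity of $\kappa$.
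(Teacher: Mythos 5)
The paper itself gives no proof of this proposition (it is quoted from Menas), so there is nothing to compare line by line; judged on its own, your first half is fine, but the second half has a genuine gap. Two points. First, a fixable oversight: you choose $c_s \in C$ only so that $s \subseteq c_s$, yet you then use both that $c_{s \cup t} \supseteq c_s \cup c_t$ (for directedness) and that $\langle c_{\bigcup F_n} \mid n < \omega \rangle$ is $\subseteq$-increasing; neither holds for an arbitrary choice. You need to choose the $c_s$ by recursion on $|s|$ so that $c_s \supseteq s \cup \bigcup \{c_t \mid t \subsetneq s\}$, which is possible because that union has size $<\kappa$ and $C$ is cofinal. (The pair-coding itself is the right idea, though the ``decode from $\min(u,v)$'' bookkeeping should be replaced by the iterative step: when $u = e(t)$, put $e(t \cup \{v\})$ and $c_{t \cup \{v\}}$ into $g(\{u,v\})$, and then induct on $|s|$, using infiniteness of $x$ to handle singletons.)

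The real gap is the final step. Your reduction to an $\omega$-chain requires writing $[x]^{<\omega}$ as a countable increasing union of finite sets, which is possible only when $x$ is countable. But $\kappa$ is an arbitrary regular uncountable cardinal (in this paper typically $\kappa \geq \omega_2$), so $C_g$ contains uncountable sets $x$, and for those your argument never concludes $x \in C$: the directed family $\{c_s \mid s \in [x]^{<\omega}\}$ has no cofinal $\subseteq$-chain at all when $x$ is uncountable (compare $[\omega_1]^{<\omega}$), so ``extract a cofinal chain'' is not available. What is missing is the lemma that any $C \subseteq \power_\kappa X$ closed under unions of $\subseteq$-increasing chains of length $<\kappa$ is also closed under unions of directed subfamilies of size $<\kappa$; this is true but needs an induction on the cardinality $\mu$ of the family (for regular $\mu$, write the family as a continuous increasing union of directed subfamilies of size $<\mu$ and apply the induction hypothesis plus chain-closure; for singular $\mu$, do the same along a cofinal sequence of length $\cf(\mu)$). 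Alternatively, one can build $g$ so that its closed sets are unions of increasing continuous chains from $C$ directly. With the monotone choice of the $c_s$ and this directed-union lemma your argument goes through; as written it only establishes that the countable members of $C_g$ lie in $C$.
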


\subsection{Guessing models}
We now review the notion of a \emph{guessing model} and the subsequently defined 
\emph{guessing model properties}, which provide an alternative formulation of the 
two-cardinal tree properties of the form $\ISP(\ldots)$.

\begin{definition}
  Suppose that $\theta$ is a regular uncountable cardinal and $M \subseteq H(\theta)$.
  \begin{enumerate}
    \item Given a set $x \in M$, a subset $d \subseteq x$, and a cardinal $\mu$, we say 
    that
    \begin{enumerate}
      \item $d$ is \emph{$(\mu, M)$-approximated} if, for every $z \in M \cap \power_\mu(x)$, 
      there is $e \in M$ such that $d \cap z = e \cap z$;\footnote{In other works, the conclusion of 
      this definition is simply ``$d \cap z \in M$". This is clearly equivalent to what is written here 
      if $M \prec H(\theta)$ (and hence $M$ is closed under intersections). We will want to apply this 
      definition to more general situations, though, and for our purposes this seems like the most 
      appropriate formulation.}
      \item $d$ is \emph{$M$-guessed} if there is $e \in M$ such that $d \cap M = e \cap M$.
    \end{enumerate}
    \item $M$ is a \emph{$\mu$-guessing model for $x$} if every 
    $(\mu, M)$-approximated subset of $x$ is $M$-guessed.
    \item $M$ is a \emph{$\mu$-guessing model} if, for every $x \in M$, it is a $\mu$-guessing 
    model for $x$.
    \item Suppose that $\mu \leq \kappa \leq \theta$ are regular uncountable cardinals
    and $\mc Y \subseteq \power_\kappa H(\theta)$ is stationary. Then 
    $\GMP_{\mc Y}(\mu, \kappa, \theta)$ is the assertion that the set of $M \in \mc Y$ 
    such that $M$ is a $\mu$-guessing models is stationary in $\power_\kappa H(\theta)$.
  \end{enumerate}
\end{definition}

\begin{remark} \label{gmp_convention_rmk}
  In order to cut down on the number of parameters in use and make statements of our 
  results cleaner, we introduce some conventions, all of which are standard in the literature. 
  In the notation $\GMP_{\mc Y}(\mu, \kappa, \theta)$, if $\mc Y$ is omitted, then it 
  should be understood to be $\power_\kappa H(\theta)$. 
  $\GMP(\mu, \kappa, {\geq}\kappa)$ denotes the assertion that 
  $\GMP(\mu, \kappa, \theta)$ holds for all regular $\theta \geq \kappa$. 
  Since the most common first two parameters in $\GMP(\ldots)$ are $\omega_1$ and 
  $\omega_2$, respectively, if $\mc Y \subseteq \power_{\omega_2}H(\theta)$, 
  we let $\GMP_{\mc Y}(\theta)$ denote $\GMP_{\mc Y}(\omega_1, \omega_2, 
  \theta)$ and let $\GMP$ denote the assertion that $\GMP(\theta)$ holds for all 
  regular $\theta \geq \omega_2$.
  By \cite[Propositions 3.2 and 3.3]{viale_weiss}, for a regular cardinal $\kappa \geq \omega_2$,
  $\GMP(\omega_1, \kappa, {\geq}\kappa)$ is equivalent to $\ISP(\omega_1, \kappa, {\geq}\kappa)$, 
  which is typically denoted in the literature as simply $\ISP(\kappa)$ or $\ISP_\kappa$ (we will 
  use the latter in this paper). We note that $\GMP$ follows from the Proper Forcing Axiom 
  \cite[Theorem 4.8]{viale_weiss} and also holds in the extension by the Mitchell forcing 
  $\M(\omega, \kappa)$ if $\kappa$ is supercompact in the ground model 
  \cite[Theorem 5.4]{weiss}.
\end{remark}

We next recall weakenings of $\GMP(\ldots)$ introduced in \cite{kurepa_paper} that 
provide alternative formulations of two-cardinal tree properties of the form 
$\SP(\ldots)$.

\begin{definition}
  Suppose that $\mu \leq \kappa \leq \theta$ are regular uncountable cardinals, 
  $x \in H(\theta)$, $S \subseteq \power_\kappa H(\theta)$, and $M \subseteq H(\theta)$.
  We say that $(M,x)$ is \emph{almost guessed by $S$} if $x \in M$ and, for every 
  $(\mu, M)$-approximated subset $d \subseteq x$, there is $N \in S$ such that
  \begin{itemize}
    \item $x \in N \subseteq M$;
    \item $d$ is $N$-guessed.
  \end{itemize}
  Suppose that $\mc Y \subseteq \power_\kappa H(\theta)$.
  \begin{itemize}
    \item $\AGP_{\mc Y}(\mu, \kappa, \theta)$ is the assertion that, for every 
    $\subseteq$-cofinal $S \subseteq \power_\kappa H(\theta)$ and every $x \in H(\theta)$, 
    the set of $M \in \mc Y$ such that $(M,x)$ is almost guessed by $S$ is stationary in 
    $\power_\kappa H(\theta)$.
    \item $\wAGP_{\mc Y}(\mu, \kappa, \theta)$ is defined in the same way, except that the 
    set of $M$ as in the conclusion is only assumed to be \emph{weakly} stationary in 
    $\power_\kappa H(\theta)$.
  \end{itemize}
\end{definition}

\begin{remark} \label{guessing_remark}
  $\mathsf{(w)AGP}$ stands for ``(weak) almost guessing principle".
  In the notation $\mathsf{(w)AGP}_{\mc Y}(\mu, \kappa, H(\theta))$, we again suppress 
  mention of $\mc Y$ if it is equal to $\power_\kappa H(\theta)$. As with $\GMP$, if 
  $\mc Y \subseteq \power_{\omega_2} H(\theta)$, we let $\mathsf{(w)AGP}_{\mc Y}(\theta)$ 
  denote $\mathsf{(w)AGP}_{\mc Y}(\omega_1, \omega_2, \theta)$.
  
  It is immediate that, 
  for a fixed trio of regular uncountable cardinals $\mu \leq \kappa \leq \theta$ and 
  a stationary $\mc Y \subseteq \power_\kappa H(\theta)$, we have
  \[
    \GMP_{\mc Y}(\mu, \kappa, \theta) \Rightarrow \AGP_{\mc Y}(\mu, \kappa, \theta)
    \Rightarrow \wAGP_{\mc Y}(\mu, \kappa, \theta).
  \]
  To verify the first implication, the key observation is the fact that, for a fixed 
  cofinal $S \subseteq \power_\kappa H(\theta)$, the set
  \[
    \{M \in \power_\kappa H(\theta) \mid \forall a \in M \exists N \in S ~ [a \in N \subseteq 
    M]\}
  \]
  is a club in $\power_\kappa H(\theta)$.
\end{remark}

\section{The weak Kurepa hypothesis and cardinal arithmetic} \label{kurepa_sec}

Recall that, for a regular uncountable cardinal $\mu$, a weak $\mu$-Kurepa tree is a tree of height and 
size $\mu$ with more than $\mu$-many cofinal branches. We use $\wKH(\mu)$ to denote the 
\emph{weak Kurepa hypothesis} at $\mu$, i.e., the assertion that there is a weak $\mu$-Kurepa tree. 
Then $\neg \wKH(\mu)$ is the assertion that every tree of height and size $\mu$ has at most 
$\mu$-many cofinal branches. Note that $\neg \wKH$ entails $2^{<\mu} > \mu$, since otherwise 
${^{<\mu}}2$ is a weak $\mu$-Kurepa tree. We omit mention of $\mu$ and write simply $\wKH$ if $\mu = \omega_1$.

In \cite[Theorem 9.3]{kurepa_paper}, we show that, if $\mu$ is a regular uncountable cardinal, then 
$\neg \wKH(\mu)$ follows from $\wAGP(\mu, \mu^+, \mu^+)$. In particular, $\neg \wKH$ follows from 
$\GMP$ (this conclusion had been proven earlier in \cite[Theorem 2.8]{cox_krueger_indestructible}).

In this section, we address the influence of $\neg \wKH(\mu)$ on the value of
$2^{\mu}$. In particular, we will show that $\neg \wKH(\mu)$ forces $2^{\mu}$ to
be as small as possible relative to $2^{<\mu}$ \emph{if} $2^{<\mu} < \mu^{+\mu}$
but does not have the same influence in general. In particular, $\neg \wKH$ 
implies that $2^{\omega_1} = 2^\omega$ if $2^\omega < \aleph_{\omega_1}$, but not necessarily otherwise.
This should be contrasted with (a weakening of) $\GMP$, which implies $\neg \wKH$ and, as we will see in Section
\ref{isp_p_omega_1_section}, forces $2^{\omega_1}$ to be as small as possible relative
to $2^\omega$ regardless of the value of $2^\omega$.

These arguments make use of the notion of \emph{meeting numbers}, which will also be used in 
the arguments of Section \ref{isp_p_omega_1_section}.

\begin{definition} [\cite{matet_towers_and_clubs}]
  Suppose that $\kappa \leq \lambda$ are infinite cardinals. Then the \emph{meeting number} 
  $m(\kappa, \lambda)$ is the minimal cardinality of a collection $\mc Y \subseteq [\lambda]^\kappa$ 
  such that, for all $x \in [\lambda]^\kappa$, there is $y \in \mc Y$ such that $|x \cap y| = \kappa$.
\end{definition}

\begin{remark}
  By a standard diagonalization argument, if $\kappa < \lambda$ and $\cf(\kappa) =
  \cf(\lambda)$, then $m(\kappa, \lambda) > \lambda$. Therefore, for general
  $\kappa < \lambda$, the smallest possible value $m(\kappa, \lambda)$ can take
  is $\lambda$ if $\cf(\kappa) \neq \cf(\lambda)$ and $\lambda^+$ if
  $\cf(\kappa) = \cf(\lambda)$. As we will see in Section \ref{isp_p_omega_1_section}, 
  a weakening of $\GMP$ forces
  $m(\kappa, \lambda)$ to always attain this minimum possible value.
\end{remark}

One reason for interest in meeting numbers is the fact that they provide a simple 
alternate formulation of Shelah's Strong Hypothesis ($\SSH$), which is the assertion that, 
for every singular cardinal $\lambda$, the pseudopower $\mathrm{pp}(\lambda)$ is 
equal to $\lambda^+$.

\begin{theorem}[Matet, \cite{matet_meeting_numbers}] \label{matet_meeting_thm}
  The following are equivalent:
  \begin{enumerate}
    \item Shelah's Strong Hypothesis;
    \item for every singular cardinal $\lambda$ of countable cofinality, we have 
    $m(\omega, \lambda) = \lambda^+$;
    \item for all infinite cardinals $\kappa < \lambda$, we have $m(\kappa, \lambda) = 
    \lambda^+$ if $\cf(\kappa) = \cf(\lambda)$ and $m(\kappa, \lambda) = \lambda$ 
    if $\cf(\kappa) \neq \cf(\lambda)$.
  \end{enumerate}
\end{theorem}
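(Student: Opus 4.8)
The plan is to prove the cycle of implications (1)$\Rightarrow$(3)$\Rightarrow$(2)$\Rightarrow$(1). The implication (3)$\Rightarrow$(2) is immediate: given a singular $\lambda$ of countable cofinality, apply (3) with $\kappa=\omega$; since $\cf(\omega)=\omega=\cf(\lambda)$, this yields $m(\omega,\lambda)=\lambda^+$. The remaining two implications carry all of the content, and both run through PCF theory.

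For (1)$\Rightarrow$(3), I first record that the two lower bounds are theorems of $\ZFC$. That $m(\kappa,\lambda)\ge\lambda$ whenever $\kappa<\lambda$ follows from a cardinality count: if $\mc Y\subseteq[\lambda]^\kappa$ has size $<\lambda$, then $\bigcup\mc Y$ has size $<\lambda$, and any $x\in[\lambda]^\kappa$ disjoint from $\bigcup\mc Y$ shows $\mc Y$ is not a meeting family. That $m(\kappa,\lambda)>\lambda$ when $\cf(\kappa)=\cf(\lambda)$ is exactly the diagonalization argument indicated in the remark following the definition of $m(\kappa,\lambda)$. So the task is to establish the matching \emph{upper} bounds under $\SSH$. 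The bound $m(\kappa,\lambda)\le\lambda^+$ follows from two facts: (a) any $\subseteq$-cofinal subfamily of $[\lambda]^{\le\kappa}$ gives rise to a meeting family (pad its members to size exactly $\kappa$), so $m(\kappa,\lambda)\le\cf([\lambda]^{\le\kappa},\subseteq)=\mathrm{cov}(\lambda,\kappa^+,\kappa^+,2)$; and (b) $\SSH$ implies $\mathrm{cov}(\lambda,\mu,\mu,2)\le\lambda^+$ for all infinite $\mu\le\lambda$, which is a standard consequence of $\SSH$ in PCF theory. For the bound $m(\kappa,\lambda)\le\lambda$ when $\cf(\kappa)\ne\cf(\lambda)$, I would induct on $\lambda$ and split into three subcases. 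If $\cf(\lambda)>\kappa$, then every $x\in[\lambda]^\kappa$ is bounded in $\lambda$, so the union over $\mu<\lambda$ of meeting families for $[\mu]^\kappa$ of size $\le\mu^+$ (which exist by the induction hypothesis) is a meeting family for $[\lambda]^\kappa$ of size $\le\lambda$. If $\cf(\lambda)\le\kappa$ but $\cf(\kappa)>\cf(\lambda)$, then, fixing a partition $\lambda=\bigsqcup_{i<\cf(\lambda)}I_i$ into pieces of size $<\lambda$, a cofinality count shows that every $x\in[\lambda]^\kappa$ satisfies $|x\cap I_i|=\kappa$ for some $i$, and the bound again follows from the induction hypothesis. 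The remaining subcase, $\cf(\kappa)<\cf(\lambda)\le\kappa$, is the crux: here an $x\in[\lambda]^\kappa$ may be cofinal in $\lambda$ and need not be concentrated on any single piece of such a partition, so the naive recursion fails. To handle it I would invoke a scale for $\lambda$---which, under $\SSH$, has length $\lambda^+$---and combine the scale functions with the ``bounded'' meeting families supplied by the induction hypothesis, reducing the requirement $|x\cap y|=\kappa$ to a combination of initial-segment meetings controlled by a single scale comparison. Verifying that this produces a meeting family of size $\lambda$ is the most delicate part of the proof.

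For (2)$\Rightarrow$(1), I argue by contraposition. If $\SSH$ fails, then, by Shelah's reduction of $\SSH$ to the behaviour of $\pp$ at cardinals of countable cofinality, there is a singular $\lambda$ with $\cf(\lambda)=\omega$ and $\pp(\lambda)>\lambda^+$. By a PCF normalization, fix such a $\lambda$ together with an increasing cofinal sequence $\langle\lambda_n\mid n<\omega\rangle$ of regular uncountable cardinals below $\lambda$ and a sequence $\langle f_\alpha\mid\alpha<\theta\rangle$ in $\prod_n\lambda_n$ that is $<^*$-increasing and $<^*$-cofinal, where $<^*$ denotes domination modulo finite sets and $\theta=\tcf(\prod_n\lambda_n,<^*)\ge\lambda^{++}$. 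Identify $\lambda$ with $\bigsqcup_n J_n$, where $|J_n|=\lambda_n$ and $J_n$ is identified with $\lambda_n$, and put $x_\alpha:=\{(n,f_\alpha(n))\mid n<\omega\}\in[\lambda]^\omega$ for each $\alpha<\theta$; the $x_\alpha$ are pairwise distinct since the $f_\alpha$ are. Given any $y\in[\lambda]^\omega$, set $g_y(n):=\sup(y\cap J_n)+1$, which is $<\lambda_n$ because $y\cap J_n$ is countable and $\lambda_n$ is regular uncountable, and fix $\beta<\theta$ with $g_y<^*f_\beta$. If $\alpha\ge\beta$, then $f_\alpha(n)\ge g_y(n)>\sup(y\cap J_n)$ for all but finitely many $n$, so $(n,f_\alpha(n))\notin y$ for all but finitely many $n$, hence $|x_\alpha\cap y|<\omega$. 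Therefore $\{\alpha<\theta\mid|x_\alpha\cap y|=\omega\}\subseteq\beta$ has size $<\theta$, and since $\theta$ is regular, no family of size $<\theta$ can meet every $x_\alpha$ in an infinite set. Thus $m(\omega,\lambda)\ge\theta\ge\lambda^{++}>\lambda^+$, contradicting (2).

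I expect the two genuinely hard steps to be: the subcase $\cf(\kappa)<\cf(\lambda)\le\kappa$ of (1)$\Rightarrow$(3), where a simple recursion on $\lambda$ no longer suffices and one must exploit a scale for the singular cardinal $\lambda$ itself; and, in (2)$\Rightarrow$(1), the PCF normalization extracting from $\pp(\lambda)>\lambda^+$ a scale of length $\ge\lambda^{++}$ that is increasing modulo the \emph{finite} ideal---rather than merely modulo some nonprincipal ideal on $\omega$---possibly after passing to a suitable cofinal subsequence of regular cardinals. Everything else reduces to cardinality bookkeeping and standard consequences of $\SSH$.
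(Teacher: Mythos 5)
First, a point of comparison: the paper does not prove this theorem at all --- it is quoted from Matet's paper on meeting numbers --- so your proposal can only be judged against the literature, not against an in-paper argument. Your overall architecture ((1)$\Rightarrow$(3)$\Rightarrow$(2)$\Rightarrow$(1), lower bounds in $\ZFC$, upper bounds from $\SSH$ via covering numbers, and a scale-based argument for (2)$\Rightarrow$(1)) is reasonable, and the cases you actually carry out are fine. But there are two genuine gaps, both at exactly the places you flag as ``delicate,'' and neither is closed by what you wrote.

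For the subcase $\cf(\kappa)<\cf(\lambda)\le\kappa$ of (1)$\Rightarrow$(3), you offer only an intention (``combine a scale for $\lambda$ with the bounded meeting families''), and as stated it is doubtful: a scale for $\lambda$ has length $\lambda^+$, while you must produce a meeting family of size exactly $\lambda$, and you give no mechanism keeping the construction below $\lambda^+$. In fact no scale is needed here: replace your disjoint partition by cumulative initial segments. Fix $\langle\lambda_i\mid i<\cf(\lambda)\rangle$ increasing and cofinal in $\lambda$. Since $\cf(\lambda)<\kappa$, if $|x\cap\lambda_i|<\kappa$ for all $i$ then $|x|\le\cf(\lambda)\cdot\sup_i|x\cap\lambda_i|<\kappa$, so $\sup_i|x\cap\lambda_i|=\kappa$; and since the map $i\mapsto|x\cap\lambda_i|$ is nondecreasing and $\cf(\kappa)<\cf(\lambda)$, the ($\le\cf(\kappa)$-many) indices where it first exceeds each member of a cofinal $\cf(\kappa)$-sequence in $\kappa$ are bounded in $\cf(\lambda)$, so $|x\cap\lambda_{i^*}|=\kappa$ for some $i^*$. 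Then $\bigcup_{i<\cf(\lambda)}\mc Y_i$, with $\mc Y_i$ a meeting family for $[\lambda_i]^\kappa$ of size $\le\lambda_i^+\le\lambda$ (your $\SSH$ upper bound), works and has size $\lambda$. So the ``crux'' is a two-line pigeonhole, not a scale argument; as written, your treatment of this case is a hole.

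For (2)$\Rightarrow$(1), your argument genuinely requires a $<^*$-increasing \emph{and} $<^*$-cofinal sequence modulo the ideal of \emph{finite} sets, of length some regular $\theta\ge\lambda^{++}$, i.e.\ $\tcf\bigl(\prod_n\lambda_n,<^*\bigr)\ge\lambda^{++}$. You assume this ``PCF normalization'' without proof or precise citation. It is in fact a theorem (Shelah; for countable cofinality one can realize every regular $\theta\in(\lambda,\pp(\lambda)]$ as a true cofinality modulo the bounded ideal of a suitable $\langle\lambda_n\rangle$), but it is not a routine manipulation, and the weaker statement that this paper itself imports from Matet (Lemma~\ref{cofinality_lemma}: some $\vec\mu$ with $\cf\bigl(\prod\vec\mu,<^*\bigr)=\mu^{++}$, i.e.\ a dominating family of that size rather than a linear scale) does \emph{not} suffice for your argument: without linearity, a meeting family $Y$ only yields that $\{g_y\mid y\in Y\}$ is $\le^*$-unbounded (each $x_f$ is met infinitely often by some $y$, giving $f(n)<g_y(n)$ only infinitely often), and the unbounding number of $\prod_n\lambda_n$ modulo finite can be $\lambda^+$, so this route proves only $m(\omega,\lambda)\ge\lambda^+$. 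So either cite the true-cofinality-mod-finite theorem explicitly or supply a different argument; as it stands, the key quantitative step of (2)$\Rightarrow$(1) is unproved.
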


Note that Theorem \ref{matet_meeting_thm} makes it immediately evident that $\SSH$ is indeed a 
strengthening of $\SCH$, since if $\lambda$ is a singular cardinal of countable cofinality, 
then a straightforward calculation yields $\lambda^\omega = 2^\omega \cdot m(\omega, \lambda)$.

The following basic fact about $m(\kappa, \lambda)$ will be useful. For a proof, see 
\cite[Corollary 2.5]{matet_meeting_numbers}.

\begin{proposition} \label{meeting_prop}
 Suppose that $\kappa < \lambda < \kappa^{+\cf(\kappa)}$. Then $m(\kappa, \lambda) = \lambda$.
\end{proposition}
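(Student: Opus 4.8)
The plan is to establish the two inequalities $m(\kappa,\lambda) \geq \lambda$ and $m(\kappa,\lambda) \leq \lambda$ separately; only the second uses the hypothesis $\lambda < \kappa^{+\cf(\kappa)}$. For the lower bound, which in fact holds whenever $\kappa < \lambda$: given any $\mc Y \subseteq [\lambda]^\kappa$ with $|\mc Y| < \lambda$, the union $\bigcup \mc Y$ has cardinality at most $\max(|\mc Y|, \kappa) < \lambda$, so $|\lambda \setminus \bigcup \mc Y| = \lambda \geq \kappa$, and any $x \in [\lambda \setminus \bigcup \mc Y]^\kappa$ satisfies $x \cap y = \emptyset$ for every $y \in \mc Y$; thus no family of size $<\lambda$ can be a meeting family. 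For the upper bound, write $\delta := \cf(\kappa)$ and recall that the cardinals in the interval $(\kappa, \kappa^{+\delta})$ are precisely $\kappa^{+\eta}$ for $0 < \eta < \delta$. I would prove by induction on $\eta \in [1,\delta)$ that $m(\kappa,\kappa^{+\eta}) \leq \kappa^{+\eta}$, splitting into a successor case and a limit case.

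In the successor case ($\eta = \xi+1$, so $\lambda = \mu^+$ with $\mu := \kappa^{+\xi} \geq \kappa$), fix a family $\mc Z \subseteq [\mu]^\kappa$ of size at most $\mu$ witnessing $m(\kappa,\mu) \leq \mu$: when $\mu = \kappa$ take $\mc Z = \{\kappa\}$, and when $\mu > \kappa$ this is the induction hypothesis (applicable since then $\kappa < \mu < \kappa^{+\delta}$). For each $\beta$ with $\mu \leq \beta < \mu^+$ fix a bijection $e_\beta \colon \mu \to \beta$, and set $\mc Y := \{e_\beta[z] \mid \mu \leq \beta < \mu^+,\ z \in \mc Z\}$, which has size at most $\mu^+ = \lambda$. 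Given $x \in [\lambda]^\kappa$, since $\cf(\mu^+) = \mu^+ > \kappa$ the set $x$ is bounded in $\mu^+$, say $x \subseteq \beta$ with $\beta \in [\mu, \mu^+)$; applying the property of $\mc Z$ to $e_\beta^{-1}[x] \in [\mu]^\kappa$ yields $z \in \mc Z$ with $|e_\beta^{-1}[x] \cap z| = \kappa$, whence $|x \cap e_\beta[z]| = \kappa$ and $e_\beta[z] \in \mc Y$. (The base case $\eta = 1$ is just the instance $\mu = \kappa$ of this argument.)

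In the limit case ($\eta$ a limit ordinal), the crucial point is that $\cf(\lambda) = \cf(\eta) \leq \eta < \delta = \cf(\kappa)$. For each $\xi$ with $0 < \xi < \eta$, the induction hypothesis provides $\mc Z_\xi \subseteq [\kappa^{+\xi}]^\kappa$ of size at most $\kappa^{+\xi}$ witnessing $m(\kappa,\kappa^{+\xi}) \leq \kappa^{+\xi}$; put $\mc Y := \bigcup_{0 < \xi < \eta} \mc Z_\xi$, so $|\mc Y| \leq |\eta| \cdot \lambda = \lambda$ since $|\eta| < \delta \leq \kappa < \lambda$. Given $x \in [\lambda]^\kappa$: if $x \subseteq \kappa^{+\xi}$ for some $\xi \in (0,\eta)$ we finish via $\mc Z_\xi$; otherwise $x = \bigcup_{0 < \xi < \eta}(x \cap \kappa^{+\xi})$ is an increasing union of fewer than $\cf(\kappa)$ sets, and a union of fewer than $\cf(\kappa)$ sets each of size $<\kappa$ has size $<\kappa$, so some piece $x \cap \kappa^{+\xi}$ lies in $[\kappa^{+\xi}]^\kappa$, and $\mc Z_\xi$ again supplies the required $z$.

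Once the induction is set up this way the argument is essentially routine; the only step that genuinely uses the hypothesis, and hence the only place demanding care, is the limit case, where $\lambda < \kappa^{+\cf(\kappa)}$ is invoked precisely to guarantee $\cf(\lambda) < \cf(\kappa)$ — the inequality that forces an unbounded $x$ of size $\kappa$ to already concentrate on some proper initial segment $\kappa^{+\xi}$. I would also double-check the small cardinal-arithmetic facts used silently (that products and sups of fewer than $\cf(\kappa)$ cardinals below $\kappa$ stay below $\kappa$, and that $|\eta|\cdot\lambda = \lambda$), but these are standard.
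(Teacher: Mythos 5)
Your proof is correct. Both halves check out: the lower bound $m(\kappa,\lambda)\geq\lambda$ is the standard cardinality argument and needs only $\kappa<\lambda$; the upper bound induction is sound, with the successor step correctly reducing $[\mu^+]^\kappa$ to $[\mu]^\kappa$ via bijections $e_\beta\colon\mu\to\beta$ (using that $\mu^+$ is regular and $>\kappa$, so every $x\in[\mu^+]^\kappa$ is bounded), and the limit step correctly exploiting that $\eta<\cf(\kappa)$ forces some piece $x\cap\kappa^{+\xi}$ to have full size $\kappa$. For comparison: the paper does not prove this proposition at all -- it simply cites Matet (Corollary 2.5 of the meeting-numbers paper), and the underlying facts there are essentially the successor and limit recursions you reprove (the same recursions appear, for $\kappa=\omega$, as the clauses of Matet's Proposition 2.4 that the paper invokes in Corollary \ref{isp_and_omega_1_cor}). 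So your argument is a self-contained reconstruction of the standard proof rather than a genuinely different route; the only cosmetic remark is that in the limit case the preliminary split into ``$x$ bounded in some $\kappa^{+\xi}$'' versus ``otherwise'' is unnecessary, since the counting argument already gives a piece of size $\kappa$ in either case.
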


\begin{lemma} \label{general_wkh_meeting_lemma}
  Suppose that $\mu$ is a regular uncountable cardinal, and assume $\neg \wKH(\mu)$. Then 
  $2^\mu = m(\mu, 2^{<\mu})$.
\end{lemma}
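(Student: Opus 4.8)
The plan is to prove the two inequalities $2^\mu \geq m(\mu, 2^{<\mu})$ and $2^\mu \leq m(\mu, 2^{<\mu})$ separately, using $\neg\wKH(\mu)$ only for the second. Write $\nu := 2^{<\mu}$. For the lower bound $2^\mu \geq m(\mu,\nu)$: the point is that any $\mc Y \subseteq [\nu]^\mu$ witnessing the meeting number must have size at least $2^\mu$, or else one can build a ``Kurepa-like'' counterexample. Concretely, identify $2^{<\mu}$ with (a subset of) the tree $T = {}^{<\mu}2$, whose levels have size at most $\nu$ and whose underlying set has size $\nu$; each $f \in {}^\mu 2$ gives a cofinal branch $b_f = \{f \restr \alpha \mid \alpha < \mu\} \in [T]^\mu = [\nu]^\mu$. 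Two distinct branches $b_f, b_g$ meet in a set of size $< \mu$ (in fact a proper initial segment, so of order type some $\alpha < \mu$). Hence if $|\mc Y| < 2^\mu$, then since each $y \in \mc Y$ can have $|y \cap b_f| = \mu$ for at most... — actually the cleaner route: each $y \in \mc Y$, intersected against the $2^\mu$ many branches, meets $\mu$-many of their levels for at most one branch if $y$ itself "is" essentially a branch, but in general we should argue that $\{f \mid |b_f \cap y| = \mu\}$ has size at most $|y|^{<\mu}$-controlled... Let me instead use the standard counting argument: the set of $f \in {}^\mu 2$ such that $b_f \cap y$ is cofinal in some branch forces $y$ to contain an increasing (under $\subseteq$) $\mu$-sequence through $T$, whose union is a branch; distinct such $f$ give distinct branches, and $y$ can contain at most $|{}^{<\mu}(T\cap y)|$-bounded many, but more simply at most one cofinal branch if we first thin $T$ — so $|\{f : |b_f \cap y| = \mu\}| \le$ (number of branches of the subtree generated by $y$) $\le 2^{<\mu} = \nu < 2^\mu$ would not immediately close it unless $|\mc Y|\cdot \nu < 2^\mu$. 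So the honest lower-bound argument: if $\mc Y$ witnesses $m(\mu,\nu)$ and $|\mc Y| \le \nu$, then $\bigcup\{\text{branches of subtree gen.\ by } y : y \in \mc Y\}$ has size $\le \nu$, but there are $2^\mu > \nu$ branches $b_f$, and for $b_f$ not among them, no $y$ meets $b_f$ in size $\mu$ — contradiction; this gives $m(\mu,\nu) > \nu$, and then a routine bootstrapping (replacing $\nu$ by any $\nu' < 2^\mu$, or iterating) pushes it up to $2^\mu$. Actually the slickest form: for any $\delta < 2^\mu$, take $2^\mu$-many branches and note that $\delta$-many $y$'s generate subtrees with altogether $\le \delta^{<\mu}\cdot\nu$... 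I would check whether $\neg\wKH(\mu)$ is secretly needed here too, since without it one might have $2^{<\mu}=\mu$; but $\neg\wKH(\mu)$ gives $2^{<\mu}>\mu$, and the counting works.

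For the upper bound $2^\mu \leq m(\mu,\nu)$, this is where $\neg\wKH(\mu)$ enters. Let $\mc Y \subseteq [\nu]^\mu$ be a witness for $m(\mu,\nu)$ of minimal size, so $|\mc Y| = m(\mu,\nu)$. I want to produce an injection from ${}^\mu 2$ into something of size $|\mc Y|$, or build a tree whose branches inject into $\mc Y$. The idea: work in the tree $T = {}^{<\mu}2$ again, with underlying set of size $\nu$. For each $y \in \mc Y$ with $|y\cap T| = \mu$, consider the subtree $T_y$ of $T$ generated by $y \cap T$ (closure under initial segments), which has size $\le \mu\cdot$(order types) — in any case size at most $2^{<\mu}$, but I want each $T_y$ to be a tree of height and size $\le \mu$; by thinning/truncating I can arrange height exactly $\mu$ (or discard those of height $<\mu$). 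Now given a branch $b_f$, by the meeting property there is $y$ with $|b_f \cap y| = \mu$, and since $b_f \cap y \subseteq b_f$ is a $\subseteq$-chain of size $\mu$ in $T$, its downward closure is a cofinal branch of $T_y$ extending/equal to $b_f$; so $b_f$ is a cofinal branch of $T_y$. Thus every branch of $T$ — and there are $2^\mu$ of them — is a cofinal branch of some $T_y$. By $\neg\wKH(\mu)$, each $T_y$ (being of height and size $\mu$, after the thinning) has at most $\mu$ cofinal branches. Therefore $2^\mu = |{}^\mu 2| \le |\mc Y| \cdot \mu = |\mc Y| = m(\mu,\nu)$ (using $|\mc Y| \ge \mu$, indeed $\ge 2^{<\mu}>\mu$). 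Combining the two inequalities gives $2^\mu = m(\mu,2^{<\mu})$.

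The main obstacle I anticipate is the bookkeeping around the subtrees $T_y$: ensuring each is genuinely a tree of \emph{height and size exactly $\mu$} so that $\neg\wKH(\mu)$ applies cleanly. The size is at most $2^{<\mu}$ a priori, which is too big, so I would not close $y\cap T$ under all initial segments but rather choose, for each relevant $y$, a single increasing enumeration of a $\mu$-sized chain inside $y\cap T$ (if one exists witnessing some $|b_f\cap y|=\mu$), take its $\subseteq$-downward closure — this chain has order type a cofinal subset of $\mu$, so its downward closure, re-indexed by level, is a tree of size $\le\mu$ and height $\mu$. The subtlety: different branches $b_f$ meeting the same $y$ might need different chains; the fix is that for \emph{each} branch $b$ with $|b\cap y|=\mu$, the set $b\cap y$ is itself determined by $b$ and $y$, so I should instead let $T_y$ be the union over all such $b$ of (downward closures of) $b\cap y$ — but then $T_y$ could again be large. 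The clean resolution is to note $b\cap y \subseteq y$ and any $\subseteq$-chain in $y$ of size $\mu$ is, after downward closure, a subtree of $T$ contained in the downward closure of $y$; so set $T_y := $ the $\subseteq$-downward closure (in $T$) of $y\cap T$, which has size $\le |y\cap T|\cdot\mu \le \mu$ since each element of $T$ has $<\mu$ predecessors — wait, each node $t\in{}^\alpha 2$ has exactly $|\alpha|\le\mu$ strict predecessors, but actually $<\mu$ if $\mu$ is regular and $\alpha<\mu$, hence $|T_y|\le\mu$. Its height is $\le\mu$; if $<\mu$ discard it (no $b_f$ meets $y$ cofinally then). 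So $T_y$ is a tree of height and size $\le\mu$, and by $\neg\wKH(\mu)$ (applied after padding to height exactly $\mu$ if needed, or noting weak Kurepa trees of height $<\mu$ also can't exist by an easy reduction) has $\le\mu$ cofinal branches. That closes the gap, and the rest is the counting above.
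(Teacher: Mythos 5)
Your second paragraph is, in substance, exactly the paper's proof of the nontrivial inequality $2^\mu \le m(\mu,2^{<\mu})$: fix a witnessing family $\mc Y \subseteq [{}^{<\mu}2]^\mu$ of size $m(\mu,2^{<\mu})$, let $T_y$ be the downward closure of $y$ in ${}^{<\mu}2$ (of size $\le\mu$ since $\mu$ is regular and each node has $<\mu$ predecessors), observe that every $h \in {}^\mu 2$ meets some $y$ cofinally and hence is a cofinal branch of $T_y$, and use $\neg\wKH(\mu)$ to bound the number of size-$\mu$ branches of each $T_y$ by $\mu$, together with $m(\mu,2^{<\mu}) > \mu$ (from $2^{<\mu}>\mu$). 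That part is correct and is the same argument as in the paper, including your resolution of the worry about $|T_y|$.

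The problem is your first paragraph, where the two inequalities get swapped. You announce the ``lower bound $2^\mu \ge m(\mu,\nu)$'' but then argue that \emph{every} witnessing family must have size at least $2^\mu$ --- that is the statement $m(\mu,\nu) \ge 2^\mu$, i.e.\ the other direction, which is precisely what your second paragraph proves (and which genuinely needs $\neg\wKH(\mu)$). The inequality you actually need there, $m(\mu,2^{<\mu}) \le 2^\mu$, is never established in your write-up; it is a one-liner: the full family $[2^{<\mu}]^\mu$ itself witnesses the meeting property and has size $(2^{<\mu})^\mu = 2^\mu$, which is how the paper disposes of it. Moreover, the ZFC-only claims you float in that paragraph (that the counting gives $m(\mu,\nu) > \nu$ and can be ``bootstrapped'' up to $2^\mu$ without $\neg\wKH(\mu)$) are false: for instance, if $\mu=\omega_1$, $2^\omega=\omega_2$ and $2^{\omega_1}=\omega_3$, then Proposition~\ref{meeting_prop} gives $m(\omega_1,2^{\omega}) = \omega_2 < 2^{\omega_1}$, so no such argument can succeed in ZFC (of course, $\wKH$ holds in that model). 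So the first paragraph should simply be replaced by the trivial observation above; with that replacement, your proof coincides with the paper's.
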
 

\begin{proof}
  First note that $m(\mu, 2^{<\mu}) \leq |[2^{<\mu}]^\mu| = 2^\mu$. It thus remains to show that 
  $2^\mu \leq m(\mu, 2^{<\mu})$.
  
  Fix $\mc Y \subseteq [{^{<\mu}}2]^\mu$ such that
  \begin{itemize}
    \item $|\mc Y| = m(\mu, 2^{<\mu})$;
    \item for every $x \in [{^{<\mu}}2]^\mu$, there is $y \in \mc Y$ such that $|x \cap y| = \mu$.
  \end{itemize}
  For each $y \in \mc Y$, let $T_y := \{f \in {^{<\mu}2} \mid \exists g \in y ~ [f \subseteq g]\}$, i.e., 
  $T_y$ is the downward closure of $y$ in the tree ${^{<\mu}}2$. Note that $T_y$ is a tree of 
  cardinality $\mu$ and therefore, by $\neg \wKH(\mu)$, $T_y$ has at most $\mu$-many branches of size 
  $\mu$. We naturally identify branches through ${^{<\mu}}2$ of size $\mu$ with elements of 
  ${^\mu}2$.
  
  \begin{claim}
    For every $h \in {^\mu}2$, there is $y \in \mc Y$ such that $h$ is a branch through $T_y$.
  \end{claim}
  
  \begin{proof}
    Fix $h \in {^\mu}2$, and let $x := \{h \restriction \alpha \mid \alpha < \mu\}$ be the set of 
    proper initial segments of $h$. Then $x \in [{^{<\mu}}2]^\mu$, so we can find $y \in \mc Y$ 
    such that $|x \cap y| = \mu$. Then there are unboundedly many $\alpha < \mu$ such 
    that $h \restriction \alpha \in y$; it follows that $h$ is a branch through $T_y$.
  \end{proof}
  
  Since, for each $y \in \mc Y$, $T_y$ has at most $\mu$-many branches of size $\mu$, and since 
  each element of ${^\mu}2$ is a branch through $T_y$ for some $y \in \mc Y$, it follows that 
  $2^\mu \leq \mu \cdot m(\mu, 2^{<\mu})$. Since $\neg \wKH$ holds, we have $2^{<\mu} > \mu$, and 
  thus $m(\mu, 2^{<\mu}) > \mu$. Therefore, $2^\mu = m(\mu, 2^{<\mu})$.
\end{proof}

The following corollary yields clause (2) of Theorem B as a special case.

\begin{corollary}
  Suppose that $\mu$ is a regular uncountable cardinal, $\neg \wKH(\mu)$ holds, and 
  $2^{<\mu} < \mu^{+\mu}$. Then $2^\mu = 2^{<\mu}$.
\end{corollary}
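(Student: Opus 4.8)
The plan is to simply combine the two tools already assembled: Lemma~\ref{general_wkh_meeting_lemma}, which under $\neg\wKH(\mu)$ identifies $2^\mu$ with the meeting number $m(\mu, 2^{<\mu})$, and Proposition~\ref{meeting_prop}, which evaluates meeting numbers $m(\kappa,\lambda)$ in the range $\kappa < \lambda < \kappa^{+\cf(\kappa)}$. So the corollary should fall out once we check that the parameters $\kappa := \mu$ and $\lambda := 2^{<\mu}$ land in that range.

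First I would record that $\neg\wKH(\mu)$ entails $2^{<\mu} > \mu$: otherwise $2^{<\mu} = \mu$, and then ${^{<\mu}}2$ is a tree of height and size $\mu$ with $2^\mu \geq \mu^+$ cofinal branches, i.e., a weak $\mu$-Kurepa tree, contradicting the hypothesis. (This is exactly the observation made in the paragraph preceding Lemma~\ref{general_wkh_meeting_lemma}, so it can be cited rather than reproved.) Thus $\mu < 2^{<\mu}$, and by assumption $2^{<\mu} < \mu^{+\mu}$. Since $\mu$ is regular we have $\cf(\mu) = \mu$, so $\mu^{+\mu} = \mu^{+\cf(\mu)}$, and hence $\mu < 2^{<\mu} < \mu^{+\cf(\mu)}$.

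Now I would apply Proposition~\ref{meeting_prop} with $\kappa = \mu$ and $\lambda = 2^{<\mu}$ to conclude $m(\mu, 2^{<\mu}) = 2^{<\mu}$. Combined with Lemma~\ref{general_wkh_meeting_lemma}, which gives $2^\mu = m(\mu, 2^{<\mu})$ under $\neg\wKH(\mu)$, this yields $2^\mu = 2^{<\mu}$, as desired. Clause~(2) of Theorem~B is then the special case $\mu = \omega_1$, where $2^{<\omega_1} = 2^\omega$ and $\mu^{+\mu} = \aleph_{\omega_1}$.

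There is essentially no obstacle here beyond bookkeeping: the content is entirely in Lemma~\ref{general_wkh_meeting_lemma} and the cited Matet fact (Proposition~\ref{meeting_prop}). The only point requiring a moment's care is the boundary check $\mu < 2^{<\mu}$, which is where $\neg\wKH(\mu)$ is used a second time (the first use being inside Lemma~\ref{general_wkh_meeting_lemma}); if one merely assumed $2^{<\mu} \leq \mu$ the meeting number in question would be $m(\mu,\mu) = \mu$ and the argument would collapse, so it is worth flagging that $\neg\wKH(\mu)$ rules this out.
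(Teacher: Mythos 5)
Your proposal is correct and follows essentially the same route as the paper: apply Lemma~\ref{general_wkh_meeting_lemma} to get $2^\mu = m(\mu, 2^{<\mu})$, note that $\neg\wKH(\mu)$ gives $\mu < 2^{<\mu}$ while the hypothesis gives $2^{<\mu} < \mu^{+\mu} = \mu^{+\cf(\mu)}$ (by regularity of $\mu$), and then invoke Proposition~\ref{meeting_prop} to conclude $m(\mu, 2^{<\mu}) = 2^{<\mu}$. Your explicit flagging of the boundary check $\mu < 2^{<\mu}$ is a nice touch but does not change the argument.
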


\begin{proof}
  Since $\neg \wKH(\mu)$ holds, Lemma \ref{general_wkh_meeting_lemma} implies that $2^\mu = 
  m(\mu, 2^{<\mu})$. By $\neg \wKH(\mu)$, we have $2^{<\mu} > \mu$, and by assumption we have 
  $2^{<\mu} < \mu^{+\mu}$. Therefore, Proposition \ref{meeting_prop} implies that 
  $m(\mu, 2^{<\mu}) = 2^{<\mu}$. Altogether, this implies that $2^\mu = 2^{<\mu}$.
\end{proof}

We now prove clause (3) of Theorem B, showing that $\neg \wKH(\mu)$ no longer forces $2^{\mu}$ to be as small as possible 
relative to $2^{<\mu}$
if we allow $2^{<\mu} \geq \mu^{+\mu}$. For concreteness, we focus on the case $\mu = \omega_1$ 
and produce a model
in which $\neg \wKH$ holds, $2^\omega = \aleph_{\omega_1}$, and $2^{\omega_1} =
\aleph_{\omega_1+2}$, but it will be evident how to modify the construction to produce
other configurations.

\begin{theorem}
  If the existence of a supercompact cardinal is consistent, then it is consistent
  that $\neg \wKH$ holds, $2^\omega = \aleph_{\omega_1}$, and
  $2^{\omega_1} > \aleph_{\omega_1+1}$.
\end{theorem}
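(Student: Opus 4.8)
The plan is to reduce the statement to a pure cardinal‑arithmetic problem under $\neg\wKH$ and then solve that problem by forcing. First, by Lemma~\ref{general_wkh_meeting_lemma}, every model of $\neg\wKH$ satisfies $2^{\omega_1}=m(\omega_1,2^\omega)$; hence it suffices to produce a model in which $\neg\wKH$ holds, $2^\omega=\aleph_{\omega_1}$, and $2^{\omega_1}=\aleph_{\omega_1+2}$. The conclusion $2^{\omega_1}>\aleph_{\omega_1+1}$ is then immediate, and as a byproduct $m(\omega_1,\aleph_{\omega_1})=\aleph_{\omega_1+2}$, so the meeting number can consistently exceed the value $\aleph_{\omega_1+1}$ that it must always take (since $\cf(\omega_1)=\cf(\aleph_{\omega_1})=\omega_1$), even under $\neg\wKH$. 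The catch is that the two requirements pull against each other: $\neg\wKH$ forces $\omega_2$ to be large in an inner model (it entails $\neg\mathrm{KH}$, hence that $\omega_2$ is inaccessible in $L$), which is naturally arranged by collapsing a large cardinal $\kappa$ to become $\omega_2$, whereas pushing $2^{\omega_1}$ past $2^\omega=\aleph_{\omega_1}$ requires adjoining more than $\aleph_{\omega_1}$‑many new subsets of $\omega_1$. Crucially this cannot be done naively after the collapse: over a model of $\neg\CH$ the poset $\Add(\omega_1,\aleph_{\omega_1+2})$ collapses $2^\omega$ to $\omega_1$ (the generic function $\omega_1\to 2$ catches every ground‑model real on an $\omega$‑block), reinstating $\CH$ and hence $\wKH$. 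So the two tasks must be interleaved rather than performed in succession.

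Accordingly, I would start from $V\models\GCH$ with a supercompact cardinal $\kappa$ — substantially less than supercompactness is needed for the part of the argument forcing $\neg\wKH$ (a weakly compact $\kappa$, and perhaps even an inaccessible, should suffice), but we may as well use the hypothesis as stated — and force with a single Mitchell‑style poset $\mathbb{P}$ arranged so that in $V^{\mathbb{P}}$ we have $\kappa=\omega_2$, $\neg\wKH$, $2^\omega=\aleph_{\omega_1}$, and $2^{\omega_1}=\aleph_{\omega_1+2}$. Concretely, $\mathbb{P}$ should factor (up to a dense subset) as $\mathbb{C}\ast\dot{\mathbb{Q}}$, where $\mathbb{C}$ is a Cohen part of length $\aleph_{\omega_1}^{V^{\mathbb{P}}}$ producing $2^\omega=\aleph_{\omega_1}$, and $\dot{\mathbb{Q}}$ names, in $V^{\mathbb{C}}$, an $\omega_1$‑closed poset that simultaneously (a) collapses the interval $(\omega_1,\kappa)$ onto $\omega_1$, so that $\kappa$ becomes $\omega_2$ and the large‑cardinal reflection available to $\kappa$ in $V$ can be invoked to destroy weak $\omega_1$‑Kurepa trees, and (b) adjoins $\aleph_{\omega_1+2}$‑many new subsets of $\omega_1$. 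The point of passing through a term forcing over $\mathbb{C}$, rather than iterating, is the standard Mitchell trick: $\mathbb{P}$ then has a dense subset projecting onto $\mathbb{C}$ with an $\omega_1$‑closed remainder, while retaining a strong enough chain condition below $\kappa$, and it is exactly this chain condition that stops the $\omega_1$‑closed part from coding the $\aleph_{\omega_1}$‑many reals of $V^{\mathbb{C}}$ into an $\omega_1$‑block pattern and thereby collapsing the continuum — the obstruction that dooms the two‑step approach. One must also check that $\dot{\mathbb{Q}}$'s chain condition is good enough that no cardinals above $\omega_1$ other than those in $(\omega_1,\kappa)$ are collapsed, so that the $\aleph_{\omega_1+2}$ new subsets of $\omega_1$ genuinely witness $2^{\omega_1}=\aleph_{\omega_1+2}$; this bookkeeping rests on $\GCH$ in $V$ together with the size and chain condition of $\mathbb{P}$.

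The hard part — and the heart of the whole argument — is verifying that $\neg\wKH$ survives in $V^{\mathbb{P}}$, despite $\mathbb{P}$ having added $\aleph_{\omega_1+2}$‑many subsets of $\omega_1$ and hence many new trees of height and size $\omega_1$. Given such a tree $T$ in $V^{\mathbb{P}}$, the strategy is to factor $\mathbb{P}$ so that $T$ appears at a bounded stage and then to bound $|[T]|$ in three pieces: (i) the Cohen part of $\mathbb{P}$ adds \emph{no} new cofinal branches to $T$ — a single Cohen real cannot, by a pigeonhole argument on an uncountable set of levels on which the name is forced by one finite condition, and finite‑support products of Cohen forcings inherit this; (ii) the $\omega_1$‑closed part of $\mathbb{P}$ adds at most $\omega_1$‑many new cofinal branches to $T$, proved by a tree‑of‑conditions/fusion argument — a cofinal branch is determined level by level and a genuinely new one forces, at a critical level, uncountably many distinct nodes; the subtle case is when $T$ itself has uncountable levels, where the naive counting breaks down and one must exploit the closure and chain condition built into $\dot{\mathbb{Q}}$; and (iii) the branches contributed cofinally along the collapse part are controlled by a reflection argument using the weak compactness (or whatever property of $\kappa$ is retained): if $T$ had $\kappa$‑many branches in $V^{\mathbb{P}}$, a reflected copy of $T$ below $\kappa$ would already be a weak Kurepa tree in a small forcing extension, which is impossible. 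Summing (i)–(iii) gives $|[T]|\le\omega_1$, i.e.\ $\neg\wKH$. (As a consistency check, $\mathbb{P}$ cannot also force $\GMP$: by Theorem~A that, together with $\neg\wKH$, would force $2^{\omega_1}=m(\omega_1,\aleph_{\omega_1})=\aleph_{\omega_1+1}$ — which is harmless, since only weak‑compactness‑level reflection is being used in (iii), not full supercompact reflection.)
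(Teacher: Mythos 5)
Your overall architecture has a self-defeating flaw, and the reduction you open with is exactly what exposes it. You aim to get $2^{\omega_1}=\aleph_{\omega_1+2}$ by having the $\omega_1$-closed (term-forcing) part $\dot{\mathbb Q}$ adjoin $\aleph_{\omega_1+2}$-many subsets of $\omega_1$ over a $\GCH$ ground model, while insisting that $\mathbb P$ retain a strong chain condition below $\kappa$ (Mitchell-style $\kappa$-c.c.) so that cardinals $\geq\kappa$ and the value $2^\omega=\lambda:=\aleph_{\omega_1}^{V^{\mathbb P}}=\kappa^{+\omega_1}$ of $V$ survive. But any such $\kappa$-c.c.\ poset has the $\kappa$-covering property: every $x\in[\lambda]^{\omega_1}$ of the extension is contained in some $y\in V$ with $|y|^V<\kappa$, and under $\GCH$ the family $([\lambda]^{<\kappa})^V$ has size $\lambda^+$; after the collapse of $(\omega_1,\kappa)$ its members have size $\omega_1$, so it witnesses $m(\omega_1,\lambda)\leq\lambda^+=\aleph_{\omega_1+1}$ in $V^{\mathbb P}$. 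By Lemma~\ref{general_wkh_meeting_lemma} (your own first step), $\neg\wKH$ would then give $2^{\omega_1}=m(\omega_1,2^\omega)\leq\aleph_{\omega_1+1}$, contradicting the $\aleph_{\omega_1+2}$-many new subsets of $\omega_1$; concretely, running the proof of that lemma with this covering family, one of the trees $T_y$ must acquire more than $\omega_1$-many branches, i.e.\ a weak Kurepa tree appears. So with the chain condition you need, $\neg\wKH$ provably fails in your model; if instead you weaken the chain condition, you lose the very mechanism you invoke to protect $2^\omega$ and the cardinals. Your closing remark also points the wrong way: since $m(\omega_1,\aleph_{\omega_1})>\aleph_{\omega_1+1}$ already implies $\neg\SSH$ (Theorem~\ref{matet_meeting_thm}), the target configuration carries consistency strength far beyond weak compactness (inner models with measurables of high Mitchell order), so the large-cardinal work cannot be confined to reflection at $\kappa$; it must be spent \emph{above} $\kappa$ to create a genuine pcf failure at $\aleph_{\omega_1}$, which no combination of c.c.c.\ forcing plus $\omega_1$-closed, $\kappa$-c.c.\ forcing over a $\GCH$ model can do.

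This is precisely where the paper's construction differs: the large value of $2^{\omega_1}$ is not obtained by adding subsets of $\omega_1$ at all. One first blows up $2^\lambda=\lambda^{++}$ at a Laver-prepared supercompact $\lambda$, then uses a Radin-type forcing with interleaved collapses to make $\lambda=\kappa^{+\omega_1}$ a strong limit of cofinality $\omega_1$ while preserving $2^\lambda=\lambda^{\omega_1}=\lambda^{++}$ and $V_{\kappa+1}$ (so a weakly compact $\kappa<\lambda$ survives); Mitchell forcing $\M(\omega,\kappa)$ then yields $\neg\wKH$, and finally adding $\lambda$-many Cohen reals gives $2^\omega=\lambda=\aleph_{\omega_1}$ and $2^{\omega_1}=\lambda^{\aleph_1}=\lambda^{++}$, with $\neg\wKH$ preserved by factoring the Cohen forcing through $\aleph_1$-sized complete suborders and quoting a preservation theorem. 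Your steps (i)--(iii) would also need repair in detail (an $\omega_1$-closed poset can certainly add more than $\omega_1$-many branches to a tree of height and size $\omega_1$ in general), but the decisive gap is the structural one above: the covering forced by your own chain-condition requirement caps $m(\omega_1,2^\omega)$ at $(2^\omega)^+$ and hence makes $\neg\wKH$ incompatible with $2^{\omega_1}>\aleph_{\omega_1+1}$ in any model produced this way.
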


\begin{proof}
  Let $\lambda$ be a supercompact cardinal. By forcing with the Laver preparation
  followed by the forcing to add $\lambda^{++}$-many Cohen subsets to $\lambda$,
  we may assume that $2^\lambda = \lambda^{++}$. Let $\kappa < \lambda$ be weakly compact.
  We can now force with a Radin-type forcing $\R$ with interleaved collapses such that:
  \begin{itemize}
    \item in $V^{\R}$, we have $\lambda = \kappa^{+\omega_1}$, $\lambda$ is strong limit, and
    $2^\lambda = \lambda^{\omega_1} = \lambda^{++}$;
    \item $(V_{\kappa+1})^{V^{\R}} = (V_{\kappa+1})^V$.
  \end{itemize}
  For details about how to define such a forcing notion, see, for example,
  \cite{davis}. The forcing defined there produces an extension in which
  $\lambda = \aleph_{\omega_1}$,
  but by ensuring that all of the points of the Radin club are chosen above $\kappa$
  and by using an initial interleaved L\'{e}vy collapse of the form
  $\Coll(\kappa^{+3}, < \lambda_0)$, where $\lambda_0$ is the first point in the
  Radin club, we obtain a forcing notion with the desired properties.

  Since $(V_{\kappa+1})^{V^{\R}} = (V_{\kappa+1})^V$, $\kappa$ is still weakly compact in
  $V^{\bb{R}}$. Let $\dot{\bb{M}}$ be an $\bb{R}$-name for the standard
  Mitchell forcing $\M(\omega, \kappa)$. Then, in $V^{\R * \dot{\M}}$, we have:
  \begin{itemize}
    \item $\kappa = \omega_2 = 2^\omega$;
    \item $\lambda = \aleph_{\omega_1}$ is strong limit, and $2^\lambda = \lambda^{\omega_1} =
    \lambda^{++}$;
    \item $\neg \wKH$ (cf.\ \cite{T:wKH}).
  \end{itemize}
  Finally, let $\dot{\C}$ be an $\R * \dot{\M}$-name for the forcing to add
  $\lambda$-many Cohen reals. Then, in $V^{\R * \dot{\M} * \dot{\C}}$, we
  have $2^\omega = \lambda = \aleph_{\omega_1}$ and $2^{\omega_1} =
  (\aleph_{\omega_1})^{\aleph_1} = \aleph_{\omega_1 + 2}$. It remains to verify that 
  $\neg \wKH$ holds in $V^{\R * \dot{\M} * \dot{\C}}$. 
  
  It is proven in \cite[\S 1]{T:wKH} that, after forcing over a ground model $W$ with 
  Mitchell forcing $\M(\omega, \mu)$ with $\mu$ inaccessible, $\neg \wKH$ is preserved by 
  further forcing with a finite-support iteration of length at most $\omega_2$ 
  of c.c.c.\ posets of size $\omega_1$ such that each iterand does not add a new uncountable branch to any 
  tree of height $\omega_1$. In particular, it follows that, in $V^{\R \ast \dot{\M}}$, 
  $\neg \wKH$ is preserved after adding $\omega_2$-many or fewer Cohen reals.
  
  In $V^{\R \ast \dot{\M}}$, let $\dot{T}$ be a $\C$-name for a tree of height and size $\omega_1$. 
  We think of conditions in $\C$ as being finite partial functions from $\lambda$ to $2$. For 
  each $A \subseteq \lambda$, let $\C_A$ be the complete suborder consisting of all elements of $\C$ 
  whose domains are subsets of $A$. Since $\C$ has the c.c.c., we can find $A \in [\lambda]^{\aleph_1}$ 
  such that there is a $\C_A$-name $\dot{T}'$ for which $\Vdash_{\C}``\dot{T} = \dot{T}'"$. 
  Let $T$ be the realization of $\dot{T}'$ in $V^{\R \ast \dot{M} \ast \dot{\C}_{\dot{A}}}$. By 
  the previous paragraph, $\neg \wKH$ holds in that model, so $T$ has at most $\omega_1$-many branches there.
  But $V^{\R \ast \dot{M} \ast \dot{\C}}$ is an extension of $V^{\R \ast \dot{M} \ast \dot{\C}_{\dot{A}}}$ 
  by the forcing to add $\lambda$-many Cohen reals, which cannot add new cofinal branches to a tree of 
  height $\omega_1$. Therefore, $T$ still has at most $\omega_1$-many branches in 
  $V^{\R \ast \dot{M} \ast \dot{\C}}$, so $\neg \wKH$ holds there.
\end{proof}

\section{Guessing models and cardinal arithmetic} \label{isp_p_omega_1_section}

In this section, we analyze the effect of guessing model principles on cardinal arithmetic, focusing 
on meeting numbers and pseudopowers. In the process, 
we will prove, for instance, that a weakening of $\GMP$ implies that $2^{\omega_1}$ is as small as 
possible relative to $2^\omega$ and that a weakening of $\GMP(\kappa, \kappa, \geq \kappa)$ 
implies $\SSH$ above $\kappa$.

We will need to introduce a bit of machinery to obtain these results. We first 
recall the notion of a \emph{covering matrix}, introduced by Viale in his proof
that $\PFA$ implies $\SCH$ \cite{viale_pfa_sch}.

\subsection{Covering matrices}

The terminology associated with covering
matrices is slightly inconsistent across sources; we will follow the terminology
of \cite{sharon_viale} and \cite{clh_covering_ii}.

\begin{definition}
  Let $\theta < \lambda$ be regular cardinals. A \emph{$\theta$-covering matrix
  for $\lambda$} is a matrix $\mathcal{D} = \langle D(i, \beta) \mid i <
  \theta, ~ \beta < \lambda \rangle$ such that:
  \begin{enumerate}
    \item for all $\beta < \lambda$, $\langle D(i, \beta) \mid i < \theta \rangle$
    is a $\subseteq$-increasing sequence and $\bigcup_{i < \theta} D(i, \beta) =
    \beta$;
    \item for all $\beta < \gamma < \lambda$ and $i < \theta$, there is $j <
    \theta$ such that $D(i, \beta) \subseteq D(j, \gamma)$.
  \end{enumerate}
  For such a matrix $\mc D$, let $\beta_{\mc D}$ denote the least ordinal
  $\beta$ such that $\otp(D(i, \gamma)) < \beta$ for all $\gamma < \lambda$
  and $i < \theta$.
\end{definition}

We will be especially interested in covering matrices satisfying certain additional properties.

\begin{definition}
  Suppose that $\theta < \lambda$ are regular cardinals and $\mc D$ is a
  $\theta$-covering matrix for $\lambda$.
  \begin{enumerate}
    \item $\mc D$ is \emph{transitive} if, for all $\beta < \gamma < \lambda$
    and all $i < \theta$, if $\beta \in D(i, \gamma)$, then
    $D(i, \beta) \subseteq D(i, \gamma)$.
    \item $\mc D$ is \emph{uniform} if, for every limit ordinal $\beta < \lambda$,
    there is $i < \theta$ such that $D(i, \beta)$ contains a club in $\beta$.
    \item $\mc D$ is \emph{strongly locally downward coherent} if, for all
    $X \in [\lambda]^{\leq \theta}$, there is $\gamma_X < \lambda$ such that,
    for all $\beta \in [\gamma_X, \lambda)$, there is $i < \theta$ such that,
    for all $j \in [i, \theta)$, $X \cap D(j, \beta) = X \cap D(j, \gamma_X)$.
    \item $\CP(\mc D)$ holds if there is an unbounded $A \subseteq \lambda$
    such that $[A]^\theta$ is covered by $\mc D$, i.e., for all
    $X \in [A]^\theta$, there are $\beta < \lambda$ and $i < \theta$ for which
    $X \subseteq D(i, \beta)$.
  \end{enumerate}
\end{definition}

The key ingredient of Viale's proof that $\PFA$ implies $\SCH$ is
the fact that, for every singular cardinal $\mu$ of countable cofinality,
$\PFA$ implies $\CP(\mc D)$ for every strongly locally downward coherent
$\omega$-covering matrix $\mc D$ for $\mu^+$, together with a lemma
asserting that, for every singular cardinal $\mu > 2^\omega$ of countable
cofinality, there is a strongly locally downward coherent $\omega$-covering
matrix $\mc D$ for $\mu^+$ such that $\beta_{\mc D} = \mu$. The proof of this
latter lemma can essentially be split into two steps. In the first, it is proven
that, for every singular cardinal $\mu$, there is a transitive $\cf(\mu)$-covering
matrix $\mc D$ for $\mu^+$ such that $\beta_{\mc D} = \mu$. In the second, it
is shown that, if $\mu > 2^{\cf(\mu)}$, then every such covering matrix
$\mc D$ is strongly locally downward coherent. We show here that, in this second
step, the requirement of $\mu > 2^{\cf(\mu)}$ can be dropped if we additionally
assume that our covering matrix is uniform. We first note that the proof of the first step
already guarantees the existence of such covering matrices. In fact, we will be 
able to arrange the following strengthening of $\beta_{\mc D} = \mu$ that will be 
useful later in this section, when we address the influence of guessing model 
principles on pseudopowers.

\begin{definition}
  Suppose that $\mu$ is a singular cardinal, $\theta = \cf(\mu)$, and 
  $\vec{\mu} = \langle \mu_i \mid i < \theta \rangle$ is an increasing 
  sequence of regular cardinals converging to $\mu$. If $\mc D = \langle D(i,\beta) 
  \mid i < \theta, ~ \beta < \mu^+ \rangle$ is a $\theta$-covering matrix for 
  $\mu^+$, we say that $\mc D$ \emph{respects} $\vec{\mu}$ if $|D(i,\beta)| < \mu_i$ 
  for all $i < \theta$.
\end{definition}

The following lemma is essentially \cite[Lemma 2.4]{sharon_viale}; our formulation is 
slightly stronger than the cited lemma, but the proof there easily yields our desired 
conclusion.

\begin{lemma}[{\cite[Lemma 2.4]{sharon_viale}}] \label{sharon_viale_lemma}
  Suppose that $\mu$ is a singular cardinal, $\theta = \cf(\mu)$, and $\vec{\mu} = 
  \langle \mu_i \mid i < \theta \rangle$ is an increasing sequence of regular cardinals 
  converging to $\mu$. Then there is a uniform, transitive $\theta$-covering matrix
  for $\mu^+$ that respects $\vec{\mu}$.
\end{lemma}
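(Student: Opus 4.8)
The plan is to build the covering matrix directly from the given sequence $\vec\mu = \langle \mu_i \mid i < \theta\rangle$, using a Skolem-hull/elementary-substructure construction that automatically delivers transitivity, and then massage the result to get uniformity. First I would fix a well-order $\lhd$ of $H(\mu^{++})$ and, for each $\beta < \mu^+$, a surjection $e_\beta : \mu \to \beta$ (taking $e_\beta$ to be, say, the identity when $\beta < \mu$). For $i < \theta$ and $\beta < \mu^+$, define $D(i,\beta)$ to be $\beta \cap X_{i,\beta}$, where $X_{i,\beta}$ is the closure of $\mu_i \cup \{\beta, \vec\mu, \langle e_\gamma \mid \gamma < \mu^+\rangle\}$ under Skolem functions for $(H(\mu^{++}), \in, \lhd)$; one then checks $|X_{i,\beta}| = \mu_i$ (using that $\mu_i$ is a cardinal and $H(\mu^{++})$ has definable Skolem functions), so $|D(i,\beta)| \le \mu_i$, and in fact $< \mu_i$ since $\mu_i$ is regular and the hull has size $\le \mu_i$ — actually to get the strict inequality $|D(i,\beta)| < \mu_i$ one should instead close a set of size $< \mu_i$, e.g. take the hull of $\eta_i \cup \{\dots\}$ for some $\eta_i < \mu_i$ with $\sup_i \eta_i = \mu$; I would set this up so that $|X_{i,\beta}| < \mu_i$ cleanly. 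Monotonicity in $i$ (clause (1), the increasing requirement) is immediate since the generating sets increase; $\bigcup_i D(i,\beta) = \beta$ follows because $\beta \in X_{i,\beta}$ and $e_\beta \in X_{i,\beta}$, so $e_\beta[\mu_i] \subseteq X_{i,\beta}$, and these exhaust $\beta$ as $i \to \theta$. Coherence (clause (2)) and transitivity both follow from elementarity: if $\beta \in D(i,\gamma)$ then $\beta \in X_{i,\gamma}$, hence $e_\beta \in X_{i,\gamma}$ and $X_{i,\beta} \subseteq X_{i,\gamma}$ (the hull of a subset of a hull closed under the same Skolem functions), giving $D(i,\beta) = \beta \cap X_{i,\beta} \subseteq \gamma \cap X_{i,\gamma} = D(i,\gamma)$ with the \emph{same} index $i$; coherence for general $\beta < \gamma$ follows by the same argument with a possibly larger index (take $j$ with $\beta \in D(j,\gamma)$).

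The remaining point is uniformity: for every limit $\beta < \mu^+$ I need some $i < \theta$ with $D(i,\beta)$ containing a club in $\beta$. Here I distinguish cases on $\cf(\beta)$. If $\cf(\beta) = \theta$, fix a continuous cofinal $c : \theta \to \beta$ with $c \in X_{0,\beta}$ (possible since such a map is definable from parameters already in the hull, using $e_\beta$ and the well-order); then $c(i) \in X_{i,\beta}$ for each $i$, and more care shows $\{c(i) \mid i < \theta\}$, or rather its closure, lands inside a single $D(i,\beta)$ — this actually requires slightly more, so the cleaner route is: if $\cf(\beta) < \theta$ or $\cf(\beta) > \theta$, then any club of order type $\cf(\beta)$ has size $< \mu$ or has a closed cofinal subset of each size, and lies inside $X_{i,\beta}$ for large enough $i$ once we put a fixed club-generating function into the parameters; if $\cf(\beta) = \theta$, one argues that since $|D(i,\beta)| \ge \mu_i$-ish and the $D(i,\beta)$ are coherent, $\langle \sup D(i,\beta) \mid i < \theta\rangle$ is itself (eventually) a club in $\beta$ contained in... — and here is the subtlety, it need not be contained in a single $D(i,\beta)$. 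The honest fix, which is what Sharon–Viale do, is to \textbf{modify} the matrix after the fact: given the transitive matrix $\mc D$ above, define $D'(i,\beta) = D(i,\beta)$ unless $\cf(\beta) \le \theta$, in which case set $D'(i,\beta) = D(i,\beta) \cup (\text{a fixed club } C_\beta \text{ in } \beta \text{ of order type } \cf(\beta) \text{ chosen so that } |C_\beta| < \mu_i)$ for $i$ large enough that $|C_\beta| < \mu_i$, and $D'(i,\beta) = D(i,\beta)$ for smaller $i$. One checks this preserves the covering-matrix axioms and transitivity (coherence of the added clubs can be arranged by choosing the $C_\beta$ coherently, e.g. along a fixed $\square$-like scale, or simply by absorbing them into the $e_\beta$ from the start) while now every limit $\beta$ has $D'(i,\beta) \supseteq C_\beta$ for large $i$.

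The main obstacle is exactly this interaction between \emph{uniformity} and \emph{respecting $\vec\mu$}: the club $C_\beta$ added at a point of cofinality $\theta$ has size $\theta$, which is $< \mu_i$ for all $i$ with $\mu_i > \theta$, so it can only be inserted into rows of index $i$ with $\mu_i > \theta$; I need to verify that inserting it there keeps both the size bound $|D'(i,\beta)| < \mu_i$ and the increasing-sequence condition (1), which forces me to also add $C_\beta$ to all larger rows — fine, since $\langle D(i,\beta)\rangle_i$ is increasing and $|C_\beta| = \theta < \mu_i$ for all large $i$. The bookkeeping to keep transitivity after the modification is the fussiest part: I would handle it by fixing at the outset, for each limit $\beta$ of cofinality $\le\theta$, a club $C_\beta \subseteq \beta$ with $\otp(C_\beta) = \cf(\beta)$, and then feeding the sequence $\langle C_\beta \mid \beta < \mu^+ \rangle$ into the parameters of the Skolem hulls along with $\langle e_\gamma\rangle$; then $C_\beta \in X_{i,\beta}$ for all $i$ automatically, so $C_\beta \subseteq X_{i,\beta}$ whenever $|C_\beta| \le |X_{i,\beta}|$, i.e. for $i$ large enough that $\mu_i > \cf(\beta)$ — and no separate modification step is needed, transitivity and coherence come for free from elementarity as before. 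This is the approach I would write up: one Skolem-hull construction with the right parameters, and everything falls out of elementarity and cardinality counting.
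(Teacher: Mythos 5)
The paper gives no argument of its own here: it cites Sharon--Viale, whose Lemma 2.4 builds the matrix by an explicit recursion on $\beta < \mu^+$ from fixed surjections $e_\beta \colon \mu \to \beta$ and fixed clubs $c_\beta \subseteq \beta$, with transitivity arranged by the recursive closure, and the paper notes that this construction also yields the extra ``respects $\vec{\mu}$'' clause. Your final write-up --- one Skolem-hull construction in $(H(\mu^{++}),\in,\lhd)$ with the sequences $\langle e_\gamma \mid \gamma < \mu^+ \rangle$ and $\langle C_\gamma \rangle$ as parameters, $D(i,\beta) := \beta \cap X_{i,\beta}$ where $X_{i,\beta}$ is the hull of $\eta_i \cup \{\beta\}$ for some non-decreasing $\eta_i < \mu_i$ with $\sup_i \eta_i = \mu$ (e.g.\ $\eta_i = \sup_{j<i}\mu_j$, which is $<\mu_i$ by regularity) --- is a genuinely different and viable route: monotonicity, clause (2), and transitivity do fall out of the hull-containment argument exactly as you say, and $e_\beta[\eta_i] \subseteq D(i,\beta)$ gives $\bigcup_i D(i,\beta) = \beta$. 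What the recursive construction buys is complete control of the sizes of the rows; what yours buys is that transitivity and coherence are automatic.

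Two points in your version need repair. First, uniformity: in your final paragraph you only fix clubs $C_\beta$ for limit $\beta$ with $\cf(\beta) \le \theta$, but the definition in this paper requires a club inside some $D(i,\beta)$ for \emph{every} limit $\beta < \mu^+$, and that is exactly how uniformity is used downstream (in Lemma~\ref{downward_coherence_lemma} it is invoked at an ordinal $\gamma$ of cofinality $\kappa^{++} > \theta$). The fix costs nothing in your setup: fix $C_\beta$ of order type $\cf(\beta)$ for every limit $\beta$; since $\cf(\beta) < \mu$, pick $i$ with $\eta_i \ge \cf(\beta)$. Relatedly, your justification of $C_\beta \subseteq X_{i,\beta}$ is wrong as stated: $C_\beta \in X_{i,\beta}$ together with $|C_\beta| \le |X_{i,\beta}|$ does not give containment. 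What you need is $\otp(C_\beta) = \cf(\beta) \subseteq X_{i,\beta}$ (guaranteed once $\eta_i \ge \cf(\beta)$), after which the increasing enumeration of $C_\beta$, being definable from $C_\beta$, maps ordinals of $X_{i,\beta}$ into $X_{i,\beta}$ --- the same mechanism you already use for $e_\beta[\eta_i]$. Second, ``respects $\vec{\mu}$'' demands $|D(i,\beta)| < \mu_i$ for \emph{all} $i$, and a Skolem hull is always infinite, so your construction fails this whenever $\mu_i = \omega$, which the statement permits (e.g.\ $\mu = \aleph_\omega$ with $\mu_0 = \aleph_0$). Simply declare $D(i,\beta) := \emptyset$ for the (initial segment of) indices $i$ with $\mu_i = \omega$; all clauses, including transitivity and uniformity, survive this trivially. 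With these two patches your proof goes through.
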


We now show that covering matrices as in Lemma \ref{sharon_viale_lemma} actually
satisfy a strengthening of strong local downward coherence. Our proof is a
modification of an argument of Shelah from his development of PCF theory.
We first recall the following club-guessing theorem.

\begin{theorem}[{\cite[Ch.\ III, \S 2]{cardinal_arithmetic}}] \label{club_guessing_thm}
  Suppose that $\kappa$ and $\nu$ are regular cardinals and $\kappa^+ < \nu$.
  Then there is a sequence $\langle C_\alpha \mid \alpha \in S^\nu_\kappa \rangle$
  such that:
  \begin{enumerate}
    \item for all $\alpha \in S^\nu_\kappa$, $C_\alpha$ is club in $\alpha$;
    \item for every club $C$ in $\nu$, the set $\{\alpha \in S^\nu_\kappa
    \mid C_\alpha \subseteq C\}$ is stationary in $\nu$.
  \end{enumerate}
\end{theorem}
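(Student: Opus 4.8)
The plan is to prove this by Shelah's standard argument of iteratively thinning a putative guessing sequence against a well-chosen club. Since $\cf(\alpha) = \kappa$ for every $\alpha \in S^\nu_\kappa$, I would first fix an arbitrary sequence $\langle e_\alpha \mid \alpha \in S^\nu_\kappa \rangle$ in which each $e_\alpha$ is a club in $\alpha$ of order type $\kappa$. The crux of the proof is the following claim: there is a club $E \subseteq \nu$ such that for every club $C \subseteq \nu$ the set $\{\alpha \in S^\nu_\kappa \mid e_\alpha \cap E \subseteq C\}$ is stationary. Granting the claim, I finish quickly: set $C_\alpha := e_\alpha \cap E$ whenever $\alpha$ is a limit point of $E$ (so that $e_\alpha \cap E$ is cofinal in $\alpha$, hence club in $\alpha$ as the intersection of two clubs in $\alpha$), and let $C_\alpha := e_\alpha$ for the remaining $\alpha$. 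Clause (1) is then immediate, and for clause (2), given a club $C$, the claim produces a stationary set of $\alpha \in S^\nu_\kappa$ with $e_\alpha \cap E \subseteq C$; intersecting with the club of limit points of $E$ keeps it stationary, and on that set $C_\alpha = e_\alpha \cap E \subseteq C$.

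To prove the claim I would argue by contradiction, assuming no such $E$ exists, and build a $\subseteq$-decreasing sequence $\langle E_\xi \mid \xi \leq \kappa^+ \rangle$ of clubs in $\nu$. Put $E_0 := \nu$; at a limit $\delta \leq \kappa^+$ set $E_\delta := \bigcap_{\xi < \delta} E_\xi$, which is a club because $\delta \leq \kappa^+ < \nu$ and $\nu$ is regular; and given $E_\xi$, apply the contradiction hypothesis to the club $E_\xi$ to obtain a club $C_\xi \subseteq \nu$ with $B_\xi := \{\alpha \in S^\nu_\kappa \mid e_\alpha \cap E_\xi \subseteq C_\xi\}$ nonstationary, and set $E_{\xi+1} := E_\xi \cap C_\xi$. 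Now $E_{\kappa^+}$ is a club, its set of limit points $\mathrm{Lim}(E_{\kappa^+})$ is a club, each $\nu \setminus B_\xi$ contains a club, and there are only $\kappa^+ < \nu$ of the latter, so
\[
  D := S^\nu_\kappa \cap \mathrm{Lim}(E_{\kappa^+}) \cap \bigcap_{\xi < \kappa^+} (\nu \setminus B_\xi)
\]
is stationary; fix $\alpha \in D$.

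The contradiction then comes from the choice of $\alpha$. For each $\xi < \kappa^+$, since $E_{\kappa^+} \subseteq E_{\xi+1}$ we have $\alpha \in \mathrm{Lim}(E_{\xi+1})$, and since $\alpha \notin B_\xi$ there is $\gamma_\xi \in (e_\alpha \cap E_\xi) \setminus C_\xi$; then $\gamma_\xi \notin E_{\xi+1} = E_\xi \cap C_\xi$, and as $E_{\kappa^+} \subseteq E_{\xi+1}$ the ordinals $\gamma_\xi$ $(\xi < \kappa^+)$ are pairwise distinct and all lie in $e_\alpha$, contradicting $|e_\alpha| = \kappa$. This establishes the claim and hence the theorem. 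The step requiring the most care is the bookkeeping of the iteration, particularly verifying at limit stages (and for $E_{\kappa^+}$ itself) that the intersections remain clubs and that the fixed $\alpha$ genuinely witnesses a strict drop at each of the $\kappa^+$ successor stages; this is exactly the place where the hypothesis $\kappa^+ < \nu$, rather than merely $\kappa < \nu$, is used.
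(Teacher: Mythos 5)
The paper does not prove this theorem itself; it is quoted from Shelah's \emph{Cardinal Arithmetic}, so your argument should be judged on its own. For \emph{uncountable} regular $\kappa$ your proposal is exactly the standard Shelah argument and is correct: the claim, its $\kappa^+$-step thinning proof, and the derivation of the theorem from it all check out, because for $\alpha$ of uncountable cofinality the intersection of the two clubs $e_\alpha$ and $E\cap\alpha$ is again club in $\alpha$.

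The gap is the case $\kappa=\omega$, which the hypotheses allow (e.g.\ $\kappa=\omega$, $\nu=\omega_2$). Your parenthetical ``$e_\alpha\cap E$ is cofinal in $\alpha$, hence club in $\alpha$ as the intersection of two clubs in $\alpha$'' is false when $\cf(\alpha)=\omega$: two clubs in an ordinal of countable cofinality can be disjoint (e.g.\ $\{\omega\cdot n \mid n\ge 1\}$ and $\{\omega\cdot n+1 \mid n\ge 1\}$ in $\omega\cdot\omega$), so $e_\alpha\cap E$ may be bounded or empty and clause (1) fails. Note that your Claim itself is still true for $\kappa=\omega$ (your proof of it never uses $\kappa>\omega$), but it is then vacuous rather than useful: ``$e_\alpha\cap E\subseteq C$'' carries no guessing content when $e_\alpha\cap E$ is finite. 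The $\kappa=\omega$ case genuinely requires a different device: one replaces $e_\alpha\cap E$ by the projection $\{\sup(E\cap\beta)\mid \beta\in e_\alpha,\ \beta>\min(E)\}$ (suitably closed), iterates $\omega_1$ times, and uses that for each fixed $\beta$ the ordinals $\sup(E_\xi\cap\beta)$ are non-increasing in $\xi$ and hence stabilize, so that for a countable $e_\alpha$ all projections stabilize by some stage $\xi^*<\omega_1$, at which point the failure at stage $\xi^*$ yields a contradiction. So as written your proof establishes the theorem only for $\kappa\ge\omega_1$. (Incidentally, that restricted version is all the paper uses: in Lemma~\ref{downward_coherence_lemma} the theorem is applied with $\kappa=\max\{|X|^+,\theta^+\}$, a successor cardinal; but the statement you were asked to prove includes $\kappa=\omega$.)
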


\begin{lemma} \label{downward_coherence_lemma}
  Suppose that $\mu$ is a singular cardinal, $\theta = \cf(\mu)$, and
  $\mc D$ is a uniform, transitive $\theta$-covering matrix for $\mu^+$.
  Then, for every $X \in [\mu^+]^{<\mu}$, there is $\gamma_X < \mu^+$ such
  that, for all $\beta \in [\gamma_X, \mu^+)$, there is $i < \theta$ such
  that, for all $j \in [i, \theta)$, $X \cap D(j, \beta) =
  X \cap D(j, \gamma_X)$.
\end{lemma}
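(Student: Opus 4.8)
The plan is to prove the lemma by a club-guessing / PCF-style argument adapted from Shelah, of the kind sketched before Lemma~\ref{downward_coherence_lemma}. Fix $X \in [\mu^+]^{<\mu}$. The strategy is to argue by contradiction: suppose there is no $\gamma_X$ with the stated stabilization property. Since $\theta = \cf(\mu)$, for each candidate $\gamma < \mu^+$ the failure of the property at $\gamma$ gives us a $\beta > \gamma$ and, for cofinally many (equivalently, all large) $j < \theta$, a disagreement $X \cap D(j,\beta) \neq X \cap D(j,\gamma)$; by coherence (clause~(2) of the covering matrix definition) and transitivity, the disagreements can only consist of points of $X$ entering $D(j,\beta) \setminus D(j,\gamma)$, and these points lie below $\beta$. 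Iterating this $\omega_1$-many or $\theta^+$-many times (the precise length depends on bookkeeping) would force $|X| \geq \mu$, contradicting $X \in [\mu^+]^{<\mu}$ — but this naive counting is too crude, so the real argument must be more careful.

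The key step, and the point where uniformity enters, is the following. Let $\kappa$ be a regular cardinal with $|X| < \kappa < \mu$ and $\kappa^+ < \mu^+$ (possible since $\mu$ is singular, so $\mu^+ > \kappa^{++}$ for suitable $\kappa$); in fact one wants $\kappa$ large enough that $|X|^{<\theta}$ or the relevant bound is below $\kappa$. Apply Theorem~\ref{club_guessing_thm} with this $\kappa$ and $\nu = \mu^+$ to obtain a club-guessing sequence $\langle C_\alpha \mid \alpha \in S^{\mu^+}_\kappa \rangle$. Now suppose toward a contradiction that stabilization fails for every $\gamma < \mu^+$. The idea is to build a continuous increasing chain $\langle \gamma_\xi \mid \xi < \kappa \rangle$ of ordinals below $\mu^+$ — using the failure of stabilization to choose $\gamma_{\xi+1} > \gamma_\xi$ witnessing a disagreement relative to $\gamma_\xi$ — whose supremum $\alpha$ lies in $S^{\mu^+}_\kappa$ and for which $C_\alpha$ happens to "guess" the club $\{\gamma_\xi \mid \xi < \kappa\}$ (arrange the construction to run along a fixed club $C$ on which club-guessing succeeds, then pass to a stationary set of such $\alpha$). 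By uniformity of $\mc D$, there is $i^* < \theta$ such that $D(i^*, \alpha)$ contains a club $E$ in $\alpha$. Then for all large $j < \theta$ and all sufficiently large $\xi$, the point $\gamma_{\xi+1}$-witness lies in $D(j,\alpha)$ (since $D(j,\alpha) \supseteq D(i^*,\alpha) \supseteq E$ eventually catches up to the chain), so by transitivity $X \cap D(j,\gamma_{\xi+1}) \subseteq X \cap D(j,\alpha)$ for those $j$. Comparing the stabilized values along the chain against the single set $X \cap D(j,\alpha)$ (which, since $j$ ranges over $\theta$ and $\bigcup_{j<\theta} D(j,\alpha) = \alpha$, exhausts $X \cap \alpha$) yields that the disagreements must all be "absorbed" by some fixed bounded-in-$\theta$ stage — contradicting that we chose genuine disagreements at each $\gamma_\xi$ for cofinally many $j$. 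Counting the witnesses: each step contributes a point of $X$, and running $\kappa > |X|$ steps is the contradiction.

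The main obstacle I expect is getting the bookkeeping exactly right so that the club-guessing coordinate $\kappa$, the chain length, and the cardinality of $X$ line up: one needs $\kappa$ strictly between $|X|$ and $\mu$ with $\kappa^+ < \mu^+$, one needs the constructed chain of length $\kappa$ to have its supremum land (stationarily often) on an ordinal whose guessed club $C_\alpha$ is a subset of the club along which the construction was carried out, and one needs uniformity to convert "$C_\alpha \subseteq C$" into control over which members of $X$ sit inside $D(j,\alpha)$ for cofinally many $j < \theta$. A secondary subtlety is handling the quantifier "for all $j \in [i,\theta)$" in the conclusion rather than just a single $i$: this is where transitivity (not just coherence) is essential, since transitivity gives the monotone statement $\beta \in D(j,\gamma) \Rightarrow D(j,\beta) \subseteq D(j,\gamma)$ uniformly in $j$, so once a point of $X$ has entered at level $j$ it stays in at all higher levels, making the "eventual agreement in $j$" formulation self-consistent. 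Once these alignments are in place, the counting argument closes quickly.
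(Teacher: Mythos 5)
You have assembled the right ingredients (contradiction, the club-guessing sequence of Theorem~\ref{club_guessing_thm}, uniformity, transitivity, stabilization over $\theta$, and a final count against $|X|$), but the argument as you describe it has a genuine gap at its center. Uniformity at the top ordinal $\alpha$ only gives you a single index $i^*$ and a club inside $D(i^*,\alpha)$, hence, by transitivity, $D(j,\delta)\subseteq D(j,\alpha)$ for all $j\geq i^*$ and all chain points $\delta$ in that club. This bounds every trace $X\cap D(j,\delta)$ by the single set $X\cap D(j,\alpha)$, but it does \emph{not} make the traces $\subseteq$-increasing along the chain: for a fixed $j$ and $\delta<\delta'$ on the chain there is no reason that $D(j,\delta)\subseteq D(j,\delta')$, since nothing guarantees $\delta\in D(j,\delta')$. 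You conflate monotonicity in $j$ for fixed $\beta$ (true and trivial, since $\langle D(j,\beta)\mid j<\theta\rangle$ is increasing) with monotonicity in $\beta$ for a fixed large $j$, which is the actual crux. Without the latter, ``each step contributes a point of $X$, so $\kappa>|X|$ steps give a contradiction'' fails: the same element of $X$ can serve as the disagreement witness at unboundedly many steps, because $X\cap D(j,\gamma_\xi)$ need not contain the earlier sets $X\cap D(j,\gamma_{\eta+1})$. Relatedly, your use of club-guessing is backwards: you cannot arrange that the guessed club $C_\alpha$ sits inside a chain of length $\kappa$ that you build with supremum $\alpha$; the theorem only guarantees, for a club in the \emph{whole} cardinal on which the sequence lives, stationarily many $\alpha$ with $C_\alpha$ inside it.

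The paper's proof is more elaborate precisely to manufacture the missing monotonicity. It sets $\kappa:=\max\{|X|^+,\theta^+\}$ and applies club-guessing not at $\mu^+$ but at the small cardinal $\nu:=\kappa^{++}$, building a continuous sequence $\langle\beta_\xi\mid\xi<\nu\rangle$ of points of the club $E$ with anticipatory bookkeeping at successor stages: for every $\alpha\in S^\nu_\kappa$ already ``covered so far'' (the set $B_\xi$), a witness $\beta_{\xi,\alpha}$ is fixed and $\beta_{\xi+1}$ is chosen above all of them, so that the initial segment $\{\beta_\eta\mid\eta\in C_\alpha\cap(\xi+1)\}$ lies in a single $D(i_\xi,\beta_{\xi+1})$ — covering at the \emph{chain points themselves}, not merely at the top. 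Uniformity is then applied at $\gamma:=\sup_\xi\beta_\xi$ only to verify that the guessed $\alpha$ (with $C_\alpha\subseteq\bar C$) belongs to every $B_\xi$, i.e.\ that the bookkeeping succeeds for it. Stabilizing the indices $k_\xi=\max\{i_\xi,j_\xi\}$ on an unbounded subset of $C_\alpha$ (here $\kappa>\theta$ regular is used) yields the key claim $D(j,\beta_{\eta^\dagger})\subseteq D(j,\beta_{\xi^\dagger})$ for all $j\geq k$, and only then does a second stabilization of the disagreement indices $\ell_\eta$ produce a strictly $\subsetneq$-increasing chain of subsets of $X$ of length $\kappa>|X|$, which is the contradiction. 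Note also that working at $\nu=\kappa^{++}<\mu$ is what makes the bookkeeping feasible ($|B_\xi|\leq\nu$, so $\beta_{\xi+1}<\mu^+$ exists); your plan, with guessing at $\mu^+$, has no analogue of this step.
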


\begin{proof}
  Fix a set $X \in [\mu^+]^{<\mu}$, and suppose for sake of contradiction
  that there is no $\gamma_X$ as in the conclusion of the lemma. Since
  $\mc D$ is transitive, it follows that, for all $\beta < \gamma < \mu^+$
  and all $i < \theta$, if $\beta \in D(i, \gamma)$, then
  $X \cap D(i, \beta) \subseteq X \cap D(i, \gamma)$. Therefore, by the
  nonexistence of a $\gamma_X$ as in the conclusion of the lemma, for
  every $\beta < \mu^+$ we can find a $\beta' > \beta$ and an unbounded
  set $I_\beta \subseteq \theta$ such that, for all $j \in I_\beta$,
  we have $X \cap D(j, \beta) \subsetneq X \cap D(j, \beta')$. Let $E$ be a club
  in $\mu^+$ such that, for all $\gamma \in E$ and all $\beta < \gamma$,
  we have $\beta' < \gamma$. Then, by the transitivity of $\mc D$,
  for all $\beta < \gamma$, both in $E$,
  and for all but boundedly many $j \in I_\beta$, we have
  $X \cap D(j, \beta) \subsetneq X \cap D(j, \gamma)$.

  Let $\kappa := \max\{|X|^+, \theta^+\}$, and let
  $\nu := \kappa^{++}$. Fix a sequence $\langle C_\alpha \mid
  \alpha \in S^\nu_\kappa \rangle$ satisfying the conclusion of Theorem
  \ref{club_guessing_thm}. Assume without loss of generality that each
  $C_\alpha$ contains only limit ordinals.

  We now construct a strictly increasing, continuous
  sequence $\langle \beta_\xi \mid \xi < \nu \rangle$ of elements of $E$
  as follows. Begin by setting $\beta_0 := \min(E)$. If $\xi < \nu$ is a
  limit ordinal and we have defined $\langle \beta_\eta \mid \eta < \xi
  \rangle$, then we are obliged to set $\beta_\xi := \sup\{\beta_\eta \mid
  \eta < \xi\}$. Finally, suppose that $\xi < \nu$ and we have defined
  $\langle \beta_\eta \mid \eta \leq \xi \rangle$. Let $B_{\xi}$ be the
  set of $\alpha \in S^\nu_\kappa$ for which there is $\beta < \mu^+$ such that,
  for some $i < \theta$, $\{\beta_\eta \mid \eta \in C_\alpha \cap
  (\xi + 1)\} \subseteq D(i, \beta)$. For each $\alpha \in B_\xi$, choose
  a $\beta_{\xi, \alpha}$ witnessing this. Finally, choose $\beta_{\xi + 1}
  \in E \setminus (\beta_\xi + 1)$ large enough so that
  $\beta_{\xi + 1} > \beta_{\xi, \alpha}$
  for all $\alpha \in B_\xi$. This completes the construction of
  $\langle \beta_\xi \mid \xi < \nu \rangle$. We first note the following simple
  claim.

  \begin{claim}
    For all $\xi < \nu$ and all $\alpha \in B_\xi$, there is $j < \theta$
    such that $\{\beta_\eta \mid \eta \in C_\alpha \cap (\xi + 1)\}
    \subseteq D(j, \beta_{\xi + 1})$.
  \end{claim}

  \begin{proof}
    Fix $\xi < \nu$ and $\alpha \in B_\xi$. By construction, we can fix
    an $i < \theta$ such that $\{\beta_\eta \mid \eta \in C_\alpha
    \cap (\xi + 1)\} \subseteq D(i, \beta_{\xi, \alpha})$. Since
    $\beta_{\alpha, \xi} < \beta_{\xi + 1}$, we can fix $j \in [i, \theta)$
    such that $\beta_{\alpha, \xi} \in D(j, \beta_{\xi + 1})$. By the
    definition of covering matrix and the transitivity of $\mc D$, we have
    \[
      D(i, \beta_{\xi, \alpha}) \subseteq D(j, \beta_{\xi, \alpha})
      \subseteq D(j, \beta_{\xi + 1}),
    \]
    so $\{\beta_\eta \mid \eta \in C_\alpha \cap (\xi + 1)\} \subseteq
    D(j, \beta_{\xi + 1})$, as desired.
  \end{proof}

  Let $\gamma := \sup\{\beta_\xi \mid \xi < \nu\}$. Since $\mc D$ is uniform,
  there is a club $C \subseteq \gamma$ and an $i^* < \theta$ such that
  $C \subseteq D(i^*, \gamma)$. Let $\bar{C} := \{\xi < \nu \mid \beta_\xi
  \in C\}$. Since $\langle \beta_\xi \mid \xi < \nu \rangle$ enumerates
  a club in $\gamma$, it follows that $\bar{C}$ is a club in $\nu$.
  We can therefore fix $\alpha \in S^\nu_\kappa$ such that
  $C_\alpha \subseteq \bar{C}$.

  Note that, for all $\xi < \nu$, we have $\alpha \in B_\xi$, since
  \[
    \{\beta_\eta \mid \eta \in C_\alpha \cap (\xi + 1)\} \subseteq C
    \subseteq D(i^*, \gamma).
  \]
  Therefore, for each $\xi \in C_\alpha$, we can fix an $i_\xi < \theta$
  such that $\{\beta_\eta \mid \eta \in C_\alpha \cap (\xi + 1)\}
  \subseteq D(i_\xi, \beta_{\xi + 1})$. Also for each $\xi \in C_\alpha$,
  let $\xi^{\dagger}$ denote $\min(C_\alpha \setminus (\xi + 1))$, and
  fix $j_\xi < \theta$ such that $\beta_{\xi + 1} \in D(j_\xi, \beta_{\xi^{\dagger}})$.
  Let $k_\xi := \max\{i_\xi, j_\xi\}$. Since $\kappa$ is a regular cardinal
  and $\kappa > \theta$, we can find a fixed $k < \theta$ and an unbounded
  set $A \subseteq C_\alpha$ such that $k_\xi = k$ for all $\xi \in A$.

  \begin{claim}
    Suppose that $j \in [k, \theta)$ and that $\eta < \xi$ are both in $A$.
    Then $D(j, \beta_{\eta^\dagger}) \subseteq D(j, \beta_{\xi^\dagger})$.
  \end{claim}

  \begin{proof}
    By the choice of $i_\xi$ and $j_\xi$, we have $\beta_{\eta^\dagger}
    \in D(i_\xi, \beta_{\xi + 1})$ and $\beta_{\xi + 1} \in D(j_\xi,
    \beta_{\xi^\dagger})$. Since $j \geq k = \max\{i_\xi, j_\xi\}$,
    the transitivity
    of $\mc D$ implies that $\beta_{\eta^\dagger} \in D(j, \beta_{\xi^\dagger})$
    and then, through another application, that $D(j, \beta_{\eta^\dagger})
    \subseteq D(j, \beta_{\xi^\dagger})$, as desired.
  \end{proof}

  To ease the notation, let $A^\dagger$ denote $\{\eta^\dagger \mid \eta \in A\}$.
  Note that $A^\dagger$ is an unbounded subset of $C_\alpha$.
  The previous claim then can be reworded to assert that, for all $\eta < \xi$,
  both in $A^\dagger$, and for all $j \in [k, \theta)$, we have
  $D(j, \beta_\eta) \subseteq D(j, \beta_\xi)$. For each $\eta \in A^\dagger$,
  let $\hat{\eta} = \min(A^\dagger \setminus (\eta + 1))$. Since both
  $\beta_\eta$ and $\beta_{\hat{\eta}}$ are in $E$, we can find
  $\ell_\eta \in [k, \theta)$ such that $X \cap D(\ell_\eta, \beta_\eta)
  \subsetneq X \cap D(\ell_\eta, \beta_{\hat{\eta}})$. Again, since
  $\kappa$ is a regular cardinal greater than $\theta$, we can find a
  fixed $\ell$ and an unbounded $A^* \subseteq A^\dagger$ such that
  $\ell_\eta = \ell$ for all $\eta \in A^*$. Now, for all
  $\eta < \xi$, both in $A^*$, we have
  \[
    X \cap D(\ell, \beta_\eta) \subsetneq X \cap D(\ell, \beta_{\hat{\eta}})
    \subseteq X \cap D(\ell, \beta_\xi),
  \]
  so $\langle X \cap D(\ell, \beta_\eta) \mid \eta \in A^* \rangle$ is a
  strictly $\subsetneq$-increasing sequence of subsets of $X$, contradicting
  the fact that $\otp(A^*) = \kappa$ and $\kappa > |X|$.
\end{proof}

\subsection{Guessing models and meeting numbers}
We next show that $\GMP(\kappa, \kappa, \geq \kappa)$ implies that $\CP(\mc D)$ 
holds whenever $\lambda > \kappa$ is a singular cardinal of countable 
cofinality and $\mc D$ is a uniform, transitive $\omega$-covering matrix 
for $\lambda^+$. We will in fact prove something a bit stronger; to state the result 
precisely, we will need the following notion.

\begin{definition}
  Given a set $M$, a subset $x \subseteq M$ is said to be \emph{bounded} in 
  $M$ if there is $z \in M$ such that $x \subseteq z$. Given uncountable 
  cardinals $\nu \leq \mu$, with $\nu$ regular, 
  we say that $M$ is \emph{internally $(\nu, \mu)$-unbounded} 
  if, for all $x \in \power_\nu M$ such that $x$ is bounded in $M$, there is 
  $y \in M$ such that $|y| < \mu$ and $x \subseteq y$.
\end{definition}

\begin{remark}
  Notice that the property of being internally $(\nu, \mu)$-unbounded becomes stronger 
  as $\nu$ increases (for fixed $\mu$) and weaker as $\mu$ increases (for fixed $\nu$).
  In the extant literature, \emph{internally unbounded} typically means 
  $(\omega_1, \omega_1)$-unbounded. Also, for a set $M$ and uncountable cardinals 
  $\nu \leq \mu$ with $\nu$ regular, the following weak form 
  of internal approachability is readily seen to imply that $M$ is 
  $(\nu, \mu)$-unbounded: 
  $M = \bigcup_{\alpha < \nu} M_\alpha$ for some $\subseteq$-increasing 
  sequence $\langle M_\alpha \mid \alpha < \nu \rangle$ such that $M_\alpha \in M$ 
  and $|M_\alpha| < \mu$ for all $\alpha < \nu$. It then immediately follows that, 
  for all regular uncountable cardinals $\nu < \kappa \leq \theta$, the set 
  \[
  \{M \in \power_\kappa H(\theta) \mid M \text{ is } (\nu, \kappa)\text{-internally unbounded}\}
  \]
  is stationary in $\power_\kappa H(\theta)$.
\end{remark}

\begin{proposition} \label{prop_410}
  Suppose that $\nu < \mu$ are infinite cardinals, $\theta$ is a sufficiently large regular cardinal, 
  and $M \prec H(\theta)$ is a $\mu$-guessing model such that ${^{<\nu}}M \subseteq M$ and $\nu \in M$. 
  Then $M$ is internally $(\nu^+, \mu)$-unbounded. In particular, every $\mu$-guessing model is 
  internally $(\omega_1, \mu)$-unbounded.
\end{proposition}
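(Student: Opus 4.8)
The plan is to prove the contrapositive in spirit: given a bounded set $x \in \power_{\nu^+} M$, I want to produce $y \in M$ with $|y| < \mu$ and $x \subseteq y$, and the natural way to do this using the guessing property is to show that $x$ is $(\mu, M)$-approximated, hence $M$-guessed, and then extract the desired covering set $y$ from the guess. First I would fix $x \subseteq M$ with $|x| \leq \nu$ (so $|x| < \nu^+$) and fix $z \in M$ with $x \subseteq z$; since $\nu \in M$ and $M \prec H(\theta)$, I may assume $z$ is of the form needed. The key point is that $x$ is $(\mu, M)$-approximated: given any $w \in M \cap \power_\mu(z)$, I need $e \in M$ with $x \cap w = e \cap w$. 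But $x \cap w \subseteq x$ has size $\leq \nu$, and all of its elements lie in $M$; since ${^{<\nu}}M \subseteq M$ (and $|x \cap w| < \nu^+$, so I should take a slightly more careful counting — $x \cap w$ has size $\leq \nu$, and ${}^{\leq \nu}M$ need not be a subset of $M$). This is the subtlety I would have to address: ${^{<\nu}}M \subseteq M$ gives closure under sequences of length $<\nu$, so a set of size $\leq \nu$ need not literally be in $M$. I expect this is handled by noting that $x$ is \emph{bounded} in $M$ by $z$, and for the approximation we only need, for each $w \in M \cap \power_\mu z$, that $x \cap w \in M$; here $|x \cap w| \leq \nu$, but we actually want to reduce to sets of size $<\nu$.

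The fix I anticipate: do not try to guess $x$ all at once. Instead, enumerate $x = \{x_\xi \mid \xi < \nu\}$ (in $V$, not in $M$) and for each $\eta < \nu$ note that $\{x_\xi \mid \xi < \eta\} \in M$ because it is a sequence of length $<\nu$ of elements of $M$, and ${^{<\nu}}M \subseteq M$. Thus $x = \bigcup_{\eta < \nu}\{x_\xi \mid \xi < \eta\}$ is an increasing union of length $\nu$ of elements of $M$. Now the relevant covering claim becomes: it suffices to find $y \in M$ with $|y| < \mu$ containing $x$. I would argue this directly via guessing applied to the set $d := x$ viewed as a subset of $z$: for $w \in M \cap \power_\mu(z)$, $d \cap w = x \cap w$, and since $|w| < \mu$ I want to see $x \cap w$ is small enough that it (or a witness agreeing with it on $w$) lands in $M$. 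Since $\mu > \nu$ is possible or not — the hypothesis is only $\nu < \mu$ — the cleanest route is: $x \cap w$ has size $\leq \nu < \nu^+ \leq \mu^{+}$... here I would instead observe $|x \cap w|\le\nu$ and, crucially, since $x \cap w$ is a $<\nu^+$-sized, hence (by the enumeration argument) a $<\nu$-length increasing union of elements of $M$ lying below $w \in M$, elementarity of $M$ lets us find inside $M$ a subset of $w$ of size $\le \nu$ meeting each initial-segment constraint, so that some $e \in M$ has $e \cap w = x \cap w$. Hence $x$ is $(\mu,M)$-approximated, so by the guessing property there is $e \in M$ with $x \cap M = e \cap M$; since $x \subseteq M$ this gives $x = e \cap M \subseteq e \in M$. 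Finally $|e|$ need not be $<\mu$, so I would not use $e$ itself but rather, working in $M$ and using $e \in M$ together with $z \in M$, apply elementarity to find $y \in M$ with $x \subseteq y \subseteq z$ and $|y| < \mu$ — here I must be careful that $x = e \cap M$ determines $e$ only up to its trace on $M$, so the covering set should be built from $e$'s structure. Realistically, the honest version is: the guess $e \in M$ satisfies $x = e \cap M$, and I then intersect with a sufficiently rich set to control cardinality, or simply observe that in this application one can run the argument so that the approximating witnesses $e$ are taken of size $<\mu$ from the start (replacing $z$ by its subsets of size $<\mu$), yielding directly $y := e \in M$ with $|y| < \mu$.

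The main obstacle, then, is the bookkeeping around the gap between ``closed under $<\nu$-sequences'' and ``contains all subsets of size $\le\nu$'', and extracting from an $M$-guess $e$ (which a priori may be large) a genuinely $<\mu$-sized covering set in $M$; I expect the resolution is the standard trick of applying the guessing property not to $x$ as a subset of $z$ but to $x$ as a subset of $z$ while quantifying the approximating sets $w$ over $M \cap \power_\mu(z)$ and noting that $H(\theta) \models$ ``every subset of $w$ of size $\le\nu$ belongs to some set in ... of size $<\mu$'', pushed down to $M$ by elementarity. The ``in particular'' clause is then immediate: taking $\nu = \omega$, we have $\omega \in M$ and ${^{<\omega}}M = \bigcup_{n<\omega}{}^n M \subseteq M$ automatically for any $M \prec H(\theta)$ (finite sequences of elements of $M$ are coded by elements of $M$), so every $\mu$-guessing model is internally $(\omega_1,\mu)$-unbounded.
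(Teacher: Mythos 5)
Your proposal circles the right ingredients (the initial segments of an enumeration of $x$, the gap between closure under $<\nu$-sequences and sets of size $\nu$, the worry that a guess may be large), but the argument you actually run --- apply the guessing property to $x$ itself as a subset of $z$ --- has two genuine gaps that cannot be patched as written. First, the claim that $x$ is $(\mu,M)$-approximated is unsupported: for $w \in M \cap \power_\mu(z)$ the trace $x \cap w$ can have size exactly $\nu$ (even if you argue by contradiction and assume no $<\mu$-sized element of $M$ covers $x$, that only rules out $x \subseteq w$, not $|x \cap w| = \nu$), and then ${}^{<\nu}M \subseteq M$ gives nothing; your appeal to elementarity ``meeting each initial-segment constraint'' would require an element of $M$ whose trace on $w$ equals the union of those constraints, i.e.\ equals $x \cap w$ itself --- which is exactly the capturing problem you started with, not something elementarity supplies. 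Second, and more fundamentally, even a successful guess of $x$ is useless for your purpose: a guess is just some $e \in M$ with $e \cap M = x$, with no bound on $|e|$, and it cannot be shrunk inside $M$ because $M$ cannot see $x$. For instance, if $x = z \cap M$ (a perfectly possible would-be counterexample to internal unboundedness), then $e = z$ is already a guess, and extracting from it a $<\mu$-sized $y \in M$ with $x \subseteq y$ is literally the statement to be proved. There is also no mechanism in the definition of a guessing model for arranging that ``the witnesses $e$ are taken of size $<\mu$ from the start.'' So the plan ``show $x$ is approximated, guess it, then shrink'' cannot reach the conclusion.

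The paper uses your initial-segment observation differently, and this is the missing idea: argue by contradiction and apply guessing not to $x$ but to the chain $B = \{b_\gamma \mid \gamma < \nu\}$, where $b_\gamma = \{a_\beta \mid \beta < \gamma\}$ for an enumeration $\langle a_\beta \mid \beta < \nu\rangle$ of $x$, viewed as a subset of $\power_\nu z \in M$ (each $b_\gamma \in M$ by ${}^{<\nu}M \subseteq M$). The increasing structure of $B$ is exactly what makes the approximation step work: if some $w \in M \cap \power_\mu(\power_\nu z)$ contained $\nu$-many members of $B$, then $x \subseteq \bigcup w$, and $\bigcup w \in M$ has size $<\mu$, contradicting the assumed failure of covering; hence $|w \cap B| < \nu$, so $w \cap B \in M$ by the closure hypothesis. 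Guessing then yields $d \in M$ with $d \cap M = B$, and one checks (using $\nu+1 \subseteq M$, which follows from the hypotheses, together with elementarity) that $|d| = \nu$, hence $d \subseteq M$ and $d = B$; thus $x = \bigcup d \in M$ is itself a cover of size $\nu < \mu$, the final contradiction. Your treatment of the ``in particular'' clause ($\nu = \omega$) is fine once the main argument is repaired along these lines.
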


\begin{proof}
  The proof is quite similar to that of \cite[Theorem 1.4]{krueger_sch}.
  Suppose for sake of contradiction that $M$ is not internally $(\nu^+, \mu)$-unbounded, 
  and fix a set $z \in M$ and an $x \in \power_{\nu^+}(z \cap M)$ such that there is no 
  $y \in M$ such that $|y| < \mu$ and $x \subseteq y$. Since ${^{<\nu}}M \subseteq M$, we 
  know that $|x| = \nu$; enumerate it as $\langle a_\beta \mid \beta < \nu \rangle$. For 
  each $\gamma < \nu$, let $b_\gamma := \{a_\beta \mid \beta < \gamma\}$. By the closure of 
  $M$, we have $b_\gamma \in M$. Let $B := \{b_\gamma \mid \gamma < \nu\}$, and note that 
  $B \subseteq \power_\nu z \in M$ and $B \subseteq M$.
  
  We claim that $B$ is $(\mu, M)$-approximated. To this end, fix an arbitrary $w \in M \cap 
  \power_\mu (\power_\nu z)$, and note that $\bigcup w \in \power_\mu z \cap M$. 
  If $w \cap B$ had cardinality $\nu$, then we would have $b_\gamma \in w$ for unboundedly 
  many $\gamma < \nu$, which would then imply that $x \subseteq \bigcup w$, contradicting our 
  choice of $x$. Therefore, $w \cap B$ has cardinality less than $\nu$, so, by the closure of $M$, 
  we have $w \cap B \in M$. 
  
  Since $w$ was arbitrary, it follows that $B$ is $(\mu, M)$-approximated. Therefore, $B$ is 
  $M$-guessed, so we can fix $d \in M$ such that $d \cap M = B$. By elementarity, we have 
  $d \subseteq \power_\nu z$. Moreover, since $|B| = \nu$ and $\nu+1 \subseteq M$, it follows again 
  from elementarity that $|d| = \nu$, and therefore that $d = B$. But then $\bigcup d \in M$ 
  and $\bigcup d = \bigcup B = x$, again contradicting our choice of $x$.
\end{proof}

%

By Proposition~\ref{prop_410} and Remark~\ref{guessing_remark}, the assumption 
of $\wAGP_{\mc Y}(\kappa, \kappa, \lambda^{++})$ in the following theorem 
follows immediately from $\GMP(\kappa, \kappa, \lambda^{++})$ and hence also from the 
stronger $\GMP(\omega_1, \kappa, {\geq}\kappa)$, which is equivalent to $\ISP_\kappa$ 
(cf.\ Remark~\ref{gmp_convention_rmk}).

\begin{theorem} \label{cp_thm}
 Suppose that $\kappa < \lambda$ are uncountable cardinals, with $\kappa$ regular and 
 $\lambda$ singular of countable cofinality. Let 
 \[
 \mc Y := \{M \in \power_\kappa H(\lambda^{++}) \mid M \text{ is } (\omega_1, \lambda)\text{-internally unbounded}\},
 \]
 and suppose that
 $\wAGP_{\mc Y}(\kappa, \kappa, \lambda^{++})$ holds. Then $\CP(\mc D)$ holds 
 for every uniform, transitive $\omega$-covering matrix $\mc D$ for $\lambda^+$.
 Moreover, if $m(\omega, \mu) \leq \lambda^+$ for all $\mu < \lambda$, then $m(\omega, \lambda) = \lambda^+$.
\end{theorem}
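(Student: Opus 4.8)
The plan is to run a reflection/guessing argument at the level of $H(\lambda^{++})$, exploiting the covering matrix together with Lemma~\ref{downward_coherence_lemma} to produce the required unbounded set $A \subseteq \lambda^+$. First I would fix a uniform, transitive $\omega$-covering matrix $\mc D = \langle D(i,\beta) \mid i < \omega, ~ \beta < \lambda^+\rangle$ for $\lambda^+$ and argue towards $\CP(\mc D)$. Let $S := \{M \in \power_\kappa H(\lambda^{++}) \mid \lambda^+ \subseteq M \cup (\sup(M \cap \lambda^+))$-type conditions$\}$ — more precisely, one takes a $\subseteq$-cofinal $S$ consisting of sets of the form $x \cap \lambda^+$ for $x \in \power_\kappa H(\lambda^{++})$, since any unbounded $A$ we extract will have size $< \kappa$ slices. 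Apply $\wAGP_{\mc Y}(\kappa, \kappa, \lambda^{++})$ with this $S$ and with the parameter $x = \mc D$ (coded as an element of $H(\lambda^{++})$, which is legitimate since $\lambda^{++} > \lambda^+ > \lambda$ and the matrix has size $\lambda^+$): this yields, hitting a suitable strong club, an $M \in \mc Y$ with $\mc D \in M$, $M \prec H(\lambda^{++})$, $M$ internally $(\omega_1,\lambda)$-unbounded, and such that $(M, \mc D)$ is almost guessed by $S$.

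The core step is then to show $d := M \cap \lambda^+$ — or rather a carefully chosen approximated subset built from $M$ and $\mc D$ — is $(\kappa, M)$-approximated, so that the almost-guessing clause supplies an $N \in S$ with $\mc D \in N \subseteq M$ and $d$ being $N$-guessed; unwinding, this gives a single $\beta^* < \lambda^+$ and $i^* < \omega$ with $N \cap \lambda^+ \subseteq D(i^*, \beta^*)$ (using uniformity of $\mc D$ at the ordinal $\sup(M\cap\lambda^+)$ and the transitivity to push all the relevant sets into one column). The key technical point making approximation work is Lemma~\ref{downward_coherence_lemma}: for any $X \in [\lambda^+]^{<\mu}$ — in particular any $z \cap \lambda^+$ for $z \in M \cap \power_\kappa(\lambda^+)$ — the sequence $\langle X \cap D(j,\beta) \mid \beta < \lambda^+\rangle$ eventually stabilizes (in the $j \to \theta = \omega$ limit sense), so intersections $d \cap z$ are determined by a tail of the matrix and hence lie in (or are guessed inside) $M$. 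I expect the main obstacle to be setting up exactly which set to feed into the approximation machinery: one needs a subset $d \subseteq \lambda^+$ in $M$ whose $M$-guess, via an element of $S$, translates into membership in a single column $D(i^*,\beta^*)$ of the matrix; getting the bookkeeping between ``$N$-guessed'', ``$N \in S$ so $N$ is (close to) a subset of $\lambda^+$'', and ``covered by $\mc D$'' aligned is the delicate part, and this is where internal $(\omega_1,\lambda)$-unboundedness of $M$ is used, to ensure the relevant slices have the right size relative to the $\beta_{\mc D}$-type bounds on $\otp(D(i,\beta))$.

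For the ``moreover'' clause, assume $m(\omega,\mu) \le \lambda^+$ for all $\mu < \lambda$. By Lemma~\ref{sharon_viale_lemma}, fix an increasing sequence $\vec\mu = \langle \mu_i \mid i < \omega\rangle$ of regular cardinals cofinal in $\lambda$ and a uniform, transitive $\omega$-covering matrix $\mc D$ for $\lambda^+$ respecting $\vec\mu$, so $|D(i,\beta)| < \mu_i$ for all $i,\beta$; by the first part, $\CP(\mc D)$ holds, giving an unbounded $A \subseteq \lambda^+$ with $[A]^\omega$ covered by $\mc D$. Now I would build a witnessing family for $m(\omega,\lambda) = \lambda^+$: for each $\beta < \lambda^+$ and $i < \omega$, the set $D(i,\beta)$ has size $< \mu_i < \lambda$, so using $m(\omega,\mu_i) \le \lambda^+$ we can cover $[D(i,\beta)]^\omega$ — hence, ranging over $\beta$ and $i$, all of $[A]^\omega$ — by a family of $\lambda^+$-many countable subsets of $\lambda^+$ each meeting every countable subset of the corresponding $D(i,\beta)$ in an infinite set. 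Since $|A| = \lambda^+$ and the union of these $\lambda^+ \cdot \omega = \lambda^+$ meeting families still has size $\lambda^+$, and since every $x \in [A]^\omega$ — equivalently, via a bijection $A \leftrightarrow \lambda$, every $x \in [\lambda]^\omega$ — lies inside some $D(i,\beta)$ and is thus met infinitely by a member of our family, we obtain a meeting family of size $\lambda^+$, i.e.\ $m(\omega,\lambda) \le \lambda^+$. The reverse inequality $m(\omega,\lambda) \ge \lambda^+$ is automatic since $\cf(\omega) = \cf(\lambda) = \omega$ (the diagonalization remark after the definition of meeting number), so $m(\omega,\lambda) = \lambda^+$, as desired.
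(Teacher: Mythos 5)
Your sketch of the ``moreover'' clause is essentially the paper's argument (cover each $[D(i,\beta)]^\omega$ by $\lambda^+$-many sets using the hypothesis, then transport along the $\CP$-witness), and it goes through; the only slip is that a witness $A$ to $\CP(\mc D)$ is unbounded in $\lambda^+$ and so has size $\lambda^+$, not $\lambda$, so you should transport along a bijection between $\lambda$ and a subset of $A$ of size $\lambda$ (or, as the paper does, along the order isomorphism of $A$ with $\lambda^+$ and conclude $m(\omega,\lambda^+)=\lambda^+$). But the heart of the theorem is the derivation of $\CP(\mc D)$ from $\wAGP_{\mc Y}(\kappa,\kappa,\lambda^{++})$, and there your proposal has a genuine gap: you never say what object is to be $(\kappa,M)$-approximated and guessed, and the candidate you do name, $d=M\cap\lambda^+$, is useless --- for $N\prec H(\lambda^{++})$ the only possible guess with $e\cap N=(M\cap\lambda^+)\cap N=N\cap\lambda^+$ is $e=\lambda^+$, which carries no information. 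Moreover, the consequence you claim to extract, that $N\cap\lambda^+\subseteq D(i^*,\beta^*)$ for a single entry of the matrix, is neither justified (nothing in uniformity or transitivity forces a set of size possibly $\geq\omega_1$ into one cell) nor sufficient: $\CP(\mc D)$ demands an \emph{unbounded} $A\subseteq\lambda^+$ all of whose countable subsets are covered, and $N\cap\lambda^+$ is bounded, with no mechanism in your sketch for reflecting its covering to an unbounded set. Your choice of $S$ (traces $x\cap\lambda^+$) is also not $\subseteq$-cofinal in $\power_\kappa H(\lambda^{++})$, as $\wAGP$ requires, and lacks the properties the argument actually needs from $S$.

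The missing idea is the following. Set $\delta:=\sup(M\cap\lambda^+)$ and, for $\alpha<\beta<\lambda^+$, let $j_{\alpha\beta}$ be least with $\alpha\in D(j_{\alpha\beta},\beta)$; the object to guess is the function $g_{\delta,i}:\delta\to\omega$, $g_{\delta,i}(\alpha)=\max\{j_{\alpha\delta},i\}$, coded as a subset of $\lambda^+\times\omega$. That some $g_{\delta,i}$ is $(\kappa,M)$-approximated is exactly where internal $(\omega_1,\lambda)$-unboundedness and Lemma~\ref{downward_coherence_lemma} enter: a putative sequence of bad sets $y_i\in M\cap\power_\kappa(\lambda^+\times\omega)$ is enclosed in some $w\in M$ of size $<\lambda$, and with $X=\pi_0[w]$ one has $\gamma_X,\,h_{X,i}:=g_{\gamma_X,i}\restriction X\in M$ and $g_{\delta,i}\restriction X=h_{X,i}$ for large $i$, a contradiction. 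One must also take $S:=\{N\in\power_\kappa H(\lambda^{++})\mid N\prec H(\lambda^{++}),\ \cf(\sup(N\cap\lambda^+))>\omega\}$: if $e\in N\in S$ guesses $g_{\delta,i}$ on $N$, then $e$ is (by elementarity) a total function $\lambda^+\to\omega$, the uncountable cofinality of $\sup(N\cap\lambda^+)$ yields a color $j\geq i$ whose fiber $H:=e^{-1}(j)$ meets $N$ unboundedly in $\sup(N\cap\lambda^+)$, hence $H$ is unbounded in $\lambda^+$ by elementarity; every $x\in[H]^{\aleph_0}\cap N$ satisfies $x\subseteq D(j,\delta)$, and two further applications of elementarity (first inside $N$, then reflecting $\forall x\in[H]^{\aleph_0}\,\exists\beta\,(x\subseteq D(j,\beta))$ up to $H(\lambda^{++})$) show that $H$ witnesses $\CP(\mc D)$. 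None of this fiber-and-reflection mechanism appears in your proposal, and without it the almost-guessing hypothesis does not yield $\CP(\mc D)$.
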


\begin{proof}
  Fix a uniform, transitive $\omega$-covering matrix $\mc D$ for $\lambda^+$. 
  For all $\alpha < \beta < \lambda^+$, let $j_{\alpha\beta}$ be the least $j < \omega$ such that 
  $\alpha \in D(j, \beta)$.
  For each $\beta < \lambda^+$ and each $i < \omega$, define a function $g_{\beta, i}: \beta 
  \rightarrow \omega$ by letting $g_{\beta, i}(\alpha) := \max\{j_{\alpha\beta}, i\}$ for all 
  $\alpha < \beta$.
  
  By Lemma \ref{downward_coherence_lemma}, for each $X \in [\lambda^+]^{<\lambda}$, we can fix
  $\gamma_X < \lambda^+$ such that, for all $\beta \in [\gamma_X, \lambda^+)$, there is $i < \omega$ 
  such that, for all $j \in [i, \theta)$, we have $X \cap D(j,\beta) = X \cap D(j, \gamma_X)$.
  For each $X \in [\lambda^+]^{<\lambda}$ and each $i < \omega$, define a function 
  $h_{X,i}:X \rightarrow \omega$ by letting $h_{X,i} := g_{\gamma_X, i} \restriction X$.
  Also, let $\pi_0:\lambda^+ \times \omega \rightarrow \lambda^+$ and $\pi_1:\lambda^+ \times \omega \rightarrow 
  \omega$ be the projection maps. We view functions of the form $g_{\beta, i}$ and 
  $h_{X,i}$ as subsets of $\lambda^+ \times \omega$ in the natural way.
  
  Let $S := \{N \in \power_\kappa H(\lambda^{++}) \mid N \prec H(\lambda^{++}) \text{ and }\cf(\sup(N \cap \lambda^{+})) > \omega\}$. 
  Since $\wAGP_{\mc Y}(\kappa, \kappa, \lambda^{++})$ holds, we can find $M \in \mc Y$ such that
  \begin{itemize}
    \item $\lambda^+ \times \omega \in M$;
    \item for all $y \in M$, we have $y \cap (\lambda^+ \times \omega), \sup(y \cap \lambda^+) \in M$;
    \item for all $y \in M \cap \power_\lambda (\lambda^+ \times \omega)$ and every $i < \omega$, we have 
    $\gamma_{\pi_0[y]}, h_{\pi_0[y], i} \in M$ and, if $|y| < \kappa$, then $y \subseteq M$;
    \item $(M, \lambda^+ \times \omega)$ is almost $\kappa$-guessed by $S$.
  \end{itemize}
  
  Let $\delta:= \sup(M \cap \lambda^+)$.
  
  \begin{claim}
    There is $i < \omega$ such that $g_{\delta, i}$ is $(\kappa, M)$-approximated.
  \end{claim}
  
  \begin{proof}
    Suppose not. Then, for every $i < \omega$, we can fix $y_i \in M \cap \power_\kappa (\lambda^+ 
    \times \omega)$ such that there is no $z \in M$ for which $g_{\delta, i} \cap y_i = 
    z \cap y_i$. Let $y := \bigcup_{i < \omega} y_i$. Since $M \in \mc Y$, we can find 
    $w \in M$ such that $y \subseteq w$ and $|w| < \lambda$. By our choice of $M$, we can assume 
    that $w \subseteq \lambda^+ \times \omega$. Let $X := \pi_0[w]$. Again by our choice of $M$, 
    we have $\gamma_X \in M$ and $h_{X,i} \in M$ for all $i < \omega$. In particular, 
    $\gamma_X < \delta$. We can therefore find $i < \omega$ such that, for all $j \in [i, \omega)$, 
    we have $X \cap D(j, \delta) = X \cap D(j, \gamma_X)$. Unraveling the definitions, this 
    implies that $g_{\delta,i} \restriction X = h_{X,i}$. But then $g_{\delta,i} \cap y_i = 
    h_{X,i} \cap y_i$, and we have $h_{X,i} \in M$, contradicting our choice of 
    $y_i$.
  \end{proof}
  
  Since $(M, \lambda^+ \times \omega)$ is almost $\kappa$-guessed by $S$, we can find $N \in S$ 
  such that $\lambda^+ \times \omega \in N \subseteq M$ and an $e \in N$ such that $e \cap N = 
  g_{\delta,i} \cap N$. By elementarity, $e$ is a function from $\lambda^+$ to $\omega$.
  Let $\gamma:= \sup(N \cap \lambda^+)$. Since $\cf(\gamma) > \omega$, 
  we can find $j \geq i$ such that $\{\alpha \in N \cap \lambda^+ \mid e(\alpha) = j\}$ 
  is unbounded in $\gamma$. Let $H:= \{\alpha < \lambda^+ \mid e(\alpha) = j\}$. By elementarity, 
  $H$ is unbounded in $\lambda^+$. Moreover, for every $x \in [H]^{\aleph_0} \cap N$, we have 
  $x \subseteq D(j,\delta)$, so, by elementarity, 
  \[
    N \models \exists \beta < \lambda^+ ~ (x \subseteq D(j, \beta))
  \]
  But then, by another application of elementarity, we have 
  \[
    H(\lambda^{++}) \models \forall x \in [H]^{\aleph_0} ~ \exists \beta < \lambda^+ ~ (x \subseteq 
    D(j, \beta)).
  \]
  In particular, $H$ witnesses $\CP(\mc D)$.
  
  For the ``moreover" clause, suppose that $m(\omega, \mu) \leq \lambda^+$ for all $\mu < \lambda$. 
  We will show that $m(\omega, \lambda^+) = \lambda^+$. Fix a uniform, transitive $\omega$-covering 
  matrix $\mc D$ for $\lambda^+$ such that $\beta_{\mc D} = \lambda$.
  By hypothesis, for all $i < \omega$ and $\beta < \lambda^+$, we can fix a set 
  $\mc Y(i,\beta) \subseteq [D(i,\beta)]^{\omega}$ such that $|\mc Y(i,\beta)| \leq \lambda^+$ and, for all 
  $x \in [D(i,\beta)]^{\omega}$, there is $y \in \mc Y(i,\beta)$ such that $|x \cap y| = \omega$. 
  Let $H \in [\lambda^+]^{\lambda^+}$ witness $\CP(\mc D)$, let 
  $\pi:H \rightarrow \lambda^+$ be the unique order-preserving bijection, and let 
  \[
    \mc Y := \{\pi[x \cap H] \mid i < \omega, ~ \beta < \lambda^+, ~ x \in \mc Y(i,\beta)\}.
  \]
  Clearly, we have $\mc Y \subseteq [\lambda^+]^{\omega}$ and $|\mc Y| = \lambda^+$. To see that 
  $\mc Y$ witnesses $m(\omega, \lambda^+) = \lambda^+$, fix an arbitrary $x \in [\lambda^+]^\omega$. 
  Let $\bar{x} = \pi^{-1}[x]$. Then $\bar{x} \in [H]^\omega$, so we can fix $i < \omega$ and $\beta < 
  \lambda^+$ such that $\bar{x} \subseteq D(i,\beta)$. By our choice of $\mc Y(i,\beta)$, there is 
  $\bar{y} \in \mc Y(i,\beta)$ such that $|\bar{x} \cap \bar{y}| = \omega$. Then 
  $y := \pi[\bar{y} \cap H]$ is in $\mc Y$ and $|x \cap y| = \omega$, as desired.
\end{proof}

In \cite{cox_krueger_quotients}, Cox and Krueger prove that $\GMP$ is
consistent with arbitrarily large values of the continuum. In particular, their
methods allow for the construction of a model in which $\GMP$ holds
and the continuum is a singular cardinal of cofinality $\omega_1$. In such a model,
we necessarily have $2^\omega < 2^{\omega_1}$, so $\GMP$, in contrast with, 
for instance, $\MA_{\omega_1}$, does not imply that $2^{\omega_1} = 2^\omega$. However, 
as we now show, it turns out
that $\GMP$ does have a significant effect on $2^{\omega_1}$, in fact forcing
it to be as small as possible relative to the value of $2^\omega$.

The following corollary, together with the ensuing remark, yields clause (1) of Theorem B. Similar 
corollaries can readily be obtained about the influence of principles of the form 
$\wAGP_{\mc Y_\lambda}(\mu, \mu^+, \lambda^{++})$ on the relationship between 
$2^\mu$ and $2^{<\mu}$ under appropriate hypotheses about the values of $m(\nu, \mu^+)$ 
for $\nu \leq \mu$, where $\mu$ is an arbitrary regular uncountable cardinal.

\begin{corollary} \label{isp_and_omega_1_cor}
  Suppose that $\wAGP_{\mc Y_\lambda}(\lambda^{++})$ holds for all singular 
    $\lambda$ of countable cofinality, where 
    \[
      \mc Y_\lambda := \{M \in \power_{\omega_2} H(\lambda^{++}) \mid M \text{ is } 
      (\omega_1, \lambda)\text{-internally unbounded}\}.
    \]
    Then, for every uncountable cardinal $\mu$, we have 
    \[
    m(\omega, \mu) = \begin{cases}
      \mu & \text{if } \cf(\mu) \neq \omega \\
      \mu^+ & \text{if } \cf(\mu) = \omega.
      \end{cases}
  \]
    In particular,  
    \[
      2^{\omega_1} = \begin{cases}
        2^\omega & \text{if } \cf(2^\omega) \neq \omega_1 \\
        (2^\omega)^+ & \text{if } \cf(2^\omega) = \omega_1.
      \end{cases}
    \]
\end{corollary}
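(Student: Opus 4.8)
The plan is to deduce both parts of the corollary from Theorem~\ref{cp_thm} together with the basic facts about meeting numbers recorded earlier. The first claim — the computation of $m(\omega,\mu)$ for every uncountable cardinal $\mu$ — splits according to the cofinality of $\mu$. If $\cf(\mu)\neq\omega$, then in particular $\cf(\mu)\neq\cf(\omega)$, so the remark following the definition of meeting numbers already gives $m(\omega,\mu)\geq\mu$, and $m(\omega,\mu)\leq\mu$ is trivial (take $\mc Y=[\mu]^\omega$ restricted appropriately, or rather note that the standard bound gives $m(\omega,\mu)\le\mu$ whenever $\cf(\mu)\ne\omega$ by a cofinal-sequence argument); actually the cleanest route is to invoke Proposition~\ref{meeting_prop} together with the general monotonicity-in-$\lambda$ behavior, so let me instead organize the argument by induction on $\mu$.

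First I would prove, by induction on uncountable cardinals $\mu$, the statement
\[
  m(\omega,\mu) = \begin{cases} \mu & \cf(\mu)\neq\omega \\ \mu^+ & \cf(\mu)=\omega.\end{cases}
\]
For the successor and more generally for $\mu$ with $\omega<\mu<\omega^{+\omega}=\aleph_\omega$, Proposition~\ref{meeting_prop} directly gives $m(\omega,\mu)=\mu$, and these all have cofinality $\neq\omega$ (in fact cofinality $\geq\omega_1$ below $\aleph_\omega$), matching the claimed value. Now suppose $\mu\geq\aleph_\omega$ and the statement holds for all uncountable cardinals below $\mu$. If $\cf(\mu)=\omega$, write $\mu$ as the supremum of a sequence of regular cardinals; apply Theorem~\ref{cp_thm} with $\lambda=\mu$. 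Its hypothesis ``$m(\omega,\nu)\leq\mu=\lambda^+\!\!\upharpoonright$" — more precisely $m(\omega,\nu)\le\mu^+$ for all $\nu<\mu$ — follows from the induction hypothesis, since for $\nu<\mu$ we have $m(\omega,\nu)\le\nu^+\le\mu$; the $\wAGP_{\mc Y_\mu}$ hypothesis is exactly what is assumed. Hence the ``moreover" clause of Theorem~\ref{cp_thm} yields $m(\omega,\mu^+)=\mu^+$; but we want $m(\omega,\mu)$, so I should instead apply the theorem with $\lambda$ the largest singular cardinal of countable cofinality below $\mu$ whose successor is $\mu$, i.e.\ when $\mu$ itself is a successor $\mu=\lambda^+$ with $\lambda$ singular of cofinality $\omega$ — in that case the conclusion is directly $m(\omega,\mu)=\mu$, contradicting... no: the claimed value when $\cf(\mu)=\omega$ is $\mu^+$, and $\mu=\lambda^+$ has cofinality $\omega$ only when $\lambda$ has cofinality $\omega$ and... a successor cardinal is regular. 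So every $\mu$ with $\cf(\mu)=\omega$ is a limit cardinal, hence singular of countable cofinality, and I apply Theorem~\ref{cp_thm} with $\lambda=\mu$ to get $m(\omega,\mu^+)=\mu^+$, while for $m(\omega,\mu)$ itself the lower bound $m(\omega,\mu)>\mu$ comes from the diagonalization remark ($\cf(\omega)=\cf(\mu)$) and the upper bound $m(\omega,\mu)\le\mu^+$ follows because $[\mu]^\omega$ can be covered modulo finite-intersection by $\mu^+$-many sets using a transitive covering matrix and the induction hypothesis — essentially a localized version of the ``moreover" clause argument applied to $\mc D$ for $\mu$ viewed with $\beta_{\mc D}=\mu$ and $H=\mu$ (no $\CP$ needed since trivially $[\mu]^\omega$ is covered by the matrix for $\mu$... which it is not in general). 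This last point is the delicate one; I expect the intended argument is that $m(\omega,\mu)\le m(\omega,\mu^+)=\mu^+$ holds trivially by restriction, which settles the upper bound cleanly. For $\cf(\mu)>\omega$ with $\mu\geq\aleph_\omega$: the upper bound $m(\omega,\mu)\le\mu$ follows by writing $\mu=\sup_{i<\cf(\mu)}\mu_i$ with $\mu_i$ regular $>\omega$, taking for each $i$ a witnessing family $\mc Y_i\subseteq[\mu_i]^\omega$ of size $m(\omega,\mu_i)=\mu_i$ (induction hypothesis), and noting $\bigcup_i\mc Y_i$ has size $\mu$ and witnesses $m(\omega,\mu)\le\mu$ since any $x\in[\mu]^\omega$ is either bounded below some $\mu_i$ or, if unbounded in $\mu$... but then $\cf(\mu)=\omega$, contradiction, so $x$ is bounded and lands in some $[\mu_i]^\omega$. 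The lower bound $m(\omega,\mu)\ge\mu$ is the diagonalization remark since $\cf(\omega)\ne\cf(\mu)$.

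For the ``in particular" clause, set $\mu=2^\omega$. Since $\wAGP(\omega_2)$-type hypotheses here include $\wAGP(\omega_1,\omega_2,\omega_2)$ (or this follows from the ambient assumptions, as in Theorem~B(1)), we have $2^\omega>\omega_1$ by the discussion of $\neg\wKH$ in Section~\ref{kurepa_sec} — actually what we need is just that $\mu=2^\omega$ is uncountable, which is automatic. By Lemma~\ref{general_wkh_meeting_lemma} applied at $\mu=\omega_1$ (valid since $\neg\wKH$ follows from the hypotheses via $\wAGP(\omega_2)\Rightarrow\neg\wKH$), we get $2^{\omega_1}=m(\omega_1,2^\omega)$. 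Hmm — but the first part of the corollary computes $m(\omega,\cdot)$, not $m(\omega_1,\cdot)$. The bridge is Theorem~\ref{matet_meeting_thm}(3) or a direct comparison: $m(\omega_1,\nu)$ and $m(\omega,\nu)$ have the same ``smallest possible value" behavior governed only by whether $\cf(\omega_1)=\omega_1$ equals $\cf(\nu)$, and in fact under the hypotheses (which imply $\SSH$ above $\omega_2$... but $2^\omega$ may be below relevant thresholds) one shows $m(\omega_1,2^\omega)$ equals $2^\omega$ if $\cf(2^\omega)\neq\omega_1$ and $(2^\omega)^+$ if $\cf(2^\omega)=\omega_1$, by the same induction-on-$\mu$ scheme using Proposition~\ref{meeting_prop} (now with $\cf(\mu)=\omega_1$-sensitive bookkeeping) and Theorem~\ref{cp_thm}; alternatively, quote that $\neg\wKH$ gives $2^{\omega_1}=m(\omega_1,2^\omega)$ and then invoke the general form of the first claim with $\nu=\omega_1$ in place of $\omega$ — the proof of Theorem~\ref{cp_thm} goes through verbatim with $\omega$ replaced by $\omega_1$ and ``$\omega$-covering matrix'' by ``$\omega_1$-covering matrix'' provided one uses $S$ defined with $\cf(\sup(N\cap\lambda^+))>\omega_1$, which is exactly the role the $\mc Y_\lambda$ in the corollary's hypothesis is set up to play. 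Thus the ``in particular'' clause drops out of the first claim applied at $\nu=\omega_1$, $\mu=2^\omega$, combined with Lemma~\ref{general_wkh_meeting_lemma}. The main obstacle is the careful bookkeeping in the induction — correctly matching, at each cardinal $\mu$, which instance of Theorem~\ref{cp_thm} (i.e.\ which $\lambda$ with $\lambda^+$ near $\mu$) supplies the needed equality, and verifying the ``$m(\omega,\nu)\le\lambda^+$ for $\nu<\lambda$'' hypothesis from the induction hypothesis uniformly; the rest is routine cardinal arithmetic and citation of Proposition~\ref{meeting_prop} and Lemma~\ref{general_wkh_meeting_lemma}.
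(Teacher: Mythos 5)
Your proposal follows essentially the same route as the paper's proof: induction on $\mu$, with elementary covering arguments (Proposition~\ref{meeting_prop} and the boundedness of countable sets) handling the cases $\cf(\mu)\neq\omega$, the ``moreover'' clause of Theorem~\ref{cp_thm} handling singular $\mu$ of countable cofinality, and then $\neg\wKH$ together with Lemma~\ref{general_wkh_meeting_lemma} and Matet's equivalence (your Theorem~\ref{matet_meeting_thm}(3); the paper cites the local form, \cite[Theorem 1.1]{matet_meeting_numbers}) to convert the computation of $m(\omega,\cdot)$ into the value of $m(\omega_1,2^\omega)=2^{\omega_1}$. Two points. First, your treatment of the case $\cf(\mu)>\omega$, namely ``write $\mu=\sup_{i<\cf(\mu)}\mu_i$ with $\mu_i$ regular,'' only makes literal sense when $\mu$ is a limit cardinal; for successors $\mu=\nu^+\geq\aleph_{\omega+1}$ there is no cofinal set of cardinals below $\mu$, so that induction step as written does not apply. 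The repair is routine and is exactly what the paper outsources to \cite[Proposition 2.4(v)]{matet_meeting_numbers}: since $\cf(\mu)>\omega$, every $x\in[\mu]^\omega$ lies inside some ordinal $\alpha<\mu$, and transferring a witnessing family for $[|\alpha|]^\omega$ of size $m(\omega,|\alpha|)\le\mu$ (induction hypothesis) along a bijection and taking the union over $\alpha<\mu$ gives $m(\omega,\mu)\le\mu$. Second, your hesitation about whether the ``moreover'' clause yields $m(\omega,\lambda)$ or $m(\omega,\lambda^+)$ is moot: the clause as stated gives $m(\omega,\lambda)=\lambda^+$ outright (and your monotonicity fix $m(\omega,\lambda)\le m(\omega,\lambda^+)$ is also correct); similarly, the alternative you float of rerunning Theorem~\ref{cp_thm} with $\omega_1$-covering matrices is unnecessary and would not go through ``verbatim'' without checking the covering-matrix lemmas anew --- the Matet transfer you propose first is the intended bridge and is what the paper does.
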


\begin{proof}
  The proof is by induction on $\mu$. The base case of $\mu = \omega_1$ is trivial.
  Suppose that $\mu = \nu^+$ and we know that $m(\omega, \nu) \leq \mu$. Then, by 
  \cite[Proposition 2.4(v)]{matet_meeting_numbers}, we have $m(\omega, \mu) = 
  \max\{\mu, m(\omega, \nu)\} = \mu$. Suppose next that $\mu$ is a limit cardinal of uncountable 
  cofinality and $m(\omega, \nu) < \mu$ for all $\nu < \mu$. Then 
  \cite[Proposition 2.4(vi)]{matet_meeting_numbers} implies that 
  $m(\omega, \mu) = \sup\{m(\omega, \nu) \mid \nu < \mu\} = \mu$. Finally, suppose 
  that $\mu$ is a singular cardinal of countable cofinality and $m(\omega, \nu) < \mu$ 
  for all $\nu < \mu$. Then the ``moreover" clause of Theorem \ref{cp_thm} implies 
  that $m(\omega, \mu) = \mu^+$.
  
  Now \cite[Theorem 1.1]{matet_meeting_numbers} implies that, for all infinite cardinals 
  $\sigma < \mu$, we have
  \[
    m(\sigma, \mu) = \begin{cases}
      \mu & \text{if } \cf(\mu) \neq \cf(\sigma) \\ 
      \mu^+ & \text{if } \cf(\mu) = \cf(\sigma),
    \end{cases}
  \]
  as desired.
  
  To see the ``in particular" clause, first note that, as proven in \cite[Theorem 9.3]{kurepa_paper}, 
  $\wAGP(\omega_2)$ implies 
  $\neg \wKH$, so, \emph{a fortiori}, the hypothesis of the corollary also implies $\neg \wKH$. 
  By Lemma~\ref{general_wkh_meeting_lemma}, we therefore have $2^{\omega_1} = 
  m(\omega_1, 2^\omega)$, and the conclusion follows.
\end{proof}

\begin{remark}
  As the proof of Corollary~\ref{isp_and_omega_1_cor} makes clear, to obtain the ``in particular" clause, 
  we only require $\neg \wKH$ together with the conclusion of the corollary applied to $m(\omega_1, 2^\omega)$. 
  It therefore suffices to assume $\wAGP(\omega_2)$ together with 
  $\wAGP_{\mc Y_\lambda}(\lambda^{++})$ for all singular $\lambda < 2^\omega$ of 
  countable cofinality, as reflected in the statement of Theorem B.
\end{remark}

\subsection{Pseudopowers}

We end this section by investigating the effect of guessing model principles on pseudopowers, 
leading up to the proof of Theorem A. We first recall some necessary definitions; 
for efficiency, we opt to give 
these definitions in the specific settings in which we will need them rather than in full generality.

\begin{definition}
  Suppose that $\theta$ is an infinite regular cardinal and  $\vec{\mu} = \langle \mu_i \mid i < \theta 
  \rangle$ is a sequence of regular cardinals. 
  We let $\prod \vec{\mu}$ denote $\prod_{i < \theta} \mu_i$. If $f,g \in \prod \vec{\mu}$, then 
  we write $f <^* g$ to indicate that $|\{i < \theta \mid g(i) \leq f(i)\}| < \theta$.
\end{definition}

Recall that, if $\mu$ is a singular cardinal, then $\pp(\mu)$ denotes the \emph{pseudopower} of $\mu$. 
Since we will not need it here, we do not give the precise definition of $\pp(\mu)$, referring the 
reader to \cite{matet_meeting_numbers} for all necessary definitions or to
\cite{cardinal_arithmetic} for a more encyclopedic treatment. 
Instead, we simply note here that we always have $\pp(\mu) 
\geq \mu^+$, and that the following consequence of $\pp(\mu) > \mu^+$ follows immediately from 
\cite[Observation 4.4]{matet_meeting_numbers}.

\begin{lemma} \label{cofinality_lemma}
  Suppose that $\mu$ is a singular cardinal and $\pp(\mu) > \mu^+$. 
  Then there is an increasing sequence of regular cardinals $\vec{\mu} = \langle \mu_i \mid i < \cf(\mu) 
  \rangle$ converging to $\mu$ such that $\cf \left( \prod \vec{\mu}, <^* \right) = \mu^{++}$.
\end{lemma}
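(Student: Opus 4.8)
The plan is to obtain the lemma directly from the structure theory of pseudopowers, essentially by unpacking \cite[Observation 4.4]{matet_meeting_numbers}. Recall (see \cite{cardinal_arithmetic}) that $\pp(\mu)$ is, modulo the standard equivalences among the variants of $\pp$, the supremum over all increasing sequences $\vec{\nu} = \langle \nu_i \mid i < \cf(\mu) \rangle$ of regular cardinals converging to $\mu$ of the cofinality $\cf\left(\prod \vec{\nu}, <^*\right)$, where $<^*$ denotes eventual dominance modulo the bounded ideal on $\cf(\mu)$. By Shelah's attainment theorem this supremum is achieved, and, crucially, the ``no hole'' phenomena of pcf theory ensure that whenever $\pp(\mu) \geq \mu^{++}$ the specific value $\mu^{++}$ is also achieved: there is an increasing sequence $\vec{\mu}$ of regular cardinals converging to $\mu$ with $\cf\left(\prod \vec{\mu}, <^*\right) = \mu^{++}$. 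Since $\pp(\mu) \geq \mu^+$ always and, by hypothesis, $\pp(\mu) > \mu^+$, and since $\pp(\mu)$ is a cardinal, we get $\pp(\mu) \geq \mu^{++}$, so such a $\vec{\mu}$ exists, which is exactly the conclusion.

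The one genuinely non-formal point is the passage from the trivial inequality $\cf\left(\prod \vec{\mu}, <^*\right) \geq \mu^{++}$ — which is immediate from the definition of $\pp(\mu)$ once we know $\pp(\mu) \geq \mu^{++}$ — to the exact equality $\cf\left(\prod \vec{\mu}, <^*\right) = \mu^{++}$. Naively shrinking a cofinal sequence need not bring its product's cofinality down to $\mu^{++}$ in any controlled way, so this step genuinely uses pcf machinery (localization and the transitivity of pcf generators). Since all of this is recorded in \cite{cardinal_arithmetic} and packaged for our use in \cite[Observation 4.4]{matet_meeting_numbers}, I would simply cite the latter rather than reproduce the argument.
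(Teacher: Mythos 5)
Your proposal is correct and takes essentially the same route as the paper: the paper offers no independent argument for this lemma, simply noting that it follows immediately from Observation 4.4 of \cite{matet_meeting_numbers}, which is precisely the citation you fall back on after sketching the pcf background (the no-holes/localization phenomenon that turns $\pp(\mu) \geq \mu^{++}$ into a sequence with cofinality exactly $\mu^{++}$ modulo $<^*$). The only quibble is that your appeal to attainment of the supremum defining $\pp(\mu)$ is unnecessary here (and delicate in general); what is actually used is the interval/no-holes fact packaged in the cited observation.
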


We also recall that Shelah's Strong Hypothesis ($\SSH$) is the assertion that $\pp(\mu) = \mu^+$ for 
every singular cardinal $\mu$. Given a cardinal $\kappa$, we say that \emph{$\SSH$ 
holds above $\kappa$} if $\pp(\mu) = \mu^+$ for every singular cardinal $\mu > \kappa$. Recall also 
that the Singular Cardinals Hypothesis ($\SCH$) is the assertion that, for every singular strong limit 
cardinal $\mu$, we have $2^\mu = \mu^+$. As with $\SSH$, we say that $\SCH$ holds above some
cardinal $\kappa$ if $2^\mu = \mu^+$ for every singular strong limit cardinal $\mu > \kappa$.
As noted after Theorem \ref{matet_meeting_thm}, the characterization of $\SSH$ in terms of meeting numbers makes 
it clear that $\SSH$ implies $\SCH$. It is not as immediately obvious that, for an arbitrary cardinal 
$\kappa$, $\SSH$ above $\kappa$ implies $\SCH$ above $\kappa$, since the characterization of 
$\SSH$ above $\kappa$ in terms of meeting numbers is not as clean as that of Theorem \ref{matet_meeting_thm}. 
Nonetheless, it is true, and follows from related results in \cite{matet_meeting_numbers} and 
\cite{cardinal_arithmetic}:

\begin{proposition}
  Let $\kappa$ be an infinite cardinal such that $\SSH$ holds above $\kappa$. Then $\SCH$ holds above $\kappa$.
\end{proposition}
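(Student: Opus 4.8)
The plan is to reduce the statement to a purely pcf-theoretic fact about pseudopowers, avoiding any direct manipulation of meeting numbers. First I would recall the standard pcf inequality relating $2^\mu$ for a singular strong limit $\mu$ to pseudopowers of $\mu$: for any singular strong limit cardinal $\mu$, we have $2^\mu = \mathrm{pp}(\mu)$ — this is one of the fundamental consequences of Shelah's pcf theory (see \cite[Ch.\ IX]{cardinal_arithmetic}), using that $\mu$ strong limit forces $2^{<\mu} = \mu$ and hence $2^\mu = \mu^{\cf(\mu)} = \mathrm{pp}(\mu)$ (the last equality being the nontrivial pcf input). Given this, if $\SSH$ holds above $\kappa$ and $\mu > \kappa$ is singular strong limit, then $\mathrm{pp}(\mu) = \mu^+$, so $2^\mu = \mu^+$, which is exactly $\SCH$ at $\mu$.

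The only genuine subtlety is that the equality $2^\mu = \mu^{\cf(\mu)}$ for strong limit $\mu$ is elementary, but the identification of $\mu^{\cf(\mu)}$ with $\mathrm{pp}(\mu)$ when $\mu$ is strong limit is where pcf theory enters; one should cite the precise statement rather than reprove it. An alternative, closer in spirit to the surrounding material, is to route through meeting numbers: the calculation noted in the excerpt after Theorem~\ref{matet_meeting_thm} gives $\lambda^\omega = 2^\omega \cdot m(\omega,\lambda)$ for $\lambda$ singular of countable cofinality, and \cite{matet_meeting_numbers} contains the analogous identities $\mu^{\cf(\mu)} = 2^{<\mu} \cdot m(\cf(\mu),\mu)$ together with a characterization of $m(\cf(\mu),\mu) = \mu^+$ in terms of $\mathrm{pp}(\mu) = \mu^+$. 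For a strong limit $\mu > \kappa$, $\SSH$ above $\kappa$ yields $m(\cf(\mu),\mu) = \mu^+$, and since $2^{<\mu} = \mu$ we get $2^\mu = \mu^{\cf(\mu)} = \mu \cdot \mu^+ = \mu^+$.

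Thus the steps are: (1) fix a singular strong limit $\mu > \kappa$ and reduce $2^\mu$ to $\mu^{\cf(\mu)}$ using strong limitude; (2) invoke the pcf/meeting-number identity to rewrite $\mu^{\cf(\mu)}$ in terms of $\mathrm{pp}(\mu)$ (equivalently $m(\cf(\mu),\mu)$); (3) apply the hypothesis $\mathrm{pp}(\mu) = \mu^+$ to conclude $2^\mu = \mu^+$. I expect step (2) — locating and citing the precise form of the pcf identity valid \emph{above a fixed $\kappa$}, rather than globally — to be the main obstacle, since, as the excerpt itself notes, the characterization of ``$\SSH$ above $\kappa$'' is less clean than the global version; one must be careful that no singular cardinal $\le \kappa$ sneaks into the pcf computation for $\mu > \kappa$, which is automatic here because $\mathrm{pp}(\mu)$ and $m(\cf(\mu),\mu)$ depend only on cofinal sequences in $\mu$ itself.
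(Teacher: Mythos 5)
Your overall strategy---reduce $2^\mu$ to $\mu^{\cf(\mu)}$ via strong limitness and then invoke a pcf identity to replace $\mu^{\cf(\mu)}$ by $\pp(\mu)$---is in the same spirit as the paper's argument, but as written it has a genuine gap, and it is exactly the gap the paper's proof is organized around. The blanket identity ``$2^\mu=\pp(\mu)$ for every singular strong limit $\mu$'' is not a citable ZFC fact: for strong limit $\mu$ it amounts to $\mathrm{cov}(\mu,\mu,\cf(\mu)^+,2)=\pp(\mu)$, which Shelah proved when $\cf(\mu)=\omega$ but which is not known (the cov versus pp problem) for uncountable cofinality. Your plan applies the identity to \emph{all} strong limit $\mu>\kappa$, including those of uncountable cofinality, and nowhere reduces to the countable cofinality case. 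The paper's first move is precisely that reduction: take the \emph{least} $\mu>\kappa$ witnessing the failure of $\SCH$ above $\kappa$ and use Silver's theorem \cite{silver} to conclude $\cf(\mu)=\omega$; only after that is any form of the covering/$\pp$ identity available.

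Second, your closing remark that the ``above $\kappa$'' localization is automatic because $\pp(\mu)$ and $m(\cf(\mu),\mu)$ depend only on cofinal sequences in $\mu$ misidentifies where the subtlety lives. The quantity $\pp(\mu)$ indeed depends only on $\mu$, but the \emph{link} between $2^\mu$ (equivalently $m(\omega,\mu)$, or $\mathrm{cov}(\mu,\mu,\omega_1,2)$) and $\pp(\mu)$ is exactly where smaller singulars enter: the paper bounds $m(\omega,\mu)\leq\max\{\mathrm{cov}(\mu,\mu,\omega_1,2),\sup\{m(\omega,\chi)\mid\chi<\mu\}\}$ (strong limitness disposing of the supremum) and then applies \cite[\S IX, Conclusion 1.8]{cardinal_arithmetic}, whose hypotheses require $\pp(\chi)<\mu$ for all singular $\chi<\mu$ and $\pp(\chi)=\chi^+$ for all singular $\chi\in(\kappa,\mu)$ of cofinality $\omega_1$; the latter is precisely the point at which ``$\SSH$ above $\kappa$'' (rather than full $\SSH$) must be invoked, and it works only because those $\chi$ lie above $\kappa$---it is a hypothesis to be verified, not a vacuity. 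Your alternative route through meeting numbers has the same issue in a different guise: the identity $\lambda^\omega=2^\omega\cdot m(\omega,\lambda)$ and Matet's characterizations are stated for countable cofinality and/or derived from global computations below $\mu$, so they cannot simply be quoted for arbitrary $\cf(\mu)$ with hypotheses holding only above $\kappa$. With the least-counterexample/Silver step added and a correctly localized covering-versus-$\pp$ statement cited, your plan becomes essentially the paper's proof.
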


\begin{proof}
  Suppose not, and let $\mu > \kappa$ be the least witness to the failure of $\SCH$ above $\kappa$. 
  In particular, $\mu$ is strong limit, $2^\mu = \mu^{\cf(\mu)} > \mu^+$, and, by Silver's Theorem 
  \cite{silver}, we must have $\cf(\mu) = \omega$. By the discussion after Theorem \ref{matet_meeting_thm} 
  above, we have $\mu^\omega = 2^\omega \cdot m(\omega, \mu) = m(\omega, \mu)$, so we have 
  $m(\omega, \mu) > \mu^+$.
  
  Recall that $\mathrm{cov}(\mu, \mu, \omega_1, 2)$ is the least cardinality of a set 
  $\mc X \subseteq \power_\mu \mu$ such that, for every $a \in \power_{\omega_1} \mu$, there is 
  $b \in \mc X$ such that $a \subseteq b$. By a straightforward argument (cf.\ 
  \cite[Proposition 2.4(viii)]{matet_meeting_numbers}), we have
  \[
    m(\omega, \mu) \leq \max\{\mathrm{cov}(\mu, \mu, \omega_1, 2), \sup\{m(\omega, \chi) \mid \chi < \mu\}\}.
  \]
  Since $\mu$ is strong limit, we have $m(\omega, \chi) < \mu$ for all $\chi < \mu$, and hence 
  $m(\omega, \mu) \leq \mathrm{cov}(\mu, \mu, \omega_1, 2)$. Now note that
  \begin{itemize}
    \item $\pp(\chi) < \mu$ for every singular $\chi < \mu$ (because $\mu$ is strong limit and 
    $\pp(\mu) \leq 2^\mu$ for all $\mu$); and
    \item for all $\chi \in (\kappa, \mu)$, if $\chi$ is a singular cardinal of cofinality $\omega_1$, then 
    $\pp(\chi) = \chi^+$ (because $\SSH$ holds above $\kappa$).
  \end{itemize}
  Therefore, \cite[\S IX, Conclusion 1.8]{cardinal_arithmetic} implies that $\pp(\mu) = 
  \mathrm{cov}(\mu, \mu, \omega_1, 2)$. Since $\mathrm{cov}(\mu, \mu, \omega_1, 2) \geq 
  m(\omega, \mu) > \mu^+$, this contradicts the assumption that $\SSH$ holds above $\kappa$.
\end{proof}

We now prove the main result of this subsection, indicating the impact of instances of $\CP(\mathcal{D})$ 
on values of the form $\cf\left(\prod \vec{\mu}, <^* \right)$.

\begin{theorem} \label{cp_cf_thm}
  Suppose that $\mu$ is a singular cardinal, $\theta = \cf(\mu)$, and 
  $\vec{\mu} = \langle \mu_i \mid i < \theta \rangle$ is an increasing sequence 
  of regular cardinals converging to $\mu$. Suppose moreover that there is a 
  $\theta$-covering matrix for $\mu^+$, $\mc D = \langle D(i,\beta) \mid i < \theta, ~ 
  \beta < \mu^+ \rangle$, such that $\mc D$ respects $\vec{\mu}$ and 
  $\CP(\mc D)$ holds. Then $\cf\left( \prod \vec{\mu}, <^*\right) 
  = \mu^+$.
\end{theorem}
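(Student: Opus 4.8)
The plan is to use $\CP(\mc D)$ to extract an unbounded set $A \subseteq \mu^+$ whose $\theta$-sized subsets are all captured by single cells of $\mc D$, and then to build a scale in $\prod \vec{\mu}$ of length $\mu^+$ indexed along $A$, using the fact that each cell $D(i,\beta)$ has size less than $\mu_i$ (this is where the hypothesis that $\mc D$ respects $\vec{\mu}$ is essential). First I would fix the witness $A \in [\mu^+]^{\mu^+}$ to $\CP(\mc D)$ and let $\pi : \mu^+ \to A$ be the increasing enumeration; by passing to $A$ I may as well assume $\mu^+$ itself is covered, i.e.\ every $X \in [\mu^+]^\theta$ is contained in some $D(i,\beta)$. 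The goal $\cf(\prod\vec\mu, <^*) = \mu^+$ splits into two inequalities: the lower bound $\cf(\prod\vec\mu,<^*) \geq \mu^+$ is standard (any $\leq\mu$-sized family is dominated, by a diagonalization using $\cf(\mu_i)=\mu_i > \mu$ for large $i$ — more precisely one bounds a family of size $\mu$ by splitting into $\theta$ pieces of size $<\mu_i$ at coordinate $i$), so the content is the upper bound: exhibiting a $<^*$-cofinal family of size $\mu^+$.

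For the upper bound, the key idea is to associate to each $\beta < \mu^+$ a function $f_\beta \in \prod\vec\mu$ as follows. For each $i < \theta$, we have $|D(i,\beta)| < \mu_i$, so I can pick an injection $e_{i,\beta} : D(i,\beta) \to \mu_i$, and more to the point I want to record, for each ordinal $\alpha$, "how $\alpha$ sits inside the cell". The construction I would attempt: recursively build a $<^*$-increasing sequence $\langle f_\beta \mid \beta < \mu^+\rangle$ in $\prod\vec\mu$ that is \emph{coherent} with $\mc D$, meaning that whenever $\alpha < \beta$ and $\alpha \in D(i,\beta)$, the values $f_\alpha(j)$ for $j \geq i$ are determined by (or at least bounded in terms of) data internal to $D(i,\beta)$; the size bound $|D(i,\beta)| < \mu_i$ then forces there to be "room" at coordinate $i$ to diagonalize past all of $D(i,\beta)$. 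Then given an arbitrary $g \in \prod\vec\mu$, I use $\CP(\mc D)$ on a countable-or-$\theta$-sized set of witnessing ordinals to land $g$ inside a single cell $D(i^*,\beta^*)$, and the coherence plus the size bound yields that $g <^* f_{\beta^*}$ for a suitable successor stage; hence $\{f_\beta \mid \beta < \mu^+\}$ is cofinal and $\cf(\prod\vec\mu,<^*) \leq \mu^+$, completing the proof.

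The main obstacle I anticipate is organizing the recursive construction of the $f_\beta$'s so that a single function $f_\beta$ simultaneously dominates \emph{all} of $\{f_\alpha \mid \alpha \in D(i,\beta)\}$ modulo the tail past $i$, for \emph{every} $i < \theta$ at once — the cells $D(i,\beta)$ grow with $i$, and at coordinate $i$ we only have $\mu_i$ worth of values to play with, so the bookkeeping must ensure that dominating the (size $<\mu_i$) set $\{f_\alpha(i) \mid \alpha \in D(i,\beta)\}$ is possible at coordinate $i$. The clean way to do this is: at stage $\beta$, for each $i < \theta$ set $f_\beta(i) := \sup\big(\{f_\alpha(i) \mid \alpha \in D(i,\beta)\} \cup \{h(i)\}\big) + 1$, where $h$ enumerates whatever we need to diagonalize against (e.g.\ a prescribed $g$ at successor steps, or a bound from limit steps); since $|D(i,\beta)| < \mu_i$ and $\mu_i$ is regular, this supremum is $<\mu_i$, so $f_\beta \in \prod\vec\mu$. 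One then checks by induction, using clause (2) of the definition of covering matrix together with transitivity-type bookkeeping on the $D(i,\beta)$'s (or just the raw covering-matrix axioms, since we only need coarse domination), that $\alpha < \beta$ implies $f_\alpha <^* f_\beta$, and that any $g$ trapped in $D(i^*,\beta^*)$ satisfies $g \leq^* f_{\beta^*}$ on the tail $\{i \geq i^*\}$. Verifying these two facts carefully — especially that the relation is genuinely $<^*$ and not merely "$\leq$ on a tail" — is the technical heart, but it is routine once the cell-size bound is in hand; this is precisely why the hypothesis is phrased with "$\mc D$ respects $\vec\mu$."
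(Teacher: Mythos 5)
There is a genuine gap at the heart of your upper-bound argument, namely in the step ``given an arbitrary $g \in \prod\vec{\mu}$, I use $\CP(\mc D)$ on a countable-or-$\theta$-sized set of witnessing ordinals to land $g$ inside a single cell.'' You never say what these witnessing ordinals are, and there is no obvious candidate: the values $g(i)$ are ordinals below $\mu_i$, which need not belong to the $\CP$-witness $A$, and $\CP(\mc D)$ only covers sets in $[A]^\theta$, not arbitrary $\theta$-sized subsets of $\mu^+$. Moreover, even if some set attached to $g$ were trapped in $D(i^*,\beta^*)$, your recursive definition $f_\beta(i) := \sup\bigl(\{f_\alpha(i) \mid \alpha \in D(i,\beta)\} \cup \{h(i)\}\bigr)+1$ only guarantees that $f_{\beta^*}$ dominates the \emph{previously constructed} functions $f_\alpha$ with $\alpha \in D(i,\beta^*)$; it gives no relation between $f_{\beta^*}$ and $g$ itself. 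The fallback of ``diagonalizing against a prescribed $g$ at successor steps'' cannot close this, since the recursion has length $\mu^+$ while $\prod\vec{\mu}$ may have cardinality greater than $\mu^+$ (if it didn't, the theorem would need no covering matrix at all). So as written the family $\{f_\beta \mid \beta<\mu^+\}$ is $<^*$-increasing but there is no argument that it is cofinal.

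The paper's proof supplies exactly the missing encoding, and needs no recursion. Let $\langle \gamma_\eta \mid \eta < \mu\rangle$ enumerate the first $\mu$-many elements of the $\CP$-witness $A$ in increasing order, and set $\delta_i := \sup\{\gamma_\eta \mid \eta < \mu_i\}$, so $\cf(\delta_i) = \mu_i$. Define $g_\beta(i)$ \emph{directly} as the least $\eta < \mu_i$ with $\sup(D(i,\beta)\cap\delta_i) < \gamma_\eta$; this is well defined because $|D(i,\beta)| < \mu_i = \cf(\delta_i)$, which is precisely where ``$\mc D$ respects $\vec{\mu}$'' is used (you correctly sensed the size bound matters, but the point is that it bounds the trace of a cell \emph{below} $\delta_i$, not that it leaves room at coordinate $i$ for a sup-plus-one). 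Now an arbitrary $f \in \prod\vec{\mu}$ is coded by the set $x := \{\gamma_{f(i)} \mid i < \theta\} \in [A]^{\leq\theta}$, which $\CP(\mc D)$ traps in some $D(i^*,\beta)$, hence in $D(j,\beta)$ for all $j \geq i^*$; then $\gamma_{f(j)} \in D(j,\beta)\cap\delta_j$ forces $f(j) < g_\beta(j)$, so $f <^* g_\beta$. (Your preliminary ``pass to $A$ and assume all of $[\mu^+]^\theta$ is covered'' reduction is also not justified as stated and is unnecessary; and the lower bound $\cf(\prod\vec{\mu},<^*) \geq \mu^+$ is indeed standard, as you note.)
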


\begin{proof}
  Let $\mc D$ be as in the statement of the proposition, and let $A \subseteq \mu^+$ 
  witness $\CP(\mc D)$. Let $\langle \gamma_\eta \mid \eta < \mu \rangle$ enumerate 
  the first $\mu$-many elements of $A$ in increasing order and, for each $i < 
  \theta$, let $\delta_i:= \sup\{\gamma_\eta \mid \eta < \mu_i\}$. Note that 
  $\cf(\delta_i) = \mu_i$. For each 
  $\beta < \mu^+$, define a function $g_\beta \in \prod \vec{\mu}$ as 
  follows: for each $i < \theta$, let $g_\beta(i)$ be the least 
  $\eta < \mu_i$ such that $\sup(D(i,\beta) \cap \delta_i) < \gamma_\eta$. This is 
  well-defined, since $\mc D$ respects $\vec{\mu}$ and therefore 
  $|D(i,\beta)| < \mu_i = \cf(\delta_i)$.
  
  We claim that $\{g_\beta \mid \beta < \mu^+\}$ is cofinal in $\left(\prod \vec{\mu}, 
  <^*\right)$. To this end, fix $f \in \prod \vec{\mu}$. Let 
  $x = \{\gamma_{f(i)} \mid i < \theta\}$. Since $x \in [A]^{\leq \theta}$ and 
  $A$ witnesses $\CP(\mc D)$, we can fix $i < \theta$ and $\beta < \mu^+$ such 
  that $x \subseteq D(i,\beta)$, and hence $x \subseteq D(j,\beta)$ for all 
  $j \in [i,\theta)$. In particular, for every $j \in [i, \theta)$, we have 
  $\gamma_{f(j)} \in D(j,\beta) \cap \delta_j$, and hence $f(j) < g_\beta(j)$. 
  Therefore, $f <^* g_\beta$, as desired.
\end{proof}

We obtain Theorem A as a corollary.

\begin{proof}[Proof of Theorem A]
  Suppose that $\wAGP_{\mc Y_\lambda}(\kappa, \kappa, \lambda^{++})$ holds for all 
  singular $\lambda > \kappa$ of countable cofinality.
  By \cite[\S 2, Claim 2.4]{cardinal_arithmetic}, if $\mu$ is a singular cardinal of 
  uncountable cardinality and the set of singular cardinals $\nu < \mu$ for which 
  $\pp(\nu) = \nu^+$ is stationary in $\mu$, then $\pp(\mu) = \mu^+$. Therefore, to 
  establish $\SSH$ above $\kappa$, it suffices to show that $\pp(\mu) = \mu^+$ for 
  all singular $\mu> \kappa$ of countable cofinality.
  
  Fix such a $\mu$. By Lemma \ref{cofinality_lemma}, it suffices to show that 
  $\cf\left(\prod \vec{\mu}, <^*\right) = \mu^+$ for every increasing sequence of regular cardinals 
  $\vec{\mu} = \langle \mu_i \mid i < \omega \rangle$ converging to $\mu$. 
  Fix such a sequence $\vec{\mu}$. By Lemma \ref{sharon_viale_lemma}, we can fix a 
  uniform, transitive $\omega$-covering matrix $\mc D$ for $\mu^+$ that respects $\vec{\mu}$.
  By Theorem \ref{cp_thm}, $\CP(\mc D)$ holds, and then, by Theorem \ref{cp_cf_thm}, 
  we have $\cf\left(\prod \vec{\mu}, <^*\right) = \mu^+$, as desired.
\end{proof}

\section{Special trees and random reals} \label{special_sec}

In this section, we take a slight detour to prove a variation of a theorem of 
Laver \cite{laver_random} concerning special trees in random extensions of 
models of forcing axioms. At the end of the section, we will rejoin our main 
narrative path by connecting this result with Cox and Krueger's \emph{indestructible 
guessing model principle} \cite{cox_krueger_indestructible}, a strengthening of 
$\GMP$ that will also provide part of the motivation for the results in 
Section~\ref{pfas_section}. 

Recall that a tree $T$ of height $\omega_1$ is \emph{special} if there is a function $f:T \rightarrow 
\omega$ such that, for all $s,t \in T$, if $s <_T t$, then $f(s) \neq f(t)$. It is immediate that a 
special tree cannot have an uncountable branch. Baumgartner introduced a generalization of this 
notion of specialness that can also be satisfied by trees of height $\omega_1$ that have 
uncountable branches; this notion was used to prove that $\PFA$ implies $\neg\wKH$. In order to 
avoid confusion with the more familiar notion of specialness, we will call Baumgartner's generalization 
\emph{$B$-specialness}.

\begin{definition}[{\cite[\S 7]{baumgartner_pfa}}]
  Suppose that $T$ is a tree of height $\omega_1$. We say that $T$ is \emph{$B$-special} if there is 
  a function $f:T \rightarrow \omega$ such that, for all $s,t,u \in T$, if $f(s) = f(t) = f(u)$ and 
  $s <_T t,u$, then $t$ and $u$ are $<_T$-comparable.
\end{definition}

This can indeed be seen as a generalization of the notion of specialness, since, if $T$ is a tree 
of height $\omega_1$ with no uncountable branches, then $T$ is special if and only if $T$ is 
$B$-special. It is well-known that $\MA_{\omega_1}$ implies that every tree of height and size 
$\omega_1$ with no uncountable branch is special. An elaboration of this argument, also due 
to Baumgartner \cite[Theorem 7.10]{baumgartner_pfa} shows that $\PFA$ implies that every tree of 
height and size $\omega_1$ is $B$-special.

In \cite{laver_random}, Laver proves that, if one forces with a measure algebra over any model of 
$\MA_{\omega_1}$, then, in the resulting forcing extension, it remains true that every tree of 
height and size $\omega_1$ with no uncountable branch is special. In particular, this resolved positively 
the question of whether Suslin's Hypothesis is consistent with $\cf(2^\omega) = \omega_1$. 
In this section, we modify Laver's argument to prove an analogous result indicating that, if one forces 
with a measure algebra over any model of $\PFA$, then, in the resulting forcing extension, every 
tree of height and size $\omega_1$ is $B$-special.

The following proposition will be useful.

\begin{proposition}[{\cite[Proposition 4.3]{cox_krueger_indestructible}}]
\label{b_special_outer_model_prop}
  Suppose that $T$ is a $B$-special tree of height $\omega_1$, and suppose that $W$ is an outer model 
  of $V$ with $(\omega_1)^W = (\omega_1)^V$. Then every uncountable branch of $T$ that is in $W$ is 
  also in $V$.
\end{proposition}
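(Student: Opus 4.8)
The plan is to use the coloring $f \colon T \to \omega$ witnessing $B$-specialness (which lives in $V$) to show that any uncountable branch $b$ of $T$ in $W$ is \emph{definable from $V$-data}, namely from $T$, $f$, an integer $n$, and a single node $s \in T$. Since $(\omega_1)^W = (\omega_1)^V$, the tree $T$ still has height $\omega_1$ in $W$, and $b$, being an uncountable chain in $T$, is well-ordered of order type $\omega_1$; it is also downward closed in $T$ and is a $\subseteq$-maximal chain (any chain properly extending $b$ would have to contain a node at a height already attained by $b$, hence a node of $b$, or a node of height $\geq \omega_1$, which is impossible). These two facts — downward closedness and maximality of $b$ — are what I would record first.

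Next I would isolate a single color occurring cofinally along $b$. Working in $W$, the partition of $b$ induced by $f \restriction b$ has countably many pieces, and since $\omega_1$ is regular in $W$, one of them, say $B_n := \{t \in b \mid f(t) = n\}$, is unbounded in $b$; this is precisely the point where $(\omega_1)^W = (\omega_1)^V$ is used. Let $s := \min(B_n)$ in the tree order (which well-orders $b$). Then $s$ is an element of $T$, and $T \in V$, so $s \in V$.

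The key step is to observe that $B$-specialness forces the set
\[
  C := \{t \in T \mid s \leq_T t \text{ and } f(t) = n\}
\]
to be a chain: if $t, u \in C$, then $f(s) = f(t) = f(u) = n$ with $s \leq_T t$ and $s \leq_T u$, so either one of $t, u$ equals $s$ or, by the defining property of $f$, $t$ and $u$ are $<_T$-comparable. Since $C$ is defined from $T$, $f$, $n$, $s$ — all in $V$ — we have $C \in V$, and hence its downward closure $\bar{C}$ in $T$ is a chain belonging to $V$.

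Finally I would identify $b$ with $\bar C$. We have $B_n \subseteq C \subseteq \bar C$. Because $b$ is downward closed and $B_n$ is cofinal in $b$, the downward closure of $B_n$ in $T$ is exactly $b$, so $b \subseteq \bar C$; and since $\bar C$ is a chain while $b$ is a maximal chain, $\bar C = b$. Therefore $b = \bar C \in V$, as desired. I do not expect a serious obstacle here: everything beyond the $B$-specialness observation is routine bookkeeping about chains and downward closures, and the only point requiring care is ensuring the color class $B_n$ is genuinely unbounded in $b$, which is exactly what the preservation of $\omega_1$ guarantees.
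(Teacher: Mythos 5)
Your argument is correct and is essentially the standard proof of this fact: the paper itself quotes the proposition from Cox--Krueger without proof, and the cited argument proceeds exactly as you do, fixing a color $n$ occurring cofinally along the branch (using $(\omega_1)^W=(\omega_1)^V$), taking the minimal node $s$ of that color on the branch, and recovering the branch in $V$ as the downward closure of the chain $\{t\in T \mid s\le_T t,\ f(t)=n\}$. The only step worth stating a bit more carefully is the maximality of $b$, which follows from downward closedness plus cofinality of its set of levels, but your sketch of this is fine.
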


We will also need the following definition and lemma.

\begin{definition}
  Suppose that $T$ is a tree and $B$ is a set of cofinal branches of $T$. A function
  $g:B \rightarrow T$ is called a \emph{Baumgartner function} if $g$ is injective and
  \begin{enumerate}
    \item for all $b \in B$, we have $g(b) \in b$;
    \item for all $b, b' \in B$, if $g(b) < g(b')$, then $g(b') \notin b$.
  \end{enumerate}
\end{definition}

If $B$ is small, then a Baumgartner function with domain $B$ always exists:

\begin{lemma}[{\cite[Lemma 7.6]{baumgartner_pfa}}] \label{baumgartner_function_lemma}
  Suppose that $\kappa$ is a regular cardinal, $T$ is a tree of height $\kappa$,
  $B$ is a set of cofinal branches of $T$, and $|B| \leq \kappa$. Then there is a
  Baumgartner function $g:B \rightarrow T$.
\end{lemma}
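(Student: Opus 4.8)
The plan is to prove Lemma~\ref{baumgartner_function_lemma} by a straightforward transfinite recursion of length $|B| \leq \kappa$, enumerating $B$ and choosing the value $g(b)$ node-by-node, using the fact that each branch $b$ has length $\kappa$ (which is regular) to always find enough ``room'' on $b$ above all previously used nodes.

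\medskip

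First I would fix an enumeration $B = \{b_\xi \mid \xi < \delta\}$ where $\delta = \otp(B) \leq \kappa$ (if $|B| < \kappa$ this is an enumeration of order type some ordinal below $\kappa$; if $|B| = \kappa$ it has type $\kappa$). I would then define $g(b_\xi)$ by recursion on $\xi < \delta$, maintaining the invariant that after stage $\eta$, the function $g \restriction \{b_\zeta \mid \zeta \le \eta\}$ is injective and satisfies conditions (1) and (2) of the definition of Baumgartner function. At stage $\xi$, having defined $g(b_\zeta) \in b_\zeta$ for all $\zeta < \xi$, I want to choose a node $t \in b_\xi$ that will serve as $g(b_\xi)$. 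The key observation is: for each $\zeta < \xi$ with $g(b_\zeta) \in b_\xi$, the node $g(b_\zeta)$ lies on the branch $b_\xi$ and hence has a well-defined height $\mathrm{ht}(g(b_\zeta)) < \kappa$; and for each $\zeta < \xi$ with $g(b_\zeta) \notin b_\xi$, I need to ensure $g(b_\xi) \not< g(b_\zeta)$, which I can guarantee by taking $g(b_\xi)$ at height strictly above $\mathrm{ht}(g(b_\zeta) \wedge b_\xi)$ (the height of the meet, i.e. where $b_\zeta$ branches away from $b_\xi$), since then $g(b_\xi)$ is already $\not<_T g(b_\zeta)$ regardless. Let $\alpha_\xi := \sup\{\mathrm{ht}(g(b_\zeta)) + 1, \mathrm{ht}(g(b_\zeta) \wedge b_\xi) + 1 \mid \zeta < \xi\}$, where the meet term is interpreted as $0$ when $b_\zeta$ and $b_\xi$ share no node, etc. Since $\xi < \delta \le \kappa$ and $\kappa$ is regular, $\alpha_\xi < \kappa$, so $b_\xi$, having length $\kappa$, contains a (necessarily unique) node at height $\alpha_\xi$; set $g(b_\xi)$ to be that node.

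\medskip

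It then remains to verify the three required properties. For condition (1), $g(b_\xi) \in b_\xi$ by construction. For injectivity: if $\zeta < \xi$ then $\mathrm{ht}(g(b_\xi)) = \alpha_\xi > \mathrm{ht}(g(b_\zeta))$, so $g(b_\xi) \neq g(b_\zeta)$. For condition (2): suppose $g(b_\xi) <_T g(b_\zeta)$ or $g(b_\zeta) <_T g(b_\xi)$ for some $\zeta \ne \xi$; in either case I must show the node with the smaller value does not lie on the branch through the node with the larger value. Without loss of generality say $g(b_\zeta) < g(b_\xi)$ with $\zeta < \xi$ — I must show $g(b_\xi) \notin b_\zeta$. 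But $g(b_\xi) \in b_\xi$, and if also $g(b_\xi) \in b_\zeta$ then $b_\xi$ and $b_\zeta$ agree up to height $\alpha_\xi = \mathrm{ht}(g(b_\xi))$, which (since $\alpha_\xi > \mathrm{ht}(g(b_\zeta) \wedge b_\xi)$ forced the branches to have already separated below $\alpha_\xi$, unless $g(b_\zeta) \in b_\xi$) forces $g(b_\zeta) \in b_\xi$; in that subcase $\alpha_\xi > \mathrm{ht}(g(b_\zeta))$ still gives $g(b_\zeta) \ne g(b_\xi)$ and $g(b_\zeta) < g(b_\xi)$, but then $g(b_\zeta) \in b_\xi$ means that actually we are comparing two nodes both on $b_\xi$, and I need $g(b_\xi) \notin b_\zeta$: since $\zeta < \xi$ I handled $b_\zeta$ at an earlier stage and the separation-height bookkeeping must be checked symmetrically. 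The case $g(b_\xi) < g(b_\zeta)$ with $\zeta < \xi$ is excluded outright since $\mathrm{ht}(g(b_\xi)) = \alpha_\xi > \mathrm{ht}(g(b_\zeta))$. The main obstacle is getting this case analysis for condition (2) fully airtight — in particular handling carefully whether $g(b_\zeta)$ itself lies on $b_\xi$ or the two branches have already diverged below it — but this is exactly what the choice of $\alpha_\xi$ (taking the sup over \emph{both} the heights of earlier $g$-values and the divergence heights of earlier branches from $b_\xi$) is designed to control, and once the invariant is stated precisely the verification is routine. I expect Baumgartner's original argument in \cite[Lemma 7.6]{baumgartner_pfa} proceeds along exactly these lines.
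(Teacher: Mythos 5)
Your overall plan (enumerate $B$ in type $\leq\kappa$ and choose $g(b_\xi)$ on $b_\xi$ at a level past everything relevant to earlier branches, using regularity of $\kappa$) is the right one — the paper gives no proof, quoting Baumgartner, and this is indeed how the lemma is proved — but the specific bookkeeping you propose does not work, and the case you flag as "to be checked symmetrically" is exactly where it fails. Your $\alpha_\xi$ only exceeds $\mathrm{ht}(g(b_\zeta))$ when $g(b_\zeta)\in b_\xi$, and exceeds the height of the meet of the \emph{node} $g(b_\zeta)$ with $b_\xi$ otherwise; in neither case does it control the level at which the \emph{branches} $b_\zeta$ and $b_\xi$ separate. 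Concretely, take $T={}^{<\omega_1}2$, $b_0$ the all-zero branch and $b_1$ the branch which is $0$ below $100$ and $1$ from $100$ on, enumerated in this order. Your recursion gives $g(b_0)=$ the root and $\alpha_1=1$ (both of your terms equal $1$, since $g(b_0)\in b_1$), so $g(b_1)$ is the node of $b_1$ at level $1$, which still lies on $b_0$. Then $g(b_0)<_T g(b_1)$ and $g(b_1)\in b_0$, violating clause (2) of the definition. So the dangerous direction is not "$g(b_\xi)\not<_T g(b_\zeta)$" (that is automatic from the increasing heights) but "$g(b_\zeta)<_T g(b_\xi)$ forces $g(b_\xi)\notin b_\zeta$", and when $g(b_\zeta)$ happens to lie on $b_\xi$ your $\alpha_\xi$ can still fall below the point where $b_\xi$ splits off from $b_\zeta$.

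The repair is small but it is a different invariant: for $\zeta<\xi$ let $\gamma_{\zeta,\xi}<\kappa$ be the least level at which the distinct cofinal branches $b_\zeta$ and $b_\xi$ differ (this exists, since two cofinal branches agreeing at every level are equal, and above $\gamma_{\zeta,\xi}$ they never share a node), and choose $\alpha_\xi<\kappa$ greater than every $\gamma_{\zeta,\xi}$ and every $\mathrm{ht}(g(b_\zeta))$ for $\zeta<\xi$ (possible by regularity, as $\xi<\kappa$). Then $g(b_\xi)$, the node of $b_\xi$ at level $\alpha_\xi$, lies on no $b_\zeta$ with $\zeta<\xi$. This yields injectivity at once, makes the direction "$g(b_\zeta)<_T g(b_\xi)$" of clause (2) hold because its conclusion $g(b_\xi)\notin b_\zeta$ holds outright, and rules out "$g(b_\xi)<_T g(b_\zeta)$" either by comparing heights or by noting that branches are downward closed, so a predecessor of $g(b_\zeta)$ would again lie in $b_\zeta$. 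With that change the verification really is routine; as written, however, the proposal's construction is incorrect rather than merely incompletely verified.
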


\begin{theorem} \label{random_wkh_theorem}
  Suppose that $\PFA$ holds, $\kappa$ is an infinite cardinal, and $\bb{B}$ is the measure algebra 
  on $2^\kappa$, with associated measure $\mu$. Then, in $V^{\bb{B}}$, every tree of height and size 
  $\omega_1$ is $B$-special.
\end{theorem}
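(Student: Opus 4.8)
The plan is to adapt Laver's argument for random reals over models of $\MA_{\omega_1}$ to the setting of $B$-specialness and $\PFA$. Fix in $V$ a $\bb{B}$-name $\dot{T}$ for a tree of height and size $\omega_1$; without loss of generality we may assume the underlying set of $\dot T$ is forced to be $\omega_1$, and that $\dot{<}_{\dot T}$ is a $\bb B$-name for a partial order on $\omega_1$. Since $\bb B$ is ccc, for each pair $\alpha, \beta < \omega_1$ the truth value $\llbracket \alpha \mathrel{\dot{<}_{\dot T}} \beta \rrbracket \in \bb B$ is a single element of the measure algebra, and we think of the whole tree as coded by the assignment $(\alpha,\beta) \mapsto \llbracket \alpha \mathrel{\dot{<}_{\dot T}} \beta\rrbracket$. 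Because $\bb B$ is the measure algebra on $2^\kappa$ for some $\kappa$ but each name for an element of $\omega_1$ is decided by a countable subalgebra, we may pass to a countably generated, hence measure-isomorphic to the standard Lebesgue measure on $2^\omega$, complete subalgebra $\bb B_0 \le \bb B$ with $\dot T$ a $\bb B_0$-name. The point of working in $V$, where $\PFA$ holds, is that $\PFA$ (via Baumgartner's Theorem 7.10 of \cite{baumgartner_pfa}) lets us $B$-specialize trees, and more importantly lets us run a proper forcing iteration to produce a single function that $B$-specializes $\dot T$ on a positive-measure set of conditions uniformly.

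The key step is to build, by a proper forcing argument using $\PFA$, a function $f: \omega_1 \to \omega$ in $V$ together with a dense/positive-measure structure witnessing that $f$ is forced by $\bb B_0$ to $B$-specialize $\dot T$. Concretely, following Laver, one wants: for every $n < \omega$ and every triple $\alpha, \beta, \gamma < \omega_1$ with $f(\alpha) = f(\beta) = f(\gamma) = n$, the condition $\llbracket \alpha \mathrel{\dot{<}_{\dot T}} \beta \rrbracket \wedge \llbracket \alpha \mathrel{\dot{<}_{\dot T}} \gamma \rrbracket$ forces $\beta$ and $\gamma$ to be comparable. The natural poset to force with is a finite-approximation or countable-approximation poset whose conditions are partial functions $p: \omega_1 \to \omega$ (with finite or countable domain) respecting the above requirement relative to the measure-algebra truth values, and one must show this poset is proper (indeed one should aim for a property like being $\aleph_1$-preserving and not collapsing cardinals, perhaps proper via a fusion or via the standard ``built from countable conditions with an amalgamation lemma'' schema). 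Then $\PFA$ applied to this poset — against $\aleph_1$-many dense sets asking that each $\alpha < \omega_1$ enter the domain — yields the desired $f$ in $V$. The measure-theoretic input (a Fubini/Lebesgue density argument, exactly as in Laver) is what makes the amalgamation lemma go through: when trying to extend a condition to include a new point $\alpha$, one shows that for all but a measure-zero set of ``colors'' $n$, assigning color $n$ to $\alpha$ is consistent, because the relevant bad set of conditions in $\bb B_0$ has measure zero.

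Having produced such an $f \in V$, the last step is to transfer $B$-specialness into $V^{\bb B}$. In $V^{\bb B}$ the tree $T$ is the realization of $\dot T$; by construction of $f$ and the choice of $\bb B_0$, the same $f$ witnesses that $T$ is $B$-special in $V^{\bb B_0}$: if $s <_T t$ and $s <_T u$ in $V^{\bb B_0}$ with $f(s) = f(t) = f(u)$, then the generic meets $\llbracket s \mathrel{\dot{<}_{\dot T}} t \rrbracket \wedge \llbracket s \mathrel{\dot{<}_{\dot T}} u \rrbracket$, which by the defining property of $f$ forces $t, u$ comparable. Finally, $V^{\bb B}$ is a further measure-algebra extension of $V^{\bb B_0}$ — and forcing with a measure algebra adds no new cofinal branches to trees of height $\omega_1$ (a ccc, indeed $\omega^\omega$-bounding, forcing cannot add a branch, or more directly one invokes Proposition \ref{b_special_outer_model_prop}: since $\omega_1$ is preserved, every uncountable branch of $T$ in $V^{\bb B}$ already lies in $V^{\bb B_0}$). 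Hence $f$ still $B$-specializes $T$ in $V^{\bb B}$. Iterating over all names $\dot T$ is unnecessary since the argument handles an arbitrary name; this completes the proof.

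\textbf{Main obstacle.} The crux, and where essentially all the work lies, is proving that the coloring poset (whose conditions carry the constraint phrased in terms of $\bb B_0$-truth values) is proper and that the amalgamation/extension lemma holds; this is precisely the measure-theoretic heart of Laver's argument, and the delicate point is generalizing from ``no uncountable branches, so $f$ is a genuine specializing function'' to the $B$-special setting, where branches are allowed but the three-point comparability condition must be maintained — one needs Baumgartner functions (Lemma \ref{baumgartner_function_lemma}) to handle the countably many branches that may appear along the way, exactly as in Baumgartner's original $\PFA$ argument, now performed with the extra measure-algebra parameter.
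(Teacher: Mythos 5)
Your outline diverges from a workable argument at its central step: you propose to use $\PFA$ to produce a single function $f\colon\omega_1\to\omega$ \emph{in $V$} that is forced by (a subalgebra of) $\bb{B}$ to $B$-specialize $\dot T$, i.e.\ such that for same-colored triples the Boolean value $\llbracket \alpha<_{\dot T}\beta\rrbracket\wedge\llbracket\alpha<_{\dot T}\gamma\rrbracket$ forces comparability. No such ground-model $f$ exists in general, so no poset can be cooked up to produce it. Concretely, take the underlying set to be $\omega_1\times 2$, let $r\in 2^{\omega_1}$ be read off the random generic, and let $(\alpha,i)<_{\dot T}(\beta,j)$ iff $\alpha<\beta$ and $i=r(\alpha)$. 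This is forced to be a tree of height and size $\omega_1$ (one generic branch $\{(\alpha,r(\alpha))\mid\alpha<\omega_1\}$ plus leaves), and it is $B$-special in the extension (color the branch $0$ and the leaves $1$), but for any ground-model $f$ some color class contains three nodes $(\alpha,i),(\beta,j),(\gamma,k)$ with $\alpha<\beta<\gamma$, and then the event $r(\alpha)=i,\ r(\beta)=1-j,\ r(\gamma)=1-k$ has measure $1/8$ and on it $(\alpha,i)$ lies below two incomparable same-colored nodes. This is exactly why, in Laver's argument and in the paper, the specializing function is built \emph{in} $V^{\bb{B}}$: conditions of the specializing poset assign to each pair $(\alpha,n)$ a measure-algebra element $p(\alpha,n)$ of measure $>\frac12$ (the event on which $\alpha$ may get color $n$), not an actual color, and at the end one sets $f(\alpha)$ to be the least $n$ with $b_{\alpha,n}$ in the random filter $H$. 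Moreover, since $\PFA$ only gives a filter meeting $\omega_1$ many dense sets rather than a generic, the paper has to engineer extra dense sets (the $E^*_{\gamma,k}$, built from countable dense subsets of the countably generated algebras $\bb{B}_\gamma$) whose meeting guarantees that generically some $b_{\alpha,n}$ really lands in $H$; none of this measure-theoretic bookkeeping appears in your outline, and it is the heart of the proof.

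A second missing idea concerns the branches. You say Baumgartner functions handle ``the countably many branches,'' but in $V^{\bb{B}}$ the tree can have up to $2^{\omega_1}=\kappa$ many uncountable branches, while Lemma~\ref{baumgartner_function_lemma} requires at most $\omega_1$ many. The paper's device is to fold the collapse $\bb{C}=\Coll(\omega_1,\kappa)$ into the forcing to which $\PFA$ is applied: since $\bb{C}$ is $\omega_1$-closed and $\bb{B}$ is c.c.c., no new branches of $\dot T$ appear in the $\bb{B}\times\bb{C}$-extension, so in $V^{\bb{C}}$ one has a name for a Baumgartner function, the splitting $\dot T=\dot T_0\cup\dot T_1$, and Laver's poset $\bb{P}$ for $\dot T_1$; $\PFA$ is then applied to the proper iteration $\bb{C}*\dot{\bb{P}}$, with further dense sets $D^*_\alpha$ deciding the names $\dot\eta_\alpha$, and the final verification uses the Baumgartner property to compare the $\eta^*_\alpha$'s. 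Without some such collapse your plan has no way to carry out the $T_0/T_1$ splitting. Two smaller inaccuracies: the subalgebra of $\bb{B}$ capturing $\dot T$ is $\aleph_1$-generated, not countably generated; and ``a c.c.c.\ forcing cannot add a cofinal branch to a tree of height $\omega_1$'' is false as stated (a Suslin tree is c.c.c.\ and adds a branch to itself) --- though in your framework that last transfer step would anyway be unnecessary, since once $f$ and the tree order are fixed, $B$-specialness is upward absolute.
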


\begin{proof}
  For notational simplicity, assume that $\kappa \geq \omega_2$; the same ideas will 
  work if $\kappa \in \{\omega, \omega_1\}$.
  Fix a $\bb{B}$-name $\dot{T}$ for a tree of height and size $\omega_1$. Note that, since 
  $\Vdash_{\bb{B}} ``2^\omega = 2^{\omega_1} = \kappa"$, we have $\Vdash_{\bb{B}} ``\dot{T} 
  \text{ has at most } \kappa\text{-many uncountable branches}"$. Let $\bb{C} = \mathrm{Coll}(\omega_1, 
  \kappa)$ (as defined in $V$). Since $\bb{B}$ has the c.c.c.\ and $\bb{C}$ is $\omega_1$-closed, 
  \cite[Lemma 6]{UNGER:1} implies that every uncountable branch of $\dot{T}$ in the extension by 
  $\bb{B} \times \bb{C}$ is already in the extension by $\bb{B}$. In particular, in $V^{\bb{C}}$, 
  we have $\Vdash_{\bb{B}} ``\dot{T} \text{ has at most } \omega_1\text{-many uncountable 
  branches}"$. Therefore, working in $V^{\bb{C}}$ and letting $\dot{B}$ be a $\bb{B}$-name for 
  the set of all uncountable branches through $\dot{T}$, Lemma \ref{baumgartner_function_lemma} implies 
  that we can find a $\bb{B}$-name $\dot{g}:\dot{B} \rightarrow \dot{T}$ that is forced to be a Baumgartner 
  function.
  
  Still working in $V^{\bb{C}}$, let $\dot{T}_0$ be a $\bb{B}$-name for the set 
  \[
    \{t \in \dot{T} \mid \exists b \in \dot{B} ~ \dot{g}(b) <_{\dot{T}} t \in \dot{b}\},
  \]
  and let $\dot{T}_1$ be a $\bb{B}$-name for $\dot{T} \setminus \dot{T}_0$. Then $\dot{T}_1$ 
  is forced to be a subtree of $\dot{T}$ (with the induced ordering), and, since a tail of every element 
  of $\dot{B}$ is forced to be in $\dot{T}_0$, we have $\Vdash_{\bb{B}} ``\dot{T}_1 \text{ has no 
  uncountable branch}"$.
  
  In $V$, $\bb{C}$ is $\omega_1$-closed and therefore does not add any new codes for Borel subsets of 
  $2^\kappa$. As a result, in $V^\bb{C}$, the measure algebra on $2^\kappa$ is isomorphic to 
  $\bb{B}$, so, when working in $V^{\bb{C}}$, we can still think of $\bb{B}$ as being the 
  measure algebra on $2^\kappa$. Therefore, the proof of the main result of \cite{laver_random} 
  implies that, in $V^{\bb{C}}$, there is a c.c.c.\ forcing poset $\bb{P}$ that adds a $\bb{B}$-name for 
  a specializing function for $\dot{T}_1$. Let us recall how this poset is defined.
  
  Work in $V^{\bb{C}}$. Without loss of generality, we may assume that the underlying set of 
  $\dot{T}$ is forced to be $\omega_1$, and therefore the underlying sets of $\dot{T}_0$ and $\dot{T}_1$ 
  are forced to be subsets of $\omega_1$.
  We can also assume that $\Vdash_{\bb{B}} ``\forall \alpha, \beta < \omega_1 (\alpha <_{\dot{T}} 
  \beta \Rightarrow \alpha < \beta)"$.
  
  For all $\alpha < \omega_1$, let $A_\alpha \subseteq \bb{B}$ be a maximal antichain of conditions 
  deciding the truth value of the statement $``\alpha \in \dot{T}_1"$. For all $\alpha < \beta < \omega_1$, 
  let $A'_{\alpha\beta}$ be a maximal antichain of conditions $b \in \bb{B}$ such that one of the following 
  holds:
  \begin{itemize}
    \item $b$ forces at least one of $\alpha$ and $\beta$ to be in $\dot{T}_0$; or 
    \item $b$ forces both $\alpha$ and $\beta$ to be in $\dot{T}_1$ and $b$ decides the truth value 
    of the statement $``\alpha <_{\dot{T}_1} \beta"$.
  \end{itemize}
  
  For all $\gamma < \omega_1$, let $\bb{B}_\gamma$ be the complete subalgebra of $\bb{B}$ generated by 
  $\bigcup_{\alpha \leq \gamma} A_\alpha \cup \bigcup_{\alpha < \beta \leq \gamma} A'_{\alpha\beta}$. 
  Since $\bb{B}$ has the c.c.c., each $\bb{B}_\gamma$ is countably generated and is thus 
  isomorphic to the measure algebra on $2^\omega$. We can therefore fix a 
  countable subset $\mathbb{B}^*_\gamma \subseteq \mathbb{B}_\gamma$ such that, for all 
  $b \in \bb{B}_\gamma$ and all $\varepsilon > 0$, there is $b^* \in \bb{B}^*_\gamma$ such that 
  $\mu(b^* - b) < \varepsilon \cdot \mu(b^*)$.
    
  We now define our poset $\bb{P}$. The conditions in $\bb{P}$ are all functions $p$ with domains of 
  the form $E_p \times W_p$, where $E_p \in [\omega_1]^{<\omega}$ and $W_p \in [\omega]^{<\omega}$, 
  such that
  \begin{itemize}
    \item for all $(\alpha, n) \in E_p \times W_p$, either $p(\alpha, n) = 0$ or $p(\alpha, n) \in 
    \bb{B}_\alpha$ and $\mu(p(\alpha, n)) > \frac{1}{2}$;
    \item for all $\alpha < \beta$ in $E_p$ and all $n \in W_p$, either $p(\alpha, n) \wedge 
    p(\beta, n) = 0$ or $p(\alpha, n) \wedge p(\beta, n) \Vdash_{\bb{B}} ``\alpha \not<_{\dot{T}_1} 
    \beta"$.
  \end{itemize}
  Note that, for a condition $b \in \bb{B}$ and ordinals $\alpha < \beta < \omega_1$, if we 
  write $b \Vdash_{\bb{B}}``\alpha <_{\dot{T}_1} \beta"$, then implicit in this assertion is the 
  fact that $b$ forces both $\alpha$ and $\beta$ to be in $\dot{T}_1$. Therefore, one of the ways 
  in which we could have $p(\alpha, n) \wedge p(\beta, n) \Vdash_{\bb{B}} ``\alpha \not<_{\dot{T}_1} 
  \beta"$ in the above bullet point is for $p(\alpha, n) \wedge p(\beta, n)$ to force either 
  $\alpha$ or $\beta$ to be in $\dot{T}_0$.
  
  If $p,q \in \bb{P}$, then we let $q \leq_{\bb{P}} p$ if and only if $E_q \supseteq E_p$, $W_q \supseteq 
  W_p$, $q(\alpha, n) \leq_{\bb{B}} p(\alpha, n)$ for all $(\alpha, n) \in E_p \times W_p$, and, 
  for all such $(\alpha, n)$, if $p(\alpha, n) > 0$, then $q(\alpha, n) > 0$.
  
  By \cite[Lemma 3]{laver_random}, $\bb{P}$ has the c.c.c.\ in $V^{\bb{C}}$. (Our poset $\bb{P}$ has some 
  minor cosmetic differences from the poset considered in \cite{laver_random} due to the fact that we 
  are adding a name for a specializing function for a \emph{subtree} of $\dot{T}$ rather than the 
  entire tree, but all of the arguments of \cite{laver_random} go through without change to our setting.)
  In $V$, let $\dot{\bb{P}}$ be a $\bb{C}$-name for $\bb{P}$. Then $\bb{C} * \dot{\bb{P}}$ is proper. 
  Note that, in $V^{\bb{C}}$, every condition of $\bb{P}$ is in fact in $V$, since it is a function from 
  a finite set of pairs of ordinals into $\bb{B}$. Therefore, in $V$, the set of conditions $(c, \dot{p})$ 
  such that there is a function $p \in V$ for which $c \Vdash_{\bb{C}}``\dot{p} = p"$ is dense in 
  $\bb{C} * \dot{\bb{P}}$, so we will assume that we are working exclusively with such conditions 
  and will write $(c,p)$ instead of $(c, \dot{p})$.
  
  We now isolate a collection of $\omega_1$-many dense subsets of $\bb{C} \ast \dot{\bb{P}}$ to which we 
  will apply $\PFA$. First note that, in $V^{\bb{C} \times \bb{B}}$, for all $\alpha \in T_0$, 
  the fact that $g$ is a Baumgartner function implies that there is a unique uncountable branch 
  $b$ of $T$ such that $g(b) <_T \alpha \in b$. Denote the value $g(b)$ for this unique branch $b$ 
  by $\eta_\alpha$, and note that $\eta_\alpha \in T_1$. If $\alpha \in T_1$, then let 
  $\eta_\alpha = 0$. In $V$, let $\dot{\eta}_\alpha$ be a $\bb{C} \times \bb{B}$-name for $\eta_\alpha$.
  
  For each $\alpha < \omega_1$, let $D_\alpha$ be the set of $c \in \bb{C}$ 
  for which there exists a maximal antichain $A(c, \alpha)$ of $\bb{B}$ such that, for every 
  $b \in A(c, \alpha)$, $(c, b) \in \bb{C} \times \bb{B}$ decides the truth value of the 
  statement $``\alpha \in \dot{T}_1"$ and decides the value of $\dot{\eta}_\alpha$. Since 
  $\bb{C}$ is $\omega_1$-closed and $\bb{B}$ has the c.c.c., $D_\alpha$ is dense in $\bb{C}$. 
  Let $D^*_\alpha$ be the set of $(c,p) \in \bb{C} \ast \dot{\bb{P}}$ such that $c \in D_\alpha$. 
  Then $D^*_\alpha$ is dense in $\bb{C} \ast \dot{\bb{P}}$.
  
  Recall that, in $V^{\bb{C}}$, for every $\gamma < \omega_1$, we constructed $\bb{B}_\gamma$ 
  to be a countably generated complete subalgebra of $\bb{B}$ and also specified a countable 
  subset $\mathbb{B}^*_\gamma \subseteq \bb{B}_\gamma$ such that, for all $b \in \bb{B}_\gamma$ 
  and every $\varepsilon > 0$, there is $b^* \in \bb{B}^*_\gamma$ such that $\mu(b^* - b) < 
  \varepsilon \cdot \mu(b^*)$. 
  In $V$, let $\dot{h}_\gamma$ be a $\bb{C}$-name for a bijection from $\omega$ to 
  $\bb{B}^*_\gamma$, and let $\dot{\bb{A}}_\gamma$ be a $\bb{C}$-name for a countable generating 
  set for $\bb{B}_\gamma$. 
  Since $\bb{C}$ is $\omega_1$-closed, $\dot{h}_\gamma$ and $\dot{\bb{A}}_\gamma$ are forced to be elements of 
  the ground model. For all $\gamma < \omega_1$ and $k < \omega$, let $E^*_{\gamma, k}$ be the 
  set of $(c,p) \in \bb{C} \ast \dot{\bb{P}}$ such that 
  \begin{itemize}
    \item $c$ decides the value of $\dot{h}_\gamma$, say as some function $h_\gamma : \omega \rightarrow 
    \bb{B}$;
    \item $c$ decides the value of $\dot{\bb{A}}_\gamma$;
    \item there is an $n < \omega$ such that $(\gamma, n) \in \dom{p}$, 
    $\mu(p(\gamma, n)) > \frac{1}{2}$, and $\mu(p(\gamma, n) - h_\gamma(k)) < \frac{1}{2} - 
    \frac{1}{2}\mu(h_\gamma(k))$.
  \end{itemize}
  We claim that $E^*_{\gamma, k}$ is dense in $\bb{C} * \dot{\bb{P}}$. To see this fix an arbitrary 
  $(c_0, p_0) \in \bb{C} * \dot{\bb{P}}$. First find a condition $c_1 \leq_{\bb{C}} c_0$, a 
  function $h_\gamma : \omega \rightarrow \bb{B}$, and a set $\bb{A}_\gamma$ such that $c_1 \Vdash_{\bb{C}}``\dot{h}_\gamma 
  = h_\gamma \text{ and } \dot{\bb{A}}_\gamma = \bb{A}_\gamma"$. Then, fix an $n \in \omega \setminus W_{p_0}$. Let $\dot{p}_1$ be a $\bb{C}$-name 
  for a condition in $\dot{\bb{P}}$ that is forced by $c_1$ to have the following properties:
  \begin{itemize}
    \item $E_{\dot{p}_1} = E_{p_0} \cup \{\gamma\}$;
    \item $W_{\dot{p}_1} = W_{p_0} \cup \{n\}$;
    \item for all $(\alpha, m) \in E_{p_0} \times W_{p_0}$, $\dot{p}_1(\alpha, m) = 
    p_0(\alpha, m)$;
    \item for all $(\alpha, m) \in (E_{\dot{p}_1} \times W_{\dot{p}_1}) \setminus 
    (E_{p_0} \times W_{p_0})$, if $(\alpha, m) \neq (\gamma, n)$, then $\dot{p}_1(\alpha, m) = 0$;
    \item $\dot{p}_1(\gamma, n)$ is an element $b$ of $\bb{B}_\gamma$ such that
    \begin{itemize}
      \item[$\circ$] $h_\gamma(k) \leq b$;
      \item[$\circ$] $\frac{1}{2} < \mu(b) < \frac{1}{2} + \frac{1}{2}\mu(h_\gamma(k))$.
    \end{itemize}
  \end{itemize}  
  It is possible to satisfy the final requirement above due to the fact that $\bb{B}_\gamma$ is 
  forced to be atomless. Finally, find $c_2 \leq_{\bb{C}} c_1$ deciding the value of $\dot{p}_1$ 
  as some function 
  $p_1 \in V$. Then $(c_2, p_1) \in E^*_{\gamma, k}$ and $(c_2, p_1) \leq_{\bb{C} * \dot{\bb{P}}} 
  (c_0, p_0)$.
  
  Now apply $\PFA$ to find, in $V$, a filter $G \subseteq \bb{C} * \dot{\bb{P}}$ that meets 
  $D^*_\alpha$ for all $\alpha < \omega_1$ and $E^*_{\gamma, k}$ for all $\gamma < \omega_1$ and 
  $k < \omega$. Also, let $H \subseteq \bb{B}$ be a $V$-generic filter. Working in $V[H]$, let 
  $T$ denote the realization of $\dot{T}$. We will use $G$ and $H$ to construct a function 
  $f:T \rightarrow \omega$ witnessing that $T$ is $B$-special.
  
  We first define subtrees $T^*_0, T^*_1 \subseteq T$ as follows. For every $\alpha < \omega_1$, 
  find a condition $(c^*_\alpha,p^*_\alpha) \in G \cap D^*_\alpha$. Then, in $V$, $A(c^*_\alpha, \alpha)$ 
  was a maximal antichain in $\bb{B}$, so there is a unique $b^*_\alpha \in H \cap A(c^*_\alpha, \alpha)$. 
  Moreover, we know that $(c^*_\alpha, b^*_\alpha)$ decides the truth value of the statement 
  $``\alpha \in \dot{T}_1"$. If $(c^*_\alpha, b^*_\alpha)$ forces $\alpha$ to be in $\dot{T}_1$, 
  then put $\alpha$ into $T^*_1$; otherwise, put $\alpha$ into $T^*_0$. In addition, 
  $(c^*_\alpha, b^*_\alpha)$ decides the value of $\dot{\eta}_\alpha$; let $\eta^*_\alpha$ be this 
  value. In $V$, let $\dot{T}^*_0$ and $\dot{T}^*_1$ be $\bb{B}$-names for $T^*_0$ and $T^*_1$, 
  respectively.
  
  \begin{claim} \label{eta_claim}
    For all $\alpha < \beta \in T^*_0$, if $\alpha \not<_T \beta$, then either 
    $\eta^*_\beta \not<_T \alpha$ or $\eta^*_\alpha \not<_T \beta$.
  \end{claim}
  
  \begin{proof}
    Fix $\alpha < \beta \in T^*_0$, and suppose for sake of contradiction that $\alpha \not<_T \beta$ 
    but $\eta^*_\beta <_T \alpha$ and $\eta^*_\alpha <_T \beta$. Let $c^*$ be the greatest lower bound 
    of $c^*_\alpha$ and $c^*_\beta$ in $\bb{C}$, and let $b^* \leq b^*_\alpha \wedge b^*_\beta$ be 
    a condition in $H$ forcing that $\alpha \not<_T \beta$, $\eta^*_\beta <_T \alpha$, and 
    $\eta^*_\alpha <_T \beta$. 
    Then, in $V$, $(c^*, b^*) \Vdash_{\bb{C} \times \bb{B}}``\dot{\eta}_\alpha = \eta^*_\alpha \text{ and } 
    \dot{\eta}_\beta = \eta^*_\alpha$. In particular, there are $\bb{B}$-names $\dot{d}_\alpha, 
    \dot{d}_\beta$ for uncountable branches through $\dot{T}$ such that 
    \[
    (c^*, b^*) \Vdash_{\bb{C} \times 
    \bb{B}} ``\alpha \in \dot{d}_\alpha, ~ \beta \in \dot{d}_\beta, ~ \dot{g}(\dot{d}_\alpha) = 
    \eta^*_\alpha, ~ \dot{g}(\dot{d}_\beta) = \eta^*_\beta".
    \]
    Since $b^*$ forces $\alpha \not<_T \beta$, it must be the case that $(c^*, b^*)$ forces 
    $\dot{d}_\alpha$ and $\dot{d}_\beta$ to be distinct branches of $T$, and hence 
    $\eta^*_\alpha \neq \eta^*_\beta$. Since $(c^*, b^*)$ forces $\eta^*_\alpha, \eta^*_\beta <_T \alpha$, 
    it forces $\eta^*_\alpha$ and $\eta^*_\beta$ to be $<_T$-comparable. Suppose that 
    $\eta^*_\alpha < \eta^*_\beta$, in which case $(c^*, b^*) \Vdash_{\bb{C} \times \bb{B}} 
    ``\eta^*_\alpha <_T \eta^*_\beta"$. Since $\dot{g}$ is forced to be a Baumgartner function, 
    it must be the case that $(c^*, b^*) \Vdash_{\bb{C} \times \bb{B}} ``\eta^*_\beta \notin 
    \dot{d}_\alpha"$. However, $(c^*, b^*) \Vdash_{\bb{C} \times \bb{B}} ``\eta^*_\beta <_{\dot{T}} 
    \alpha \in \dot{d}_\alpha"$, which is a contradiction. A symmetric argument yields a contradiction 
    if $\eta^*_\beta < \eta^*_\alpha$, thus completing the proof of the claim.
  \end{proof}
  
  For each $\gamma < \omega_1$, since $G \cap E^*_{\gamma, 0} \neq \emptyset$, we can find a condition 
  $(c, p) \in G$ that decides the value of $\dot{h}_\gamma$ and $\dot{\bb{A}}_\gamma$, say as 
  $h_\gamma$ and $\bb{A}_\gamma$. Let $\bb{B}^*_\gamma$ be the subalgebra of $\bb{B}$ generated by 
  $\bb{A}_\gamma$ (in $V$).
  
  For each $\alpha \in T^*_1$ and each $n < \omega$, let 
  \[
    b_{\alpha, n} = \bigwedge \{p(\alpha, n) \mid (\alpha, n) \in \dom{p} \text{ and } \exists c \in \bb{C} 
    ~ (c,p) \in G\}.
  \] 
  Note that $b_{\alpha, n}$ is in $\bb{B}^*_\alpha$ (possibly equal to $0$) and, if 
  $b_{\alpha, n} > 0$, then $\mu(b_{\alpha, n}) \geq \frac{1}{2}$.  
  In $V$, let $\dot{H}$ be the canonical $\bb{B}$-name for the generic filter.
  
  \begin{claim}
    For every $\alpha \in T^*_1$, there is $n < \omega$ such that $b_{\alpha, n} > 0$ and 
    $b_{\alpha, n} \in H$.
  \end{claim}
  
  \begin{proof}
    Fix $\alpha \in T^*_1$, and suppose for sake of contradiction that there is $b \in H$ such that, 
    in $V$, we have 
    \begin{itemize}
      \item $b \Vdash_{\bb{B}} ``\alpha \in \dot{T}^*_1"$;
      \item $b \Vdash_{\bb{B}} ``\text{there is no } n < \omega \text{ such that } b_{\alpha, n} > 0 
      \text{ and } b_{\alpha, n} \in \dot{H}"$.
    \end{itemize}
    In particular, for each $n < \omega$ such that $b_{\alpha, n} > 0$, it must be the case 
    that $b_{\alpha, n}$ and $b$ are incompatible. Since each $b_{\alpha, n}$ is in $\bb{B}^*_\alpha$, 
    we can assume that $b$ is in $\bb{B}^*_\alpha$ as well. We can therefore find $k < \omega$ 
    such that $\mu(h_\alpha(k) - b) < \frac{1}{4}\mu(h_\alpha(k))$, and thus 
    $\mu(h_\alpha(k) \wedge b) > \frac{3}{4}\mu(h_\alpha(k))$. Now find $(c, p) \in G \cap 
    E^*_{\alpha, k}$. Then we can find $n < \omega$ such that $(\alpha, n) \in \dom{p}$, 
    $\mu(p(\alpha, n)) > \frac{1}{2}$, and $\mu(p(\alpha, n) - h_\alpha(k)) < 
    \frac{1}{2} - \frac{1}{2} \mu(h_\alpha(k))$. In particular, since $b_{\alpha, n} \leq 
    p(\alpha, n)$ and $\mu(b_{\alpha, n}) \geq \frac{1}{2}$, we must have 
    $\mu(b_{\alpha, n} \wedge h_{\alpha}(k)) > \frac{1}{2}\mu(h_\alpha(k))$. Altogether, this 
    implies that $b_{\alpha, n}$ and $b$ are compatible in $\bb{B}$, contradicting our choice of $b$.
  \end{proof}
  
  For each $\alpha \in T^*_1$, let $f(\alpha)$ be the least $n$ such that $b_{\alpha, n} > 0$ and 
  $b_{\alpha, n} \in H$. For each $\alpha \in T^*_0$, recall that we defined an ordinal 
  $\eta^*_{\alpha}$ such that $\eta^*_\alpha <_T \alpha$ and $\eta^*_\alpha \in T^*_1$. 
  For such $\alpha$, let $f(\alpha) = f(\eta^*_\alpha)$. 
  
  It remains to show that $f$ witnesses that $T$ is $B$-special. We first claim that $f \restriction 
  T^*_1$ witnesses that $T^*_1$ is special. To this end, fix $\alpha, \beta \in T^*_1$ and suppose 
  that $f(\alpha) = f(\beta) = n$. Then we have $b_{\alpha, n} \wedge b_{\beta, n} \in H$. Find 
  $(c,p) \in G$ such that $(\alpha, n)$ and $(\beta, n)$ are both in $\dom{p}$. Then, by the definition 
  of $\bb{P}$ in $V^{\bb{C}}$, we must have 
  \[
    c \Vdash_{\bb{C}} ``p(\alpha, n) \wedge p(\beta, n) \Vdash_{\bb{B}} ``\alpha \not<_{\dot{T}_1} 
    \beta"".
  \]
  Since $\alpha, \beta \in T^*_1$, we can find an extension $c^*$ of $c$ such that 
  $(c^*, p) \in G$ and $(c^*, b_\alpha^* \wedge b_\beta^*) \Vdash ``\alpha, \beta \in \dot{T}_1"$. 
  Then $p(\alpha, n) \wedge p(\beta, n) \wedge b_\alpha^* \wedge b_\beta^*$ is in $H$ and forces 
  that $\alpha \not<_{\dot{T}} \beta$. Therefore, $\alpha$ and $\beta$ are incomparable in $T$, 
  so $f \restriction T^*_1$ really does witness that $T^*_1$ is special. 
  
  Now suppose that $\alpha < \beta < \gamma < \omega_1$, $f(\alpha) = f(\beta) = f(\gamma) = n$, 
  and $\alpha <_T \beta, \gamma$. Suppose first that $\beta \in T^*_1$. Since $f \restriction T^*_1$ 
  witnesses that $T^*_1$ is special, we must have $\alpha \in T^*_0$. But then we have 
  $\eta^*_\alpha <_T \alpha <_T \beta$, $\eta^*_\alpha \in T^*_1$, and $f(\eta^*_\alpha) = f(\alpha) = 
  f(\beta)$, which is again a contradiction. Thus, we must have $\beta \in T^*_0$; similarly, we also have 
  $\gamma \in T^*_0$. 
  
  If $\alpha <_T \eta^*_\beta$, then we reach a contradiction exactly as in the previous paragraph, since 
  $f(\alpha) = f(\beta) = f(\eta^*_\beta)$ and $\eta^*_\beta \in T^*_1$. Therefore, we must have 
  $\eta^*_\beta \leq_T \alpha$. Similarly, $\eta^*_\gamma \leq_T \alpha$. 
  But then $\eta^*_\beta <_T \gamma$ and $\eta^*_\gamma <_T \beta$, so, by Claim \ref{eta_claim}, 
  it must be the case that $\beta <_T \gamma$. Therefore, $f$ witnesses that $T$ is $B$-special.
\end{proof}

As a corollary, we can show that an ``indestructible" version of the negation of the 
weak Kurepa Hypothesis is compatible with any possible value of the continuum except 
$\omega_1$. More precisely:

\begin{corollary} \label{wkh_continuum_cor}
  Suppose that $\PFA$ holds, $\kappa \geq \omega_2$ is a cardinal of uncountable 
  cofinality, and $\bb{B}$ is the measure algebra on $2^\kappa$. Then, in 
  $V^{\bb{B}}$, $2^\omega = \kappa$ and, for every tree $T$ of size and 
  height $\omega_1$ and every outer model $W$ of $V^{\bb{B}}$ such that $(\omega_1)^W = 
  (\omega_1)^{V^{\bb{B}}}$, $T$ has at most $\omega_1$-many uncountable branches in 
  $W$.
\end{corollary}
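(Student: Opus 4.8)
The plan is to derive both assertions directly from Theorem~\ref{random_wkh_theorem} (i.e.\ Theorem~C) together with Proposition~\ref{b_special_outer_model_prop}, the only genuinely new input being a standard cardinal-arithmetic computation and Baumgartner's bound on the number of cofinal branches of a $B$-special tree.

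First I would dispose of the assertion $2^\omega = \kappa$, which involves no $B$-specialness. The forcing $\bb{B}$ is c.c.c.\ and has a dense subset of size $\kappa$, so the number of nice $\bb{B}$-names for reals is at most $\kappa^{\aleph_0}$. Since $\PFA$ implies $\SCH$ and $\kappa \geq \omega_2 = 2^\omega$ has uncountable cofinality, a routine computation gives $\kappa^{\aleph_0} = \kappa$; hence $\bb{B}$ adds at most $\kappa$-many new reals. On the other hand $\bb{B}$ adds $\kappa$-many mutually generic random reals, so in $V^{\bb{B}}$ we have $2^\omega = \kappa$.

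For the branch statement, fix in $V^{\bb{B}}$ a tree $T$ of height and size $\omega_1$. By Theorem~\ref{random_wkh_theorem}, $T$ is $B$-special in $V^{\bb{B}}$, witnessed by some $f : T \to \omega$. I would then recall the well-known fact (essentially Baumgartner~\cite{baumgartner_pfa}) that a $B$-special tree of size $\omega_1$ has at most $\omega_1$-many cofinal branches: each cofinal branch $b$ has order type $\omega_1$, hence uncountable cofinality, so it meets some colour class $f^{-1}(n_b)$ cofinally; fixing $s_b \in b$ with $f(s_b) = n_b$, $B$-specialness makes $D_b := \{t \in T \mid s_b <_T t,\ f(t) = n_b\}$ a chain, and $b$ is the downward closure of the cofinal set $b \cap D_b$; so if two cofinal branches shared the pair $(n_b,s_b)$ their union would be a chain cofinal in both, forcing them equal, and $b \mapsto (n_b, s_b) \in \omega \times T$ is injective. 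Thus $\neg\wKH$ holds in $V^{\bb{B}}$, and in particular $T$ has at most $\omega_1$-many uncountable branches there.

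Finally, let $W$ be an outer model of $V^{\bb{B}}$ with $(\omega_1)^W = (\omega_1)^{V^{\bb{B}}}$. Applying Proposition~\ref{b_special_outer_model_prop} with $V^{\bb{B}}$ in the role of $V$ (legitimate since $T$ is $B$-special of height $\omega_1$ in $V^{\bb{B}}$), every uncountable branch of $T$ lying in $W$ already lies in $V^{\bb{B}}$; combined with the previous paragraph and $(\omega_1)^W = (\omega_1)^{V^{\bb{B}}}$, this gives that $T$ has at most $\omega_1$-many uncountable branches in $W$. I do not expect a serious obstacle here, as this is genuinely a corollary; the only points needing care are the bookkeeping of the agreement of $\omega_1$ between $V^{\bb{B}}$ and $W$ (so that ``at most $\omega_1$-many'' is unambiguous and $|T| = \omega_1$ in both models) and citing the Baumgartner bound correctly, together with the verification that $\kappa^{\aleph_0} = \kappa$ under the stated hypotheses.
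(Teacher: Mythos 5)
Your proposal is correct and follows essentially the same route as the paper: obtain $B$-specialness of $T$ in $V^{\bb{B}}$ from Theorem~\ref{random_wkh_theorem}, bound the branches in $V^{\bb{B}}$ by Baumgartner's theorem that a $B$-special tree of size $\omega_1$ has at most $\omega_1$-many cofinal branches (which the paper simply cites, while you reprove it, correctly), and then transfer to $W$ via Proposition~\ref{b_special_outer_model_prop}; the computation $2^\omega=\kappa$ is the same standard fact the paper invokes.
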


\begin{proof}
  The fact that $2^\omega = \kappa$ in $V^{\bb{B}}$ is a standard fact about 
  random forcing.
  Let $T \in V^{\bb{B}}$ be a tree of size and height 
  $\omega_1$, and let $W$ be an outer model 
  of $V^{\bb{B}}$ with $(\omega_1)^W = (\omega_1)^{V^{\bb{B}}}$. By Theorem 
  \ref{random_wkh_theorem}, $T$ is $B$-special in $V^{\bb{B}}$. By 
  \cite[Theorem 7.4]{baumgartner_pfa}, it follows that $T$ has at most 
  $\omega_1$-many uncountable branches in $V^{\bb{B}}$. By Proposition 
  \ref{b_special_outer_model_prop}, every uncountable branch of $T$ in $W$ is already 
  in $V$. Therefore, $T$ has at most $\omega_1$-many uncountable branches 
  in $W$.
\end{proof}

We now connect the results in this section back to the main subject of this paper. 
In \cite{cox_krueger_indestructible}, Cox and Krueger introduce the \emph{indestructible 
guessing model property}:

\begin{definition}[\cite{cox_krueger_indestructible}]
  Let $\theta \geq \omega_2$ be a regular cardinal. $M \in \power_{\omega_2} H(\theta)$ 
  is said to be an \emph{indestructible $\omega_1$-guessing model} if it is an 
  $\omega_1$-guessing  model and remains an $\omega_1$-guessing model in any 
  forcing extension that preserves $\omega_1$. $\IGMP(\theta)$ is the assertion 
  that there are stationarily many indestructible $\omega_1$-guessing models in 
  $\power_{\omega_2} H(\theta)$. $\IGMP$ is the 
  assertion that $\IGMP(\theta)$ holds for all regular $\theta \geq \omega_2$.
\end{definition}

In \cite{cox_krueger_indestructible}, 
Cox and Krueger show that $\IGMP$ follows from the conjunction of the following two statements:
\begin{itemize}
  \item for all regular $\theta \geq \omega_2$, there are stationarily many internally unbounded 
  $\omega_1$-guessing models in $\power_{\omega_2} H(\theta)$;
  \item every tree of size and height $\omega_1$ with no cofinal branches is special.
\end{itemize}
In \cite{krueger_sch}, Krueger proves that if $\theta \geq \omega_2$ is a 
regular cardinal and $N \prec H(\theta)$ is an $\omega_1$-guessing model 
with $\omega_1 \subseteq N$, then $N$ is internally unbounded. 
Therefore, $\IGMP$ follows from the conjunction of $\GMP$ and the assertion that all trees of size and 
height $\omega_1$ with no cofinal branches are special; in particular, it follows from $\PFA$.

$\IGMP$ has some consequences that $\GMP$ does not have. For example, it implies that there are no 
Suslin trees, whereas other work of Cox and Krueger \cite{cox_krueger_quotients} shows that 
$\GMP$ is compatible with the existence of a Suslin tree (we give another model for this 
conjunction in the next section by showing that $\GMP$ follows from the forcing 
axiom $\PFA(S)$).

By the argument of \cite[Theorem 2.8]{cox_krueger_indestructible}, it is clear that 
$\IGMP(\omega_2)$ implies the indestructible version of $\neg \wKH$ isolated in 
Corollary \ref{wkh_continuum_cor} in the case in which $W$ is a forcing extension of 
$V$. In Corollary \ref{wkh_continuum_cor}, we saw that this indestructible version of 
$\neg \wKH$ is compatible with any possible value of the continuum, including 
values of cofinality $\omega_1$. In \cite{cox_krueger_indestructible}, Cox and 
Krueger show that $\IGMP$ is compatible with any possible value of the continuum 
with cofinality at least $\omega_2$. The combination of these two results naturally 
raises the following question:

\begin{question}
  Is $\IGMP$ compatible with $\cf(2^\omega) = \omega_1$? What about just 
  $\IGMP(\omega_2)$?
\end{question}

\section{Forcing axioms for Suslin and almost Suslin trees} \label{pfas_section}

In this section, we continue investigations motivated by the study of $\IGMP$, 
connecting it with the forcing axioms $\PFA(S)$ and $\PFA(T^*)$, introduced 
by Todorcevic \cite{todorcevic_forcing} and Krueger \cite{krueger_forcing_axiom}, 
respectively. In the process, we answer
questions of Cox and Krueger \cite{cox_krueger_indestructible} and Krueger 
\cite{krueger_forcing_axiom}. Let us begin by recalling the relevant definitions.

\begin{definition}
  Suppose that $T$ is an $\omega_1$-tree, i.e., a tree of height $\omega_1$, all of 
  whose levels are countable.
  \begin{enumerate}
    \item $T$ is an \emph{Aronszajn tree} if it has no cofinal branches.
    \item $T$ is a \emph{Suslin tree} if it is an Aronszajn tree and has no 
    uncountable antichains.
    \item $T$ is an \emph{almost Suslin tree} if it has no stationary antichains, 
    i.e., no antichains $A \subseteq T$ for which the set $\{\height(s) \mid s 
    \in A\}$ is stationary in $\omega_1$, where $\height(s)$ denotes the level of 
    $s$ in $T$.
  \end{enumerate}
\end{definition}

We note that an almost Suslin tree need not be an Aronszajn tree, though in this 
paper we will only be interested in almost Suslin trees that are Aronszajn. 
Throughout this section, in accordance with established notation, we will always 
use $S$ to denote a Suslin tree and $T^*$ to denote an almost Suslin Aronszajn tree. 
With this convention, there will be no confusion introduced by the following slight 
abuse of notation.

\begin{definition}
  Let $\P$ be a forcing notion.
  \begin{enumerate}
    \item For a Suslin tree $S$, we say that $\P$ is \emph{$S$-preserving} if 
    $\Vdash_{\P}``S \text{ is a Suslin tree}"$.
    \item For an almost Suslin Aronszajn tree $T^*$, we say that $\P$ is 
    \emph{$T^*$-preserving} if $\Vdash_{\P}``T^* \text{ is an almost Suslin 
    Aronszajn tree}"$.
  \end{enumerate}
\end{definition}

We now introduce the forcing axioms that form the subject of this section. We note 
that in most other works, the forcing axioms $\MA_{\omega_1}(S)$ and 
$\PFA(S)$ require that $S$ be a \emph{coherent} Suslin tree. Since coherence 
will not be necessary in any of our arguments, we choose to present the axioms 
in a more general form and not require coherence. We note that, when
we interpret a Suslin tree as a forcing notion, then the forcing order is
understood to be the reverse of the tree order.

\begin{definition}
  If $\mc C$ is a class of forcing posets, then $\mathsf{FA}(\mc C)$ is the assertion 
  that, for every $\P \in \mc C$ and every collection $\mc D = \{D_\alpha \mid \alpha < 
  \omega_1\}$ of $\omega_1$-many dense subsets of $\P$, there is a filter $G \subseteq \P$ 
  such that $G \cap D_\alpha \neq \emptyset$ for all $\alpha < \omega_1$.
  \begin{enumerate}
    \item $\MA_{\omega_1}(S)$ is the assertion that $S$ is a Suslin 
    tree and $\mathsf{FA}(\mc C)$ 
    holds, where $\mc C$ is the class of c.c.c.\ $S$-preserving posets.
    \item $\PFA(S)$ is the assertion that $S$ is a Suslin tree and 
    $\mathsf{FA}(\mc C)$ holds, where $\mc C$ is the class of proper $S$-preserving posets.
    \item If we start with a model satisfying $\PFA(S)$ and then force with the
    Suslin tree $S$, then we say that the resulting forcing extension satisfies 
    $\PFA(S)[S]$. Asserting that $\PFA(S)[S]$ implies a statement $\varphi$ should be 
    understood as asserting that, in any model of $\ZFC$ satisfying $\PFA(S)$ for 
    some coherent Suslin tree $S$, we have $\Vdash_{S} \varphi$. $\MA_{\omega_1}(S)[S]$ 
    is defined analogously, with $\MA_{\omega_1}(S)$ replacing $\PFA(S)$.
    \item $\PFA(T^*)$ is the assertion that $T^*$ is an almost Suslin Aronszajn 
    tree and $\mathsf{FA}(\mc C)$ holds, where $\mc C$ is the class of 
    proper $T^*$-preserving posets.
  \end{enumerate}
\end{definition}

\begin{remark}
  The consistency of $\PFA(S)$ (and hence of $\PFA(S)[S]$) and of 
  $\PFA(T^*)$ follows from the consistency 
  of the existence of a supercompact cardinal in much the same way that the 
  consistency of $\PFA$ follows from the same 
  hypothesis. For sketches of the proof, see \cite[Theorem 4.1]{todorcevic_forcing}
  and \cite[Theorem 2.6]{krueger_forcing_axiom}.
\end{remark}

The following lemma, due to Woodin (cf.\ \cite[Proof of Theorem 2.53]{woodin}) 
will be useful. Recall that, if $\P$ is a forcing poset and $\P \in M \prec H(\theta)$ 
for some regular uncountable cardinal $\theta$, then we say that a filter 
$G \subseteq \P$ is \emph{$M$-generic} if $G \cap D \cap M \neq \emptyset$ for 
all dense subsets $D$ of $\P$ that are elements of $M$.

\begin{lemma} \label{woodin_lemma}
  Suppose that $\mc C$ is a class of forcing posets and $\mathsf{FA}(\mc C)$ holds. 
  Then, for every poset $\P \in \mc C$ and all sufficiently large regular 
  cardinals $\theta$, the set
  \[
    \{M \in \power_{\omega_2} H(\theta) \mid |M| = \omega_1 \subseteq M \wedge 
    \exists G \subseteq \P ~ [G \text{ is an } M\text{-generic filter}]\}
  \]
  is stationary in $\power_{\omega_2} H(\theta)$.
\end{lemma}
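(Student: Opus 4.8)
\emph{The plan} is to apply $\mathsf{FA}(\mc C)$ to $\P$ itself, feeding it a carefully chosen family of $\omega_1$ dense sets, and to read off from the resulting filter both the model $M$ and a witness to its genericity.

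By Proposition~\ref{menas_prop} it suffices to fix a function $g \colon [H(\theta)]^2 \to \power_{\omega_2} H(\theta)$ and to produce an infinite $M \prec H(\theta)$ with $\P \in M$, $|M| = \omega_1 \subseteq M$, $M$ closed under $g$, and a filter $G \subseteq \P$ that is $M$-generic. I would first build a $\subseteq$-increasing, continuous chain $\langle M_\alpha \mid \alpha < \omega_1 \rangle$ of countable elementary submodels of $\mc H := (H(\theta), \in, g, \P)$ with $\P, g \in M_0$ and $M_\alpha \in M_{\alpha+1}$ for all $\alpha$, and set $M := \bigcup_{\alpha < \omega_1} M_\alpha$. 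Then $M \prec \mc H$; $|M| = \omega_1$; and since each $\delta_\alpha := M_\alpha \cap \omega_1$ lies in $M_{\alpha+1}$, which, being elementary and containing $\omega$, contains a surjection of $\omega$ onto $\delta_\alpha$ and hence all of $\delta_\alpha$, the ordinals $\delta_\alpha$ form a strictly increasing, continuous sequence of length $\omega_1$; thus $\omega_1 = \bigcup_{\alpha < \omega_1}\delta_\alpha \subseteq M$. As $g, \P \in M$ and $\omega_1 \subseteq M \prec H(\theta)$, $M$ is also closed under $g$, so $M$ lies in the club $C_g$ of Proposition~\ref{menas_prop} and has exactly the required form; it remains to construct the $M$-generic filter $G$.

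For this last step, recall that $G$ is $M$-generic as soon as it is $M_\alpha$-generic for cofinally many $\alpha < \omega_1$ (any dense $D \in M$ belongs to some $M_\beta$, and if $G$ is $M_\alpha$-generic for some $\alpha \geq \beta$ then $G \cap D \cap M_\alpha \neq \emptyset$ with $M_\alpha \subseteq M$), and recall the notion of an $(M_\alpha, \P)$-generic condition. When $\P$ is ccc --- or, more generally, whenever the maximal antichains of $\P$ lying in each $M_\alpha$ are countable and so contained in $M_\alpha$ --- the construction is clean: for each $\alpha$ and each dense $D \subseteq \P$ with $D \in M_\alpha$, choose a maximal antichain $A_D \subseteq D$ with $A_D \in M_\alpha$ (hence $A_D \subseteq M_\alpha$); the set $\{p \in \P \mid p \leq a \text{ for some } a \in A_D\}$ is then dense in $\P$, and any filter $G$ meeting all $\omega_1$ of these sets --- supplied by $\mathsf{FA}(\mc C)$, since $\P \in \mc C$ --- contains, for each such $D$, some $a \in G \cap A_D \subseteq G \cap D \cap M_\alpha$, so $G$ is $M$-generic. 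For a general $\P \in \mc C$ one replaces $A_D$ by the machinery of $(M_\alpha, \P)$-generic conditions, feeding to $\mathsf{FA}(\mc C)$, for each $\alpha$ and each dense $D \in M_\alpha$, the dense set
\[
  D_{\alpha, D} := \{p \in \P \mid p \leq s \text{ for some } s \in D \cap M_\alpha\} \cup \{p \in \P \mid \text{there is no } (M_\alpha, \P)\text{-generic } q \leq p\},
\]
which is dense because, below any condition admitting an $(M_\alpha, \P)$-generic extension, $D \cap M_\alpha$ is predense; one then arranges that the resulting filter $G$ contains an $(M_\alpha, \P)$-generic condition for cofinally many $\alpha$, and past such an $\alpha$ every member of $G$ is compatible with an $(M_\alpha, \P)$-generic condition, which forces every witness to $D_{\alpha, D}$ into the first alternative, so that $G$ is $M_\alpha$-generic.

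The main obstacle --- and the reason for this indirect route --- is that $M$-genericity demands the witnesses to the dense sets to lie \emph{inside} $M$: simply obtaining from $\mathsf{FA}(\mc C)$ a filter meeting every dense subset of $\P$ that belongs to $M$ does not suffice. Pinning these witnesses down is the technical heart of the argument; in the ccc case it is automatic, since maximal antichains in $M_\alpha$ are already contained in $M_\alpha$, whereas for a general proper $\P$ it requires the theory of $(M, \P)$-generic conditions and the extra care of arranging $G$ to be genuinely generic over cofinally many $M_\alpha$.
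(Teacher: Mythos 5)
The paper does not prove this lemma at all---it cites Woodin's argument (\cite[Proof of Theorem 2.53]{woodin})---so your proposal must stand on its own, and it does not. The standard proof and your proposal differ at exactly the point where yours breaks: Woodin lets the filter \emph{produce} the model $M$, whereas you fix $M$ in advance. In the cited argument one forces with $\P$, observes (using that $\mathsf{FA}(\mc C)$ makes $\P$ preserve $\omega_1$) that in $V^{\P}$ there is an elementary submodel $N$ of $(H(\theta)^V,\in,\dots)$ of size $\omega_1$ with $\omega_1\subseteq N$ such that the generic filter meets $D\cap N$ for every dense $D\in N$, fixes a $\P$-name for an enumeration $\langle \dot e(\alpha)\mid\alpha<\omega_1\rangle$ of such an $N$, and then applies $\mathsf{FA}(\mc C)$ to $\omega_1$-many dense sets of conditions that decide the values $\dot e(\alpha)$ and, whenever a decided value is a dense subset of $\P$, sit below a decided member of it; $M$ is then read off as the set of decided values, and the same filter is $M$-generic. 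Your ccc case is fine (and is essentially this idea trivialized, since maximal antichains in $M$ are subsets of $M$), but the lemma is needed for non-ccc posets ($S$-preserving proper posets, $T^*$-proper posets), and for an arbitrary class $\mc C$ no properness of its members is assumed at all, so the $(M_\alpha,\P)$-generic-condition machinery is not even available in general.

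Concretely, the unjustified step is ``one then arranges that the resulting filter $G$ contains an $(M_\alpha,\P)$-generic condition for cofinally many $\alpha$.'' There is no dense set you can hand to $\mathsf{FA}(\mc C)$ that guarantees this for a \emph{pre-chosen} chain $\langle M_\alpha\mid\alpha<\omega_1\rangle$: the set $\{p\in\P\mid p \text{ is } (M_\alpha,\P)\text{-generic for some }\alpha\ge\beta\}$ need not be dense, and in fact its complement can contain a cone. For example, with the poset actually used later, $\P=\Add(\omega,1)\ast\dot{\Coll}(\omega_1,2^\lambda)$, take a condition whose collapse part is decided to send $0$ to some $\beta\in 2^\lambda\setminus M$; the dense set of conditions deciding the collapse value at $0$ lies in $M_0$, all of its elements lying in any $M_\alpha$ decide that value inside $M_\alpha$ and are therefore incompatible with our condition, so no extension of it is $(M_\alpha,\P)$-generic for any $\alpha$. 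Thus properness only guarantees generic extensions of conditions \emph{in} $M_\alpha$, not of arbitrary conditions, and your $D_{\alpha,D}$'s can be met entirely through their second alternative, telling you nothing about $M$-genericity. Since the dense sets fed to $\mathsf{FA}(\mc C)$ must be fixed before the filter appears, you cannot repair this by choosing the $M_\alpha$'s afterwards---which is precisely why the correct argument encodes the model into $\P$-names and lets the filter determine $M$, yielding stationarily many (and not, as your approach would if it worked, essentially club many) such $M$.
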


\subsection{Suslin trees} \label{suslin_subsec}
In \cite{cox_krueger_indestructible}, Cox and Krueger ask whether $\IGMP$ implies $\mathfrak{p} > \omega_1$. 
In this section, we prove that the axiom $\PFA(S)[S]$ implies $\IGMP$. 
Since $\mathfrak{p} = \omega_1$ in any forcing extension by a Suslin tree 
(cf.\ \cite[Lemma 2]{farah}), and hence in any model of $\PFA(S)[S]$, 
this answers Cox and Krueger's question negatively. We also show that 
$\PFA(S)$ implies $\GMP$. Thus, models of $\PFA(S)$ provide examples of models in which $\GMP$ holds and 
there exists a Suslin tree, and therefore $\IGMP$ fails, but forcing with a particular 
Suslin tree yields a model of $\IGMP$.

%
%
%

Given a tree $T$ of height $\omega_1$ and a set $C \subseteq \omega_1$, 
let $T_C$ denote the set of $s \in T$ such that $\height(s) \in C$, with the 
inherited ordering from $T$. Note that, if $S$ is a Suslin tree and $C\subseteq 
\omega_1$ is unbounded, then $S_C$ is itself a Suslin tree.

The following theorem is due to Raghavan and Yorioka \cite{raghavan_yorioka} 
under the additional assumption that $S$ is coherent,
though they indicate that it was known beforehand in the case in which $\dot{T}$ is a name for
an Aronszajn tree. Since we remove the requirement that $S$ be coherent, 
and because we find our proof to be simpler
than that presented in \cite{raghavan_yorioka}, we include a proof.

\begin{theorem} \label{ma_s_special_tree_thm}
  Suppose that $S$ is a Suslin tree and $\dot{T}$ is an $S$-name for a tree of
  height and size $\omega_1$ with no cofinal branches. Then there is a forcing notion
  $\bb{P}(\dot{T})$ such that $\bb{P}(\dot{T}) \times S$ is c.c.c.\ and
  $\Vdash_{\bb{P}(\dot{T}) \times S} ``\dot{T} \text{ is special"}$.
\end{theorem}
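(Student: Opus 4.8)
The plan is to build $\bb{P}(\dot{T})$ as the natural forcing that approximates, by finite conditions, a function $f$ that will specialize $\dot{T}$ once $\dot T$ is realized by a generic branch through $S$. First I would normalize the name: since $\dot{T}$ is forced to have height and size $\omega_1$ and every node of a tree of height $\omega_1$ has only countably many predecessors, we may assume $\Vdash_S$ ``the underlying set of $\dot{T}$ is $\omega_1$'' and, re-enumerating if needed, $\Vdash_S$ ``$\alpha <_{\dot{T}} \beta \rightarrow \alpha < \beta$''. Record a graph $G = G_{\dot{T}}$ on $\omega_1$ by putting $\alpha \mathrel{G} \beta$ (for $\alpha < \beta$) exactly when some condition of $S$ forces $\alpha <_{\dot{T}} \beta$ or $\beta <_{\dot{T}} \alpha$; equivalently, $\alpha \not\mathrel{G} \beta$ iff $\Vdash_S$ ``$\alpha$ and $\beta$ are $<_{\dot{T}}$-incomparable''. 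Conditions of $\bb{P}(\dot{T})$ are finite partial functions $p : \omega_1 \rightharpoonup \omega$ that are proper colorings of $G$, i.e.\ $p(\alpha) \neq p(\beta)$ whenever $\alpha, \beta \in \dom{p}$ and $\alpha \mathrel{G} \beta$; the order is reverse inclusion. (Depending on how the chain-condition argument in Step~3 shapes up, one may need to attach to $p$ a finite amount of ``commitment'' information tying $p$ to nodes of $S$ and to $\dot T$; I expect the coloring formulation above, possibly with such a decoration, to be what works, and the precise form of the decoration will be dictated by Step~3.)

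Two of the three things to verify are routine. For each $\alpha < \omega_1$ the set $\{p \in \bb{P}(\dot{T}) : \alpha \in \dom{p}\}$ is dense, since a fresh color avoids the finitely many $G$-conflicts at $\alpha$, so the generic object $f = \bigcup G_{\bb{P}(\dot{T})}$ is a total function $\omega_1 \to \omega$. Moreover $f$ is forced to specialize $\dot{T}$: if $G_S$ is the generic for the $S$-factor and $\alpha$ lies below $\beta$ in the realization of $\dot T$ by $G_S$, then some $s \in G_S$ forces $\alpha <_{\dot{T}} \beta$, so $\alpha \mathrel{G} \beta$, hence $f(\alpha) \neq f(\beta)$. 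Thus $\Vdash_{\bb{P}(\dot{T}) \times S}$ ``$f$ specializes $\dot T$'', which is exactly the second conclusion.

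The heart of the proof — and the step I expect to be the real obstacle — is showing that $\bb{P}(\dot{T}) \times S$ has the c.c.c. This is the name-over-$S$ analogue of the classical theorem that the finite-condition specializing forcing of a tree of size $\omega_1$ with no uncountable branch is c.c.c. Given $\{(p_\xi, s_\xi) : \xi < \omega_1\}$, I would apply the $\Delta$-system lemma to the domains $\dom{p_\xi}$, fix the common value of $p_\xi$ on the root, and arrange that the tails of the domains, and the levels of the $s_\xi$ in $S$, form an increasing pattern with fixed ``types'' below the relevant splitting levels. After this thinning, $(p_\xi, s_\xi)$ and $(p_\eta, s_\eta)$ with $\xi < \eta$ are incompatible only if either $s_\xi$ and $s_\eta$ are incomparable in $S$, or $p_\xi \cup p_\eta$ creates a $G$-conflict between the two tails. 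Suslinness of $S$ — in the form that $S$ is c.c.c.\ and, after enough thinning, the $s_\xi$ can be taken pairwise comparable, exactly as in the known proof that $\mathrm{Spec}(A) \times S$ is c.c.c.\ for an Aronszajn tree $A$ — rules out the first alternative for uncountably many pairs. For the second, a $G$-conflict between the $\eta$-tail and the $\xi$-tail means some condition of $S$ forces a $<_{\dot{T}}$-comparability between a node of one tail and a node of the other carrying a common $p$-color; were this to occur for uncountably many pairs $\xi < \eta$ with a fixed pair of tail-positions and a fixed color, one would extract (by a pressing-down argument together with the alignment of the $s_\xi$) an $S$-name for a strictly increasing $\omega_1$-sequence of nodes of $\dot{T}$, i.e.\ a name for a cofinal branch of $\dot{T}$, contradicting the hypothesis. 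Combining the two, one finds $\xi \neq \eta$ with $(p_\xi, s_\xi) \comp (p_\eta, s_\eta)$. The delicate point, where I would spend the most care, is coordinating the two thinnings — the $S$-side (Suslinness) and the $\dot{T}$-side (no cofinal branch) — so that a single pair $\xi, \eta$ survives both; this interaction, absent from the single-tree argument, and the associated $S$-name bookkeeping (handled, if necessary, via the commitment decoration mentioned in Step~1) is the crux of the theorem.
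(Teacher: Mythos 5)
There is a genuine gap, and it is located exactly where you suspected, but your proposed poset does not survive it. Your conflict graph $G$ on $\omega_1$ puts an edge on $\{\alpha,\beta\}$ whenever \emph{some} node of $S$ forces comparability, so the color you assign to $\alpha$ in $V$ must simultaneously respect comparability decisions made by mutually incompatible nodes of $S$. This over-constrains the poset. Concretely, suppose $S$ is a free Suslin tree (consistent, e.g., under $\diamondsuit$) and let $\dot{T}$ name $S \setminus \dot{b}$ with the induced order, where $\dot{b}$ is the generic branch: this is forced to be a tree of height and size $\omega_1$ with no cofinal branches, yet for any $\alpha <_S \beta$ any node incomparable with $\alpha$ forces $\alpha <_{\dot T} \beta$, so $G$ contains the whole comparability relation of $S$. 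Then $\bb{P}(\dot{T})$ adds a specializing function for $S$ itself, so in $V^{\bb{P}(\dot T)}$ the tree $S$ has an uncountable antichain and the product $\bb{P}(\dot T) \times S$ is not c.c.c. The same defect blocks your Step~3 as sketched: an incompatibility between $p_\xi$ and $p_\eta$ is witnessed only by \emph{some} condition of $S$, and the witnesses for different pairs need not be pairwise compatible (nor compatible with the $s_\xi$'s), so there is no way to assemble them into a single $S$-name for a cofinal branch of $\dot T$; the ``commitment decoration'' placeholder does not supply the needed coherence.

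The paper's proof contains precisely the missing idea. First, by a Larson--Todorcevic-style analysis one finds an unbounded set $C \subseteq \omega_1$ of levels and a set $K \subseteq [S_C]^2$ of $<_S$-comparable pairs such that, for $t \in S_C$ and $\alpha < \beta < \height_{S_C}(t)$, $t$ forces $\alpha <_{\dot T} \beta$ if and only if the pair of predecessors of $t$ at levels $\alpha,\beta$ lies in $K$: the name is thus coded coherently on the nodes of $S$ rather than on ordinals. Conditions of $\bb{P}(\dot T)$ are then finite partial functions $p : S \to \omega$ avoiding monochromatic $K$-pairs, and the specializing function is read off \emph{along the generic branch}: $\dot c(\alpha) = \dot g(s)$ where $s$ is the branch's node at level $\alpha$. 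So the color of an ordinal is decided jointly by $\bb{P}(\dot T)$ and the $S$-generic, not by $\bb{P}(\dot T)$ alone, which is exactly what defuses the over-constraint above. The c.c.c.\ argument is then run in $V[G]$ for an $S$-generic $G$ containing uncountably many $s_\eta$, so all relevant nodes lie on or below one branch; after $\Delta$-system thinning, a hypothetical uncountable antichain yields (via an ultrafilter argument) an uncountable $<_S$-chain of pairwise $K$-related nodes, whose downward closure is a cofinal branch of $S$ that is $V$-generic because $S$ is Suslin in $V$; interpreting $\dot T$ by it produces a cofinal branch of the realized tree, contradicting the hypothesis. You should either reproduce this coherence step and redefine your conditions as colorings of nodes of $S$, or find a substitute for it; without it the argument does not go through.
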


\begin{proof}
  We may assume that it is forced by $S$ that:
  \begin{itemize}
    \item the underlying set of $\dot{T}$ is $\omega_1$;
    \item for all $\alpha, \beta < \omega_1$, if $\alpha <_{\dot{T}} \beta$, then
    $\alpha < \beta$.
  \end{itemize}
  We can therefore think of $\dot{T}$ as a name for a subset $\dot{X} \subseteq
  [\omega_1]^2$, where, for all $\alpha < \beta < \omega_1$, we have
  \[
    \Vdash_S ``\alpha <_{\dot{T}} \beta \Leftrightarrow \{\alpha, \beta\} \in \dot{X}".
  \]
  By the argument immediately after the statement of Theorem 4.2 of
  \cite{larson_todorcevic}, we can find an unbounded $C \subseteq \omega_1$ and
  a subset $K \subseteq [S_C]^2$ such that for all $t \in S_C$ and all
  $\alpha < \beta < \height_{S_C}(t)$, we have $t \Vdash_S ``\{\alpha, \beta\} \in \dot{X}"$
  if and only if $\{s,r\} \in K$, where $s$ and $r$ are the predecessors of $t$
  at levels $\alpha$ and $\beta$ of $S_C$, respectively. Since forcing with $S_C$ is
  equivalent to forcing with $S$, for ease of notation we will assume that $C = \omega_1$
  and write $S$ instead of $S_C$. Also, since the preceding sentences only mention
  pairs $\{s,r\} \in K$ such that $s <_S r$ or $r <_S s$, we can assume that $K$ only
  consists of pairs that are comparable via $<_S$.

  Let $\bb{P}(\dot{T}) = \bb{P}$ be the forcing notion whose conditions are all
  finite partial functions
  $p:S \rightarrow \omega$ with the property that, for all $s,t \in \dom{p}$, if
  $p(s) = p(t)$, then $\{s,t\} \notin K$. Order $\bb{P}$ by reverse inclusion. We can
  think of $\bb{P}$ as the natural forcing to add an $S$-name for a specializing function
  for $\dot{T}$.

  \begin{claim}
    $\bb{P} \times S$ is c.c.c.
  \end{claim}

  \begin{proof}
    Let $\langle (p_\eta, s_\eta) \mid \eta < \omega_1 \rangle$ be a sequence of conditions
    in $\bb{P} \times S$. We will prove that there are $\eta_0 < \eta_1$ such that
    $(p_{\eta_0}, s_{\eta_0})$ and $(p_{\eta_1}, s_{\eta_1})$ are compatible.
    Since $S$ is Suslin, and hence c.c.c., there is
    $s^* \in S$ such that $s^* \Vdash_S ``\{\eta < \omega_1 \mid s_\eta \in \dot{G}\}
    \text{ is uncountable},"$ where $\dot{G}$ is the canonical name for the
    $S$-generic filter. Let $G \subseteq S$ be a $V$-generic filter with $s^* \in G$,
    and move to $V[G]$. Let $A := \{\eta < \omega_1 \mid s_\eta \in G\}$. By our choice
    of $G$, $A$ is uncountable. Using the $\Delta$-system lemma, find an unbounded
    $A' \subseteq A$ such that $\{\dom{p_\eta} \mid \eta \in A'\}$ forms a $\Delta$-system,
    with root $R$. Since each level of $S$ is countable and the codomain of each $p_\eta$
    is $\omega$, by thinning out $A'$ further if necessary, we may also assume the
    following.
    \begin{itemize}
      \item There is $n < \omega$ such that, for all $\eta \in A'$,
      $R_\eta := \dom{p_\eta} \setminus R$ has size $n$. Enumerate each $R_\eta$ as
      $\{r^\eta_m \mid m < n\}$.
      \item For all $\eta_0, \eta_1 \in A'$, we have $p_{\eta_0} \restriction R
      = p_{\eta_1} \restriction R$.
      \item For all $\eta_0 < \eta_1$, both in $A'$, for all $m_0, m_1 < n$, and for
      all $r \in R$, we have $\height(r) < \height(r^{\eta_0}_{m_0}) <
      \height(r^{\eta_1}_{m_1})$.
    \end{itemize}
    For all $\eta_0 < \eta_1$ in $A'$, we know that $s_{\eta_0}$ and $s_{\eta_1}$ are
    both in $G$ and are therefore compatible. Therefore, if we can find such $\eta_0$
    and $\eta_1$ for which $p_{\eta_0}$ and $p_{\eta_1}$ are compatible, then we will
    be finished. Suppose that this is not the case. We proceed as in Baumgartner's
    classical proof that the forcing to specialize an $\omega_1$-tree with no cofinal
    branch is c.c.c. Namely, for each pair $\eta < \xi$, both from $A'$, our uniformization
    of $A'$ implies that the only obstacle to the compatibility of $p_\eta$ and $p_\xi$
    would be the existence of $m_{\eta,\xi,0}, m_{\eta,\xi,1} < n$ such that
    $p_\eta(r^\eta_{m_{\eta,\xi,0}}) = p_\xi(r^\xi_{m_{\eta,\xi,1}})$ and
    $\{r^\eta_{m_{\eta,\xi,0}}, r^\xi_{m_{\eta,\xi,1}}\} \in K$. It follows from our
    assumptions on $K$ that $r^\eta_{m_{\eta,\xi,0}} <_S r^\xi_{m_{\eta,\xi,1}}$.

    Let $U$ be a uniform ultrafilter over $A'$. For each $\eta \in A'$, we can fix
    a set $Y_\eta \in U$ and numbers $m_{\eta,0}, m_{\eta,1} < n$ such that, for all
    $\xi \in Y_\eta$, we have $\eta < \xi$, $m_{\eta,\xi,0} = m_{\eta,0}$, and
    $m_{\eta,\xi,1} = m_{\eta,1}$. Now find an unbounded $A'' \subseteq A'$ and
    numbers $m_0, m_1 < n$ such that $m_{\eta,0} = m_0$ and $m_{\eta,1} = m_1$ for all
    $\eta \in A''$. Given $\eta_0 < \eta_1$, both in $A''$, find $\xi \in Y_{\eta_0}
    \cap Y_{\eta_1}$. Then $r^{\eta_i}_{m_0} <_S r^{\xi}_{m_1}$ for each $i < 2$,
    so we have $r^{\eta_0}_{m_0} <_S r^{\eta_1}_{m_0}$. Also, $\{r^{\eta_i}_{m_0},
    r^{\xi}_{m_1}\} \in K$ for each $i < 2$, so, by our assumptions on $\dot{T}$,
    we must have $\{r^{\eta_0}_{m_0}, r^{\eta_1}_{m_0}\} \in K$, as well.

    By the previous paragraph, the downward closure of $\{r^\eta_{m_0} \mid \eta \in
    A''\}$ is a cofinal branch through $S$; let $b'$ denote this cofinal branch.
    Since $S$ is Suslin in $V$, $b'$ generates a $V$-generic filter $G'$ over $S$. Let
    $T'$ be the interpretation of $\dot{T}$ using $G'$. For all $\eta_0 < \eta_1$
    in $A''$, we have $\{r^{\eta_0}_{m_0}, r^{\eta_1}_{m_0}\} \in K$, so the
    $<_{T'}$-downward closure of $\{\height(r^\eta_{m_0}) \mid \eta \in A''\}$
    generates a cofinal branch in $T'$, contradicting the fact that, in $V$, we
    have $\Vdash_S ``\dot{T} \text{ has no cofinal branches}"$.
  \end{proof}

  For each $s \in S$, let $D_s := \{p \in \bb{P} \mid s \in \dom{p}\}$, and note that
  $D_s$ is a dense open subset of $\bb{P}$. Therefore, the generic object for
  $\bb{P}$ can be seen as a function from $S$ to $\omega$. Let $\dot{g}$ be a
  canonical $\bb{P}$-name for this function, and let $\dot{c}$ be a $\bb{P} \times
  S$-name for a function from $\omega_1$ to $\omega$ such that, for all $\alpha < \omega$
  and all $t \in S$ with $\height(t) > \alpha$, we have
  \[
    (\emptyset, t) \Vdash_{\bb{P} \times S} ``\dot{c}(\alpha) = \dot{g}(s)",
  \]
  where $s$ is the predecessor of $t$ on level $\alpha$ of $S$.

  We will be done if we show that $\dot{c}$ is forced to be a specializing function for
  $\dot{T}$. Suppose for sake of contradiction that there are $(p,t) \in \bb{P} \times S$
  and $\alpha < \beta < \omega_1$ such that $(p,t) \Vdash_{\bb{P} \times S}
  ``\alpha <_{\dot{T}} \beta
  \text{ and } \dot{c}(\alpha) = \dot{c}(\beta)"$. We can assume that $\height(t) >
  \beta$. Let $s$ and $r$ be the predecessors of $t$ on levels $\alpha$ and $\beta$ of $S$,
  respectively. We can also assume that $\{s,r\} \subseteq \dom{p}$.
  Then we have $\{s,r\} \in K$ and $p(s) = p(r)$,
  contradicting the fact that $p$ is a condition in $\bb{P}$.
\end{proof}

\begin{corollary} \label{ma_s_special_tree_cor}
  $\MA_{\omega_1}(S)[S]$ implies that every tree of height and size $\omega_1$ with no cofinal
  branches is special.
\end{corollary}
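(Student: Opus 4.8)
The plan is to push Theorem~\ref{ma_s_special_tree_thm} through the forcing with $S$ and then invoke $\MA_{\omega_1}(S)$ in the ground model. So let $V\models\MA_{\omega_1}(S)$, let $G\subseteq S$ be $V$-generic, work in $V[G]$, and let $T$ be a tree of height and size $\omega_1$ with no cofinal branch; we must show $T$ is special. Re-coding in $V[G]$, we may assume the underlying set of $T$ is $\omega_1$ and that $\alpha<_T\beta$ implies $\alpha<\beta$. Fix a nice $S$-name $\dot T\subseteq\omega_1\times\omega_1$ with $\dot T_G=T$.

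The first step --- and the one that takes a little care --- is to arrange that $\Vdash_S$ ``$\dot T$ is a tree of height and size $\omega_1$ with no cofinal branch''. Since the three statements ``$\dot T$ is a tree ordering on a subset of $\omega_1$ with well-ordered predecessors'', ``$\dot T$ has height $\omega_1$'', and ``$\dot T$ has no cofinal branch'' all hold in $V[G]$, and since for each of them the set of conditions deciding it is dense in $S$, there is a single $s^*\in G$ forcing all three. Now fix, in $V$, an Aronszajn tree $T_0$ on $\omega_1$ (one exists in $\ZFC$), and use mixing over the maximal antichain $\{s^*\}\cup\{u\in S\mid u$ is a $\le_T$-minimal node incompatible with $s^*\}$ to replace $\dot T$ by an $S$-name $\dot T'$ that is forced by $s^*$ to equal $\dot T$ and is forced by every condition incompatible with $s^*$ to equal $\check T_0$. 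Then $\Vdash_S$ ``$\dot T'$ is a tree of height and size $\omega_1$ with no cofinal branch'' and, since $s^*\in G$, $\dot T'_G=T$. Rename $\dot T'$ back to $\dot T$.

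Now apply Theorem~\ref{ma_s_special_tree_thm} in $V$ to obtain the poset $\bb P:=\bb P(\dot T)$ together with the set $K\subseteq[S]^2$ coding $\dot T$ as in that proof, so that $\bb P\times S$ is c.c.c.\ and the conditions of $\bb P$ are the finite partial functions $p:S\rightarrow\omega$ with $p(s)=p(t)\Rightarrow\{s,t\}\notin K$. Since $\bb P\times S$ is c.c.c., $\bb P$ is c.c.c.; moreover $\bb P$ is $S$-preserving --- this is the standard fact that, for a Suslin tree $S$, a poset whose product with $S$ is c.c.c.\ cannot destroy the Suslinity of $S$. Hence $\bb P$ belongs to the class of c.c.c.\ $S$-preserving posets, so we may apply $\MA_{\omega_1}(S)$ in $V$ to $\bb P$ with the $\omega_1$-many dense sets $D_s:=\{p\in\bb P\mid s\in\dom p\}$ for $s\in S$, obtaining a filter $F\subseteq\bb P$ meeting every $D_s$. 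Put $g^*:=\bigcup F$; by directedness of $F$ and the definition of $\bb P$, $g^*$ is a total function $S\rightarrow\omega$ with $g^*(s)=g^*(t)\Rightarrow\{s,t\}\notin K$.

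Finally, return to $V[G]$ and, for $\alpha<\omega_1$, let $G_\alpha$ be the node of the generic branch at level $\alpha$ and set $c(\alpha):=g^*(G_\alpha)$. Suppose $\alpha<\beta<\omega_1$ and $c(\alpha)=c(\beta)$, so $\{G_\alpha,G_\beta\}\notin K$. Choose any node $t$ of the generic branch with $\height(t)>\beta$; its predecessors at levels $\alpha$ and $\beta$ are $G_\alpha$ and $G_\beta$, so by the defining property of $K$ the condition $t$ does not force $\alpha<_{\dot T}\beta$, whence $\alpha\not<_T\beta$, and by our coding $\beta\not<_T\alpha$ as well. Thus $c$ is a specializing function for $T$. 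This proves Corollary~\ref{ma_s_special_tree_cor} in the form for $\MA_{\omega_1}(S)[S]$; the analogous argument with $\PFA(S)$ replacing $\MA_{\omega_1}(S)$ (and, via Lemma~\ref{woodin_lemma}, an $M$-generic filter for $\bb P$ in place of the filter $F$) will be one ingredient in the proof of Theorem~D.
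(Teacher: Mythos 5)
Your argument is the intended one: the paper treats this corollary as an immediate consequence of Theorem~\ref{ma_s_special_tree_thm}, and your derivation --- arrange a name forced outright to satisfy the hypotheses, apply the theorem in $V$, use the fact that $\bb P(\dot T)\times S$ being c.c.c.\ makes $\bb P(\dot T)$ both c.c.c.\ and $S$-preserving, meet the dense sets $D_s$ via $\MA_{\omega_1}(S)$, and read the specializing function off the generic branch using $K$ --- is exactly the argument the corollary is meant to encode.

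Two small repairs. First, in the mixing step you need $\Vdash_S``\check T_0 \text{ has no cofinal branch}"$, and an arbitrary Aronszajn tree of $V$ need not remain branchless after forcing with $S$ (take $T_0=S$ itself); so choose $T_0$ to be a \emph{special} Aronszajn tree, which exists in $\ZFC$ and stays branchless in every $\omega_1$-preserving extension, and then the mixed name really does satisfy the hypothesis of Theorem~\ref{ma_s_special_tree_thm}. Second, in the final paragraph, ``$t$ does not force $\alpha<_{\dot T}\beta$'' does not by itself give $\alpha\not<_T\beta$: the correct reading is that if $\alpha<_T\beta$ held in $V[G]$, then some $t\in G$ of $S_C$-height above $\beta$ would force it (any sufficiently high node of the generic branch above a condition forcing it), and since every such $t$ has $G_\alpha,G_\beta$ as its predecessors at levels $\alpha$ and $\beta$, the defining property of $K$ would yield $\{G_\alpha,G_\beta\}\in K$, contradicting $c(\alpha)=c(\beta)$. (You are also implicitly working with $S_C$ and its levels, as in the proof of the theorem; that convention should be stated.) With these adjustments the proof is complete and matches the paper's intent.
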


We now prove that $\PFA(S)[S]$ implies $\GMP$. Since $\PFA(S)[S]$ also
implies that all trees of height and size $\omega_1$ with no cofinal branches 
are special, it will follow from the discussion at the end of Section~\ref{special_sec} 
that $\PFA(S)[S]$ implies $\IGMP$. Our proof is a modification of the proof
from \cite{viale_weiss} of the fact that $\PFA$ implies $\GMP$. 

We first recall the \emph{covering} and \emph{approximation} properties, introduced by 
Hamkins (cf.\ \cite{hamkins_approximation}).

\begin{definition}
  Suppose that $V \subseteq W$ are transitive models of $\ZFC$ and $\mu$ is a regular uncountable cardinal.
  \begin{enumerate}
    \item $(V,W)$ satisfies the \emph{$\mu$-covering property} if, for every $x \in W$ such that 
    $x \subseteq V$ and $|x|^W < \mu$, there is $y \in V$ such that $|y|^V < \mu$ and $x \subseteq y$.
    \item $(V,W)$ satisfies the \emph{$\mu$-approximation property} if, for all $x \in W$ such that 
    $x \subseteq V$ and $x \cap z \in V$ for all $z \in V$ with $|z| < \mu$, we in fact have $x \in V$.
  \end{enumerate}
  A poset $\P$ has the \emph{$\mu$-covering property} (resp.\ \emph{$\mu$-approximation 
  property}) if, for every $V$-generic filter $G \subseteq \P$, the pair $(V,V[G])$ has the 
  $\mu$-covering property (resp.\ $\mu$-approximation property).
\end{definition}

For the rest of this subsection, let $S$ denote a Suslin tree. 
If $\dot{T}$ is an $S$-name for a tree of height and size $\omega_1$ with no
cofinal branch, then $\bb{P}(\dot{T})$ denotes the forcing from Theorem
\ref{ma_s_special_tree_thm} that adds an $S$-name for a specializing function for
$\dot{T}$. If $\P$ is an $S$-preserving forcing notion and $\dot{T}$ 
is a $\P \times S$-name for 
a tree of height and size $\omega_1$ with no cofinal branch, then we will 
typically let $\dot{\P}(\dot{T})$ be a $\P$-name for $\P(\dot{T})$ (where, in 
$V^{\P}$, $\dot{T}$ is reinterpreted as an $S$-name). 
The following is analogous to, and largely follows the
proof of, \cite[Lemma 4.6]{viale_weiss}.

\begin{lemma} \label{technical_lemma}
  Suppose that $\lambda \geq 2^\omega$ is an infinite cardinal, and let
  $\theta$ be a sufficiently large
  regular cardinal. Assume that $\bb P$ preserves $S$, collapses $2^\lambda$
  to have cardinality
  $\omega_1$, satisfies the $\omega_1$-covering and $\omega_1$-approximation properties,
  and continues to satisfy the $\omega_1$-approximation property in $V^S$.
  Then there is a $\P \times S$-name $\dot{T}_1$ for a tree of height
  and size $\omega_1$ with no cofinal branches and a $w \in H(\theta)$ such that, for every
  $M \in \power_{\omega_2} H(\theta)$ such that $\omega_1 \cup \{w\} \subseteq M \prec
  H(\theta)$, if there is $G \subseteq \bb{P} \ast \dot{\bb{P}}(\dot{T}_1)$ that is
  $M$-generic, then, in $V^S$, $M^S$ is an $\omega_1$-guessing model for
  $\lambda$.
\end{lemma}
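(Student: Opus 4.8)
The plan is to adapt the proof of \cite[Lemma 4.6]{viale_weiss} (that $\PFA$ implies $\GMP$) to the $S$-preserving setting, substituting for the usual specialization forcing the $S$-preserving poset $\bb P(\dot T_1)$ furnished by Theorem~\ref{ma_s_special_tree_thm} and carrying the whole argument through $V^S$. Concretely, since $\bb P$ collapses $2^\lambda$ to $\omega_1$ and has the $\omega_1$-covering property, we first fix (in $V^{\bb P}$, reinterpreted as an $S$-name) a generic continuous $\in$-chain $\langle N_\xi \mid \xi < \omega_1 \rangle$ of countable elementary submodels of $H(\lambda^+)$ together with the collapsing data, all definable from a single parameter, and let $w$ code the corresponding $\bb P$-name. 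We then let $\dot T_1$ be a $\bb P \times S$-name for the tree of ``coherent traces on $\lambda$'': the level-$\xi$ nodes are certain subsets $e$ of $N_\xi \cap \lambda$ whose initial restrictions are appropriately captured by the $N_\eta$ ($\eta < \xi$), ordered by end-extension, with enough of a ``non-triviality'' constraint baked in to rule out constant branches. The covering property of $\bb P$ is used here to ensure that the countable subsets of $\lambda$ relevant to approximatedness in $V^S$ are matched by countable sets available to this construction.

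Before the main argument there are two things to check about $\dot T_1$. First, that it is forced by $\bb P \times S$ to be a tree of height and size $\omega_1$: size $\omega_1$ comes from the collapse of $2^\lambda$, and height $\omega_1$ from the length of the chain. Second, and this is where the hypothesis that $\bb P$ \emph{continues to satisfy the $\omega_1$-approximation property in $V^S$} is essential, that $\dot T_1$ is forced by $\bb P \times S$ to have \emph{no} cofinal branch: a cofinal branch through $T_1$ in $V^{S \times \bb P}$ would yield a subset $d \subseteq \lambda$ all of whose traces on countable sets of $V^S$ lie in $V^S$, so by the $\omega_1$-approximation property over $V^S$ we would have $d \in V^S$, and an elementarity/pigeonhole argument then shows such a branch cannot exist. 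With these two points in hand, Theorem~\ref{ma_s_special_tree_thm} applies in $V^{\bb P}$ to the (reinterpreted) $S$-name $\dot T_1$, so $\bb P(\dot T_1)$ is defined, $\bb P(\dot T_1) \times S$ is c.c.c., and it forces $\dot T_1$ to be special; thus $\dot{\bb P}(\dot T_1)$ makes sense as a $\bb P$-name.

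Now suppose $\omega_1 \cup \{w\} \subseteq M \prec H(\theta)$ and $G \subseteq \bb P \ast \dot{\bb P}(\dot T_1)$ is $M$-generic; fix also a $V$-generic $H \subseteq S$ and work in $V^S = V[H]$, writing $M^S = M[H]$. The projection $G_0$ of $G$ to $\bb P$ is $M$-generic for $\bb P$, and since $w \in M$, genericity against the (definable-from-$w$) dense sets in $M$ forces $G_0$ to decide each $N_\xi$ ($\xi < \omega_1$, all of which lie in $M$) to be an element of $M^S$; hence $T_1 := \dot T_1[G_0][H]$ is computed already in $V^S$ and each node of $T_1$ that has an $S$-name in $M$ lies in $M^S$. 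Using the $\bb P(\dot T_1)$-component of $G$ and interpreting via $H$, we obtain a function $g \colon T_1 \to \omega$ which, by $M$-genericity against the dense sets $D_s$ from the proof of Theorem~\ref{ma_s_special_tree_thm}, is defined on every node of $T_1$ with an $S$-name in $M$ and there witnesses specialness. Finally, suppose toward a contradiction that in $V^S$ the set $M^S$ is not an $\omega_1$-guessing model for $\lambda$, and fix $d \subseteq \lambda$ that is $(\omega_1, M^S)$-approximated but not $M^S$-guessed. Since each $N_\xi \in M^S$ and $d$ is $(\omega_1,M^S)$-approximated, the traces of $d$ on the $N_\xi \cap \lambda$ are all $M^S$-guessed and hence representable inside $M^S$; the failure of $d$ to be $M^S$-guessed is precisely what forces these traces to keep ``growing'', producing a cofinal branch $b$ through $T_1$ all of whose nodes have $S$-names in $M$. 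But then $g$ is defined along $b$ and is injective on $\le_{T_1}$-chains, contradicting the fact that $b$ is cofinal; this contradiction completes the proof.

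The main obstacle is the construction and analysis of $\dot T_1$ in Step~2 — arranging matters so that $\dot T_1$ is forced (by $\bb P \times S$) to have no cofinal branch, while a failure of the guessing property at $M^S$ still produces a branch through the part of $T_1$ seen by $M$. These are reconciled exactly as in \cite{viale_weiss}, by exploiting the gap between being ``$(\omega_1,V^S)$-approximated'' (ruled out by the approximation property of $\bb P$ over $V^S$) and being merely ``$(\omega_1, M^S)$-approximated but not $M^S$-guessed'' (which remains possible). The other delicate point, absent from \cite{viale_weiss}, is bookkeeping the $S$-preservation: checking that $\bb P$, $\bb P(\dot T_1)$, and the relevant products remain $S$-preserving (indeed that $\bb P(\dot T_1)\times S$ is c.c.c., via Theorem~\ref{ma_s_special_tree_thm}) and that the $\omega_1$-approximation property genuinely transfers to $V^S$. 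Everything else is a faithful transcription of the argument of \cite[Lemma 4.6]{viale_weiss}.
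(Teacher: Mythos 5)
Your overall frame (collapse $2^\lambda$ to $\omega_1$ with $\P$, build a tree of traces along a cofinal chain in $\power_{\omega_1}\lambda$, specialize it with the $S$-preserving poset from Theorem~\ref{ma_s_special_tree_thm}, and run the guessing argument in $V^S$ with an $M$-generic filter) is the right one, but the central step of your sketch is wrong. You claim that the trace tree can be arranged so that it is forced by $\P\times S$ to have \emph{no} cofinal branches, justified by: a cofinal branch would give $d\subseteq\lambda$ all of whose countable traces lie in $V^S$, hence $d\in V^S$ by the $\omega_1$-approximation property over $V^S$, ``and an elementarity/pigeonhole argument then shows such a branch cannot exist.'' There is no such contradiction: every element of $({}^\lambda 2)^{V^S}$ \emph{does} give a cofinal branch through the tree of traces, and there are $2^\lambda$-many of them in $V^S$ (collapsed to $\omega_1$-many by $\P$). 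The approximation property identifies the branch set as exactly $({}^\lambda 2)^{V^S}$; it does not eliminate it, and no level-by-level ``non-triviality constraint'' can excise precisely these branches while still keeping the branch induced by an $(\omega_1,M^S)$-approximated but non-guessed $d$ inside the tree --- that tension is the whole point of the lemma, and your proposal does not resolve it.

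The paper resolves it with Baumgartner's device, which is the idea missing from your proposal. Since $\P$ collapses $2^\lambda$, the branch set $B=({}^\lambda 2)^{V^S}$ has size $\omega_1$ in $V^{\P\times S}$, so Lemma~\ref{baumgartner_function_lemma} yields a Baumgartner function $\dot g:\dot B\to\dot T$; one sets $\dot T_0=\{t\in\dot T\mid\exists b\in\dot B~\dot g(b)<_{\dot T} t\in b\}$ and $\dot T_1=\dot T\setminus\dot T_0$. Only $T_1$ is branchless, and it is $\dot T_1$, not the full trace tree, to which Theorem~\ref{ma_s_special_tree_thm} and $\dot\P(\dot T_1)$ are applied. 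Correspondingly, the endgame is not a contradiction but a positive identification: the branch $h$ through $T$ induced by an $(\omega_1,M[H])$-approximated $d$ must have a tail inside $T_0$ (because the generic $f$ specializes $T_1$ on $M$'s part), and then Fodor plus injectivity of $g$ pin $h$ to a single $b\in B\cap M[H]$, which is precisely the element of $M[H]$ guessing $d$. Your version, which tries to force the branch into $T_1$ and contradict specialness, inverts this logic (the traces of $d$ ``keep growing'' whether or not $d$ is guessed); without the $T_0/T_1$ decomposition via a Baumgartner function, the argument cannot be completed.
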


\begin{proof}
  Work for now in $V$, and
  let $\dot{B}$ be an $S$-name for $({^{\lambda}}2)^{V^{S}}$. Using the fact that 
  $\P$ collapses $2^\lambda$ to have cardinality $\omega_1$ and satisfies the 
  $\omega_1$-covering property, let $\dot{c}$ be 
  a $\P$-name for a $\subseteq$-increasing, continuous, and cofinal function from 
  $\omega_1$ to $(\power_{\omega_1} \lambda)^{V^{\P}}$. Since $\P$ satisfies the 
  $\omega_1$-covering property, we can assume that, for all $\alpha < \omega_1$, 
  we have $\Vdash_{\P}``\dot{c}(\alpha+1) \in V"$. Let $\dot{\ell}$ be a 
  $\P \times S$-name for a bijection from $\omega_1$ to $B$, and 
  let $\dot{T}$ be a $\P \times S$-name 
  for $\{\dot{\ell}(\eta) \restriction \dot{c}(\alpha) \mid \eta, \alpha < \omega_1\}$. 
  Note that $\dot{T}$ is forced to be a tree of height and size $\omega_1$. 
  Moreover, since $\P$ has the $\omega_1$-approximation property in $V^S$, $\dot{B}$ 
  is forced to be precisely the set of cofinal branches through $\dot{T}$. 
  Since $\dot{B}$ is forced to have cardinality $\omega_1$ in $V^{\P \times S}$, we 
  can apply Lemma \ref{baumgartner_function_lemma} to find a $\P \times S$-name 
  $\dot{g}$ for a Baumgartner function from $\dot{B}$ to $\dot{T}$.

  Let $\dot{T}_0$ be a $\P \times S$-name for the set $\{t \in \dot{T} \mid 
  \exists b \in \dot{B} ~ \dot{g}(b) <_{\dot{T}} t \in b\}$, and let 
  $\dot{T}_1$ be a $\P \times S$-name for $\dot{T} \setminus \dot{T}_0$.
  Since $\dot{T}_0$ is forced to contain a tail of every $b \in \dot{B}$, it follows 
  that $\dot{T}_1$ is forced to have no uncountable branches in $V^{\P \times S}$. 
  We can therefore let $\dot{\Q}$ be a $\P$-name for $\P(\dot{T}_1)$, where 
  $\dot{T}_1$ is reinterpreted as an $S$-name in $V^{\P}$. Since $\dot{\Q}$ is 
  forced to add an $S$-name for a specializing function for $\dot{T}_1$, we can 
  fix a $(\P \ast \dot{\Q}) \times S$-name $\dot{f}$ for a specializing function 
  from $\dot{T}_1$ to $\omega$.

  We claim that $\dot{T}_1$ is as desired. 
  Let $w$ be a set containing all relevant information, including
  $S, \dot{c}$, $\dot{T}$, $\dot{T}_0$, $\dot{T}_1$, $\dot{f}$, $\dot{g}$, 
  and $\dot{\ell}$. Now fix $M \in \power_{\omega_2} H(\theta)$
  such that $\omega_1 \cup \{w\} \subseteq M \prec H(\theta)$, and suppose that
  $G = G_0 \ast G_1 \subseteq (\bb{P} \ast \dot{\Q}) \cap M$ is $M$-generic.
  We will show that, in $V^{S}$, $M^S$ is an $\omega_1$-guessing model for $\lambda$.

  Let $H$ be an $S$-generic filter over $V$ and move to $V[H]$. Let
  $c := \dot{c}^{G_0}: \omega_1 \rightarrow \power_{\omega_1}(\lambda \cap M)$,
  and note that $c$ is $\subseteq$-increasing and continuous, and $c(\alpha + 1) \in
  M$ for all $\alpha < \omega_1$. Also, $c$ is cofinal in $(\power_{\omega_1}\lambda)^M$,
  and, since $S$ is $\omega_1$-distributive, it is also cofinal in
  $(\power_{\omega_1}\lambda)^{M[H]}$. Let $g := \dot{g}^{G_0 \times H}$,
  $T := \dot{T}^{G_0 \times H}$ (and analogously
  for $T_0$ and $T_1$), $f := \dot{f}^{(G_0 \ast G_1) \times H}$, $B = \dot{B}^H$, and
  $\ell := \dot{\ell}^{G_0 \times H}$. Let $B \restriction M[H] := \{b \restriction
  M[H] \mid b \in B \cap M[H]\}$.

  By elementarity and the fact that $G_0 \ast G_1$ is $M$-generic, we have the following
  facts:
  \begin{itemize}
    \item $\ell : \omega_1 \rightarrow B \cap M[H]$ is a bijection.
    \item $T = \{b \restriction c(\alpha) \mid b \in B \cap M[H], ~ \alpha < \omega_1\}$.
    \item $B \restriction M[H]$ is a set of uncountable branches through $T$.
    \item $\dom{g} = B \cap M[H]$; we will slightly abuse notation and identify
    $\dom{g}$ with $B \restriction M[H]$.
    \item $g : B \restriction M[H] \rightarrow T$ is a Baumgartner function.
    \item $T = T_0 \cup T_1$.
    \item $f:T_1 \rightarrow \omega$ is a specializing function.
  \end{itemize}

  \begin{claim} \label{unctble_branch_claim}
    $B \restriction M[H]$ is the set of uncountable branches through $T$.
  \end{claim}

  \begin{proof}
    We have already seen that every element of $B \restriction M[H]$ is an uncountable branch
    through $T$. For the reverse inclusion, suppose that $h$ is an uncountable branch
    through $T$. We identify branches through $T$ with their union, i.e., we
    think of $h$ as being of the form $h:M[H] \cap \lambda \rightarrow 2$.
    Since $T_1$ is special and therefore cannot have an uncountable
    branch, a tail of $h$ must lie inside $T_0$, i.e., we can find $\alpha_0 < \omega_1$
    such that, for all $\alpha \in [\alpha_0, \omega_1)$, we have $h \restriction c(\alpha)
    \in T_0$. For all such $\alpha$, there is a unique $b_\alpha \in B \restriction M[H]$
    such that $g(b_\alpha) \subsetneq h \restriction c(\alpha) \subsetneq b_\alpha$.
    By Fodor's Lemma, there is a fixed $t \in T$ and a stationary $R \subseteq \omega_1$
    such that $g(b_\alpha) = t$ for all $\alpha \in R$. Since $g$ is injective, there is
    $b \in B \restriction M[H]$ such that $b_\alpha = b$ for all $\alpha \in R$. Then
    $h \restriction c(\alpha) \subseteq b$ for all $\alpha \in R$, and since
    $R$ is cofinal in $\omega_1$, this implies that $h = b$.
  \end{proof}

  We are now ready to prove that $M[H]$ is an $\omega_1$-guessing model for $\lambda$
  in $V[H]$. Work in $V[H]$, and let $d \subseteq \lambda$ be $M[H]$-approximated.
  Let $h:\lambda \cap M[H] \rightarrow 2$ be the characteristic function of
  $d \cap M[H]$. Then, for all $\alpha < \omega_1$, we have $d \cap c(\alpha+1) \in M[H]$
  and hence $h \restriction c(\alpha+1) \in M[H]$. It follows that $h \restriction
  c(\alpha+1) \in T$, and hence $h$ is an uncountable branch through $T$. By
  Claim \ref{unctble_branch_claim}, we have $h \in B \restriction M[H]$, so there is
  $b \in B \cap M[H]$ such that $b \restriction M[H] = h$. Let
  $e := \{\eta < \lambda \mid b(\eta) = 1\}$. Then $e \in M[H]$ and $e \cap
  M[H] = d \cap M[H]$. Therefore, $d$ is guessed by $M[H]$, as desired.
\end{proof}

\begin{theorem} \label{pfa_s_s_isp_thm}
  $\PFA(S)[S]$ implies $\GMP$.
\end{theorem}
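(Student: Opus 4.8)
The plan is to transplant the Viale--Wei\ss\ proof that $\PFA$ implies $\GMP$ into $V^S$, using Lemma~\ref{technical_lemma} as the engine and $\PFA(S)$ (through Lemma~\ref{woodin_lemma}) to manufacture the $M$-generic filters. I would argue in $V\models\PFA(S)$ and show $\Vdash_S\GMP$. Since $S$ is c.c.c., cardinals and cofinalities are absolute between $V$ and $V^S$, so it suffices to fix a regular $\theta'\ge\omega_2$ and an $S$-name $\dot g$ for a function $g:[H(\theta')]^2\to\power_{\omega_2}H(\theta')$, and to produce in $V$ a set $M$ for which $S$ forces $M[\dot G_S]\cap H(\theta')$ to be an $\omega_1$-guessing model lying in the Menas club $C_g$ (Proposition~\ref{menas_prop}); this yields the relevant instance of $\GMP(\theta')$ in $V^S$, and as $\theta'$, $\dot g$ are arbitrary it gives $\Vdash_S\GMP$. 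First I would fix a cardinal $\lambda\ge 2^\omega$ large enough that $S$ forces $|H(\theta')|\le\lambda$, a sufficiently large regular $\theta$ as required by Lemma~\ref{technical_lemma}, and an $S$-name $\dot\iota$ for a bijection of $H(\theta')$ onto an ordinal ${\le}\lambda$, arranging that $H(\theta')$, $\dot g$, $\dot\iota$ all lie in $H(\theta)$.

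The central step is to exhibit a forcing $\bb P$ meeting all the hypotheses of Lemma~\ref{technical_lemma}: proper, $S$-preserving, collapsing $2^\lambda$ to cardinality $\omega_1$, with the $\omega_1$-covering and $\omega_1$-approximation properties, and retaining the $\omega_1$-approximation property over $V^S$. For the $\PFA$ case Viale and Wei\ss\ use a Mitchell-style collapse of $2^\lambda$ to $\omega_1$; the new content here is to check that such a collapse can be taken $S$-preserving and that it keeps the $\omega_1$-approximation property after passing to $V^S$. Granting such a $\bb P$, Lemma~\ref{technical_lemma} supplies a $\bb P\times S$-name $\dot T_1$ for a tree of height and size $\omega_1$ with no cofinal branches, together with a parameter $w\in H(\theta)$, such that whenever $\omega_1\cup\{w\}\subseteq M\prec H(\theta)$ and there is an $M$-generic filter for $\bb P\ast\dot{\bb P}(\dot T_1)$, then in $V^S$ the model $M^S$ is an $\omega_1$-guessing model for $\lambda$.

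Next I would verify that $\bb Q:=\bb P\ast\dot{\bb P}(\dot T_1)$ lies in the class $\mc C$ of proper $S$-preserving posets: $\bb P$ is proper and $S$-preserving by the previous step, and in $V^{\bb P}$ the poset $\dot{\bb P}(\dot T_1)$ (with $\dot T_1$ reinterpreted as an $S$-name) satisfies, by Theorem~\ref{ma_s_special_tree_thm}, that $\dot{\bb P}(\dot T_1)\times S$ is c.c.c.; hence $\dot{\bb P}(\dot T_1)$ is c.c.c.\ (so proper) and $S$-preserving over $V^{\bb P}$, which makes $\bb Q$ proper and $S$-preserving. Applying Lemma~\ref{woodin_lemma} to $\mathsf{FA}(\mc C)$ (which is exactly $\PFA(S)$) and intersecting the resulting stationary set with the club of $M\prec H(\theta)$ with $|M|=\omega_1\subseteq M$ and $\{w,\dot g,\dot\iota\}\subseteq M$, I obtain such an $M$ together with an $M$-generic filter for $\bb Q$. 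By Lemma~\ref{technical_lemma}, $S$ forces $M^S$ to be an $\omega_1$-guessing model for $\lambda$. It remains to promote this to an honest guessing model at $H(\theta')$: by the standard monotonicity argument, if $N\prec H(\theta)$ is an $\omega_1$-guessing model for $\lambda$ and $\lambda'\le\lambda$ is a cardinal in $N$, then any $(\omega_1,N)$-approximated $d\subseteq\lambda'$ is also $(\omega_1,N)$-approximated as a subset of $\lambda$ (using that $N$ is closed under intersections) and hence $N$-guessed; composing with the bijection $\dot\iota^{G_S}\in N$ shows that every $(\omega_1,N)$-approximated subset of every $x\in N\cap H(\theta')$ is $N$-guessed. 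Taking $N:=M^S\cap H(\theta')^{V^S}$ — which $S$ forces to be an elementary submodel of $H(\theta')^{V^S}$ of size $\omega_1$ containing $\omega_1$ and closed under $g$, hence a member of $C_g$ — completes the argument.

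The main obstacle is squarely the middle step: producing the forcing $\bb P$ with all the listed properties, and in particular establishing that the Mitchell-style collapse is $S$-preserving and retains the $\omega_1$-approximation property over $V^S$. This is delicate precisely because the ``$\sigma$-closed-looking'' component of such a collapse is exactly the kind of forcing that can in general add cofinal branches to a Suslin tree, so $S$-preservation must be extracted from the specific structure of $\bb P$ (e.g.\ via a strong chain condition or fusion for its approximation part) rather than from abstract closure. Everything else is routine given Lemmas~\ref{technical_lemma} and~\ref{woodin_lemma}, Theorem~\ref{ma_s_special_tree_thm}, and Proposition~\ref{menas_prop}.
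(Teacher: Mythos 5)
Your overall architecture is exactly the paper's: run Lemma~\ref{technical_lemma} for a suitable $\bb P$, note via Theorem~\ref{ma_s_special_tree_thm} that $\bb P \ast \dot{\bb P}(\dot T_1)$ is proper and $S$-preserving, extract $M$-generic filters from $\PFA(S)$ via Lemma~\ref{woodin_lemma}, and handle $S$-names for clubs in $\power_{\omega_2}H(\theta)$ through Menas functions (Proposition~\ref{menas_prop}); your promotion from ``guessing model for $\lambda$'' to full guessing models via a bijection in $M^S$ is just an unpacking of \cite[Proposition 3.2]{viale_weiss}, which is what the paper cites. However, there is a genuine gap at the step you yourself flag as the main obstacle: you never produce the poset $\bb P$, and your diagnosis of why this is delicate rests on a false premise. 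A countably closed forcing cannot add a cofinal branch to a tree of height $\omega_1$ with countable levels (the standard $2^{<\omega}$-splitting argument gives $2^{\aleph_0}$ nodes on one countable level), and in fact countably closed posets preserve Suslin trees outright; likewise $\omega_1$-Knaster posets preserve Suslin trees, since their product with the ccc poset $S$ is ccc. So no fusion argument or special chain condition for the ``approximation part'' is needed.

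Concretely, the paper takes $\bb P := \mathrm{Add}(\omega,1) \ast \dot{\mathrm{Coll}}(\omega_1, 2^\lambda)$: the first factor is $\omega_1$-Knaster and the second is forced to be countably closed, so $\bb P$ is proper and $S$-preserving by the standard facts above; it collapses $2^\lambda$ to $\omega_1$; and it has the $\omega_1$-covering and $\omega_1$-approximation properties by Krueger's work (\cite{krueger_mitchell_iteration}). The remaining hypothesis of Lemma~\ref{technical_lemma} — that $\bb P$ still has the $\omega_1$-approximation property over $V^S$ — is not extracted from any structural analysis of $\bb P$ relative to $S$, but from the simple observation that $S$ is $\omega_1$-distributive, so $\bb P$ is literally the same poset defined in $V$ or in $V^S$ and Krueger's proof runs verbatim in $V^S$. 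Without this (or some equivalent) specification and verification of $\bb P$, your argument does not get off the ground, since Lemma~\ref{technical_lemma} has nothing to apply to; with it, the rest of your outline matches the paper's proof.
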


\begin{proof}
  Suppose that $V$ satisfies $\PFA(S)$. We will prove that $\GMP$ holds
  in $V^S$. Given a cardinal $\lambda \geq \omega_2$, let $\GMP^\lambda$ 
  denote
  the assertion that, for all sufficiently large regular $\theta$, there are
  stationarily many $M \in \power_{\omega_2} H(\theta)$ that are $\omega_1$-guessing models
  for $\lambda$. By \cite[Proposition 3.2]{viale_weiss}, for every sufficiently large
  regular $\theta$, if $\GMP^{|H(\theta)|}$ holds, then there are
  stationarily many $\omega_1$-guessing models $M \in \power_{\omega_2} H(\theta)$.
  It therefore suffices to prove that $\GMP^\lambda$ holds for all
  $\lambda \geq 2^\omega$.

  Work in $V$.
  Fix $\lambda \geq 2^\omega$ and a sufficiently large regular cardinal 
  $\theta$, and let $\bb{P} := \mathrm{Add}(\omega, 1) \ast
  \dot{\mathrm{Coll}}(\omega_1, 2^\lambda)$. Then $\bb{P}$ collapses $2^\lambda$
  to have cardinality $\omega_1$ and, since $\mathrm{Add}(\omega, 1)$ is
  $\omega_1$-Knaster and $\dot{\mathrm{Coll}}(\omega_1, 2^\lambda)$ is forced to
  be countably closed, $\bb{P}$ is proper and preserves the fact that $S$ is a Suslin tree.
  It is proven in \cite{krueger_mitchell_iteration}
  that $\bb{P}$ has the $\omega_1$-covering and $\omega_1$-approximation properties.
  Since $S$ is $\omega_1$-distributive, the definition of $\bb{P}$ is the same in
  $V$ and in $V^S$; in particular, the proof from \cite{krueger_mitchell_iteration}
  can be carried out in $V^S$ to show that $\bb{P}$ has the $\omega_1$-approximation
  property in that model as well. We can therefore apply Lemma \ref{technical_lemma}
  to find a $\bb{P} \times S$-name $\dot{T}$ for a tree of height and size $\omega_1$
  with no cofinal branches and a $w \in H(\theta)$ such that, for every
  $M \in \power_{\omega_2} H(\theta)$ such that $\omega_1 \cup \{w\} \subseteq M
  \prec H(\theta)$, if there is $G \subseteq \bb{P} \ast \dot{\bb{P}}(\dot{T})$
  that is $M$-generic, then $M^S$ is an $\omega_1$-guessing model for $\lambda$
  in $V^S$.

  In $V^\bb{P}$, by Theorem \ref{ma_s_special_tree_thm}, $\P(\dot{T}) \times
  S$ is c.c.c. In particular, it follows that, in $V$, $\bb{P} * \dot{\bb{P}}(\dot{T})$
  is proper and preserves $S$. Therefore, by $\PFA(S)$ and Lemma~\ref{woodin_lemma}, 
  the set of $M \in \power_{\omega_2} H(\theta)$ for which $|M|= \omega_1 \subseteq M$ 
  and there exists an $M$-generic $G \subseteq \bb{P} * \dot{\bb{P}}(\dot{T})$
  is stationary in $\power_{\omega_2} H(\theta)$. Let $\dot{C}$ be an $S$-name for a
  club in $(\power_{\omega_2} H(\theta))^{V^S}$, and, using Proposition~\ref{menas_prop}, 
  let $\dot{f}$ be an $S$-name for
  a function from $[H(\theta)]^2 \rightarrow \power_{\omega_2} H(\theta)$
  such that $\dot{C}_{\dot{f}} := \{X \in \power_{\omega_2} H(\theta) \mid
  \forall z \in \power_\omega X ~ \dot{f}(z) \subseteq X\}$ is forced to be
  a subset of $\dot{C}$. We can then find $M \in \power_{\omega_2}H(\theta)$
  such that $\omega_1 \cup \{w, \dot{f}\} \subseteq M \prec H(\theta)$ and such that
  there exists an $M$-generic $G \subseteq \bb{P} * \dot{\bb{P}}(\dot{T})$.

  It follows that $M^S$ is an $\omega_1$-guessing model for $\lambda$ in $V^S$.
  We now show that $M^S \in \dot{C}$; since $\dot{C}$ was an arbitrary name for a club
  in $\power_{\omega_2} H(\theta)$, this will imply that $\GMP^\lambda$ holds
  in $V^S$. To see that $M^S \in \dot{C}$, recall that $\dot{f} \in M$, and therefore,
  for every $S$-name $\dot{z} \in M$ for an element of $\power_\omega H(\theta)$,
  there is an $S$-name for $\dot{f}(\dot{z})$ in $M$. It follows that
  $M^S \in \dot{C}_{\dot{f}} \subseteq \dot{C}$.
\end{proof}

\begin{corollary}
  $\PFA(S)[S]$ implies $\IGMP$.
\end{corollary}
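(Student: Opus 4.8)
The plan is to combine the two main results of Section~\ref{suslin_subsec} with the sufficient condition for $\IGMP$ recorded at the end of Section~\ref{special_sec}. Recall from that discussion that, by the work of Cox and Krueger \cite{cox_krueger_indestructible} together with Krueger's observation \cite{krueger_sch} that an $\omega_1$-guessing model $N \prec H(\theta)$ with $\omega_1 \subseteq N$ is automatically internally unbounded, $\IGMP$ follows from the conjunction of $\GMP$ with the assertion that every tree of height and size $\omega_1$ with no cofinal branches is special. So it suffices to verify that both of these conjuncts hold in any model of $\PFA(S)[S]$.

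The first conjunct is precisely Theorem~\ref{pfa_s_s_isp_thm}. For the second, I would first observe that $\PFA(S)$ implies $\MA_{\omega_1}(S)$: every c.c.c.\ poset is proper, so the class of c.c.c.\ $S$-preserving posets is contained in the class of proper $S$-preserving posets, and hence $\mathsf{FA}$ for the larger class implies $\mathsf{FA}$ for the smaller. Consequently, if $V$ satisfies $\PFA(S)$ for a coherent Suslin tree $S$, then $V$ also satisfies $\MA_{\omega_1}(S)$, so forcing over $V$ with $S$ produces a model of $\MA_{\omega_1}(S)[S]$; in other words, $\PFA(S)[S]$ implies $\MA_{\omega_1}(S)[S]$. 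By Corollary~\ref{ma_s_special_tree_cor}, $\MA_{\omega_1}(S)[S]$ implies that every tree of height and size $\omega_1$ with no cofinal branches is special. This secures the second conjunct, and the corollary follows at once.

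There is essentially no obstacle in this argument, since all of the substantive work has already been carried out: Theorem~\ref{pfa_s_s_isp_thm} handles the delicate guessing-model argument, Theorem~\ref{ma_s_special_tree_thm} supplies the specializing forcing that stays c.c.c.\ when multiplied by $S$, and the reduction of $\IGMP$ to $\GMP$ plus specialness is the content of \cite{cox_krueger_indestructible} and \cite{krueger_sch}. The only points requiring a moment's care are bookkeeping: that $\PFA(S)$ formally entails $\MA_{\omega_1}(S)$ (immediate, as above), and that passing to the single forcing extension $V^S$ of a fixed ground model $V \models \PFA(S)$ is legitimate under the ``$[S]$'' convention, so that both $\GMP$ and the specialness of all trees of height and size $\omega_1$ with no cofinal branches hold simultaneously in $V^S$, whence $\IGMP$ holds there.
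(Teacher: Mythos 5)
Your proof is correct and follows essentially the same route as the paper: Theorem~\ref{pfa_s_s_isp_thm} gives $\GMP$ in $V^S$, Corollary~\ref{ma_s_special_tree_cor} (via the trivial observation that $\PFA(S)$ entails $\MA_{\omega_1}(S)$) gives that all branchless trees of height and size $\omega_1$ are special there, and the Cox--Krueger/Krueger reduction recorded at the end of Section~\ref{special_sec} then yields $\IGMP$. This is exactly the argument the paper intends, as sketched in the paragraph preceding Theorem~\ref{pfa_s_s_isp_thm}.
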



We end this subsection by pulling back the previous results from $V^S$ to $V$, showing 
that $\PFA(S)$ also implies $\GMP$.

\begin{proposition} \label{guessing_pullback_prop}
  Suppose that $\P$ is a forcing notion such that $|\P| \leq \omega_1$ and $\P$ has the 
  $\omega_1$-covering property, and suppose that $\theta$ is a sufficiently large regular 
  cardinal and $M \in \power_{\omega_2} H(\theta)$ is such that $M \prec H(\theta)$ and 
  $\P \cup \{\P\} \subseteq M$. If $M^{\P}$ is an $\omega_1$-guessing model in $V^{\P}$, 
  then $M$ is an $\omega_1$-guessing model in $V$.
\end{proposition}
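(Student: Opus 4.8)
The plan is to verify directly that $M$ is an $\omega_1$-guessing model in $V$: given $x \in M$ and $d \sub x$ that is $(\omega_1, M)$-approximated \emph{in} $V$, I must produce $e \in M$ with $e \cap M = d \cap M$. I would do this by forcing with $\P$ to get $V[G]$, transferring the approximation hypothesis on $d$ to $M[G] := \{\tau^G \mid \tau \in M \text{ a } \P\text{-name}\}$ (the realization of $M^\P$), applying the guessing model property of $M[G]$ in $V[G]$, and pulling the resulting witness back. Two preliminary facts get used repeatedly. First, $M[G] \el H(\theta)^{V[G]} = H(\theta)^V[G]$ since $\P \in M$, and $\P$ preserves $\omega_1$ (which already follows from the covering property); also $G \sub \P \sub M$, so $G \in M[G]$. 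Second — this is where $\P \sub M$ is essential — for every $y \in M \cap V$ one has $M[G] \cap y = M \cap y$: if $a \in M[G] \cap y$ then $a = \tau^G$ with $\tau \in M$, some $p \in G$ (hence $p \in M$) forces $\tau \in \check y$, and the partial function $q \mapsto$ ``the $b \in y$ with $q \Vdash \tau = \check b$'' belongs to $M$ with domain contained in $\P \sub M$, hence range contained in $M$, and $\tau^G$ lies in this range by genericity. In particular $M[G] \cap \Ord = M \cap \Ord$ and $d \cap M[G] = d \cap M$. (We also assume $\omega_1 \sub M$, as holds in the intended applications.)

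The main work in the ``forward'' direction is showing that $d$ stays $(\omega_1, M[G])$-approximated in $V[G]$; this is where $|\P| \leq \omega_1$ and the covering property enter. Fix $z \in M[G] \cap \power_{\omega_1}(x)$, write $z = \sigma^G$ with $\sigma \in M$, and pick $p \in G \cap M$ forcing ``$\sigma$ is a countable subset of $\check x$''. Let $W := \{a \in x \mid \exists q \leq p ~ (q \Vdash \check a \in \sigma)\}$; then $W \in M$ and $z \sub W$, and for each fixed $q \leq p$ the set $\{a \mid q \Vdash \check a \in \sigma\}$ is countable in $V$ (as $q$ forces $\sigma$ countable and $\P$ preserves $\omega_1$), so $|W| \leq |\P| \leq \omega_1$. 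If $W$ is countable, then $W \in M \cap \power_{\omega_1}(x)$, so $d \cap W \in M$ and $d \cap z = (d \cap W) \cap z \in M \sub M[G]$. If $|W| = \omega_1$, fix in $M$ a $\sub$-increasing sequence $\la W_\alpha \mid \alpha < \omega_1 \ra$ of countable sets with union $W$; since $\P$ preserves $\omega_1$ the countable set $z$ lies in some $W_{\alpha^*}$, and $\alpha^* \in M$ gives $W_{\alpha^*} \in M$, so again $d \cap z = (d \cap W_{\alpha^*}) \cap z \in M \sub M[G]$. Hence $d$ is $(\omega_1, M[G])$-approximated in $V[G]$, so by the guessing model property of $M[G]$ there is $e \in M[G]$, which we may take with $e \sub x$, such that $e \cap M[G] = d \cap M[G]$; intersecting with $M$ gives $e \cap M = d \cap M$.

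The last step — replacing the witness $e \in M[G]$ by one lying in $M$ — is the genuine obstacle of the argument. Writing $e = \sigma^G$ with $\sigma \in M$, one knows that $\sigma^G \cap z = (e \cap M) \cap z = d \cap z \in M$ for every countable $z \in M$, so ``on $M$'' the name $\sigma$ behaves like the check-name of a ground model set. I would argue, exploiting $\P \sub M$ (so that for each $a \in x$ and each condition the Boolean information ``$\check a \in \sigma$'' is coded inside $M$) together with the covering property, that the family $\la d \cap z \mid z \in M \cap \power_{\omega_1}(x) \ra$ can be glued — via a function in $M$ evaluated at a suitable parameter in $M$ — into a single $e' \in M$ with $e' \cap M = d \cap M$; equivalently, that the assertion ``some set agrees with $d$ on every countable $z \in M$'' reflects down to $M$. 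The difficulty is precisely that $M \cap x$ is not approachable from inside $M$, so the naive reflection fails and one must use the smallness of $\P$ to keep all the relevant data inside $M$. Granting this, $e'$ witnesses that $d$ is $M$-guessed in $V$; as $x$ and $d$ were arbitrary, $M$ is an $\omega_1$-guessing model in $V$.
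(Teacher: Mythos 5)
Your proof is not complete at the decisive point. The first part (transferring the $(\omega_1,M)$-approximation of $d$ to $M^{\P}$ in $V^{\P}$) is workable, though more elaborate than necessary: the paper gets it directly from the $\omega_1$-covering property and elementarity (for a name $\dot y\in M$ for a countable subset of $\lambda$, find $q$ and $z\in M\cap\power_{\omega_1}\lambda$ with $q\Vdash\dot y\sub z$, so $q\Vdash d\cap\dot y=(d\cap z)\cap\dot y\in M^{\P}$), with no need for your set $W$, the bound $|\P|\le\omega_1$ at that stage, or the extra hypothesis $\omega_1\sub M$, which the proposition does not assume. But the step you yourself call ``the genuine obstacle'' --- converting the witness $e\in M[G]$, i.e.\ a name $\dot e\in M$, into an actual element of $M$ that guesses $d$ --- is precisely the content of the proposition, and you only gesture at it (``I would argue\dots Granting this\dots''). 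No gluing construction is given, and the reflection you describe is exactly what fails naively, for the reason you note: $M\cap\lambda$ is not available inside $M$. As it stands the argument has a gap at its core.

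The paper closes this gap with a short dichotomy at the level of the name $\dot e$, run inside a proof by contradiction. Suppose $d$ is $(\omega_1,M)$-approximated but not $M$-guessed. After the transfer step, guessing in $V^{\P}$ yields $\dot e\in M$ and a condition $p$ with $p\Vdash_{\P} ``d\cap M^{\P}=\dot e\cap M^{\P}"$, and $M^{\P}\cap\lambda=M\cap\lambda$ since $\P\sub M$. Now either (i) no two conditions below $p$ disagree about any ``$\alpha\in\dot e$''; then $e^*:=\{\alpha<\lambda\mid\exists q\le_{\P} p~ q\Vdash_{\P}``\alpha\in\dot e"\}$ is definable from $\dot e,p\in M$, hence $e^*\in M$, and $p\Vdash_{\P}``\dot e=e^*"$, so $e^*\cap M=d\cap M$, contradicting that $d$ is not $M$-guessed; or (ii) there exist $q_0,q_1\le_{\P} p$ and $\alpha<\lambda$ with $q_0\Vdash_{\P}``\alpha\in\dot e"$ and $q_1\Vdash_{\P}``\alpha\notin\dot e"$; by elementarity (and $\P\sub M$, so the conditions lie in $M$) such $q_0,q_1$ and $\alpha\in\lambda\cap M$ can be found in $M$, and choosing $q^*\in\{q_0,q_1\}$ according to whether $\alpha\in d$ gives $q^*\le_{\P} p$ forcing $\dot e\cap M\neq d\cap M$, contradicting the choice of $p$. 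Some argument of this kind --- deciding the name on $M$ or else producing a disagreement inside $M$ --- is the missing ingredient; without it your proposal does not establish the proposition.
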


\begin{proof}
  Suppose that $M^{\P}$ is an $\omega_1$-guessing model in $V^{\P}$ but $M$ is not an 
  $\omega_1$-guessing model in $V$. We can then fix a cardinal $\lambda \in M$ and a 
  set $d \subseteq \lambda$ such that $d$ is $(\omega_1, M)$-approximated but not 
  $M$-guessed.
  
  \begin{claim}
    In $V^{\P}$, $d$ is $(\omega_1, M^{\P})$-approximated.
  \end{claim}
  
  \begin{proof}
    Fix a condition $p \in \P$ and a $\P$-name $\dot{y} \in M$ for an element of 
    $\power_{\omega_1} \lambda$. Since $\P$ has the $\omega_1$-covering property, 
    and by elementarity, we can find $q \leq_{\P} p$ and $z \in \power_{\omega_1} \lambda 
    \cap M$ such that $q \Vdash_{\P}``\dot{y} \subseteq z"$. Then 
    $d \cap z \in M$, so $q \Vdash_{\P}``d \cap \dot{y} = (d \cap z) \cap \dot{y} \in 
    M^{\P}"$. The conclusion follows by genericity.
  \end{proof}
  
  Therefore, since $M^{\P}$ is an $\omega_1$-guessing model in $V^{\P}$, we can find 
  a $\P$-name $\dot{e} \in M$ for a subset of $\lambda$ and a condition $p \in \P$ such 
  that $p \Vdash_{\P}``d \cap M^{\P} = \dot{e} \cap M^{\P}"$. Note that, since $\P \subseteq 
  M$, we have $M^{\P} \cap \lambda = M \cap \lambda$.
  
  \begin{claim}
    There are conditions $q_0, q_1 \leq_{\P} p$ and an ordinal $\alpha < \lambda$ 
    such that $q_0 \Vdash_{\P}``\alpha \in \dot{e}"$ and $q_1 \Vdash_{\P}``\alpha \notin 
    \dot{e}"$.
  \end{claim}
  
  \begin{proof}
    If not, then, letting $e^* := \{\alpha < \lambda \mid \exists q \leq_{\P} p ~ 
    [q \Vdash_{\P}``\alpha \in \dot{e}]\}$, we have that $e^* \in M$ and 
    $p \Vdash_{\P} ``\dot{e} = e^*"$. But then we would have $e^* \cap M = d \cap M$, 
    contradicting the assumption that $d$ is not $M$-guessed.
  \end{proof}
  
  By elementarity, we can find $q_0, q_1 \leq_{\P} p$ and an ordinal $\alpha \in \lambda \cap 
  M$ such that $q_0 \Vdash_{\P}``\alpha \in \dot{e}"$ and $q_1 \Vdash_{\P}``\alpha \notin 
  \dot{e}"$. If $\alpha \in d$, then let $q^* = q_1$, and if $\alpha \notin d$, then 
  let $q^* = q_0$. In either case, we have $q^* \leq_{\P} p$ and 
  $q^* \Vdash_{\P}``\dot{e} \cap M \neq d \cap M"$, which is a contradiction.
\end{proof}

\begin{corollary}
  $\PFA(S)$ implies $\GMP$.
\end{corollary}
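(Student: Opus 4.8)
The plan is to deduce this from the combination of Theorem~\ref{pfa_s_s_isp_thm} and Proposition~\ref{guessing_pullback_prop}, taking the forcing notion $\P$ in the latter to be the Suslin tree $S$ itself. Observe that, as a forcing, $S$ has size $\omega_1$ and is $\omega_1$-distributive, so it adds no new countable sets of ordinals and in particular trivially satisfies the $\omega_1$-covering property; hence Proposition~\ref{guessing_pullback_prop} is indeed applicable with $\P=S$. Working throughout in $V$ (where $\PFA(S)$ holds), it suffices, exactly as in the proof of Theorem~\ref{pfa_s_s_isp_thm}, to verify $\GMP^\lambda$ for every $\lambda\geq 2^\omega$, since by \cite[Proposition~3.2]{viale_weiss} this yields stationarily many $\omega_1$-guessing models in $\power_{\omega_2}H(\theta)$ for all sufficiently large regular $\theta$; this last implication is a theorem of $\ZFC$ and so holds in $V$.

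So fix $\lambda\geq 2^\omega$, a sufficiently large regular $\theta$, and an arbitrary function $g:[H(\theta)]^2\to\power_{\omega_2}H(\theta)$; I want an $M\in\power_{\omega_2}H(\theta)$, closed under $g$, that is an $\omega_1$-guessing model for $\lambda$ in $V$. First I would simply re-run the construction in the proof of Theorem~\ref{pfa_s_s_isp_thm}: let $\P:=\Add(\omega,1)\ast\dot\Coll(\omega_1,2^\lambda)$, which is proper, $S$-preserving, and has the $\omega_1$-covering and $\omega_1$-approximation properties (the latter also in $V^S$), and apply Lemma~\ref{technical_lemma} to obtain a $\P\times S$-name $\dot T$ for a tree of height and size $\omega_1$ with no cofinal branches, together with $w\in H(\theta)$ (taken to contain $\lambda$, $S$, and all other relevant data), such that whenever $M\prec H(\theta)$ with $\omega_1\cup\{w\}\subseteq M$ and there is an $M$-generic filter for $\P\ast\dot\P(\dot T)$, then $M^S$ is an $\omega_1$-guessing model for $\lambda$ in $V^S$. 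Since $\P(\dot T)\times S$ is c.c.c.\ in $V^\P$, the iteration $\P\ast\dot\P(\dot T)$ is proper and $S$-preserving, so by $\PFA(S)$ and Lemma~\ref{woodin_lemma} the set of $M\in\power_{\omega_2}H(\theta)$ with $|M|=\omega_1\subseteq M$ admitting an $M$-generic filter for $\P\ast\dot\P(\dot T)$ is stationary; intersecting with the club of $M\prec H(\theta)$ with $\{w,g\}\subseteq M$ and $M$ closed under $g$, I get such an $M$, for which $M^S$ is an $\omega_1$-guessing model for $\lambda$ in $V^S$.

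Finally, since $S,\lambda\in M$ and $\omega_1=|S|\subseteq M$, we have $S\cup\{S\}\subseteq M$, so Proposition~\ref{guessing_pullback_prop} (in the form localized to the single cardinal $\lambda$, which is exactly what its proof yields) applies with $\P=S$ to give that $M$ is an $\omega_1$-guessing model for $\lambda$ in $V$; as $M$ is closed under the arbitrary $g$, this establishes $\GMP^\lambda$ in $V$, completing the argument. The step requiring the most care — and the only real obstacle — is the bookkeeping needed to confirm that the object ``$M^S$'' produced by Lemma~\ref{technical_lemma} is literally the one appearing in the hypothesis of Proposition~\ref{guessing_pullback_prop}, and that the proposition's proof really does localize to a fixed $\lambda\in M$; both are immediate on inspection of the relevant proofs, as is the observation that $S$, being $\omega_1$-distributive, has the $\omega_1$-covering property.
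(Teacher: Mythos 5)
Your proposal is correct and follows essentially the same route as the paper: run the proof of Theorem~\ref{pfa_s_s_isp_thm} to get stationarily many $M$ (closed under a Menas function, with $w,S\in M$ and $\omega_1\subseteq M$) such that $M^S$ is an $\omega_1$-guessing model for $\lambda$ in $V^S$, and then pull this back to $V$ via Proposition~\ref{guessing_pullback_prop} applied with $\P=S$, which is legitimate since $|S|=\omega_1$ and $S$ is $\omega_1$-distributive, hence trivially has the $\omega_1$-covering property. Your extra care about localizing the pullback proposition to a fixed $\lambda\in M$ and then invoking \cite[Proposition~3.2]{viale_weiss} in $V$ is a harmless (indeed slightly more explicit) rendering of what the paper's terser argument does.
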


\begin{proof}
  Suppose that $\PFA(S)$ holds in $V$.
  Fix a regular $\theta \geq \omega_2$. By the proof of Theorem \ref{pfa_s_s_isp_thm}, 
  there are stationarily many $M \in \power_{\omega_2} H(\theta)$ such that $M^S$ is 
  an $\omega_1$-guessing model in $V^{S}$. By Proposition \ref{guessing_pullback_prop}, 
  for each such $M$ for which $M \prec H(\theta)$ and $S \cup \{S\} \subseteq M$, 
  we know that $M$ is a guessing model in $V$. Therefore, $\GMP$ holds in $V$.
\end{proof}

\subsection{Almost Suslin trees}

As mentioned at the end of Section~\ref{special_sec}, work of Cox and Krueger 
shows that $\IGMP$ follows from the conjunction 
of $\GMP$ and the assertion that all trees of size and height $\omega_1$ with no 
cofinal branches are special, and that $\IGMP$ implies that there are no Suslin trees. 
Together, these results raise a natural question, asked already in 
\cite{cox_krueger_indestructible}, as to whether $\IGMP$ implies that all 
trees of height and size $\omega_1$ with no cofinal branches are special. 
In this section, we prove that $\PFA(T^*)$ implies $\IGMP$. Since every special 
tree contains a stationary antichain, this answers the question negatively. 
It also positively answers a question of Krueger from \cite{krueger_forcing_axiom} 
as to whether $\PFA(T^*)$ implies $\neg \wKH$, since, as mentioned in Section 
\ref{kurepa_sec}, $\GMP$ implies $\neg \wKH$.

The following proposition is a slight improvement of 
\cite[Proposition 4.4 and Corollary 4.5]{cox_krueger_indestructible}. 
To simplify the statement, 
let us say that a tree of height $\omega_1$ is \emph{persistently branchless} 
if it has no cofinal branches and continues to have no cofinal branches in any 
forcing extension that preserves $\omega_1$. Note that every special tree $T$ is 
persistently branchless, and, in fact, it is enough that $T_C$ be special for 
some cofinal set $C \subseteq \omega_1$.

\begin{proposition} \label{persistently_branchless_prop}
  Suppose that $\GMP$ holds and every tree $T$ of size and height $\omega_1$ with 
  no cofinal branches is persistently branchless. Then $\IGMP$ holds.
\end{proposition}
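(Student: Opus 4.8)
The plan is to inspect the proof, due to Cox and Krueger and recalled above, that $\GMP$ together with ``every tree of size and height $\omega_1$ with no cofinal branch is special'' implies $\IGMP$, and to observe that the specialness hypothesis enters that proof only through the conclusion that the trees it produces have --- and, being special, keep under any $\omega_1$-preserving forcing --- no cofinal branch; that is, specialness is used only via persistent branchlessness. In detail: fix a regular $\theta \geq \omega_2$. By $\GMP$ there are stationarily many $\omega_1$-guessing models in $\power_{\omega_2} H(\theta)$, and since $\{M \in \power_{\omega_2} H(\theta) \mid M \prec H(\theta),\ \omega_1 \subseteq M\}$ is a club, there are stationarily many such $M$ that are $\omega_1$-guessing; by Krueger's theorem \cite{krueger_sch} each of these is internally unbounded. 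It therefore suffices to show that every internally unbounded $\omega_1$-guessing model $M \prec H(\theta)$ with $\omega_1 \subseteq M$ remains an $\omega_1$-guessing model after forcing with any $\omega_1$-preserving poset $\P$; this yields stationarily many indestructible $\omega_1$-guessing models, hence $\IGMP(\theta)$, and then $\IGMP$.

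So fix such an $M$ and such a $\P$, and suppose toward a contradiction that in $V^{\P}$ there are $x \in M$ and $d \subseteq x$ that is $(\omega_1, M)$-approximated but not $M$-guessed. Working in $V$, use internal unboundedness and $|M| = \omega_1$ to build a $\subseteq$-increasing $\langle z_\alpha \mid \alpha < \omega_1 \rangle$ with each $z_\alpha \in M \cap \power_{\omega_1}(x)$ and every member of $M \cap \power_{\omega_1}(x)$ contained in some $z_\alpha$, and fix (in $V$) an enumeration of the at most $\omega_1$-many candidate guesses $e \in M \cap \power(x)$. Following \cite[Proposition 4.4]{cox_krueger_indestructible}, assemble from this data a tree $T = T(M,x)$ of size and height $\omega_1$ whose level-$\alpha$ nodes are the coherent, $M$-internal, guess-defeating approximations of length $\alpha$, with the two properties: (i) $T$ has no cofinal branch in $V$, since any such branch would assemble a subset of $x$ lying in $V$ that is $(\omega_1, M)$-approximated but not $M$-guessed, contradicting that $M$ is an $\omega_1$-guessing model in $V$; and (ii) in $V^{\P}$ the hypothetical $d$ traces out a cofinal branch of $T$, read off along the club of levels by which $d$ has been recorded to disagree with every already-enumerated candidate guess --- this club exists because assigning to each candidate a least level witnessing such disagreement gives a function on $\omega_1$, which is closed on a club.

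The hypothesis now applies to $T$: it is a tree of size and height $\omega_1$ with no cofinal branch in $V$, hence persistently branchless, so it has no cofinal branch in $V^{\P}$ --- contradicting (ii). Therefore no such $d$ exists, $M$ remains an $\omega_1$-guessing model in $V^{\P}$, and the proof is complete. The main obstacle is arranging (ii): the set $d$ produced in $V^{\P}$ need only disagree with each candidate guess somewhere, not uniformly beyond a fixed level, so getting it to trace out a genuine cofinal branch of the ground-model tree $T$ requires the bookkeeping built into the construction of $T$ --- this is precisely the work done in \cite[Proposition 4.4]{cox_krueger_indestructible}, and the only new point here is the routine observation that the specialness appealed to there serves only to preserve branchlessness and can be weakened to persistent branchlessness, giving the improvement over \cite[Proposition 4.4 and Corollary 4.5]{cox_krueger_indestructible}.
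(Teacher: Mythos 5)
Your proof is correct in outline, but it takes a genuinely different route from the paper's, and your framing of it as a mere re-reading of Cox--Krueger's argument undersells (and slightly misattributes) what you are doing. The tree in \cite[Proposition 4.4]{cox_krueger_indestructible}, which is also the tree used in the paper's proof, is the tree of \emph{all} $M$-internal approximations $(i, d\cap X_i)$ along the filtration; that tree does have (up to $\omega_1$-many) cofinal branches, which is precisely why the paper cannot apply the persistent-branchlessness hypothesis to it directly: it first splits $T = T_0 \cup T_1$ via a Baumgartner function (Lemma~\ref{baumgartner_function_lemma}), applies the hypothesis to the branchless part $T_1$, and then runs the Fodor argument of Claim~\ref{unctble_branch_claim} to conclude that any new cofinal branch of $T$ is already in $V$, so that $d \cap N \in V$ and guessing in $V$ can be invoked at the end. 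Your tree is different: by restricting the level-$\alpha$ nodes to $M$-internal approximations that already disagree with each of the first $\alpha$ candidate guesses, you make the tree branchless in $V$ outright (any cofinal branch would assemble an approximated-but-unguessed set in $V$, contradicting guessing there), and the hypothesis then applies to the tree itself; the non-guessed $d$ in $V^{\P}$ climbs it along the club of closure points of the map sending each candidate $e_\beta$ to the least level of the filtration on which $d$ and $e_\beta$ differ. This checks out: the candidates may be taken to be the at most $\omega_1$-many elements of $M \cap \power(x)$ (since $M \prec H(\theta)$ is closed under $e \mapsto e \cap x$), the nodes must be level-indexed pairs $(\alpha, y)$ so that the same set at different levels gives distinct nodes, and internal unboundedness does yield a $\subseteq$-increasing chain $\langle z_\alpha \mid \alpha < \omega_1\rangle$ that is $\subseteq$-cofinal in $M \cap \power_{\omega_1}(x)$, which you need for both (i) and (ii). What the two approaches buy: the paper's version reuses the Baumgartner/Fodor machinery it already set up for Lemma~\ref{technical_lemma} and isolates the stronger, reusable intermediate fact that the trace $d \cap N$ of any approximated set in the extension is already in $V$; yours avoids Baumgartner functions and Fodor entirely and is more self-contained, at the cost of a more bespoke tree, a proof by contradiction that does not isolate that intermediate fact, and the bookkeeping you defer to Cox--Krueger, which is not actually in their Proposition 4.4 in the form you describe and so must be carried out explicitly (as above, it does go through).
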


\begin{proof}
  Fix a regular cardinal $\theta \geq \omega_2$. It suffices to show that every 
  $\omega_1$-guessing model $N \in \power_{\omega_2} H(\theta)$ with 
  $\omega_1 \subseteq N$ is an indestructible $\omega_1$-guessing model. To this 
  end, fix such an $N$, and fix a cardinal $\lambda \in N$. It suffices to prove 
  that, in any forcing extension of $V$ in which $\omega_1$ is preserved, 
  $N$ continues to be an $\omega_1$-guessing model for $\lambda$.
  
  Since $N$ is an $\omega_1$-guessing model with $\omega_1 \subseteq N$, 
  \cite[Theorem 1.4]{krueger_sch} implies that $N$ is internally unbounded. 
  We can therefore fix a $\subseteq$-increasing sequence $\langle X_i \mid 
  i < \omega_1 \rangle$ such that each $X_i$ is an element of 
  $N \cap \power_{\omega_1} \lambda$ and $\bigcup_{i < \omega_1} X_i = N \cap \lambda$.
  We now define a tree $T$ as in the proof of 
  \cite[Proposition 4.4]{cox_krueger_indestructible}. The underlying set of $T$ 
  is all pairs $(i,x) \in N$ such that 
  $i < \omega_1$ and $x \subseteq X_i$. We set 
  $(i,x) <_{T} (j,y)$ if and only if $i < j$ and $y \cap X_i = x$.
  
  Precisely as in \cite{cox_krueger_indestructible}, $T$ is a tree of size and 
  height $\omega_1$ with at most $\omega_1$-many cofinal branches. Let $B$ be the 
  set of all cofinal branches through $T$, and, using Lemma 
  \ref{baumgartner_function_lemma}, find a Baumgartner function $g:B \rightarrow T$.
  
  Now, as in the proof of Lemma \ref{technical_lemma}, let $T_0 := \{t \in T \mid 
  \exists b \in B ~ g(b) <_T t \in b\}$, and let $T_1 := T \setminus T_0$. 
  Since $T_0$ contains a tail of every $b \in B$, it follows that $T_1$ has 
  no cofinal branches and is therefore, by assumption, persistently branchless.
  
  Now let $W$ be any forcing extension of $V$ in which $\omega_1$ is preserved, 
  and let $d \in \power^W \lambda$ 
  be $(\omega_1,N)$-approximated in $W$. Then, for each $i < \omega_1$, we have 
  $(i, d \cap X_i) \in T$, and the set $c = \{(i, d \cap X_i) \mid i < 
  \omega_1\}$ is a cofinal branch through $T$.
  Since $T_1$ is persistently branchless in $V$, it must be the case that a
  tail of $c$ lies inside $T_0$. Then, precisely by the argument in the proof 
  of Claim~\ref{unctble_branch_claim}, we must in fact have $c \in B$, i.e., 
  $c$ is already in $V$. Then $d \cap N = \bigcup_{i < \omega_1} d \cap X_i$ is also 
  in $V$ and is clearly $(\omega_1, N)$-approximated there, since $d$ is 
  $(\omega_1, N)$-approximated in $W$. Therefore, 
  since $N$ is a guessing model in $V$, $d \cap N$ is $N$-guessed, i.e., 
  there is $e \in N$ such that $e \cap N = d \cap N$. Clearly, this set $e$ 
  witnesses that $d$ is $N$-guessed in $W$. Therefore, $N$ is an $\omega_1$-guessing 
  model in $W$; since $W$ was arbitrary, $N$ is an indestructible 
  $\omega_1$-guessing model in $V$.
\end{proof}

For the rest of this subsection, let $T^*$ denote an almost Suslin Aronszajn tree.
In \cite[Definition 1.7]{krueger_forcing_axiom}, Krueger introduces the notion of 
a forcing poset $\bb{P}$ being \emph{$T^*$-proper}. Although being $T^*$-proper 
is a weakening of the conjunction of being proper and $T^*$-preserving, 
Krueger proves that $\PFA(T^*)$ is equivalent to $\mathsf{FA}(\mc C)$, where 
$\mc C$ is the class of $T^*$-proper forcing posets 
\cite[Proposition 2.5]{krueger_forcing_axiom}. We will not need the definition 
of $T^*$-properness here, but just note the following facts:
\begin{fact} \label{t_proper_fact}
\begin{enumerate}
  \item \cite[Theorem 2.2]{krueger_forcing_axiom} $T^*$-properness is preserved 
  under countable support iteration.
  \item \cite[Proposition 3.1]{krueger_forcing_axiom} Every strongly proper 
  forcing poset is $T^*$-proper. In particular, Cohen forcing is $T^*$-proper.
  \item \cite[Proposition 1.11]{krueger_forcing_axiom} Every $\omega_1$-closed 
  forcing poset is $T^*$-proper.
  \item \label{shelah_poset} 
  \cite[\S IX, Lemma 4.6]{proper_and_improper_forcing} For every tree 
  $T$ of size and height $\omega_1$ with no cofinal branches, there is a 
  $T^*$-proper forcing $\Q(T)$ that adds an unbounded subset $A \subseteq \omega_1$ 
  and a specializing function $f:T_A \rightarrow \omega$.
\end{enumerate}
\end{fact}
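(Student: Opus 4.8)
The four clauses are quoted verbatim from \cite{krueger_forcing_axiom} and \cite{proper_and_improper_forcing}, so the plan is to recall the definition of $T^*$-properness from \cite[Definition 1.7]{krueger_forcing_axiom} and to indicate how each clause is established. Fix a sufficiently large regular $\theta$, and for a countable $M \prec H(\theta)$ with $T^* \in M$ write $\delta_M := M \cap \omega_1$; call a node $x \in T^*$ of height $\delta_M$ \emph{$M$-generic} if the set of its $<_{T^*}$-predecessors meets every predense subset of $T^*$ (viewed as a forcing notion) lying in $M$. Roughly, a poset $\P$ is \emph{$T^*$-proper} if there are club-many such $M$ with $\P \in M$ so that, for every $p \in \P \cap M$ and every $M$-generic $x$ of height $\delta_M$, there is a condition $q \leq p$ that is $(M,\P)$-generic and forces $x$ to remain $M$-generic in the extension. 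The point of this extra clause is exactly that it makes $T^*$-proper posets add neither a cofinal branch nor a stationary antichain to $T^*$, hence $T^*$-preserving; the whole approach is therefore organized around producing, for each poset in question, generic conditions with this preservation property.

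Clauses (2) and (3) fall out almost immediately once the definition is unwound. For clause (3), I would use that an $\omega_1$-closed poset adds no new $\omega$-sequences, and in particular no new nodes, antichains, or branches relevant to the $\omega_1$-tree $T^*$; since such a poset is also proper, any $(M,\P)$-generic condition is automatically preserving, as there is nothing new at level $\delta_M$ that could destroy the $M$-genericity of $x$. For clause (2), the key point is that a strongly proper poset admits, for each $p \in \P \cap M$, a \emph{strongly} $(M,\P)$-generic $q \leq p$: every condition below $q$ has a reduction into $M$, so below $q$ the poset adds nothing to the $M$-interpretation of $T^*$ at the critical level, whence $q$ is automatically preserving. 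Cohen forcing is strongly proper, since its finite conditions yield reductions trivially, which gives the parenthetical assertion.

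For clause (1), the plan is to mirror Shelah's proof that properness is preserved under countable support iterations, threading the extra preservation clause through the construction. Along an increasing continuous $\in$-chain $\langle M_\xi \mid \xi < \omega_1 \rangle$ of countable elementary submodels one builds, by a fusion argument, a master condition for the iteration that is simultaneously $(M,\P)$-generic and preserving; the preservation property can be maintained stagewise precisely because it is local to the single critical level $\delta_M$, shared by the relevant submodels, so the same bookkeeping that secures genericity also secures preservation. This is \cite[Theorem 2.2]{krueger_forcing_axiom}.

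Finally, clause (4) refers to Shelah's finite-condition specializing poset $\Q(T)$ from \cite[\S IX, Lemma 4.6]{proper_and_improper_forcing}: its conditions are finite approximations to an unbounded $A \subseteq \omega_1$ together with a finite partial function specializing $T_A$, and its $T^*$-properness is verified there using that $T$ has no cofinal branch. I expect clauses (1) and (4) to be the main obstacles: the iteration theorem requires the full countable-support-iteration apparatus with careful bookkeeping of the preservation clause through the fusion, while the $T^*$-properness of the finite-condition specializing forcing is the delicate combinatorial core of \cite[\S IX]{proper_and_improper_forcing}. Clauses (2) and (3), by contrast, are essentially immediate.
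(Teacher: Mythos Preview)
The paper does not prove this statement at all: it is recorded as a \emph{Fact}, with each clause simply attributed to the cited reference and used as a black box in the subsequent argument. So your proposal already goes well beyond what the paper does, and the relevant comparison is just that you sketch proofs where the paper gives none.

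That said, a couple of your sketches are slightly off. For clause (1), the preservation of ($T^*$-)properness under countable support iteration is not proved by running along an $\omega_1$-chain of countable submodels; rather, one fixes a \emph{single} countable $M \prec H(\theta)$ containing the iteration and a condition $p$, and builds an $(M,\P)$-generic (and preserving) master condition by a fusion argument indexed by the \emph{iteration stages} in $M$, working along a cofinal $\omega$-sequence through $M \cap \mathrm{lh}(\P)$. The $\omega_1$-chain you describe is not part of the standard argument. For clause (4), note that Shelah's book predates Krueger's definition of $T^*$-properness, so Shelah does not literally verify $T^*$-properness there; what is established in \cite[\S IX]{proper_and_improper_forcing} is a preservation property (of not adding stationary antichains to a fixed $\omega_1$-tree) from which $T^*$-properness of the specializing poset can be read off, and this is how Krueger packages it in \cite{krueger_forcing_axiom}. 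Your sketches for clauses (2) and (3) are fine.
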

As noted at the end of \cite{krueger_forcing_axiom}, 
Fact \ref{t_proper_fact}(\ref{shelah_poset}) shows 
that $\PFA(T^*)$ implies that for every tree $T$ of size and height $\omega_1$ 
with no cofinal branches, there is an unbounded $A \subseteq \omega_1$ such 
that $T_A$ is special; in particular, $T$ is persistently branchless. Therefore, 
by Proposition~\ref{persistently_branchless_prop}, to show that $\PFA(T^*)$ 
implies $\IGMP$, it suffices to show that $\PFA(T^*)$ implies $\GMP$.

\begin{theorem}
  $\PFA(T^*)$ implies $\IGMP$.
\end{theorem}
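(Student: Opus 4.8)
As already observed above, by Proposition~\ref{persistently_branchless_prop} (together with the fact, noted just before the theorem, that $\PFA(T^*)$ makes every tree of size and height $\omega_1$ with no cofinal branches persistently branchless via Fact~\ref{t_proper_fact}(\ref{shelah_poset})), it suffices to prove that $\PFA(T^*)$ implies $\GMP$. The plan is to run the argument behind Theorem~\ref{pfa_s_s_isp_thm} --- that is, the Viale--Weiss proof that $\PFA$ implies $\GMP$ --- but carried out directly in $V$, with no passage to a Suslin-tree extension, and with ``proper'' replaced throughout by ``$T^*$-proper''. As in the proof of Theorem~\ref{pfa_s_s_isp_thm}, by \cite[Proposition~3.2]{viale_weiss} it is enough to show that for every $\lambda \geq 2^\omega$ and every sufficiently large regular $\theta$ there are stationarily many $M \in \power_{\omega_2} H(\theta)$ that are $\omega_1$-guessing models for $\lambda$.

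Fix such $\lambda$ and $\theta$, and let $\bb{P} := \Add(\omega,1) \ast \dot{\Coll}(\omega_1, 2^\lambda)$. As in the proof of Theorem~\ref{pfa_s_s_isp_thm}, $\bb{P}$ collapses $2^\lambda$ to have cardinality $\omega_1$ and, by \cite{krueger_mitchell_iteration}, has the $\omega_1$-covering and $\omega_1$-approximation properties; moreover $\bb{P}$ is $T^*$-proper, since $\Add(\omega,1)$ is strongly proper and $\dot{\Coll}(\omega_1, 2^\lambda)$ is forced to be $\omega_1$-closed (Fact~\ref{t_proper_fact}(2),(3)), and two-step iterations of $T^*$-proper forcings are $T^*$-proper (Fact~\ref{t_proper_fact}(1)). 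Working in $V^{\bb{P}}$, build the tree $T$ exactly as in the proof of Lemma~\ref{technical_lemma}: fix a $\subseteq$-increasing, continuous, cofinal sequence $\langle c(\alpha) \mid \alpha < \omega_1 \rangle$ enumerating $(\power_{\omega_1}\lambda)^{V^{\bb{P}}}$ with $c(\alpha+1) \in V$ for all $\alpha$, fix a bijection $\ell : \omega_1 \to ({}^\lambda 2)^{V^{\bb{P}}}$, and set $T := \{\ell(\eta) \restriction c(\alpha) \mid \eta, \alpha < \omega_1\}$. By the $\omega_1$-approximation property the set $B$ of cofinal branches of $T$ is exactly $({}^\lambda 2)^{V^{\bb{P}}}$; fix a Baumgartner function $g : B \to T$ via Lemma~\ref{baumgartner_function_lemma}, put $T_0 := \{t \in T \mid \exists b \in B ~ g(b) <_T t \in b\}$ and $T_1 := T \setminus T_0$, so that $T_1$ has no cofinal branch.

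The one genuinely new step is the specializing forcing: under $\PFA$ one would use Baumgartner's c.c.c.\ poset to specialize $T_1$, but that poset need not be $T^*$-preserving (a special tree has a stationary antichain). Instead, let $\dot{\Q}$ be a $\bb{P}$-name for the Shelah forcing $\Q(T_1)$ of Fact~\ref{t_proper_fact}(\ref{shelah_poset}), which is $T^*$-proper and adds an unbounded $A \subseteq \omega_1$ together with a specializing function $f : (T_1)_A \to \omega$; then $\bb{P} \ast \dot{\Q}$ is $T^*$-proper by Fact~\ref{t_proper_fact}(1), and in $V^{\bb{P} \ast \dot{\Q}}$ the tree $(T_1)_A$ is special, so $T_1$ has no cofinal branch there (a cofinal branch of $T_1$ would restrict to an uncountable chain through $(T_1)_A$). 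Since $\PFA(T^*)$ is equivalent to $\mathsf{FA}$ for the class of $T^*$-proper posets \cite[Proposition~2.5]{krueger_forcing_axiom}, Lemma~\ref{woodin_lemma} applies to $\bb{P} \ast \dot{\Q}$: letting $w$ collect all relevant names and parameters, the set of $M \in \power_{\omega_2} H(\theta)$ with $M \prec H(\theta)$, $\omega_1 \cup \{w\} \subseteq M$, $|M| = \omega_1$, and admitting an $M$-generic filter $G = G_0 \ast G_1$ for $\bb{P} \ast \dot{\Q}$ is stationary. For any such $M$, the argument in the proof of Lemma~\ref{technical_lemma} --- in particular the Baumgartner-function-plus-Fodor step of Claim~\ref{unctble_branch_claim} --- shows that $M$ is an $\omega_1$-guessing model for $\lambda$: given $d \subseteq \lambda$ that is $(\omega_1,M)$-approximated in $V$, the characteristic function of $d \cap M$ is a cofinal branch of $T^{G_0}$, a tail of which must lie inside $T_0^{G_0}$ since $T_1^{G_0}$ is branchless, which via $g$ forces $d \cap M = e \cap M$ for some $e \in M$. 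Intersecting with an arbitrary club in $\power_{\omega_2} H(\theta)$ then yields $\GMP^\lambda$, and hence $\GMP$.

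I expect the main obstacle to be exactly the verification sketched in the last step: one must check that the weaker Shelah forcing --- which specializes only $(T_1)_A$ for an unbounded $A$, rather than all of $T_1$ as Baumgartner's poset would under $\PFA$ --- still suffices, i.e.\ that $T_1$ is branchless in the final extension (straightforward) and, more delicately, that the elementarity and $M$-genericity bookkeeping of the Viale--Weiss/Lemma~\ref{technical_lemma} argument goes through with a $T^*$-proper three-step iteration in place of a proper one, using that $\bb{P}$ still has the $\omega_1$-covering and $\omega_1$-approximation properties. A minor simplification relative to Theorem~\ref{pfa_s_s_isp_thm} is that here the whole construction takes place in $V$, with no analogue of the $V^S$ detour, so that no pullback argument (as in Proposition~\ref{guessing_pullback_prop}) is needed.
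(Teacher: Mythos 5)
Your overall architecture is exactly the paper's: reduce $\IGMP$ to $\GMP$ via Proposition~\ref{persistently_branchless_prop} and Fact~\ref{t_proper_fact}(\ref{shelah_poset}), take $\bb{P} = \Add(\omega,1) \ast \dot{\Coll}(\omega_1, 2^\lambda)$, replace Baumgartner's c.c.c.\ specialization by Shelah's $T^*$-proper poset $\Q(\dot T_1)$ (observing that specializing $(T_1)_A$ for an unbounded $A$ already makes $T_1$ branchless), and then apply $\mathsf{FA}$ for $T^*$-proper posets together with Lemma~\ref{woodin_lemma}; you also correctly note that no pullback analogue of Proposition~\ref{guessing_pullback_prop} is needed. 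This is precisely how the paper argues, so the only thing to discuss is the internal detail of your run of the Lemma~\ref{technical_lemma} construction.

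There is one step that is wrong as written: you fix ``a bijection $\ell : \omega_1 \to ({}^{\lambda}2)^{V^{\bb{P}}}$'' and claim the branches of $T$ are exactly $({}^{\lambda}2)^{V^{\bb{P}}}$ by the $\omega_1$-approximation property. No such bijection exists: in $V^{\bb{P}}$ we have $|\lambda| = \omega_1$, so $|{}^{\lambda}2| = 2^{\omega_1} \geq \omega_2$ there. In Lemma~\ref{technical_lemma} the enumeration $\dot\ell$ is of $\dot B = ({}^{\lambda}2)^{V^S}$, i.e.\ of the functions of the \emph{base} model relative to $\bb{P}$; dropping $S$, the correct choice here is $B = ({}^{\lambda}2)^{V}$, which $\bb{P}$ does collapse to size $\omega_1$. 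This is not merely cosmetic: with $B = ({}^{\lambda}2)^{V}$, the statement that is trivial in your version --- that $B$ is the full set of cofinal branches of $T$ --- is exactly the place where the $\omega_1$-approximation property is used (a new branch all of whose restrictions to the sets $c(\alpha+1) \in V$ lie in $V$ would itself lie in $V$), and it is what makes $\dot T_1$ branchless so that $\Q(\dot T_1)$ is applicable. Moreover, the final guessing step needs the branch containing the characteristic function of $d \cap M$ to come from a ground-model function: only then is each $\dot\ell(\eta)$ forced to lie in $V$, so that the $M$-generic filter realizes it (by density of deciding conditions and elementarity) as an actual element $e \in M$ with $e \cap M = d \cap M$. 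With $B$ taken to be the extension's functions, neither the tree's size bound nor this last step goes through. Once $B$ is corrected to $({}^{\lambda}2)^{V}$, your argument coincides with the paper's proof.
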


\begin{proof}
  Suppose that $\PFA(T^*)$ holds. As noted immediately before the statement of 
  the theorem, it suffices to prove that $\GMP$ holds. Since all of the ideas of 
  this proof are transparently present in the arguments of 
  Subsection~\ref{suslin_subsec} of this paper and Section 4 of 
  \cite{viale_weiss}, we provide only a sketch.
  
  \begin{claim} \label{technical_claim}
    Suppose that $\lambda$ is an uncountable cardinal and $\theta$ is a 
    sufficiently large regular cardinal. Assume that $\P$ satisfies the 
    $\omega_1$-covering and $\omega_1$-approximation properties and collapses 
    $2^\lambda$ to have size $\omega_1$. Then there is a $\P$-name $\dot{T}_1$ for a 
    tree of height and size $\omega_1$ with no cofinal branches and a 
    $w \in H(\theta)$ such that, letting $\dot{\Q}(\dot{T}_1)$ be a name for 
    the poset in Fact \ref{t_proper_fact}(\ref{shelah_poset}), we have the 
    following: for every $M \in \power_{\omega_2} H(\theta)$ such that 
    $\omega_1 \cup \{w\} \subseteq M \prec H(\theta)$, if there is 
    $G \subseteq \P \ast \dot{\Q}(\dot{T}_1)$ that is $M$-generic, then 
    $M$ is a guessing model for $\lambda$.
  \end{claim}
  
  \begin{proof}
    The proof is exactly as in the proof of \cite[Lemma 4.6]{viale_weiss}, 
    all of the ideas of which are also present in the proof of \ref{technical_lemma} 
    above, so we omit it. The only difference between the proof of this claim 
    and that of \cite[Lemma 4.6]{viale_weiss} is that in that paper the forcing 
    $\dot{\Q}(\dot{T}_1)$ is replaced by the ccc forcing to fully specialize 
    $\dot{T}_1$. However, it is evident from that proof that adding this 
    complete specialization function is not necessary; it suffices to specialize 
    $\dot{T}_1$ restricted to some cofinal set of levels, which is precisely 
    what $\dot{\Q}(\dot{T}_1)$ does.
  \end{proof}
  
  As in the proof of \ref{pfa_s_s_isp_thm}, it suffices to show that 
  $\GMP^\lambda$ holds for all uncountable cardinals $\lambda$. Therefore, 
  fix such a $\lambda$. Let $\P := \mathrm{Add}(\omega, 1) \ast 
  \dot{\mathrm{Coll}}(\omega_1, 2^\lambda)$. Again as in the proof of 
  \ref{pfa_s_s_isp_thm}, $\P$ has the $\omega_1$-covering and $\omega_1$-approximation 
  properties and collapses $2^\lambda$ to have size $\omega_1$. We can therefore 
  apply Claim \ref{technical_claim} to find a $\P$-name $\dot{T}$ for a tree of 
  height and size $\omega_1$ with no cofinal branches and a $w \in H(\theta)$ 
  such that, for every $M \in \power_{\omega_2} H(\theta)$ such that 
  $\omega_1 \cup \{w\} \subseteq M \prec H(\theta)$, if there is 
  $G \subseteq \P \ast \dot{\Q}(\dot{T})$ that is $M$-generic, then 
  $M$ is a guessing model for $\lambda$.
  
  Since $\P$ is a two-step iteration in which the first iterand is strongly 
  proper and the second is forced to be $\omega_1$-closed, Fact \ref{t_proper_fact} 
  implies that $\P$ is $T^*$-proper. Also by Fact \ref{t_proper_fact}, 
  $\dot{\Q}(\dot{T})$ is forced by $\P$ to be $T^*$-proper, so $\P \ast 
  \dot{\Q}(\dot{T})$ is $T^*$-proper. Therefore, by $\PFA(T^*)$ and 
  Lemma~\ref{woodin_lemma}, the set of $M \in \power_{\omega_2} H(\theta)$ 
  such that $\omega_1 \cup \{w\} \subseteq M \prec H(\theta)$ and there exists 
  an $M$-generic $G \subseteq \P \ast \dot{\Q}(\dot{T})$ is stationary in 
  $\power_{\omega_2} H(\theta)$. But then, by the previous paragraph, every 
  element of this stationary set is a guessing model for $\lambda$, so 
  $\GMP^\lambda$ holds, and, since $\lambda$ was arbitrary, $\GMP$ holds, 
  as desired.
\end{proof}

Our result shows that $\IGMP$ does not imply that all trees of size and height $\omega_1$ with no cofinal branches are special. However, $\IGMP$ implies that every such tree is special on a dense subset. 

\begin{proposition}
Assume $\IGMP$. Let $T$ be a tree of size and height $\omega_1$. Then there is a dense subset $S$ of $T$ such that $S$ is special.
\end{proposition}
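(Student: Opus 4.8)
The plan rests on two observations. First, since $\IGMP$ implies $\GMP$, it implies $\neg\wKH$ and, as recorded above, that there are no Suslin trees; in particular we may freely invoke the $T^*$-proper specializing poset $\Q(T)$ of Fact~\ref{t_proper_fact}(\ref{shelah_poset}). Second, being a statement about \emph{indestructible} guessing models, $\IGMP$ survives passage to the extension by $\Q(T)$. As the example of a single chain of length $\omega_1$ makes clear, the hypothesis must be read, as in the surrounding discussion, as ``$T$ has no cofinal branch''; and, passing to a dense subtree, we may assume $T$ is pruned: the nodes of $T$ having extensions at cofinally many levels form a downward closed, pruned, cofinal-branch-free subtree, and the remaining nodes are covered, together with the (single) antichain of leaves of $T$, by a dense union of antichains. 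So assume $T$ is a pruned Aronszajn tree on $\omega_1$, whence $T_C$ is dense in $T$ for every unbounded $C\subseteq\omega_1$. The goal is to produce, in $V$, a countable family of antichains of $T$ with dense union.

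I would then fix a sufficiently large regular $\theta$ and, using $\IGMP$ together with Proposition~\ref{menas_prop}, an indestructible $\omega_1$-guessing model $N\prec H(\theta)$ with $\{T,\Q(T)\}\subseteq N$ and $\omega_1\subseteq N$. Forcing with $\Q:=\Q(T)$ yields, in $V[G]$, an unbounded $A\subseteq\omega_1$ and a specializing function $f\colon T_A\to\omega$, so that $T_A$ is a dense special subtree of $T$ there; and, $\Q$ being proper, $\omega_1$ is preserved, so by indestructibility $N$ is still an $\omega_1$-guessing model in $V[G]$. It remains to bring a dense special subtree of $T$ back down to $V$.

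This last step is the heart of the matter and the step I expect to be hardest. Asking $N$ to guess $f$ or $A$ directly fails, since these generic objects are not $(\omega_1,N)$-approximated in $V[G]$ (indeed, when $T$ is non-special $\Q$ must add a new real, and its generics are too ``fresh'' to be caught by a fixed ground-model $N$). The remedy, following the pattern of Lemma~\ref{technical_lemma} and Proposition~\ref{persistently_branchless_prop}, is to fix beforehand a $\Q$-name $\dot d$ for a subset of some $\lambda\in N$ — built from the name for $f$ so as to record, at cofinally many levels, a specialization of $T$ on a dense set of levels — which is \emph{forced} to be $(\omega_1,N)$-approximated. Then in $V[G]$ the guessing property of $N$ produces $e\in N$ with $e\cap N=\dot d^{G}\cap N$; since $e\in N\subseteq V$ and, by elementarity, $e$ inherits the structural properties forced of $\dot d$ (exactly as in the cited arguments, where the guessed set is shown to have the features forced of the name), $e$ should yield, in $V$, a function witnessing that a dense subtree of $T$ is a countable union of antichains, i.e., a dense special $S\subseteq T$. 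Designing $\dot d$ so that it is simultaneously provably approximated and informative enough for this conclusion is the real obstacle — it is the analogue, run in the opposite direction, of the branch analysis of Proposition~\ref{persistently_branchless_prop}, and is where the technical work lies.
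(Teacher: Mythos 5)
Your outline is missing precisely the step you identify as the heart of the matter, and there is a structural reason to doubt it can be carried out in the form you describe. Any reasonable coding of ``a dense $S\subseteq T$ together with $h\colon S\to\omega$ witnessing its specialness'' lives on $\omega_1$ (as $|T|=\omega_1$), and the guessing models in play satisfy $\omega_1\subseteq N$; so if a $\Q(T)$-name $\dot d$ for such a code were forced to be $(\omega_1,N)$-approximated and were then $N$-guessed by some $e\in N$, the equality $e\cap N=\dot d^G\cap N$ would read as $e\cap\omega_1=\dot d^G$, i.e.\ $\dot d^G=e\in V$. In other words, a name of the kind you want must already be forced to denote a ground-model set, whereas the whole difficulty is that $\Q(T)$ introduces its specializing data generically and even countable fragments of it are new --- exactly the obstruction you yourself note for $f$ and $A$. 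The branch-recovery arguments you invoke (Lemma~\ref{technical_lemma}, Proposition~\ref{persistently_branchless_prop}) succeed because the recovered objects are cofinal branches of a ground-model tree: their approximatedness is automatic from initial segments, and the Baumgartner-function/B-specialness apparatus is what certifies that such branches lie in $V$; there is no analogous apparatus for specializing functions, and ``by elementarity $e$ inherits the properties forced of $\dot d$'' is not a valid inference, since $N\prec H(\theta)^V$ only transfers properties $e$ actually has in $V$, not properties forced of a name it happens to agree with on $N$. (Separately, your preliminary reduction is not correct as stated: the set of nodes with extensions at cofinally many levels need not be pruned --- consider a root whose immediate successors carry dying chains of unbounded countable lengths --- and the leaves of $T$ need not be dense over the removed part. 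This is repairable, e.g.\ the upward-closed set of nodes with bounded cones is special outright by coloring with relative heights, but as written it is a further gap.)

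The paper's proof avoids forcing extensions and pull-backs altogether. By the cited theorem of Cox and Krueger, $\IGMP$ implies that $T$ (with no cofinal branches) is persistently branchless, hence $T$, viewed as a forcing notion, is not $\omega_1$-distributive. Fixing a $T$-name $\dot g$ for a new function from $\omega$ to the ordinals, the set $S$ of nodes that decide some value $\dot g(n)$ minimally is dense, and assigning to $s\in S$ the least such $n$ is a specializing function on $S$: two comparable nodes with the same value would contradict minimality. So indestructibility is used only to get persistent branchlessness, and the dense special set is read off directly in $V$ from the non-distributivity of $T$ itself; you may want to compare this with your plan to see how much machinery it bypasses.
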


\begin{proof}
Assume $\IGMP$. Let $T$ be a tree of size and height $\omega_1$. Note that the converse of Proposition \ref{persistently_branchless_prop} holds as well: $\IGMP$ implies that every tree of size and height $\omega_1$ with no cofinal branches is persistently branchless by \cite[Theorem 3.6]{cox_krueger_indestructible}. Hence $T$ is persistently branchless and therefore $T$ is not $\omega_1$-distributive. 

Let $\dot{g}$ be a $T$-name for a new function from $\omega$ into the ordinals. Using the name $\dot{g}$ we can define a dense subset of $T$ containing all nodes of $T$ which decide the value of $\dot{g}(n)$ for some $n\in\omega$ and are minimal such; formally $S=\set{t\in T}{\exists n\in\omega ((t\parallel \dot{g}(n)) \mbox{ and } \forall s<t (s \nparallel\dot{g}(n)))}$. Note that $S$ is dense since $\dot{g}$ is a name for a new function. It is straightforward to define a specialization function $f$ from $S$ to $\omega$: for $s\in S$, set $f(s)=n$, where $n$ is the minimal $n\in\omega$ such that $s$ decides the value of $\dot{g}(n)$ and no $t<s$ decides the value of $\dot{g}(n)$. It is ease to verify that $f$ is a specialization function for $S$.
\end{proof}

\bibliographystyle{plain}
\bibliography{bib}

\end{document}